\def\R{{\mathbb R}}
\newcommand{\C}{{\mathbb C}}
\newcommand{\Z}{{\mathbb Z}}
\newtheorem{thm}{Theorem}[section]
\newtheorem{lemma}[thm]{Lemma}
\newtheorem{cor}[thm]{Corollary}
\theoremstyle{defn} \newtheorem{defn}[thm]{Definition}}
\theoremstyle{remark} \newtheorem{remark}[thm]{Remark}
\newtheorem{example}[thm]{Example}}
\begin{document}

%%%%%%%%%%%%%%%%%%%%%%%%%%%%%%%%%%%%%%%%%%%%%
%\title{COMBINATORIAL CURVE NEIGHBORHOOD OF THE AFFINE FLAG MANIFOLD OF TYPE $A_{n-1}^{1}$}
\title{Combinatorial curve neighborhood of the affine flag manifold of type $A_{n-1}^1$}

\author{Songul Aslan}
\address{ Department of Mathematics, 460 McBryde Hall, Virginia Tech University, Blacksburg VA 24060 USA}\email{sonas@vt.edu}

%\author{Leonardo C. Mihalcea}
%\address{ Department of Mathematics, 460 McBryde Hall, Virginia Tech University, Blacksburg VA 24060 USA} \email{lmihalce@math.vt.edu}
\maketitle
\begin{abstract}
Let $\mathcal{X}$ be the affine flag manifold of Lie type $A_{n-1}^{(1)}$ where $n \geq 3$ and let $W_{\text{aff}}$ be the associated affine Weyl group. The moment graph for $\mathcal{X}$ encodes the torus fixed points (corresponding to elements of the affine Weyl group $W_{\text{aff}}$) and the torus stable curves in $\mathcal{X}$. Given a fixed point $u\in W_{\text{aff}}$ and a degree $\mathbf{d}=(d_0,d_1,...,d_{n-1})\in \mathbb{Z}_{\geq 0}^{n}$, the combinatorial curve neighborhood is the set of maximal elements in the moment graph of $\mathcal{X}$ which can be reached from $u'\leq u$ by a chain of curves of total degree $\leq \mathbf{d}$. In this paper we give combinatorial formulas and algorithms for calculating these elements in $\mathcal{X}$. \end{abstract}

\tableofcontents

\section{Introduction}

Let $X = G/P$ be a flag manifold defined by a semisimple complex Lie group $G$ and a parabolic subgroup $P$ and let $\Omega \subset X$ be a Schubert variety. Fix an effective degree $\mathbf{d}\in H_{2}(X)$. The (geometric) \textit{curve neighborhood} $\Gamma_{\mathbf{d}}(\Omega)$ is the closure of the union of all rational curves of degree $\mathbf{d}$ in $X$ which intersect $\Omega$. The curve neighborhood $\Gamma_{\mathbf{d}}(\Omega)$ was originally studied by Buch, Chaput, Mihalcea and Perrin \cite{BCMP} and the concept was later developed and used e.g. in \cite{MB01,MT01,MM01,LR18,shifler.withrow,BCMP:qkpos}, in relation
to the study of the quantum cohomology and quantum $K$-theory ring of $X$. It was proved in \cite{BCMP} that $\Gamma_{\mathbf{d}}(\Omega)$ is irreducible whenever $\Omega$ is irreducible. In particular,  if $\Omega$ is a Schubert variety in X then $\Gamma_{\mathbf{d}}(\Omega)$ is also a Schubert variety.  In \cite{MB01}, Buch and Mihalcea provided an explicit combinatorial formula for the Weyl group element corresponding to $\Gamma_{\mathbf{d}}(\Omega)$ when $\Omega$ is a Schubert variety in $X$. It has been also shown that the calculation of the curve neighborhoods is encoded in the \textit{moment graph} of $X$ which is a graph encoding the $T$-fixed points and the $T$-stable curves in $X$ where $T$ is a maximal torus of $G$. In \cite{W19} Withrow has studied curve neighborhoods to compute a presentation for the small quantum cohomology ring of a particular Bott- Samelson variety in Type A. Moreover, Mihalcea and Shifler use the technique of curve neighborhoods to prove a Chevalley formula in the equivariant quantum cohomology of the odd symplectic Grassmannian, see \cite{LR18}. The curve neighborhood $\Gamma_{\mathbf{d}}(\Omega)$ has also been studied in the case when $X$ is an affine flag manifold by Mare and Mihalcea, see \cite{MM01}. They defined 
an affine version of the quantum cohomology ring and gave a combinatorial description of the curve neighborhood for ``small'' degrees. Furthermore, a combinatorial formula for the curve neighborhood of affine flag manifold of Lie type $A_{1}^{1}$ has been recently given by Norton and Mihalcea; see \cite{MT01}. The goal of this paper is to give a combinatorial formula for the curve neighborhoods of Schubert varieties in the affine flag manifold of Lie type $A_{n-1}^{(1)}$ for any degree, where $n\geq 3$. 

\subsection{Statement of results} Next we will introduce some notation and recall some definitions before we give an overview of our results. Let $G$ be the special linear group $SL_{n}(\C)$. Let $B\subset G$ be the Borel subgroup, the set of upper triangular matrices, and let $T\subset B$ be the maximal torus, the set of diagonal matrices. We denote by $\mathfrak{g}$ the Lie algebra $\mathfrak{sl}_{n}(\C)$ of $G$.  Let $\Pi$ be the standard root system associated to the triple $(T,B,G)$. Also, let $\Delta=\{\alpha_{1},\alpha_{2},...,\alpha_{n-1}\} \subset \Pi$ be the set of simple roots. This determines a partition of $\Pi=\Pi^{+}\sqcup \Pi^{-}$ such that  $\Pi^{+}=\Pi \cap \bigoplus_{i=1}^{n-1}\mathbb{Z}_{\geq 0}\,\alpha_{i}$ is the set of positive roots and $\Pi^{-}=\Pi \cap \bigoplus_{i=1}^{n-1}\mathbb{Z}_{\leq 0}\,\alpha_{i}$ is the set of negative roots (these roots are described in chapter 2). Denote by $W$ the Weyl group. The Weyl group $W$ is generated by the simple reflections, $s_1,s_2,...,s_{n-1},$ where $s_{i}:=s_{\alpha_{i}}$, $i=1,2,...,n-1$. 
%For $w\in W,$ the \textit{length} of $w$ is denoted by $\ell{(w)}.$ We set $\ell{(id)}=0.$ 
Now, let $\mathfrak{g}_{\text{aff}}$ be the affine Kac-Moody Lie algebra associated to the Lie algebra $\mathfrak{g}.$ Denote by $\Pi_{\text{aff}}$ the affine root system associated to $\mathfrak{g}_{\text{aff}}.$ Let $\Delta_{\text{aff}}=\{\alpha_{0},\alpha_{1},...,\alpha_{n-1}\}\subset \Pi_{\text{aff}}$ be the affine simple roots. This determines a partition of $\Pi_{\text{aff}}$ into positive and negative affine roots: $\Pi_{\text{aff}}=\Pi_{\text{aff}}^{+}\sqcup \Pi_{\text{aff}}^{-}$ where  $\Pi_{\text{aff}}^{-}=-\Pi_{\text{aff}}^{+}$. We denote by $\Pi_{\text{aff}}^{\text{re},\,+}$ the set of affine positive real roots. Let $W_{\text{aff}}$ be the affine Weyl group of type $A_{n-1}^{(1)}$ which is generated by the simple reflections $s_{0},s_{1},..,s_{n-1}$ where $s_{i}:=s_{\alpha_{i}}$, $i=0,1,...,n-1$.  For $w\in W_{\text{aff}},$ the \textit{length} of $w$ is denoted by $\ell{(w)}$. 
%We set $\ell{(id)}=0.$  
Now, let $\mathcal{X}$ be the affine flag manifold in type $A_{n-1}^{(1)}$. The (undirected) \textit{moment graph} for $\mathcal{X}$ is defined by the following: 
\begin{itemize} 

\item The set of vertices is $W_{\text{aff}}.$
\item There is an edge between $u, v \in W_{\text{aff}}$ in the moment graph  if and only if there exists an affine positive real root $\alpha$ such that $v = us_{\alpha}$. This situation is denoted by $$u \stackrel{\alpha} \longrightarrow v$$  We say that the \textit{degree} of this edge is $\alpha.$
(In general the degree is the coroot $\alpha^\vee$, but in simply laced cases roots and coroots are the same.)
\end{itemize}
 A \textit{chain} from $u$ to $v$ in the moment graph is a succession of adjacent edges, starting with $u$ and ending with $v$;
$$\pi: u=u_{0} \stackrel{\beta_{0}} \longrightarrow u_{1}\stackrel{\beta_{1}}\longrightarrow ...\stackrel{\beta_{k-2}}\longrightarrow u_{k-1}\stackrel{\beta_{k-1}}\longrightarrow u_{k}=v .$$

The \textit{degree} of the chain $\pi$ is $\text{deg}(\pi)=\beta_{0}+\beta_{1}+...+\beta_{k-1}.$ A chain is called \textit{increasing} if the length is increasing at each step i.e., if $\,\,\ell{(u_{i})}>\ell{(u_{i-1})}$ for all $i.$ There is a partial order on the elements of $W_{\text{aff}}$, the Bruhat partial order, 
defined by $u<v$ if and only if there exists an increasing chain starting with $u$ and ending with $v.$

\begin{example} Assume that $\mathcal{X}$ is the affine flag manifold associated to $A_{2}^{(1)}$. Then the moment graph for $\mathcal{X}$ up to elements that can be obtained by a chain of degree at most $\alpha_{0}+\alpha_{1}+\alpha_{2}$ is given in the following figure with each edge labelled by its degree;

\bigskip

\end{example}
%\begin{center}
%\includegraphics[width=120mm,scale=0.5]{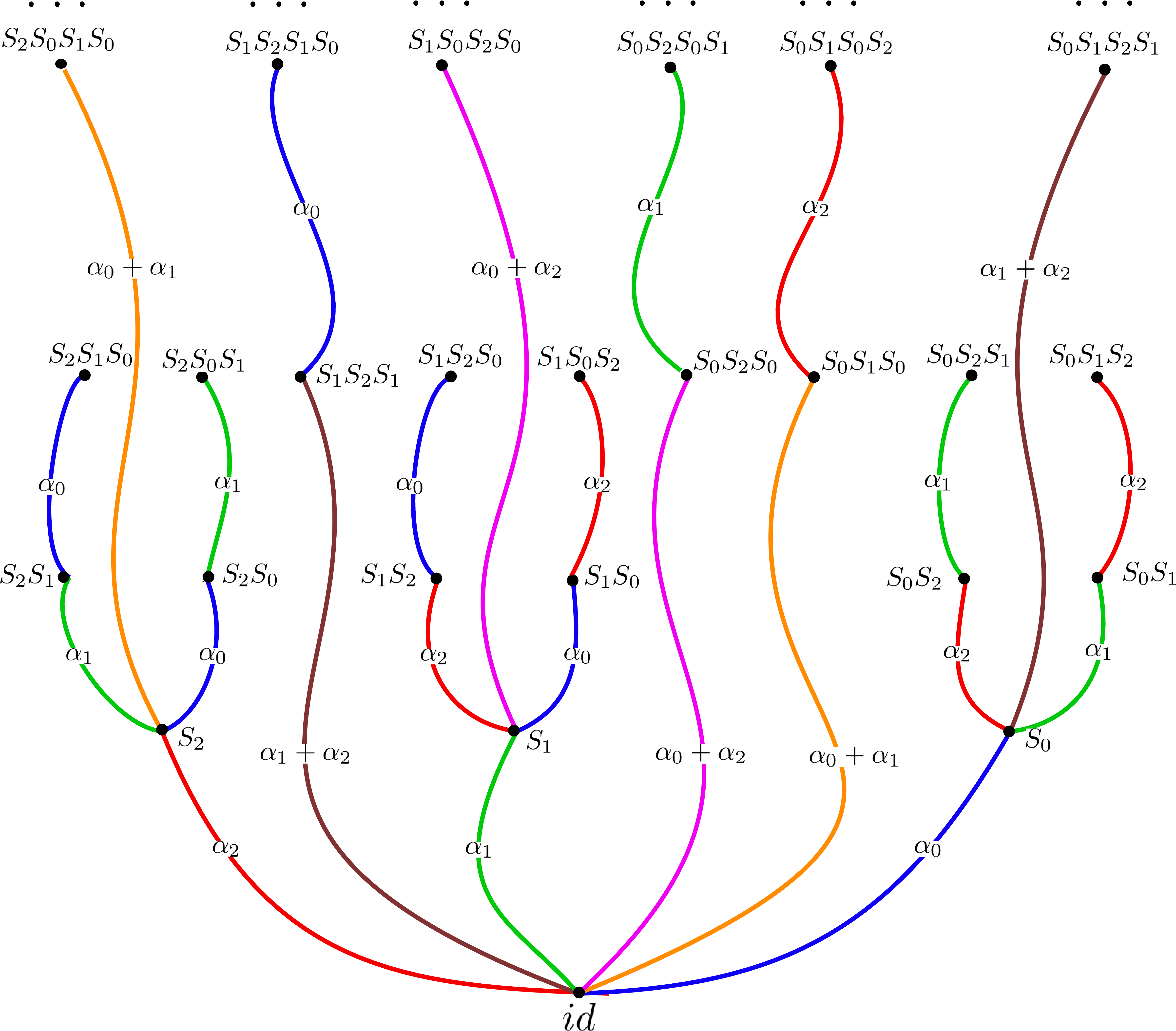}
%\end{center}

\begin{figure} [h!]\label{fig1}

\begin{center}
\includegraphics[width=120mm,scale=0.5]{}
\end{center}
\caption{The moment graph for the affine flag manifold of type $A_{2}^{(1)}.$}
\end{figure}

\bigskip

\begin{remark} The vertices and edges of this graph correspond to the $T$-fixed points and $T$-stable curves in the affine flag manifold, see \cite[\S12]{kumar} for details.
\end{remark}

A \textit{degree} $\mathbf{d}$ is an $n$-tuple of nonnegative integers $(d_0 , d_1,...,d_{n-1})$. There is a natural partial order on degrees: If $\mathbf{d}=(d_{0},...,d_{n-1})$ and $\mathbf{d'}=(d'_{0},...,d'_{n-1})$ then $\mathbf{d}\geq \mathbf{d'}$ if and only if $d_{i}\geq d'_{i}$ for all $i\in \{0,1,...,n-1\}.$ 

Now, let a degree $\mathbf{d}$ and an element $u$ of $W_{\text{aff}}$ be given. Then, inspired by the geometric definition of curve neighborhoods in \cite{MB01} and \cite{MM01}, the (combinatorial) \textit{curve neighborhood} $\Gamma_{\mathbf{d}}(u)$ is the set of elements $v$ in $W_{\text{aff}}$ such that:

\begin{itemize} 

\item[1)] $v$ can be joined to some $u'\leq u$ (in the moment graph) by a chain of degree $\leq \mathbf{d}$; 

\item[2)]  The elements $v$ are maximal among all elements satisfying $(1)$. 

\end{itemize}

In other words, to compute $\Gamma_{\mathbf{d}}(u)$ we first consider all the paths in the moment graph that start with $u'$ such that $u'\leq u$ and have a total degree at most $\mathbf{d}$. Then the curve neighborhood is given by the set of the elements with the maximal length that can be reached by the paths.

\begin{example} Assume that $\mathcal{X}$ is the affine flag manifold associated to $A_{2}^{(1)}$. The moment graph for $\mathcal{X}$ is shown in Figure \ref{fig1}. Now, let's consider all the paths in the moment graph that start with the identity element and have a degree at most $\alpha_{0}+\alpha_{1}+\alpha_{2}$. Then the elements which can be reached by the paths and have the maximal length are the top elements in the moment graph so 
$$\Gamma_{(\alpha_{0}+\alpha_{1}+\alpha_{2})}(id)= \{s_2s_0s_1s_0,s_1s_2s_1s_0,s_1s_0s_2s_0,s_0s_2s_0s_1,s_0s_1s_0s_2,s_0s_1s_2s_1\}.$$

\end{example}
Now, we will state our first result which allows us to reduce the calculation of $\Gamma_{\mathbf{d}}(w)$ for any given degree $\mathbf{d}$ and $w \in W_{\text{aff}}$ to the calculation of $ \Gamma_{\mathbf{d}}(id)$. The proof of the result can be found in Theorem \ref{thm22}.

\begin{thm} \label{thm0.3} Let $w \in W_{\text{aff}}$ and $\mathbf{d}$ be any degree. Then 
$$\Gamma_{\mathbf{d}}(w)=\text{max}\left \{w\cdot u: u \in \Gamma_{\mathbf{d}}( id) \right \}.$$\end{thm}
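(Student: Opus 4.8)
The plan is to exploit a symmetry of the moment graph, namely that left translation by $w$ is a degree-preserving automorphism, and then to show that the "maximal'' condition only sees the top of the Bruhat interval $[\mathrm{id},w]$ and is transported correctly by the automorphism. The first ingredient is formal, the combination is not, and I flag below where I expect the real difficulty.

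\emph{Step 1 (left translation is a degree-preserving graph automorphism).} Fix $w\in W_{\text{aff}}$ and consider the bijection $L_w\colon W_{\text{aff}}\to W_{\text{aff}}$, $x\mapsto wx$. If $x\stackrel{\alpha}\longrightarrow xs_\alpha$ is an edge of the moment graph, then $w(xs_\alpha)=(wx)s_\alpha$, so $L_w$ sends it to the edge $wx\stackrel{\alpha}\longrightarrow(wx)s_\alpha$, which has the \emph{same} degree $\alpha$. Hence $L_w$ is an automorphism of the edge-labelled (undirected) moment graph, and therefore induces a degree-preserving bijection between chains starting at $x$ and chains starting at $wx$. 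Taking $x=\mathrm{id}$: a vertex $v$ is joined to $\mathrm{id}$ by a chain of degree $\le\mathbf{d}$ if and only if $wv$ is joined to $w$ by a chain of degree $\le\mathbf{d}$. The point to keep in mind is that $L_w$ does \emph{not} preserve length or the Bruhat order, so by itself it does not move the maximality condition around.

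\emph{Step 2 (only the top of $[\mathrm{id},w]$ matters).} Write $R_{\mathbf{d}}(S)$ for the set of vertices joined to some vertex of $S$ by a chain of degree $\le\mathbf{d}$; since each edge contributes at least $1$ to the total of the coordinates of the degree, any such chain has at most $\sum_i d_i$ edges, so $R_{\mathbf{d}}(S)$ is finite and the set $\max R_{\mathbf{d}}(S)$ of Bruhat-maximal elements is well defined. I would prove the reduction
$$\max R_{\mathbf{d}}\bigl([\mathrm{id},w]\bigr)=\max R_{\mathbf{d}}(\{w\}),$$
i.e.\ that every maximal vertex reachable from some $u'\le w$ is already reachable from $w$ itself; concretely it suffices to show that if $u'\le w$ and there is a chain of degree $\le\mathbf{d}$ from $u'$ to $v$, then there is a chain of degree $\le\mathbf{d}$ from $w$ to some $v'\ge v$. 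The tools for this are the lifting property of the Bruhat order (to replace $u'$ by a longer element of $[\mathrm{id},w]$) together with undirectedness of the moment graph (to re-route a chain through the larger element), and I expect this to require a genuine case analysis, not a one-line argument. Combined with Step 1 this already gives $\Gamma_{\mathbf{d}}(w)=\max R_{\mathbf{d}}(\{w\})=\max\bigl(w\cdot R_{\mathbf{d}}(\{\mathrm{id}\})\bigr)$.

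\emph{Step 3 (from $R_{\mathbf{d}}(\mathrm{id})$ to $\Gamma_{\mathbf{d}}(\mathrm{id})$ — the main obstacle).} Since the only element $\le\mathrm{id}$ is $\mathrm{id}$ itself, $\Gamma_{\mathbf{d}}(\mathrm{id})=\max R_{\mathbf{d}}(\{\mathrm{id}\})$, so it remains to establish
$$\max\bigl(w\cdot R_{\mathbf{d}}(\{\mathrm{id}\})\bigr)=\max\bigl(w\cdot\max R_{\mathbf{d}}(\{\mathrm{id}\})\bigr)=\max\{w\cdot u:u\in\Gamma_{\mathbf{d}}(\mathrm{id})\},$$
i.e.\ that discarding the non-maximal elements of $R_{\mathbf{d}}(\{\mathrm{id}\})$ before multiplying by $w$ does not change the set of maximal elements afterwards. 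This is where the real content lies: because $L_w$ does not respect the Bruhat order, one cannot argue formally that $u\le u_0$ implies $wu\le wu_0$. What must be used is a rigidity of $R_{\mathbf{d}}(\{\mathrm{id}\})$ that is special to the affine setting — plausibly that it is a lower order ideal whose antichain of maximal elements $\Gamma_{\mathbf{d}}(\mathrm{id})$ is structured enough that for each $u\in R_{\mathbf{d}}(\{\mathrm{id}\})$ one can choose $u_0\in\Gamma_{\mathbf{d}}(\mathrm{id})$ with $u\le u_0$ \emph{and} $wu\le wu_0$. (In finite type the analogue of the theorem is false — from $\mathrm{id}\le w$ one can reach the longest element $w_0$ in bounded degree, so $\Gamma_{\mathbf{d}}(\mathrm{id})$ does not move with $w$ — so the absence of a longest element in $W_{\text{aff}}$ is precisely what is being used.) I would try to extract this rigidity either from the explicit description of $\Gamma_{\mathbf{d}}(\mathrm{id})$ obtained elsewhere in the paper, or directly from one-line (window) notation for $W_{\text{aff}}$ of type $A_{n-1}^{(1)}$. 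Granting it, the three displays chain together to give $\Gamma_{\mathbf{d}}(w)=\max\{w\cdot u:u\in\Gamma_{\mathbf{d}}(\mathrm{id})\}$.
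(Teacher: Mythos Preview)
Your proposal has a fundamental misreading: the symbol ``$\cdot$'' in the statement $\Gamma_{\mathbf d}(w)=\max\{w\cdot u:u\in\Gamma_{\mathbf d}(\mathrm{id})\}$ is the \emph{Hecke (Demazure) product} defined in \S\ref{hecke product} of the paper, not ordinary group multiplication. Your Steps~1--3 are built around the left-translation $L_w(x)=wx$, and the displayed identities in Steps~2 and~3 only make sense with ordinary multiplication; but with ordinary multiplication the statement is simply false (your own finite-type counterexample shows this --- and that should have been the red flag, since the finite-type analogue, due to Buch--Mihalcea, \emph{is} true with the Hecke product).

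Once the Hecke product is in play, all three of your obstacles evaporate and the paper's proof is a few lines. For any $u'\le w$ and any chain $u'\stackrel{\beta_1}{\to}\cdots\stackrel{\beta_k}{\to}u's_{\beta_1}\cdots s_{\beta_k}$ of degree $\le\mathbf d$, properties (b) and (c) of the Hecke product give
\[
u's_{\beta_1}\cdots s_{\beta_k}\ \le\ u'\cdot(s_{\beta_1}\cdots s_{\beta_k})\ \le\ w\cdot(s_{\beta_1}\cdots s_{\beta_k})\ \le\ w\cdot u
\]
for some $u\in\Gamma_{\mathbf d}(\mathrm{id})$ with $s_{\beta_1}\cdots s_{\beta_k}\le u$. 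This single chain of inequalities handles your Step~2 (replacing $u'$ by $w$) and your Step~3 (the ``rigidity'' you sought is exactly the monotonicity $x\le u\Rightarrow w\cdot x\le w\cdot u$, which is property~(b)). Conversely, property~(e) gives $w\cdot u=vu$ with $v\le w$, so $w\cdot u$ is reached from $v\le w$ by the chain realizing $u$ from $\mathrm{id}$; hence every $w\cdot u$ lies in the reachable set and the max is attained there. No case analysis, no appeal to the explicit description of $\Gamma_{\mathbf d}(\mathrm{id})$, and nothing special to the affine setting is needed.
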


Next, we will consider the curve neighborhood of the identity element at degree $\mathbf{d}=(d_{1},d_{2},...,d_{n-1})$ where $d_{i}=0$ for some $i=1,2,...,n-1.$ Here, using the automorphism of the Dynkin diagram $\varphi$, $\mathbf{d}$ can be identified with a finite degree, see Sections \ref{dynkin} and \ref{case of finite degrees} for details. In this case, by  \cite{MB01}, the curve neighborhood $ \Gamma_{\mathbf{d}}(id)$ consists of a unique element;
\begin{equation}\label{equation0.35}\Gamma_{\mathbf{d}}(id)=\{z_{\mathbf{d}}\} \end{equation}
where $z_{\mathbf{d}}:=s_{\alpha}\cdot z_{\mathbf{d}-\alpha}$ such that 
$\alpha$ is a maximal root which is smaller than and equal to $\mathbf{d}$, we refer to definition \ref{def12.5} for details. One might also see Theorem \ref{thm12.7} where we show how $z_{d}$ can be simplified.

Our next result is a corollary of Equation \ref{equation0.35}. Let $\alpha \in \Pi_{\text{aff}}^{\text{re},\,+}$ such that $\alpha<c$. Then

\begin{equation}\label{equation0.36}\Gamma_{\alpha}(id)=\{s_{\alpha}\}. \end{equation}

\begin{example} Assume that the affine Weyl group is associated to $A_{4}^{(1)}$ and $\alpha=\alpha_{0}+\alpha_{4}.$ Then $\Gamma_{\alpha}( id)=\{s_{\alpha_{0}+\alpha_{4}}\}$ by Equation \ref{equation0.36}. 

\end{example}

The rest of our results are about the instances where the degrees are not finite and proven by using some techniques which are different than those one can find in \cite{MB01} and \cite{MM01}.

Now, we will state the result for the curve neighborhood of the identity element at $c$ which is the degree corresponding to the imaginary coroot. In this case the neighborhood is described in terms of the translations $t_\gamma$ where $\gamma$ is a (co)root; see Section \ref{affine Weyl group} for details. The proof of this result can be found in Theorem \ref{thm14}.

\begin{thm} \label{thm0.55} We have 
\begin{enumerate}
\item[1)] $\Gamma_{c} ( id)=\{t_{\gamma}:\gamma \in \Pi \}$. 
\item[2)] $|\Gamma_{c}( id)|=n(n-1)$.
\item[3)] For all $w\in \Gamma_{c}( id)$,  $\ell(w)=2(n-1).$ 

\end{enumerate}
\end{thm}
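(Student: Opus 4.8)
The plan is to argue directly with chains in the moment graph. Since $c$ is the degree of the imaginary coroot, $c=\delta:=\alpha_0+\alpha_1+\cdots+\alpha_{n-1}$, and since $u'\le id$ forces $u'=id$, the set $\Gamma_c(id)$ consists exactly of the Bruhat-maximal elements reachable from $id$ by a chain $id=u_0\xrightarrow{\beta_0}u_1\xrightarrow{\beta_1}\cdots\xrightarrow{\beta_{k-1}}u_k=w$ with $\beta_0+\cdots+\beta_{k-1}\le\delta$. Because the moment-graph edges are undirected one has $u_i=u_{i-1}s_{\beta_{i-1}}$ regardless of orientation, so $w=s_{\beta_0}s_{\beta_1}\cdots s_{\beta_{k-1}}$. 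The first step is the combinatorial remark that in type $A_{n-1}^{(1)}$ a positive real root all of whose simple coordinates are at most $1$ is precisely a cyclic arc $\beta_{[a,b]}:=\alpha_a+\alpha_{a+1}+\cdots+\alpha_b$, indices read modulo $n$, with interval length between $1$ and $n-1$; this follows from the explicit list of $\Pi_{\text{aff}}^{\text{re},+}$ recalled in Section~\ref{affine Weyl group}. Hence in a chain of degree $\le\delta$ the roots $\beta_0,\dots,\beta_{k-1}$ are cyclic arcs whose supports are pairwise disjoint.

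Next I would bound lengths. For a cyclic arc $\beta_{[a,b]}$ involving $m$ simple roots one has $\ell(s_{\beta_{[a,b]}})=2m-1$: when $\alpha_0$ does not occur this is the classical identity $\ell(s_\alpha)=2\,\mathrm{ht}(\alpha)-1$ in the finite root system, and the general case follows from the rotational symmetry $\alpha_i\mapsto\alpha_{i+1}$ of the affine Dynkin diagram, which preserves length. Thus, if $w=s_{\beta_0}\cdots s_{\beta_{k-1}}$ comes from a degree-$\le\delta$ chain and $m_i$ denotes the size of $\beta_i$, then $\ell(w)\le\sum_i(2m_i-1)=2\sum_i m_i-k\le 2n-k$, since the disjoint arcs satisfy $\sum_i m_i\le n$. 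On the other hand the chain $id\to s_{\alpha_0}\to s_{\alpha_0}s_\theta$ (with $\theta=\alpha_1+\cdots+\alpha_{n-1}$) has degree $\alpha_0+\theta=\delta$, and $\ell(s_{\alpha_0}s_\theta)=\ell(s_\theta)+1=(2n-3)+1=2(n-1)$ because $s_\theta(\alpha_0)=\delta+\theta>0$. Consequently the maximum length of a reachable element is exactly $2(n-1)$, and a reachable $w$ attains it only when $k=2$, $m_0+m_1=n$ (so $\beta_0,\beta_1$ are \emph{complementary} arcs whose union is the whole cycle), and $w=s_{\beta_0}s_{\beta_1}$.

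The third step identifies these products with translations and yields parts (2) and (3). If $\beta$ and $\beta^c$ are complementary arcs then $\beta+\beta^c=\delta$; writing $\beta=\alpha+k\delta$ with $\alpha\in\Pi$ and $k\in\{0,1\}$ and using $s_{\alpha+k\delta}=t_{-k\alpha^\vee}s_\alpha$ together with $s_\alpha t_\mu s_\alpha=t_{s_\alpha(\mu)}$, a short computation gives $s_\beta s_{\beta^c}=s_\beta s_{\delta-\beta}=t_{-\alpha^\vee}$. As $\beta$ runs over the $n(n-1)$ cyclic arcs, $\alpha$ runs bijectively over $\Pi$ (arcs avoiding $\alpha_0$ give the positive roots, with $k=0$; arcs containing $\alpha_0$ give the negative roots, with $k=1$), so $\{s_\beta s_{\beta^c}\}=\{t_{-\alpha^\vee}:\alpha\in\Pi\}=\{t_\gamma:\gamma\in\Pi\}$, a set of cardinality $n(n-1)$. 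Finally $\ell(t_\gamma)=\sum_{\alpha\in\Pi^+}|\langle\gamma^\vee,\alpha\rangle|$ evaluates to $2(n-1)$ for every $\gamma=\epsilon_i-\epsilon_j$ by a one-line count, and is independent of $\gamma$ since $W$ permutes the roots transitively. In particular each $t_\gamma$ is a reachable element of maximal length, hence Bruhat-maximal, so $\{t_\gamma:\gamma\in\Pi\}\subseteq\Gamma_c(id)$.

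What remains, and what I expect to be the main obstacle, is the reverse inclusion: no reachable element of length $<2(n-1)$ is Bruhat-maximal; equivalently, every reachable $w$ lies weakly below some $t_\gamma$. The plan is an extension argument. Given $w=s_{\beta_0}\cdots s_{\beta_{k-1}}$ with disjoint arcs that do not already tile the cycle, let $\gamma_1,\dots,\gamma_j$ be the complementary arcs; then $ws_{\gamma_1}\cdots s_{\gamma_j}$ is again reachable (the total degree is exactly $\delta$), and I would show that some complementary arc $\gamma$ satisfies $\ell(ws_\gamma)>\ell(w)$, whence $w<ws_\gamma$ (for a reflection $r$, $\ell(wr)>\ell(w)$ forces $w<wr$) and $w$ is not maximal; iterating reduces to the case of a tiling by two complementary arcs. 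The remaining case, in which $\beta_0,\dots,\beta_{k-1}$ already tile the cycle with $k\ge 3$, is handled by a separate elementary reduction: compare $w$ with $s_{\beta_0}s_{\beta_0^c}$ and check, by induction using that in finite type $A$ the product of the reflections of a family of consecutive sub-intervals tiling an interval lies below the reflection of the whole interval, that $w\le s_{\beta_0}s_{\beta_0^c}$, which is one of the $t_\gamma$. Once this monotonicity step is established the three assertions follow immediately; steps one through three are essentially bookkeeping with the root data of $A_{n-1}^{(1)}$ and the translation length formula, and it is this last extension/maximality step that is genuinely delicate.
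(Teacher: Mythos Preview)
Your outline is sound and takes a route different from the paper's, and you correctly locate where the real work lies. But your plan for the reduction step conceals a gap that ``induction on the finite type~$A$ fact'' does not close.

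For $k\ge 3$ tiling arcs, the finite-type fact you invoke gives only $s_{\beta_1}\cdots s_{\beta_{k-1}}\le s_{\beta_0^c}$ (this is Corollary~\ref{cor13.6}). Concluding $w=s_{\beta_0}s_{\beta_1}\cdots s_{\beta_{k-1}}\le s_{\beta_0}s_{\beta_0^c}$ from this implicitly uses $u\le v\Rightarrow s_{\beta_0}u\le s_{\beta_0}v$, which is false for Bruhat order in general. The repair is short once seen: you already computed $\ell(s_{\beta_0}s_{\beta_0^c})=\ell(s_{\beta_0})+\ell(s_{\beta_0^c})$, so concatenating reduced words gives a reduced word for the product, and then the subword property yields $s_{\beta_0}u\le s_{\beta_0}s_{\beta_0^c}$ for every $u\le s_{\beta_0^c}$. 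With this in hand the reduction step in fact covers the non-tiling case too, making your extension step redundant; and the extension step itself is easier than you fear, since disjoint arcs pair nonpositively with one another and with any complementary arc~$\gamma$, so an immediate induction gives $s_{\beta_0}\cdots s_{\beta_{k-1}}(\gamma)\ge\gamma>0$, hence $w<ws_\gamma$ for \emph{every} such~$\gamma$.

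The paper sidesteps all of this with the Hecke product. One passes from the chain element to the larger $s_{\beta_1}\cdot\ldots\cdot s_{\beta_r}$, applies $s_{\beta_2}\cdot\ldots\cdot s_{\beta_r}\le s_{c-\beta_1}$ (Corollary~\ref{cor5.9}), and then left-multiplies by $s_{\beta_1}$ using the monotonicity of the Hecke product (property~(b) in \S\ref{hecke product}), obtaining $w\le s_{\beta_1}\cdot s_{c-\beta_1}=s_{\beta_1}s_{c-\beta_1}$ in one line. Your approach has the virtue of staying with ordinary group products and explicit length counts throughout; the Hecke product buys precisely the monotonicity you are missing and collapses the extension/reduction case split into a single step.
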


\begin{example} Suppose that the affine Weyl group $W_{\text{aff}}$ is associated to $A_{2}^{(1)}.$ Then by Theorem \ref{thm0.55} 
$$\Gamma_{c}( id)=\{t_{\alpha_{1}},t_{\alpha_{2}},t_{\alpha_{1}+\alpha_{2}}, t_{-\alpha_{1}}, t_{-\alpha_{2}},t_{-(\alpha_{1}+\alpha_{2})}\}.$$ 
Moreover, for any $w\in \Gamma_{c}( id)$, $\ell(w)=2(n-1)=2\cdot 2=4$. 

\end{example}

In the next theorem, we will generalize the case in Theorem \ref{thm0.55} by considering a constant $m$ multiple of $c$ where $m\geq 2$ is a positive integer. See Theorem \ref{thm19} for the proof. 

\begin{thm} \label{thm0.57}Let $m\geq 2$ be a positive integer.  Then we have 

\begin{enumerate}

\item[1)] $\Gamma_{mc} ( id)=\{t_{m\gamma}:\gamma \in \Pi \}$. 

\item[2)]  $|\Gamma_{mc}( id)|=n(n-1)$.

\item[3)] For all $w\in \Gamma_{mc}( id)$,  $\ell(w)=2m(n-1).$

\end{enumerate}

\end{thm}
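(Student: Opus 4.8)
The plan is to reduce to the case $m=1$, which is Theorem \ref{thm0.55}, by scaling the combinatorial data. First I would observe that the degree $mc$ corresponds to the coroot $m\delta^\vee$ (where $\delta$ is the imaginary root), and a chain of total degree $\leq mc$ in the moment graph differs from a chain of total degree $\leq c$ only in how much "room" each imaginary direction has. The key structural fact I want to exploit is that the translations $t_\gamma$, for $\gamma$ a finite root, are exactly the elements of $W_{\mathrm{aff}}$ whose "finite part" is trivial, and $t_{m\gamma} = (t_\gamma)^m$; moreover $\ell(t_{m\gamma}) = m\,\ell(t_\gamma) = 2m(n-1)$ since translations by dominant (or antidominant) multiples of roots have additive length. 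This immediately explains the length claim (3) and the cardinality claim (2), once (1) is established: the set $\{m\gamma : \gamma \in \Pi\}$ has the same size $n(n-1)$ as $\Pi$.

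For the containment $\{t_{m\gamma} : \gamma \in \Pi\} \subseteq \Gamma_{mc}(\mathrm{id})$, I would take a chain from $\mathrm{id}$ to $t_\gamma$ of degree exactly $c$ (which exists by Theorem \ref{thm0.55}, since $t_\gamma \in \Gamma_c(\mathrm{id})$ and $\Gamma_c(\mathrm{id})$ consists of maximal elements reachable in degree $\leq c$) and concatenate it with a chain from $t_\gamma$ to $t_{2\gamma}$, then to $t_{3\gamma}$, and so on up to $t_{m\gamma}$; each such step has degree $c$ by the same translation-invariance / homogeneity argument (left-multiplying the first chain by $t_\gamma$, $t_{2\gamma}$, etc., using Theorem \ref{thm0.3}), so the total degree is $mc$. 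That each $t_{k\gamma}\to t_{(k+1)\gamma}$ step is increasing in length — so that we genuinely reach $t_{m\gamma}$ and it is a legitimate maximal candidate — follows from additivity of length for translations by multiples of a fixed root.

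The harder direction is maximality: I must show no element longer than $2m(n-1)$ (or incomparable to all the $t_{m\gamma}$) can be reached from $\mathrm{id}$ by a chain of degree $\leq mc$. Here I would argue that any chain of degree $\leq mc$ from $\mathrm{id}$ decomposes — after possibly reordering or grouping consecutive edges — into at most $m$ sub-chains each of degree $\leq c$, using that $mc = c + c + \cdots + c$ and that the degree of an edge $u \xrightarrow{\alpha} us_\alpha$ is a single positive real root $\alpha \leq mc$, hence $\leq$ some "unit" worth of imaginary direction. By Theorem \ref{thm0.3} applied iteratively, the endpoint of a degree-$\leq c$ sub-chain starting at $w$ lies in $\mathrm{max}\{w \cdot u : u \in \Gamma_c(\mathrm{id})\} = \mathrm{max}\{w\,t_\gamma : \gamma \in \Pi\}$ (up to Bruhat order), so composing $m$ such steps the endpoint is dominated by some product $t_{\gamma_1} t_{\gamma_2}\cdots t_{\gamma_m}$ with each $\gamma_i \in \Pi$. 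The final step is the purely finite-type computation that among all products of $m$ translations by finite roots, the ones of maximal length are exactly $t_{m\gamma}$ (i.e. $\ell(t_{\gamma_1}\cdots t_{\gamma_m}) \leq 2m(n-1)$ with equality forcing all $\gamma_i$ equal, using that $\ell(t_\mu) = \sum_{\alpha \in \Pi^+} |\langle \mu, \alpha^\vee\rangle|$ is convex/subadditive in $\mu$ and maximized on the $W$-orbit scaled by $m$). The main obstacle I anticipate is making the "decompose a degree-$\leq mc$ chain into $m$ degree-$\leq c$ pieces" step rigorous: one has to be careful that splitting the multiset of edge-degrees so that each part sums to something $\leq c$ is always possible — this should follow because every real root $\alpha < mc$ satisfies $\alpha < c$ or at worst contributes a bounded amount in the $\delta$-direction, but the bookkeeping around the $\delta$-coefficient of real roots versus the imaginary degree is where the argument needs genuine care, and I would lean on the explicit description of $\Pi_{\mathrm{aff}}^{\mathrm{re},+}$ and the formula for $z_{\mathbf d}$ from Theorem \ref{thm12.7} to control it.
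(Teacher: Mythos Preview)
Your approach differs from the paper's. The paper does not split a degree-$mc$ chain into $m$ pieces of degree $c$; instead it invokes the already-proved (and harder) case $\Gamma_{(m-1)c+\alpha}(id)$ (Theorem~\ref{thm18}). Writing $\sum_{i<r}\beta_i=(m-1)c+(c-\beta_r)$ with $\alpha:=c-\beta_r$, Theorem~\ref{thm18} gives $s_{\beta_1}\cdot\,\cdots\,\cdot s_{\beta_{r-1}}\le (s_\beta s_{\beta'})^{m-1}s_\alpha$, and then the Hecke inequalities of Lemmas~\ref{lemma17.95} and~\ref{lemma17.98} absorb the trailing $s_{\beta_r}=s_{c-\alpha}$ to yield a pure power $(s_\mu s_{\mu'})^m=t_{m\gamma}$. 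This is a genuinely different logical architecture from yours.

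There is a real gap in your final step, and it is more serious than the decomposition obstacle you flag. Granting the decomposition (which is exactly Lemma~\ref{lemma17.71} applied iteratively), iterating Theorem~\ref{thm0.3} bounds a reachable element $w$ in Bruhat order by a \emph{Hecke} product $t_{\gamma_1}\cdot t_{\gamma_2}\cdot\,\cdots\,\cdot t_{\gamma_m}$, not by the ordinary product $t_{\sum\gamma_i}$. Your convexity argument shows this Hecke product has length at most $2m(n-1)$, with equality forcing all $\gamma_i$ equal. But the curve neighborhood consists of the \emph{Bruhat-maximal} reachable elements, not merely the length-maximal ones: you must still show that when the $\gamma_i$ are not all equal, the element $t_{\gamma_1}\cdot\,\cdots\,\cdot t_{\gamma_m}$ lies Bruhat-below some $t_{m\gamma}$, for otherwise it (or a maximal element above it) would itself belong to $\Gamma_{mc}(id)$. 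That Bruhat comparison is precisely the content of the theorem and is not a ``purely finite-type'' length computation---indeed $t_{\gamma_1}\cdot t_{\gamma_2}$ need not even be a translation. The paper supplies exactly these Bruhat comparisons via Lemmas~\ref{lemma17.95} and~\ref{lemma17.98}, which prove directly that $s_\nu\cdot s_{\nu'}\cdot s_\beta\cdot s_{\beta'}\le (s_\mu\cdot s_{\mu'})^2$ under the relevant hypotheses, collapsing mixed products into pure powers one factor at a time.
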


\begin{example} Suppose that the affine Weyl group $W_{\text{aff}}$ is associated to $A_{2}^{(1)}.$ Then by Theorem \ref{thm0.57}
$$\Gamma_{10c}( id)=\{t_{10\alpha_{1}},t_{10\alpha_{2}},t_{10(\alpha_{1}+\alpha_{2})}, t_{-10\alpha_{1}}, t_{-10\alpha_{2}},t_{-10(\alpha_{1}+\alpha_{2})}\}.$$ 
Moreover, for any $w\in \Gamma_{10c}( id)$, $\ell(w)=2m(n-1)=2\cdot 10 \cdot 2=40$. 

\end{example}

In the following theorem we will provide the result for the curve neighborhood of the identity element at degree $c+\alpha$ where $\alpha \in \Pi_{\text{aff}}^{\text{re},\,+}$ and $\alpha<c$. This plays a key role in calculating the most general case in this paper. One can find the proof in Theorem \ref{thm17.9}.

\begin{thm}\label{thm0.59} Let $\alpha\in \Pi_{\text{aff}}^{\text{re},\,+}$ be such that $\alpha<c$. Then

\begin{enumerate}
\item[1)] $\Gamma_{c+\alpha}( id)=\{t_{\beta'} s_{\alpha}:\beta'\in \Pi_{\text{aff}}^{\text{re},\,+}(\alpha)\} \cup \{t_{\beta'-c} s_{\alpha}:\beta'\in \Pi_{\text{aff}}^{\text{re},\,+}(\alpha)\}$
\item[2)] $|\Gamma_{c+\alpha}( id)|=|\{\beta': \beta' \in \Pi_{\text{aff}}^{\text{re},\,+}(\alpha)\}|$
\item[3)] For all $w\in \Gamma_{c+\alpha}( id)$,  $\ell(w)=2(n-1)+\ell{(s_{\alpha})}$ 

\end{enumerate}

where $ \Pi_{\text{aff}}^{\text{re},\,+}(\alpha):=\{\beta' \in \Pi_{\text{aff}}^{\text{re},\,+}: \beta'<c \,\,\text{and either}\,\,  \beta'\leq c-\alpha \,\, \text{or both} \,\,\beta'>\alpha \,\,\text{and} \,\, \beta'\perp \alpha \}$.

\end{thm}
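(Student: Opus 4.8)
The plan is to establish Theorem~\ref{thm0.59} by leveraging the decomposition $c+\alpha = c + \alpha$ together with the earlier results on $\Gamma_c(\id)$ (Theorem~\ref{thm0.55}) and $\Gamma_\alpha(\id) = \{s_\alpha\}$ (Equation~\ref{equation0.36}), but the naive ``concatenation'' of these two neighborhoods does not immediately give maximality, so the argument must be done carefully in both directions. First I would prove the inclusion $\Gamma_{c+\alpha}(\id) \supseteq$ (the claimed set), i.e. show each $t_{\beta'}s_\alpha$ and $t_{\beta'-c}s_\alpha$ with $\beta' \in \Pi_{\text{aff}}^{\text{re},+}(\alpha)$ is actually reachable from some $u' \le \id$ (forcing $u' = \id$) by a chain of degree $\le c+\alpha$ and is maximal with this property. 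To produce the chain, I would start with the edge $\id \stackrel{\alpha}{\longrightarrow} s_\alpha$ of degree $\alpha$, and then attach to $s_\alpha$ a chain of degree $c$ reaching $t_{\beta'}s_\alpha$ (resp.\ $t_{\beta'-c}s_\alpha$); this is where the defining condition on $\Pi_{\text{aff}}^{\text{re},+}(\alpha)$ enters. The condition ``$\beta' \le c-\alpha$ or ($\beta' > \alpha$ and $\beta' \perp \alpha$)'' is precisely what is needed to (a) fit the remaining degree budget after subtracting $\alpha$, and (b) guarantee that the chain through $s_\alpha$ can be taken increasing in length all the way up to a maximal element of length $2(n-1) + \ell(s_\alpha)$; the orthogonality clause handles the roots that are ``too big'' to be bounded by $c-\alpha$ but still contribute a length-$2(n-1)$ translation piece commuting appropriately with $s_\alpha$.

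Next I would prove the reverse inclusion: any $v \in \Gamma_{c+\alpha}(\id)$ lies in the claimed set. Since $u' \le \id$ forces $u' = \id$, we have a chain $\id = u_0 \stackrel{\beta_0}{\longrightarrow} \cdots \stackrel{\beta_{k-1}}{\longrightarrow} u_k = v$ with $\sum \beta_i \le c+\alpha$, and we may assume it is increasing (else $v$ is dominated by a longer element). Writing each $u_j = t_{\mu_j} w_j$ with $w_j \in W$ (the finite Weyl group) and $\mu_j$ in the coroot lattice, I would track how the ``translation part'' and the ``finite part'' evolve along the chain, using the fact that each edge degree $\beta_i$ is an affine positive real root, hence of the form $\pm\gamma + rc$ with $\gamma \in \Pi$ and $r \ge 0$ (and $r=0$ only if the sign is $+$). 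The total-degree constraint $\sum \beta_i \le c + \alpha$ then severely limits the possibilities: the sum of the $c$-coefficients is at most $1$, and the ``$\alpha$-room'' in the finite direction is exactly one copy of $\alpha$. A length count shows $\ell(v) \le 2(n-1) + \ell(s_\alpha)$, with equality exactly when the translation part has the form $t_{\beta'}$ or $t_{\beta'-c}$ as above and $w_v = s_\alpha$; combining this with the classification of which $\beta'$ can appear (the orthogonality/boundedness dichotomy) pins $v$ down to the stated list. Finally I would check the cardinality and length assertions (2) and (3): (3) is immediate from the description in (1) since $\ell(t_\mu) $ is constant $=2(n-1)$ on the relevant translations when $\mu$ is a root and $s_\alpha$ commutes or interacts in the controlled way, and (2) follows by observing the two families $\{t_{\beta'}s_\alpha\}$ and $\{t_{\beta'-c}s_\alpha\}$ are each in bijection with $\Pi_{\text{aff}}^{\text{re},+}(\alpha)$ and — I would need to verify — are disjoint and non-redundant, so the union has size $|\Pi_{\text{aff}}^{\text{re},+}(\alpha)|$ (note the stated formula counts $\beta'$, suggesting the two families actually coincide as sets or that the intended count is up to the natural identification; I would reconcile this with the precise conventions of Section~\ref{affine Weyl group}).

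The main obstacle I anticipate is the maximality/length bookkeeping in the forward direction: it is easy to exhibit \emph{a} chain of degree $\le c+\alpha$ from $\id$ to each candidate, but proving that candidate is \emph{maximal} — i.e.\ that no chain of degree $\le c+\alpha$ reaches a strictly longer element above it — requires the sharp length bound $\ell(v) \le 2(n-1)+\ell(s_\alpha)$ for \emph{every} reachable $v$, and this in turn rests on a tight analysis of how affine real roots summing to $\le c+\alpha$ can be arranged into an increasing chain. I expect to reduce this to Theorem~\ref{thm0.55} (the $m=1$ case of the imaginary degree) by ``peeling off'' the unique $\alpha$-contribution and invoking the known structure of $\Gamma_c$, but the interaction between the peeled-off $s_\alpha$ and the degree-$c$ part — precisely the content of the set $\Pi_{\text{aff}}^{\text{re},+}(\alpha)$ — is the delicate combinatorial heart of the proof, and is presumably why this theorem is singled out as playing ``a key role in calculating the most general case.''
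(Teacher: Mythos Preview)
Your high-level strategy---peel off the $\alpha$-contribution and reduce to the known structure of $\Gamma_c(\id)$---matches the paper's, but the mechanism you propose for the hard direction (tracking the translation/finite decomposition $u_j=t_{\mu_j}w_j$ along an increasing chain) is not how the paper proceeds, and as written it has a genuine gap. The paper never analyzes chains directly; instead it passes immediately to the \emph{Hecke product}: any endpoint $v=s_{\beta_1}s_{\beta_2}\cdots s_{\beta_r}$ of a chain is dominated by $s_{\beta_1}\cdot s_{\beta_2}\cdot\,\cdots\,\cdot s_{\beta_r}$, and one may assume $\sum\beta_i=c+\alpha$ with each $\beta_i<c$ (Remark~\ref{remark12.9}). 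The entire argument then becomes a sequence of Hecke-monoid manipulations. Two substantial combinatorial lemmas do the real work and you have not identified either: Lemma~\ref{lemma17.71} shows that up to $\leq$ one may reorder so that an initial segment sums to $c$ and the remaining segment sums to $\alpha$ (this ``splitting'' is far from obvious and occupies a page of case analysis), after which the $\Gamma_c$ result and Corollary~\ref{cor5.9} give $w\leq s_\mu\cdot s_{\mu'}\cdot s_\alpha$ with $\mu+\mu'=c$; then Lemma~\ref{lemma17.8} shows that any such $s_\mu\cdot s_{\mu'}\cdot s_\alpha$ is $\leq s_\beta\cdot s_{\beta'}\cdot s_\alpha$ for some $\beta'\in\Pi_{\mathrm{aff}}^{\mathrm{re},+}(\alpha)$. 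Finally Corollary~\ref{cor12.8} certifies that $s_\beta s_{\beta'} s_\alpha$ is reduced of length $2(n-1)+\ell(s_\alpha)$, which simultaneously gives reachability, maximality, and part~(3). Your chain-tracking approach would need to reproduce all of this without the Hecke monoid, and you have not indicated how; in particular the claimed length bound $\ell(v)\leq 2(n-1)+\ell(s_\alpha)$ does not follow from a naive count of $c$-coefficients, because a single chain can use many edges with nonzero $c$-coefficient whose sum still equals $c+\alpha$.

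On part~(2) you are also confused in a way worth flagging: the two families in~(1) are \emph{not} two disjoint sets of equal size. For a fixed $\beta'\in\Pi_{\mathrm{aff}}^{\mathrm{re},+}(\alpha)$ with $\beta+\beta'=c$, one has $s_\beta s_{\beta'}=t_{\beta'}$ when $\alpha_0\notin\mathrm{supp}(\beta')$ and $s_\beta s_{\beta'}=t_{\beta'-c}$ when $\alpha_0\in\mathrm{supp}(\beta')$ (see the proof of Lemma~\ref{lemma10}). Thus each $\beta'$ contributes exactly one element to the union, and the count $|\Gamma_{c+\alpha}(\id)|=|\Pi_{\mathrm{aff}}^{\mathrm{re},+}(\alpha)|$ follows from Lemma~\ref{lemma10} (distinct $\beta'$ give distinct $s_\beta s_{\beta'}$) together with the reducedness of $s_\beta s_{\beta'} s_\alpha$. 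There is no identification to reconcile; the union notation in~(1) is simply accommodating both support cases.
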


\begin{example}  Let $W_{\text{aff}}$ be the Weyl group of type $A_{4}^{(1)}.$ We compute $\Gamma_{c+\alpha}( id)$ where $\alpha=\alpha_{0}+\alpha_{4}.$ Note that, we have six positive real roots which are smaller than $c-\alpha=\alpha_{1}+\alpha_{2}+\alpha_{3};$ $\beta'_{1}=\alpha_{1}, \beta'_{2}=\alpha_{2}, \beta'_{3}=\alpha_{3}, \beta'_{4}=\alpha_{1}+\alpha_{2}, \beta'_{5}=\alpha_{2}+\alpha_{3} , \beta'_{6}=\alpha_{1}+\alpha_{2}+\alpha_{3}.$ Also, we have only one positive root which  is smaller than $c$, strictly bigger than $\alpha$ and perpendicular to $\alpha$; $\beta'_{7}=\alpha_{0}+\alpha_{1}+\alpha_{3}+\alpha_{4}.$ So by Theorem \ref{thm0.59}, we get
$$ \Gamma_{c+\alpha}( id)=\{t_{\beta'_{1}}s_{\alpha}, t_{\beta'_{2}}s_{\alpha},t_{\beta'_{3}}s_{\alpha},t_{\beta'_{4}}s_{\alpha},t_{\beta'_{5}}s_{\alpha},t_{\beta'_{6}}s_{\alpha},t_{\beta'_{7}-c}s_{\alpha} \}.$$

Moreover, for any $w\in \Gamma_{c+\alpha}( id),$ we have 
$\ell(w)=2(n-1)+\ell{(s_{\alpha})}=2(n-1)+2\, |\text{supp}(\alpha)|-1=2\cdot4+2\cdot2-1=11$. 

\end{example}

Next, we will consider the generalization of the case in the previous theorem to the instance where the degree is given by $mc+\alpha$ such that $m \in \Z^{>0},\,\alpha \in \Pi_{\text{aff}}^{\text{re},\,+}$ and $\alpha<c$. See Theorem \ref{thm18} for the proof.

\begin{thm}\label{thm0.6} Let $\alpha<c$ be an affine positive real root and $m$ be a positive integer. Then

\begin{enumerate}
\item[1)] $\Gamma_{mc+\alpha}(id)=\{t_{m\beta'} s_{\alpha}:\beta'\in \Pi_{\text{aff}}^{\text{re},\,+}(\alpha)\} \cup \{t_{m(\beta'-c)} s_{\alpha}:\beta'\in \Pi_{\text{aff}}^{\text{re},\,+}(\alpha)\}$
 
\item[2)] $|\Gamma_{mc+\alpha}( id)|=|\{\beta': \beta' \in \Pi_{\text{aff}}^{\text{re},\,+}(\alpha)\}|$

\item[3)] For all $w\in \Gamma_{mc+\alpha}( id)$,  $\ell(w)=2m(n-1)+\ell{(s_{\alpha})}$ 
\end{enumerate}
where $ \Pi_{\text{aff}}^{\text{re},\,+}(\alpha)$ is defined in Theorem \ref{thm0.59}.

\end{thm}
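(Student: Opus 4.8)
The plan is to deduce Theorem \ref{thm0.6} from the $m=1$ case (Theorem \ref{thm0.59}) together with the structure already established for pure multiples of $c$ (Theorem \ref{thm0.57}), by exploiting the way translations scale. First I would set up the reduction: write $\mathbf{d}=mc+\alpha$ and analyze chains in the moment graph of total degree $\leq \mathbf{d}$. The key observation is that a chain of degree $\leq mc+\alpha$ decomposes, after reordering (which is permissible since the Bruhat order and the curve-neighborhood construction only care about reachability and maximal length), into a portion that ``spends'' the real part $\alpha$ on a single reflection $s_{\alpha}$ and a portion that spends the imaginary part $mc$. Concretely I would argue that any maximal element reachable at degree $mc+\alpha$ has the form $t_{\delta}s_{\alpha}$ for some coroot lattice element $\delta$, because (i) the total $\ell$ is bounded by $2m(n-1)+\ell(s_\alpha)$ — this should follow from the length formula $\ell(t_\delta w)$ combined with the degree-to-length estimates used in the proofs of the earlier theorems — and (ii) elements of that maximal length reachable within $mc+\alpha$ are forced to use exactly one ``$\alpha$-move'' and the rest translation moves.

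Next I would pin down which $\delta$ occur. Here I would lean directly on Theorem \ref{thm0.59}: at degree $c+\alpha$ the answer is $\{t_{\beta'}s_\alpha\}\cup\{t_{\beta'-c}s_\alpha\}$ with $\beta'$ ranging over $\Pi_{\text{aff}}^{\text{re},+}(\alpha)$. The claim for general $m$ is that $\beta'$ gets replaced by $m\beta'$ and $\beta'-c$ by $m(\beta'-c)=m\beta'-mc$. The natural mechanism is an induction on $m$: given a maximal element $t_{m\beta'}s_\alpha$ at degree $mc+\alpha$, one reaches $t_{(m+1)\beta'}s_\alpha$ at degree $(m+1)c+\alpha$ by appending a chain of degree $c$ realizing the translation $t_{\beta'}$ — this is exactly the content of Theorem \ref{thm0.55}(1), which says $t_\gamma\in\Gamma_c(id)$ for every $\gamma\in\Pi$, but I must be careful that $\beta'$ here is an affine real root, not a finite root, so I would instead use the ``shift'' structure: appending $c$ worth of degree translates any element $v$ to $t_\eta v$ for a suitable finite $\eta$, and iterating builds up $m\beta'$ from $\beta'$ only if $\beta'$ itself lies in the finite root lattice after the identification. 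The cleanest route is probably to use Theorem \ref{thm0.3} (the $\Gamma_{\mathbf d}(w)=\max\{w\cdot u : u\in\Gamma_{\mathbf d}(id)\}$ reduction) in reverse together with the semigroup property $\Gamma_{\mathbf d}(\Gamma_{\mathbf d'}(id))$-type compositions, so that $\Gamma_{mc+\alpha}(id)$ is obtained from $\Gamma_{c+\alpha}(id)$ by applying $\Gamma_{(m-1)c}$, which by Theorem \ref{thm0.57} multiplies the translation parts appropriately.

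The length statement (3) should then be immediate once the description (1) is known: $\ell(t_{m\beta'}s_\alpha)=\ell(t_{m\beta'})+\ell(s_\alpha)$ provided the translation part is ``dominant enough'' relative to $\alpha$ — and membership in $\Pi_{\text{aff}}^{\text{re},+}(\alpha)$ is precisely engineered to guarantee reducedness of this product, exactly as in the $m=1$ case — while $\ell(t_{m\beta'})=2m(n-1)$ follows from $\ell(t_\beta)=2\,\mathrm{ht}$-type formulas since scaling the translation by $m$ scales its length by $m$ when $\beta'$ is (identified with) an extremal/``long'' element, which is how $2(n-1)$ arose in Theorem \ref{thm0.55}(3). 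Statement (2) is then a trivial count, unchanged from Theorem \ref{thm0.59}(2), since the map $\beta'\mapsto m\beta'$ and $\beta'\mapsto m(\beta'-c)$ are injective on $\Pi_{\text{aff}}^{\text{re},+}(\alpha)$.

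I expect the main obstacle to be the first reduction step — showing rigorously that \emph{every} maximal element at degree $mc+\alpha$ factors as $t_{\delta}s_\alpha$ with $\delta$ of the predicted form, i.e. that one cannot do ``better'' by splitting the real budget $\alpha$ across several small reflections interleaved with the translations, or by using the imaginary budget inefficiently. This is a bookkeeping argument about how length can increase along a chain versus how much degree is consumed, and it is the place where the hypothesis $\alpha<c$ and the precise definition of $\Pi_{\text{aff}}^{\text{re},+}(\alpha)$ (the perpendicularity/comparability conditions) must be used to rule out the spurious candidates. Once that structural lemma is in hand, the inductive passage from $m$ to $m+1$ and the length/count bookkeeping are routine extensions of the arguments already carried out for Theorems \ref{thm0.55}, \ref{thm0.57}, and \ref{thm0.59}.
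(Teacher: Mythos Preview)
Your inductive framework (base case $m=1$ via Theorem~\ref{thm0.59}, inductive step by peeling off one copy of $c$) matches the paper's, and your length and cardinality arguments for parts (2) and (3) are essentially correct and align with Corollary~\ref{cor12.8} and Lemma~\ref{lemma10}. The gap is exactly where you anticipate it: the upper bound in the inductive step.

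The mechanism you propose---a ``semigroup property'' $\Gamma_{\mathbf d+\mathbf d'}(id)\subseteq\Gamma_{\mathbf d'}(\Gamma_{\mathbf d}(id))$ combined with Theorem~\ref{thm0.3}---does not, by itself, do the work. Even granting such a composition statement, you would be left needing to compute $\max\{u\cdot v : u\in\Gamma_c(id),\ v\in\Gamma_{kc+\alpha}(id)\}$, i.e.\ to show that every Hecke product $(s_\gamma s_{\gamma'})\cdot (s_\beta s_{\beta'})^k s_\alpha$ (with $\gamma+\gamma'=c$, $\beta'\in\Pi_{\text{aff}}^{\text{re},+}(\alpha)$) is bounded above by some $(s_\mu s_{\mu'})^{k+1}s_\alpha$ with $\mu'\in\Pi_{\text{aff}}^{\text{re},+}(\alpha)$. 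This is a nontrivial Hecke-monoid inequality and is precisely the content the paper supplies via a chain of concrete lemmas: Lemma~\ref{lemma17.71} (a general Hecke product of total degree $\mathbf d$ can be reordered so that an initial segment has degree exactly $c$---this is already delicate and not a consequence of reordering ``after the fact''), Lemma~\ref{lemma17.8} (adjusting $s_\gamma\cdot s_{\gamma'}\cdot s_\beta$ so that the middle root sits correctly relative to $\beta$), and then the absorption Lemmas~\ref{lemma17.95}, \ref{lemma17.98}, \ref{lemma17.99}, which handle the several possible configurations of $\nu'$ versus $\beta'$ and $\alpha$ by explicit case analysis in the type-$A$ root combinatorics. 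Your proposal contains no substitute for these; the perpendicularity/comparability conditions in $\Pi_{\text{aff}}^{\text{re},+}(\alpha)$ enter not only to guarantee reducedness (as you correctly note) but also to make these absorption inequalities go through, and that second role is where the real argument lives.
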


\begin{example}Suppose that the affine Weyl group $W_{\text{aff}}$ is of type $A_{4}^{(1)}.$ We compute $\Gamma_{12c+\alpha}( id)$ where $\alpha=\alpha_{0}+\alpha_{4}.$  Then, by Theorem \ref{thm0.6}, we get 
$$ \Gamma_{12c+\alpha}( id)=\{t_{12\beta'_{1}}s_{\alpha}, t_{12\beta'_{2}}s_{\alpha},t_{12\beta'_{3}}s_{\alpha},t_{12\beta'_{4}}s_{\alpha},t_{12\beta'_{5}}s_{\alpha},t_{12\beta'_{6}}s_{\alpha},t_{12(\beta'_{7}-c)}s_{\alpha}\}$$
where $\beta'_{1}=\alpha_{1}, \beta'_{2}=\alpha_{2}, \beta'_{3}=\alpha_{3}, \beta'_{4}=\alpha_{1}+\alpha_{2}, \beta'_{5}=\alpha_{2}+\alpha_{3} , \beta'_{6}=\alpha_{1}+\alpha_{2}+\alpha_{3}$ and $\beta'_{7}=\alpha_{0}+\alpha_{1}+\alpha_{3}+\alpha_{4}$, see Example \ref{example17.93}. Moreover, for any $w\in \Gamma_{12c+\alpha}( id),$ we have $\ell(w)=2m(n-1)+\ell{(s_{\alpha})}=2m(n-1)+2\, \text{supp}(\alpha)-1=2\cdot12\cdot4+2\cdot2-1=99.$

\end{example}

Last, we will state the result for the most general case: the curve neighborhood of the identity element at any degree $\mathbf{d}$ which is strictly bigger than $c$. The proof can be found in Theorem  \ref{thm20}.

\begin{thm} \label{thm0.62} Let $\mathbf{d}=(d_0,d_1,...,d_{n-1})>c$ be a degree and $m=\text{min}\{d_0,d_1,...,d_{n-1}\}$. Also, assume that $z_{\mathbf{d}-mc}=s_{\gamma_{1}} s_{\gamma_{2}}...s_{\gamma_{k}}$ for some k, where this expression is obtained in Theorem \ref{thm12.7}. Then 

\begin{enumerate}

\item[1)] $\Gamma_{\mathbf{d}}( id)=\{t_{m\beta'}z_{\mathbf{d}-mc}: \beta' \in \Pi_{\text{aff}}^{\text{re},+}(\mathbf{d}-mc)\}\cup \{t_{m(\beta'-c)}z_{\mathbf{d}-mc}: \beta' \in \Pi_{\text{aff}}^{\text{re},+}(d-mc)\}$

\item[2)] $|\Gamma_{\mathbf{d}}( id)|=|\{\beta': \beta' \in \Pi_{\text{aff}}^{\text{re},\,+}(\mathbf{d}-mc)\}|$

\item[3)] For all $w\in \Gamma_{\mathbf{d}}(id)$, $\ell(w)=2m(n-1)+\ell(z_{\mathbf{d}-mc})$ 
\end{enumerate}
where $ \Pi_{\text{aff}}^{\text{re},\,+}(\mathbf{d}-mc)$ is the set of $\beta' \in \Pi_{\text{aff}}^{\text{re},\,+}$ such that $ \beta'<c$ and either  $\beta' \cap \gamma_{i}=\emptyset $ or both $\beta'>\gamma_{i} $ and $\beta'\perp \gamma_{i}$ for any $ i=1,...,k$. 

\end{thm}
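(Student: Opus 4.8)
The plan is to split off the imaginary part of the degree and bootstrap from the cases already settled in Theorems~\ref{thm19} (degree $mc$), \ref{thm17.9} (degree $c+\alpha$) and \ref{thm18} (degree $mc+\alpha$), together with the finite-degree formula~\ref{equation0.35}. Write $\mathbf{d}=mc+\mathbf{d}'$ with $\mathbf{d}':=\mathbf{d}-mc$. Since $m=\min_i d_i$, at least one coordinate of $\mathbf{d}'$ vanishes, so by Sections~\ref{dynkin} and~\ref{case of finite degrees} the degree $\mathbf{d}'$ is identified, via the Dynkin automorphism $\varphi$, with a finite degree; hence~\ref{equation0.35} and Theorem~\ref{thm12.7} apply and give $\Gamma_{\mathbf{d}'}(id)=\{z_{\mathbf{d}'}\}$ with the reduced factorization $z_{\mathbf{d}'}=s_{\gamma_1}\cdots s_{\gamma_k}$, in which, by Definition~\ref{def12.5}, $\gamma_1$ is a maximal root $\le\mathbf{d}'$, $\gamma_2$ a maximal root $\le\mathbf{d}'-\gamma_1$, and so on; in particular $\gamma_1+\cdots+\gamma_k\le\mathbf{d}'$ and $z_{\mathbf{d}'-\gamma_1}=s_{\gamma_2}\cdots s_{\gamma_k}$.

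\emph{Lower bound.} For each $\gamma\in\Pi$ I would produce an explicit chain from $id$ to $t_{m\gamma}z_{\mathbf{d}'}$ of degree $\le\mathbf{d}$: by the proof of Theorem~\ref{thm19} there is a chain $id\to\cdots\to t_{m\gamma}$ of degree $\le mc$ ending at an element of length $2m(n-1)$, and concatenating it with $t_{m\gamma}\to t_{m\gamma}s_{\gamma_1}\to t_{m\gamma}s_{\gamma_1}s_{\gamma_2}\to\cdots\to t_{m\gamma}z_{\mathbf{d}'}$, whose successive edges carry degrees $\gamma_1,\dots,\gamma_k$ of total $\gamma_1+\cdots+\gamma_k\le\mathbf{d}'$, yields a chain of degree $\le mc+\mathbf{d}'=\mathbf{d}$. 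Reindexing $\gamma\in\Pi$ by the positive real roots $\beta'<c$ — the finite positive roots $\alpha$ being recorded as $t_{m\beta'}z_{\mathbf{d}'}$ with $\beta'=\alpha$, the remaining roots $c-\alpha$ as $t_{m(\beta'-c)}z_{\mathbf{d}'}$ since $(c-\alpha)-c=-\alpha$ — these are the candidates in (1). It then remains to determine which indices give elements of maximal length. Since $\ell(t_{m\gamma})=2m(n-1)$ for every $\gamma\in\Pi$ (as in Theorems~\ref{thm14} and~\ref{thm19}), one has $\ell(t_{m\gamma}z_{\mathbf{d}'})\le 2m(n-1)+\ell(z_{\mathbf{d}'})$, with equality iff, for each $i=1,\dots,k$, right-multiplying $t_{m\gamma}s_{\gamma_1}\cdots s_{\gamma_{i-1}}$ by $s_{\gamma_i}$ raises the length by exactly $\ell(s_{\gamma_i})$. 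I would verify, by an inductive computation tracking how $t_{m\gamma}$ conjugates the roots $\gamma_i$ and how the inversion sets move, that this holds simultaneously for all $i$ exactly when $\gamma$ satisfies, against every $\gamma_i$, the dichotomy ``$\gamma\cap\gamma_i=\emptyset$, or else $\gamma>\gamma_i$ and $\gamma\perp\gamma_i$'', i.e.\ exactly when the corresponding $\beta'$ lies in $\Pi_{\text{aff}}^{\text{re},+}(\mathbf{d}-mc)$. This gives $\supseteq$ in (1) and shows each listed element has length $2m(n-1)+\ell(z_{\mathbf{d}'})$, which is (3).

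\emph{Upper bound and maximality.} I would then show that any $w$ reachable from $id$ by a chain of degree $\le\mathbf{d}$ has $\ell(w)\le 2m(n-1)+\ell(z_{\mathbf{d}'})$ and, more precisely, $w\le t_{m\beta'}z_{\mathbf{d}'}$ (or the $\beta'-c$ variant) for some $\beta'\in\Pi_{\text{aff}}^{\text{re},+}(\mathbf{d}-mc)$, so the listed elements are exactly the maximal ones. The natural route is induction on $k$: the base case $k=0$ is Theorem~\ref{thm19}, and the step passes from the degree $\mathbf{d}-\gamma_1=mc+(\mathbf{d}'-\gamma_1)$ — whose minimum coordinate is still $m$ because $\gamma_1$ is supported away from the vanishing coordinates of $\mathbf{d}'$, and for which $z_{(\mathbf{d}-\gamma_1)-mc}=s_{\gamma_2}\cdots s_{\gamma_k}$ — to $\mathbf{d}$ by adjoining one edge of degree $\le\gamma_1$, generalizing the step from $\Gamma_{mc}(id)$ to $\Gamma_{mc+\alpha}(id)$ carried out in Theorem~\ref{thm18}. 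An alternative is to write the $T$-fixed point as $w=t_\lambda v$ with $t_\lambda$ a translation and $v\in W$, to bound the translation part by $\ell(t_\lambda)\le 2m(n-1)$ (only degree $\le mc$ being available in the imaginary direction once the real edges are accounted for) and the finite part by $\ell(v)\le\ell(z_{\mathbf{d}'})$ using the finite curve neighborhood bound of~\cite{MB01}, and to check that the two bounds are sharp together only for the displayed elements. Combining the two inclusions gives (1); (3) is the length computation above; and (2) is the bookkeeping that the two families in (1) contribute exactly $|\Pi_{\text{aff}}^{\text{re},+}(\mathbf{d}-mc)|$ distinct elements, each $\beta'$ being recorded once, via $t_{m\beta'}$ or via $t_{m(\beta'-c)}$ according as $\beta'$ is or is not a finite root.

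\emph{Where the difficulty lies.} The crux is the compatibility analysis in the two bounds: for a general factorization $z_{\mathbf{d}'}=s_{\gamma_1}\cdots s_{\gamma_k}$ one must control, simultaneously over all $k$ factors, which translations $t_{m\gamma}$ keep the length additive, and dually which reachable $w$ are dominated by the $t_{m\gamma}z_{\mathbf{d}'}$. In Theorems~\ref{thm17.9} and~\ref{thm18} there is a single reflection $s_\alpha$, so the bookkeeping is a single step; here it becomes an induction for which one must also know that the factorization of Theorem~\ref{thm12.7} is compatible with peeling off $\gamma_1$ (so that $z_{\mathbf{d}'-\gamma_1}=s_{\gamma_2}\cdots s_{\gamma_k}$) and that the translations produced at stage $k-1$ remain in general position relative to the new root $\gamma_1$. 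This commutation and length bookkeeping, rather than the construction of the chains, is where the real work lies.
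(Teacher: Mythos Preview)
Your lower-bound construction and your translation of the candidate set into the $t_{m\beta'}z_{\mathbf{d}'}$ / $t_{m(\beta'-c)}z_{\mathbf{d}'}$ form are in the right spirit, and the length computation you hint at is exactly what the paper carries out explicitly as Lemma~\ref{lemma12.72} (via the length formula~(\ref{equation5.5}) and the auxiliary Lemma~\ref{lemma12.71}); so for part~(3) you are essentially deferring to that lemma.

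The upper bound, however, has a genuine gap. Your induction on $k$ rests on two claims that both fail. First, the factorization $z_{\mathbf{d}'}=s_{\gamma_1}\cdots s_{\gamma_k}$ referred to in the statement is the \emph{simplified} one produced by Theorem~\ref{thm12.7}, not the raw Hecke product of Definition~\ref{def12.5}; these simplified $\gamma_i$ need not be maximal roots of the residual degrees, and in general $z_{\mathbf{d}'-\gamma_1}\neq s_{\gamma_2}\cdots s_{\gamma_k}$. Indeed in Example~\ref{example12.01} one has $\mathbf{d}'=(5,0,2,2,3,0,4)$, $\gamma_1=\alpha_0+\alpha_6$, and $z_{\mathbf{d}'-\gamma_1}$ simplifies to the \emph{same} three-factor product $s_{\alpha_0+\alpha_6}s_{\alpha_2+\alpha_3+\alpha_4}s_{\alpha_3}$, so your induction does not decrease $k$. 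Second, even granting a peeling identity, you would need to show that every chain of degree $\le\mathbf{d}$ is dominated by a chain of degree $\le\mathbf{d}-\gamma_1$ followed by an edge of degree $\le\gamma_1$. The only splitting lemma available is Lemma~\ref{lemma17.71}, which separates off the \emph{largest} sub-degree $\le c$, not a prescribed root $\gamma_1$; an analogue for arbitrary $\gamma_1$ is neither stated nor easy. Your appeal to Theorem~\ref{thm18} as a model is also misplaced: that proof is by induction on $m$, not by adjoining $\alpha$ to $mc$.

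The paper instead inducts on $m$. One first reduces (as in the proof of Theorem~\ref{thm20}) to the case where the $\gamma_i$ have pairwise disconnected supports, since the condition defining $\Pi_{\text{aff}}^{\text{re},+}(\mathbf{d}-mc)$ only sees the maximal $\gamma_i$'s. Then Lemma~\ref{lemma17.71} lets one assume the chain factors as (degree $c$)$\cdot$(degree $\mathbf{d}-c$); Equation~(\ref{equation14.5}) bounds the first factor by $s_\alpha s_{\alpha'}$, and the induction hypothesis bounds the second by $(s_\mu s_{\mu'})^{m-1}z_{\mathbf{d}-mc}$ with $\mu'$ already compatible with all $\gamma_i$. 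The real work is then the two combination Lemmas~\ref{lemma19.3} and~\ref{lemma19.5}, which show how to absorb the extra $s_\alpha s_{\alpha'}$ into a single $(s_\beta s_{\beta'})^m$ while preserving the compatibility condition on $\beta'$; this is the multi-$\gamma_i$ analogue of Lemmas~\ref{lemma17.8} and~\ref{lemma17.99} from the $k=1$ case. That ``absorption'' step is what your outline is missing, and it is where the proof actually lives.
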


\begin{example}  Let $W_{\text{aff}}$ be the affine Weyl group associated to $A_{3}^{(1)}$ and $\mathbf{d}=(6,5,8,5).$ Then $\mathbf{d}=5c+(1,0,3,0)$ so $m=5$ and $\mathbf{d}-5c=(1,0,3,0)$. Also, $z_{\mathbf{d}-5c}=s_{\alpha_{0}}\cdot s_{\alpha_{2}}\cdot s_{\alpha_{2}}\cdot s_{\alpha_{2}}=s_{\alpha_{0}}s_{\alpha_{2}}.$ Hence by Theorem \ref{thm0.62} we get 
\begin{equation*} \displaystyle \Gamma_{\mathbf{d}}( id)= \{t_{5\beta'_{1}} z_{\mathbf{d}-5c}, t_{5\beta'_{2}}z_{\mathbf{d}-5c},t_{5\beta'_{4}} z_{\mathbf{d}-5c},t_{5(\beta'_{3}-c)} z_{\mathbf{d}-5c} \}
\end{equation*}
where $\beta'_{1}=\alpha_{1}$, $\beta'_{2}=\alpha_{3}$, $\beta'_{3}=\alpha_{0}+\alpha_{1}+\alpha_{3}$, $\beta'_{4}=\alpha_{1}+\alpha_{2}+\alpha_{3}$. Moreover, for all $w\in \Gamma_{\mathbf{d}}( id)$, $\ell(w)=2m(n-1)+\ell(z_{\mathbf{d}-mc})=2\cdot 5\cdot3+2=32$.

\end{example}

\section{Preliminaries}

In this chapter we will set up notations and recall some basic facts about the affine root system in type $A_{n-1}^{(1)}$. We refer to \cite{Bourbaki}, \cite[\S 1]{kumar}, \cite[\S 3]{LS01}, and chapters $1$ through $8$ in \cite{kac1994infinite} for further details. 

\subsection{Weyl Group}
Let $G$ be the Lie group $SL_{n}(\C)$. Let $B\subset G$ be the Borel subgroup, the set of upper triangular matrices, and $T\subset B$ be the maximal torus, the set of diagonal matrices. We denote by $\mathfrak{g}$ the Lie algebra $\mathfrak{sl}_{n}(\C)$ of $G$.  Let $E$ be the subspace of $\R^{n}$ which consists of $n$-tuples for which the coordinates sum to $0.$ Let $\varepsilon_{1},\varepsilon_{2},...,\varepsilon_{n}$ be the standard basis for $\R^{n}$ and let $\alpha_{i}:=\varepsilon_{i}-\varepsilon_{i+1}$ for $1\leq i\leq n-1$. Let $\Delta=\{\alpha_{1},\alpha_{2},...,\alpha_{n-1}\}\subset \mathfrak{h^{*}} $ be the simple roots and $\Delta^{\vee}=\{\alpha_{1}^{\vee},\alpha_{2}^{\vee},...,\alpha_{n-1}^{\vee}\}\subset \mathfrak{h}$ be the corresponding coroot simple roots, where $\mathfrak{h}$ is the Cartan subalgebra of $\mathfrak{g}$. Let $Q=\bigoplus_{i=1}^{n-1}\mathbb{Z}\alpha_{i}\subset \mathfrak{h}^{*}$ and $Q^{\vee}=\bigoplus_{i=1}^{n-1}\mathbb{Z}\alpha_{i}^{\vee}\subset \mathfrak{h}$ be the root and coroot lattice. Let $\Pi=\{\varepsilon_{i}-\varepsilon_{j}: i\neq j\}$ be the standard root system associated to the triple $(T,B,G)$ corresponding to $\Delta,$ where $\Pi=\Pi^{+}\sqcup \Pi^{-}$ such that  $\Pi^{+}=\Pi \cap \bigoplus_{i=1}^{n-1}\mathbb{Z}_{\geq 0}\,\alpha_{i}=\{\varepsilon_{i}-\varepsilon_{j}: i<j\}$ is the set of positive roots and $\Pi^{-}=\Pi \cap \bigoplus_{i=1}^{n-1}\mathbb{Z}_{\leq 0}\,\alpha_{i}=\{\varepsilon_{i}-\varepsilon_{j}: i>j\}$ is the set of negative roots. Due to the fact that $\alpha_{i}=\alpha_{i}^{\vee}$ for all $i$ in type $A$, we may identify $Q$ with $Q^{\vee},$ and $ \mathfrak{h^{*}} $ with $\mathfrak{h}.$ Let $\langle,\rangle: \mathfrak{h}^{*}\times \mathfrak{h}\rightarrow \C$ be the natural pairing. Denote by $W$ the Weyl group. The Weyl group $W$ is generated by the simple reflections, $s_1,s_2,...,s_{n-1},$ where $s_{i}:=s_{\alpha_{i}}$, $i=1,2,...,n-1.$ For $w\in W,$ the \textit{length} of $w$, denoted $\ell{(w)},$ is the smallest positive integer $k$ such that $w=s_{i_1}s_{i_2}...s_{i_k}$, $1\leq i_{j}\leq n-1.$ Such an expression is called a \textit{reduced expression} of $w.$ We set $\ell{(id)}=0.$ $W$ acts on $ \mathfrak{h^{*}} $ by 
$$s_{i}(\mu)=\mu-\langle\mu,\alpha_{i}^{\vee}\rangle\alpha_{i} \,\,\,\,\,\,\text{for}\,\,\mu \in  \mathfrak{h}^{*}$$
For all $w \in W, \mu, \lambda \in  \mathfrak{h^{*}}$, we have $\langle w\cdot \mu,w\cdot \lambda  \rangle =\langle\mu,\lambda  \rangle .$  For each $\alpha \in \Pi$ there is a $w\in W$ and $1\leq i\leq n-1$ such that $\alpha=w\cdot \alpha_{i}$. The reflection of $\alpha$ is given by $s_{\alpha}=ws_{i}w^{-1}$ which is independent  of the choice of $w$ and $i.$ Moreover, it is well-known that $\ell{(w)}=|\{\alpha \in  \Pi^{+}:w\cdot \alpha<0\}|$ for all $w \in W$.
We can identify $W$ with the symmetric group $S_{n}$ via the map $s_{i}\mapsto (i,i+1)$.

\subsection{Affine Weyl Group}\label{affine Weyl group}

Let $\mathfrak{g}_{\text{aff}}$ be the affine Kac-Moody Lie algebra associated to the Lie algebra $\mathfrak{g}.$ We have $\mathfrak{g}_{\text{aff}}=\mathcal{L}(\mathfrak{g})\oplus \C c\oplus \C d,$ where $\mathcal{L}(\mathfrak{g}):=\mathfrak{g}\otimes \C[t,t^{-1}]\,(t\in \C^{*})$ is the space of all Laurent polynomials in $\mathfrak{g}$ and $c$ is a central element with respect to the Lie bracket in $\mathfrak{g_{\text{aff}}}.$ We denote by $\mathfrak{h}_{\text{aff}}$ the Cartan subalgebra of $\mathfrak{g_{\text{aff}}}$ which is given by $\mathfrak{h}_{\text{aff}}:=\mathfrak{h}\oplus \C c\oplus \C d.$ Let $\langle,  \rangle : \mathfrak{h}_{\text{aff}}^{*}\times \mathfrak{h}_{\text{aff}}\rightarrow \C$ be the natural pairing. The affine root system $\Pi_{\text{aff}}$ associated to $\mathfrak{g}_{\text{aff}}$ consists of 

\begin{itemize}

\item $m\delta+\alpha,$ where $\alpha \in \Pi$ and $m \in \Z$, which are called the \textit{affine real roots.}

\item $m\delta$, $m\in \Z \setminus \{0\}$, which are called the \textit{imaginary roots.}

\end{itemize}

For any root $\alpha \in \Pi \subset \Pi_{\text{aff}}$ we identify $\alpha$ as a linear function on $\mathfrak{h}^{*}$ whose restriction to $\mathfrak{h}$ is $\alpha$ and $\langle\alpha,c  \rangle =\langle \alpha,d  \rangle =0$ and the imaginary root $\delta\in \mathfrak{h}^{*}$ is defined by $\delta |\mathfrak{h}\oplus \C c=0$ and $\langle \delta,d  \rangle =1$. Let $\Delta_{\text{aff}}=\{\alpha_{0}:=\delta-\theta,\alpha_{1},...,\alpha_{n-1}\}\subset \Pi_{\text{aff}}$ be the affine simple roots where $\theta=\alpha_{1}+...+\alpha_{n-1} \in \Pi$ is the highest root. This determines a partition of $\Pi_{\text{aff}}$ into positive and negative affine roots: $\Pi_{\text{aff}}=\Pi_{\text{aff}}^{+}\sqcup \Pi_{\text{aff}}^{-}$ where  $\Pi_{\text{aff}}^{-}=-\Pi_{\text{aff}}^{+}$ and $\Pi_{\text{aff}}^{+}$ consists of the elements $m\delta+\alpha$ such that either $m>0$ or both $\alpha \in \Pi^{+}$ and $m=0.$ Denote by $\Pi_{\text{aff}}^{\text{re},\,+}$ the set of affine positive real roots. We denote by $Q_{\text{aff}}=\bigoplus_{i=0}^{n-1}\mathbb{Z}\alpha_{i}\subset \mathfrak{h}_{\text{aff}}^{*}$ and $Q_{\text{aff}}^{\vee}=\bigoplus_{i=0}^{n-1}\mathbb{Z}\alpha_{i}^{\vee}\subset \mathfrak{h}_{\text{aff}}$ the affine root and coroot lattices. For $a,b \in \bigoplus_{i=0}^{n-1}\mathbb{Z}\alpha_{i}$ we say that $a\leq b$ if $b-a$ is a linear combination of non-negative coefficients of $\alpha_{0},\alpha_{1},...,\alpha_{n-1}.$ Denote by $a<b$ if $a\leq b$ and $a\neq b.$ Furthermore, in type A we identify $\delta$ with $c$ so we set $c=\alpha_{0}+\theta=\sum_{i=0}^{n-1}\alpha_{i}.$ 

Let $W_{\text{aff}}=W\ltimes Q^{\vee}$ be the affine Weyl group corresponding to $W$. We denote by $t_{\mu}$ the image of $\mu \in Q^{\vee}$ in $W_{\text{aff}}.$ For all $w \in W$ and $\mu \in Q^{\vee},$ we have $t_{w\cdot \mu}=wt_{\mu}w^{-1}.$ $W_{\text{aff}}$ is generated by the simple reflections, $s_0,s_1,...,s_{n-1},$ where $s_{i}:=s_{\alpha_{i}}$, $i=0,1,...,n-1$ and 
$$s_{i}(\mu)=\mu-\langle \mu,\alpha_{i}^{\vee}  \rangle \, \alpha_{i} \,\,\,\,\text{for}\,\,\mu \in  \mathfrak{h}_{\text{aff}}^{*}\,\,\text{and}\, 0\leq i\leq n-1.$$ 
The affine root system $\Pi_{\text{aff}}$ is $W_{\text{aff}}$-invariant. For an affine real root $\alpha,$ we have $\alpha=w\cdot \alpha_{i}$ for some $w\in W_{\text{aff}}$ and $0\leq i\leq n-1.$ Then the associated reflection $s_{\alpha}$ is independent of the choice of $w$ and $i$. Also, $s_{\alpha}=ws_{i}w^{-1}$ and $s_{\alpha}(\mu)=\mu-\langle\mu,\alpha^{\vee}  \rangle \, \alpha \,\,\,\,\text{for}\,\,\mu \in  \mathfrak{h}_{\text{aff}}^{*}$. For an affine real root $\beta=\alpha+m\delta$ we have $s_{\beta}=s_{\alpha}t_{m\alpha^{\vee}}=s_{\alpha}t_{m\alpha}$ and, in particular $s_{0}=s_{\theta}t_{-\theta^{\vee}}=s_{\theta}t_{-\theta},$ see \cite[\S 3]{LS01} for further details.

Let $S=\{ s_{0},s_{1},s_{2},...,s_{n-1}\}.$ Then $(W_{\text{aff}},S)$ is a Coxeter system with the following relations;
$$s_{i}^{2}=1\,\,\,\,\,\,\text{for all}\,\,i$$
$$s_{i}s_{i+1}s_{i}=s_{i+1}s_{i}s_{i+1} \,\,\,\,\,\,\text{for all}\,\,i$$
$$s_{i}s_{j}=s_{j}s_{i} \,\,\,\,\,\,\text{for}\,\,i,j\,\,\text{not adjacent,}\,\,i\neq j$$
where the indices are taken modulo $n;$ see \cite[p.263]{BB01}. Each $w\neq e$ in $W_{\text{aff}}$ can be written in the form $w=s_{i_{1}}s_{i_{2}}...s_{i_{k}}$ for some $s_{i_{j}}$ (not necessarily distinct) in $S.$ If $k$ is as small as possible, call it the \textit{length} of $w$, written $l(w),$ and call any expression of $w$ as a product of $k$ elements of $S$ a \textit{reduced expression.} We set $\ell{(e)}=0.$ For an $\alpha \in \Pi_{\text{aff}}$ if $\alpha=\sum_{i=0}^{n-1} a_{i}\alpha_{i}$ then the support of $\alpha$ is $\text{supp}(\alpha)=\{\alpha_{i}: a_{i}\neq 0 \}$. We have the following equations:

\begin{itemize}

\item If $\alpha \in \Pi_{\text{aff}}^{\text{re},\,+}$ such that $\alpha <c$ then $\ell{(s_{\alpha})}=2|\text{supp}(\alpha)|-1$.
\item If $w \in W_{\text{aff}}$ then $\ell{(w)}=|\{\alpha \in  \Pi_{\text{aff}}^{+}:w\cdot \alpha<0\}|.$

\item If $x=wt_{\lambda}\in W_{\text{aff}}$ then by Lemma $3.1$ in \cite{LS01}, we have 
\begin{equation}\label{equation5.5} \ell{(x)}=\sum_{\gamma \in \Pi^{+}} |\chi{(w\cdot \gamma <0)}+\langle\lambda,\gamma  \rangle |
\end{equation}
where $\chi{(P)}=1$ if $P$ is true and $\chi{(P)}=0$ otherwise. 

\end{itemize}
Given $u,v \in W_{\text{aff}}$, we say that the product $uv$ is \textit{reduced} if $\ell{(uv)}=\ell{(u)}+\ell{(v)}.$ 

Next,  we will set up some notations for affine positive real roots which are smaller than $c$; let $p_{i,i}:=\alpha_{i}$ for $0\leq i \leq n-1$, and $p_{i,j}:=\alpha_{i}+\alpha_{i+1}+...+\alpha_{j-1}+\alpha_{j}$ for $0\leq i<j\leq n-1$ such that $j-i<n-1$ and $p_{i,j}:=\alpha_{0}+\alpha_{1}+...+\alpha_{j}+\alpha_{i}+\alpha_{i+1}+...+\alpha_{n-1}$ where $0\leq j<i-1\leq n-2.$ Note that, $s_{p_{i,i}}=s_{i}$ and $s_{p_{i,j}}=s_{i}s_{i+1}...s_{j-1}s_{j}s_{j-1}...s_{i-1}s_{i}=s_{j}s_{j-1}...s_{i+1}s_{i}s_{i+1}...s_{j-1}s_{j}$ if $i<j,$ and 
$$\begin{array}{clclc}s_{p_{i,j}}&=&s_{j}s_{j-1}...s_{1}s_{0}s_{i}s_{i+1}...s_{n-2}s_{n-1}s_{n-2}...s_{i+1}s_{i}s_{0}s_{1}...s_{j-1}s_{j}\\
&=&s_{i}s_{j}s_{j-1}...s_{1}s_{0}s_{i+1}...s_{n-2}s_{n-1}s_{n-2}...s_{i+1}s_{0}s_{1}...s_{j-1}s_{j}s_{i}\\
\end{array}$$
if $i>j$, are some of the reduced expressions for the reflections which will be used throughout this paper.

\subsection{Dynkin Diagram}\label{dynkin}
In this section, we will consider the Dynkin diagram for $\Pi_{\text{aff}}$ which is given in the figure below.
\medskip
\begin{figure}[h!] \label{fig2}
\begin{center}
\includegraphics[width=120mm,scale=0.5]{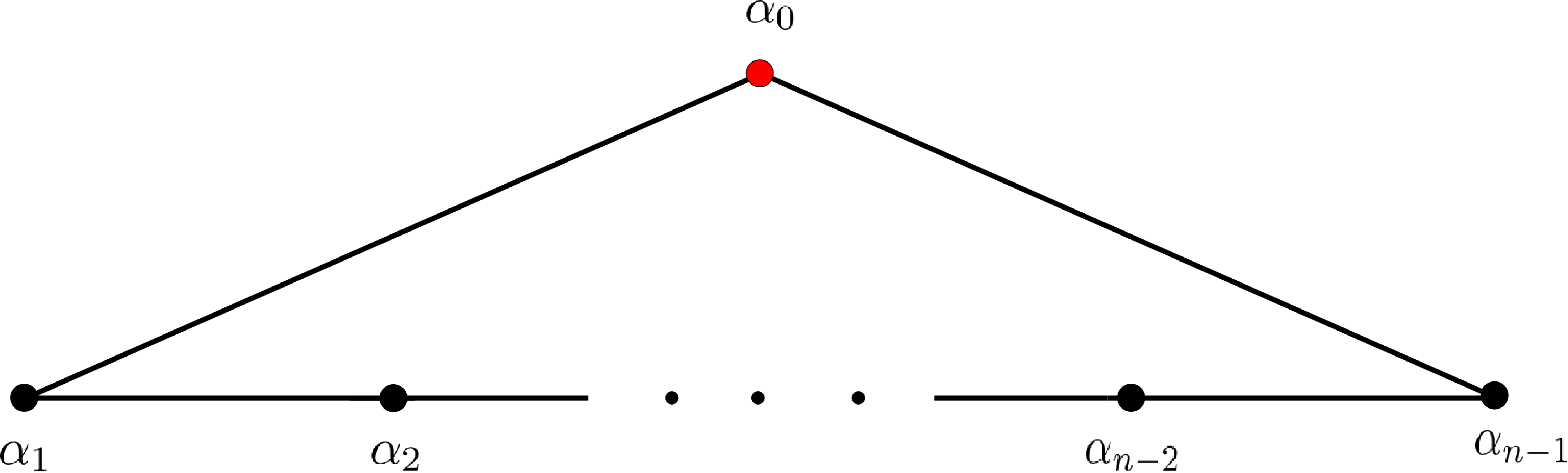}
\end{center}
\caption{Dynkin diagram of affine root system in type $A_{n-1}^{(1)}$.}
\end{figure}

Let $\varphi: \Delta_{\text{aff}}\rightarrow \Delta_{\text{aff}}$ be a map given by $\varphi(\alpha_{i})=\alpha_{i+1}$ if $0\leq i \leq n-2$ and $\varphi(\alpha_{n-1})=\alpha_{0}$. Note that, $\langle \alpha_{i},\alpha_{j}^{\vee}  \rangle =\langle\varphi(\alpha_{i}),\varphi(\alpha_{j})^{\vee}  \rangle $ for all $i,j,$ since the Dynkin diagram for $\Pi_{\text{aff}}$ is simply laced and circular. Thus $\varphi$ is an automorphism of the diagram. It is clear that the order of $\varphi$ is $n$ and $\varphi$ preserves the partial order "$\leq$" on the roots.

\subsection{Hecke Product} \label{hecke product}
We describe some of the curve neighborhoods in terms of the Hecke product. For that reason, we recall the definition of the Hecke product and some of its properties in this section. We refer to \cite[\S{3}]{MB01} for further details. For $u \in W_{\text{aff}}$ and $i\in \{0,1,...,n-1\}$, define
\begin{displaymath}
u\cdot s_{i} = \left\{ \begin{array}{ll}
 us_{i} & \textrm{If $\ell{(us_{i})}>\ell{(u)}$}\\
 u & \textrm{otherwise.}\\
  \end{array} \right.
\end{displaymath}
Let $u,v \in W_{\text{aff}}$ and let $v=s_{i_{1}}s_{i_{2}}...s_{i_{k}}$ be any reduced expression for $v.$ Define the \textit{Hecke product} of $u$ and $v$ by
$$u\cdot v=u\cdot s_{i_{1}}\cdot s_{i_{2}}\cdot ...\cdot s_{i_{k}},$$
where the simple reflections are multiplied to $u$ in left to right order. This product is independent of the chosen reduced expressions for $v.$ It provides a monoid structure on the affine Weyl group $W_{\text{aff}}.$ Furthermore, we have the following properties of the Hecke product: Let $u,v,v',w \in W_{\text{aff}}.$

\begin{itemize}

\item[a)] The Hecke product is associative, i.e. $(u\cdot v)\cdot w=u\cdot (v\cdot w).$

\item[b)] If $v\leq v'$ then $u\cdot v \cdot w\leq u\cdot v'\cdot w.$

\item[c)] We have $u\leq u\cdot v,$ $v\leq u\cdot v,$ $uv\leq u\cdot v,$ and $\ell{(u\cdot v)}\leq \ell{(u)}+\ell{(v)}.$

\item[d)] If $uv$ is reduced then $uv=u\cdot v$ and $\ell{(u\cdot v)}= \ell{(u)}+\ell{(v)}.$

\item[e)] The element $v=(w\cdot u)u^{-1}$ satisfies $v\leq w$ and $vu=v\cdot u=w\cdot u.$

\end{itemize}

\section{Preliminary Lemmas and $z_{\mathbf{d}}$}

In this section we give the definition of $z_{\mathbf{d}}$ which is the unique element in $\Gamma_{\mathbf{d}}(id)$ where $\mathbf{d}=(d_0,d_1,...,d_{n-1})\in Q_{\text{aff}}$ such that $d_{i}=0$ for some $i.$ Furthermore, the most general case of the curve neighborhood $\Gamma_{\mathbf{d}}(id)$ where $\mathbf{d}>c$ is described in terms of $z_{\mathbf{d}}$. This section also includes a theorem where we show how one can simplify $z_{\mathbf{d}}$. We refer the reader to \cite[\S 4]{MB01} and \cite{Barligea} for more details.

\begin{defn}\label{def12.5} Let $\mathbf{d}=(d_0,d_1,...,d_{n-1})\in Q_{\text{aff}}$ such that $d_{i}=0$ for some $i.$ If $\mathbf{d}=0$ then set $z_{\mathbf{d}}=id$. Otherwise we set $z_{\mathbf{d}}=s_{\alpha}\cdot z_{\mathbf{d}-\alpha}$ where $\alpha$ is any maximal root which is smaller than and equal to $\mathbf{d}.$
\end{defn}

Moreover, $z_{\mathbf{d}}$ is well-defined by induction on $\mathbf{d}$.

\begin{thm}\label{thm12.7} Let $\mathbf{d}=(d_0,d_1,...,d_{n-1})$ be a degree such that $d_{i}=0$ for some $i.$ Then $z_{\mathbf{d}}=s_{\gamma_{1}}s_{\gamma_{2}}...s_{\gamma_{r}}$ for some positive real roots, $\gamma_1,\gamma_2,...,\gamma_{r}$ such that either $\text{supp}(\gamma_{i})$ and $\text{supp}(\gamma_{j})$ are disconnected or both $\gamma_{i}\perp \gamma_{j}$ and $\gamma_{i},\gamma_{j}$ are comparable, for any $1\leq i,j\leq r$ such that $i\neq j$. Moreover, $z_{\mathbf{d}}=s_{\gamma_{1}}s_{\gamma_{2}}...s_{\gamma_{r}}$ is reduced and $\ell{(z_{\mathbf{d}})}=\sum_{i=1}^{r}(2\,|\text{supp}(\gamma_{i})|-1).$

\end{thm}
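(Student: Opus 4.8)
The plan is to induct on the degree $\mathbf{d}$ (with respect to the partial order "$\leq$" on $Q_{\text{aff}}$), using Definition \ref{def12.5} as the recursive step. The base case $\mathbf{d}=0$ gives $z_{\mathbf{d}}=\mathrm{id}$, for which there is nothing to prove. For the inductive step, write $z_{\mathbf{d}}=s_{\alpha}\cdot z_{\mathbf{d}-\alpha}$ where $\alpha$ is a maximal root with $\alpha\leq\mathbf{d}$; by induction $z_{\mathbf{d}-\alpha}=s_{\gamma_1}\cdots s_{\gamma_r}$ is a reduced product of reflections in positive real roots satisfying the stated pairwise conditions (supports disconnected, or orthogonal-and-comparable). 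First I would record the elementary facts I will lean on: since $\mathbf{d}$ has a zero coordinate, every root $\beta\leq\mathbf{d}$ satisfies $\beta<c$, so $\ell(s_\beta)=2|\mathrm{supp}(\beta)|-1$; and for $\beta<c$ a positive real root, $\mathrm{supp}(\beta)$ is always a connected subdiagram of the affine (circular) Dynkin diagram, i.e. an "interval" of nodes. The key structural input is that the $\gamma_i$, having pairwise disconnected or orthogonal supports, all lie "inside the complement" of $\mathrm{supp}(\alpha)$ in a precise sense coming from the maximality of $\alpha$; this is what forces the new relation between $\alpha$ and each $\gamma_i$.

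The main work is to prove two things simultaneously: (a) the product $s_\alpha s_{\gamma_1}\cdots s_{\gamma_r}$ is reduced, so that $s_\alpha\cdot z_{\mathbf{d}-\alpha}=s_\alpha s_{\gamma_1}\cdots s_{\gamma_r}$ by property (d) of the Hecke product, and (b) $\alpha$ satisfies the required relation with each $\gamma_i$ — either $\mathrm{supp}(\alpha)\cap\mathrm{supp}(\gamma_i)=\emptyset$, or $\alpha\perp\gamma_i$ with $\alpha,\gamma_i$ comparable. For (b): since $\gamma_i\leq\mathbf{d}-\alpha$ and $\alpha$ is a \emph{maximal} root below $\mathbf{d}$, if $\mathrm{supp}(\alpha)$ and $\mathrm{supp}(\gamma_i)$ met, one could (using that both supports are intervals in the circular diagram) form a strictly larger root below $\mathbf{d}$ by "merging" along the overlap — unless the overlap is forced to be an isolated adjacency that makes $\alpha+\gamma_i$ fail to be a root, which in type $A$ happens exactly when $\alpha$ and $\gamma_i$ are orthogonal and nested. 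Making this dichotomy precise is the crux: I would argue by analyzing how two intervals on a cycle can intersect (one contained in the other, or overlapping at one or both ends, or complementary), and in each case either contradict maximality of $\alpha$ (by exhibiting a larger root $\leq\mathbf{d}$, using $\gamma_i\leq \mathbf{d}-\alpha$ so $\alpha$ and $\gamma_i$ have "room" to combine) or land in the orthogonal-comparable case. For (a), once (b) is known, reducedness follows: using the length formula $\ell(s_\beta)=2|\mathrm{supp}(\beta)|-1$ together with the reduced expressions for $s_{p_{i,j}}$ recalled in Section \ref{affine Weyl group}, disconnected or orthogonal-nested reflections multiply without cancellation — concretely, one checks $\ell(s_\alpha s_{\gamma_1}\cdots s_{\gamma_r})=\sum(2|\mathrm{supp}(\gamma_i)|-1)+(2|\mathrm{supp}(\alpha)|-1)$ by counting inversions via $\ell(w)=|\{\beta\in\Pi_{\text{aff}}^+:w\cdot\beta<0\}|$, or by concatenating the explicit reduced words and showing no braid/commutation move shortens them.

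The length statement $\ell(z_{\mathbf{d}})=\sum_{i=1}^r(2|\mathrm{supp}(\gamma_i)|-1)$ is then immediate from (a) and (d), together with $\ell(s_{\gamma_i})=2|\mathrm{supp}(\gamma_i)|-1$. I expect the genuine obstacle to be step (b) — precisely extracting, from "$\alpha$ maximal with $\alpha\leq\mathbf{d}$" and "$\gamma_i\leq\mathbf{d}-\alpha$", the clean combinatorial dichotomy on interval-supports on a cycle; the orthogonality condition "$\alpha\perp\gamma_i$" must be matched with a root-theoretic computation (in type $A$, $\langle\varepsilon_a-\varepsilon_b,\varepsilon_c-\varepsilon_d\rangle$ vanishing), and one has to be careful that "supports overlap" does not automatically give a larger root when the overlap is a single node at which the two intervals abut from the same side versus opposite sides. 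I would handle this by passing through the identification $W_{\text{aff}}=W\ltimes Q^\vee$ and the window-notation / $\varepsilon_i$ description of roots below $c$, where each such root is $\varepsilon_a-\varepsilon_b$ for an appropriate cyclic interval and the combinatorics of "can these combine into a bigger root $\leq\mathbf{d}$" becomes transparent.
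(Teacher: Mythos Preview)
There is a genuine gap in step (b), and it breaks the whole inductive scheme. Your claim is that if $\alpha$ is a maximal root with $\alpha\leq\mathbf d$ and $\gamma_i$ appears in the good decomposition of $z_{\mathbf d-\alpha}$, then $\alpha$ and $\gamma_i$ must either have disconnected supports or be orthogonal and comparable. This is false. Take $n=5$ and $\mathbf d=(0,2,1,2,0)$. The unique maximal root below $\mathbf d$ is $\alpha=p_{1,3}=\alpha_1+\alpha_2+\alpha_3$, and $\mathbf d-\alpha=(0,1,0,1,0)$, so $z_{\mathbf d-\alpha}=s_{\alpha_1}s_{\alpha_3}$ with $\gamma_1=\alpha_1$, $\gamma_2=\alpha_3$ already in good form. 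But $\langle\alpha,\alpha_1\rangle=1$: the supports meet, the roots are comparable, yet they are \emph{not} orthogonal. Your proposed dichotomy (``either merge to a larger root $\leq\mathbf d$, contradicting maximality, or land in the orthogonal-nested case'') misses this situation entirely --- there is no larger root to form, and the pair is not orthogonal. Correspondingly (a) also fails: $s_\alpha\cdot s_{\alpha_1}\cdot s_{\alpha_3}=s_\alpha$ by Hecke absorption, so the product is not reduced and $z_{\mathbf d}=s_\alpha$ has only one factor, not three.

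The paper's argument avoids this trap by not trying to preserve the inductive decomposition at all. Instead it starts from the raw definition $z_{\mathbf d}=s_{\beta_1}\cdot s_{\beta_2}\cdots s_{\beta_r}$ (where the $\beta_i$ are the successive maximal roots, hence pairwise comparable or with disconnected supports) and then \emph{modifies the roots without changing the Hecke product}: whenever $\beta_j\leq\beta_i$ share an endpoint (the non-orthogonal comparable case), one shows $s_{\beta_i}\cdot s_{\beta_j}=s_{\beta_i}\cdot s_{\beta_j-\alpha_k}$ by an explicit Hecke-commutation computation (Lemma~\ref{lemma5.7}), shrinking $\beta_j$ until it becomes orthogonal to $\beta_i$ or disappears. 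Only after this cleanup does one verify reducedness, by an inversion count exploiting the pairwise orthogonality. The point you are missing is precisely that the Hecke product can absorb or shrink factors; the good $\gamma_i$'s are not the ones handed to you by induction but must be produced by this simplification.
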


\begin{proof} Let $z_{\mathbf{d}}=s_{\beta_{1}}\cdot s_{\beta_{2}}\cdot ...\cdot s_{\beta_{r}}$ for some positive real roots,  $\beta_1,\beta_2,...,\beta_{r}$. By definition, we have either $\text{supp}(\beta_{i})$ and $\text{supp}(\beta_{j})$ are disconnected or $\beta_{i},\beta_{j}$ are comparable, for any $1\leq i,j\leq r$ such that $i\neq j$, see \cite[\S{4}]{MB01}. So $s_{\beta_{i}}\cdot s_{\beta_{j}}=s_{\beta_{j}}\cdot s_{\beta_{i}}$ for any $1\leq i,j\leq r$, by Lemma \ref{lemma5.7}.  Now, assume that $\beta_{i}$ and $\beta_{j}$ are comparable but not perpendicular for some $1\leq i,j\leq r$ such that $i\neq j$. Here, we can suppose that $\beta_{i}\geq \beta_{j}$, without loss of generality. Now, if $\beta_{i}=p_{k,l}$ for some $0\leq k,l \leq n-1$ then we have three cases; either $\beta_{j}=p_{k,q}$ where $q\neq l$ or $\beta_{j}=p_{q,l}$ where $q\neq k$ or $\beta_{j}=p_{k,l}.$ First, suppose that $\beta_{j}=p_{k,q}$ where $q\neq l$. Note that, both $s_{\beta_{i}}$ and $s_{\beta_{j}}$ have a reduced expression which starts and ends with the simple reflection, $s_{k}$ this implies that $s_{\beta_{i}}\cdot s_{k}=s_{\beta_{i}}$. We will show that $s_{\beta_{i}}\cdot s_{\beta_{j}}=s_{\beta_{i}}\cdot s_{\beta_{j}-\alpha_{k}}$ where $\alpha_{k}$ is the simple root. Here, we will use the convention, $s_{a}:=id$ if $a=0.$ Now, observe that $s_{\beta_{j}}=s_{k}\cdot s_{\beta_{j}-\alpha_{k}}\cdot s_{k}$.  Furthermore, $s_{\beta_{i}}\cdot s_{\beta_{j}-\alpha_{k}}=s_{\beta_{j}-\alpha_{k}}\cdot s_{\beta_{i}}$ since $\beta_{j}-\alpha_{k}<\beta_{i}$ by Lemma \ref{lemma5.7}. Thus 
$$\begin{array}{lll} s_{\beta_{i}}\cdot s_{\beta_{j}}&=&s_{\beta_{i}}\cdot s_{k}\cdot s_{\beta_{j}-\alpha_{k}}\cdot s_{k}=s_{\beta_{i}}\cdot s_{\beta_{j}-\alpha_{k}}\cdot s_{k}=s_{\beta_{j}-\alpha_{k}}\cdot s_{\beta_{i}}\cdot s_{k}\\
&=&s_{\beta_{j}-\alpha_{k}}\cdot s_{\beta_{i}}=s_{\beta_{i}}\cdot s_{\beta_{j}-\alpha_{k}}.\\
\end{array}$$ 
Here, notice that $\beta_{i}$ and $\beta_{j}-\alpha_{k}$ are perpendicular. Second, suppose that $\beta_{j}=p_{q,l}$ where $q\neq k$. Then, one can show that $s_{\beta_{i}}\cdot s_{\beta_{j}}=s_{\beta_{i}}\cdot s_{\beta_{j}-\alpha_{l}}$ where $\alpha_{l}$ is the simple root, by using similar arguments in the previous case. Also, note that $\beta_{i}$ and $\beta_{j}-\alpha_{l}$ are perpendicular. Last, assume that $\beta_{j}=p_{k,l}.$ Then again by the previous two cases one can show that $s_{\beta_{i}}\cdot s_{\beta_{j}}=s_{\beta_{i}}\cdot s_{\beta_{j}-\alpha_{k}-\alpha_{l}}$ where $\alpha_{k}$ and $\alpha_{l}$ are the simple reflections. Moreover, $\beta_{i}$ and $\beta_{j}-\alpha_{k}-\alpha_{l}$ are perpendicular. Hence, we can assume that either $\text{supp}(\beta_{i})$ and $\text{supp}(\beta_{j})$ are disconnected or both $\beta_{i},\beta_{j}$ are comparable and $\beta_{i}\perp \beta_{j}$, for any $1\leq i,j\leq r$ such that $i\neq j$. 

Next, we will show that $z_{\mathbf{d}}=s_{\beta_{1}} s_{\beta_{2}} ...s_{\beta_{r}}$ is reduced. Note that $\beta_{i}<c$ for all $i$, by definition. So $\ell{(s_{\beta_{i}})}=2|\text{supp}(\beta_{i})|-1$ and we have $\ell{(z_{\mathbf{d}})}=\ell{(s_{\beta_{1}} s_{\beta_{2}} ...s_{\beta_{r}})}\leq \sum_{i=1}^{r}\ell{(s_{\beta_{i}})}=\sum_{i=1}^{r} 2|\text{supp}(\beta_{i})|-1.$ Furthermore, for a root $\nu,$ we have $s_{\beta_{1}} s_{\beta_{2}} ...s_{\beta_{r}}(\nu)=\nu-\sum_{i=1}^{r}<\nu,\beta_{i}^{\vee}>\beta_{i}$ since $\beta_{i}\perp \beta_{j},$ for any $1\leq i,j\leq r$ such that $i\neq j$. Now, assume that $\nu$ is an affine positive real root such that $\nu\leq \beta_{i}$ for some $i$. Also, assume that $\nu$ and $\beta_{i}$ are not perpendicular. This implies that $<\nu,\beta_{i}^{\vee}>$ is either equal to $2$ or $1$. if $<\nu,\beta_{i}^{\vee}>=2$ then $\beta_{i}=\nu.$ But then $\nu \perp \beta_{j}$ for all $j\in \{1,2,...,r\} \setminus \{i\},$ which follows by $s_{\beta_{1}} s_{\beta_{2}} ...s_{\beta_{r}}(\nu)=-\nu<0$. Now, suppose that  $<\nu,\beta_{i}^{\vee}>=1$ then we have $\nu \perp \beta_{j}$ for all $j$ such that $\text{supp}(\beta_{i})$ and $\text{supp}(\beta_{j})$ are disconnected since $\nu\leq \beta_{i}.$ So if $\nu$ and $\beta_{j}$ are not perpendicular for some $\beta_{j}$ where $\beta_{j}\neq \beta_{i}$ we have to have $\beta_{j}<\beta_{i}$ which implies that $<\nu,\beta_{j}^{\vee}>$ is either equal to $1$ or $-1$ and there is at most one such root as $\beta_{j}.$ Now, if  $<\nu,\beta_{j}^{\vee}>=1$ then either $\beta_{j}<\nu$ or $\nu<\beta_{j}$ but since $\beta_{j}<\beta_{i}$ and $\beta_{j}\perp \beta_{i}$ we have to have $\beta_{j}<\nu$. If $<\nu,\beta_{j}^{\vee}>=-1$ then $\beta_{j}\cap \nu=\emptyset$. Thus we have either $s_{\beta_{1}}s_{\beta_{2}} ...s_{\beta_{r}}(\nu)=\nu-\beta_{i}- \beta_{j}$ or $s_{\beta_{1}}s_{\beta_{2}} ...s_{\beta_{r}}(\nu)=\nu-\beta_{i}+ \beta_{j}$. Here, observe that in either case $s_{\beta_{1}}s_{\beta_{2}} ...s_{\beta_{r}}(\nu)<0$. Now, let $\beta_{i}=p_{k,l}$ and $\nu=p_{t,q}$. Then $t=k$ or $q=l$. If $k\leq l$ then there are $2l-2k+1=2|\text{supp}(\beta_{i})|-1$ such $\nu$ and if $k>l$ then there are $2n+2l-2k+1=2|\text{supp}(\beta_{i})|-1$ such $\nu$. Hence, if $A$ is the set of all positive real roots, $\nu$ such that $s_{\beta_{1}}s_{\beta_{2}}... s_{\beta_{r}}(\nu)<0$ where $\nu\leq \beta_{i}$ and, $\nu$ and $\beta_{i}$ are not perpendicular for some $i=1,2,..,r$  then $|A|=\sum_{i=1}^{r}2|\text{supp}(\beta_{i})|-1.$ Also, note that $A\subseteq \{\gamma \in \Pi_{\text{aff}}^{\text{re},\,+}: s_{\beta_{1}} s_{\beta_{2}}...s_{\beta_{r}}(\gamma)<0\}$ and we have 
$$\begin{array}{lll} |A|=\sum_{i=1}^{r}2|\text{supp}(\beta_{i})|-1 &\leq& |\{\gamma \in \Pi_{\text{aff}}^{\text{re},\,+}: s_{\beta_{1}}s_{\beta_{2}} ...s_{\beta_{r}}(\nu)<0\}|\\
&=&\ell{(s_{\beta_{1}}s_{\beta_{2}} ...s_{\beta_{l}})}\leq \sum_{i=1}^{r}2|\text{supp}(\beta_{i})|-1\\
\end{array}$$
which follows by $\ell{(s_{\beta_{1}}s_{\beta_{2}} ... s_{\beta_{r}})}= \sum_{i=1}^{r}2|\text{supp}(\beta_{i})|-1$. Thus $s_{\beta_{1}}s_{\beta_{2}}...s_{\beta_{r}}$ is reduced.

\end{proof}

Next, we will give an example for computing $z_{\mathbf{d}}.$
\begin{example}\label{example12.01} Let $\mathbf{d}=(5,0,2,2,3,0,4).$ Observe that $\alpha_{0}+\alpha_{6}$ and $\alpha_{2}+\alpha_{3}+\alpha_{4}$ are maximal roots of $\mathbf{d}$ which have disconnected supports and $\mathbf{d}=4(\alpha_{0}+\alpha_{6})+2(\alpha_{2}+\alpha_{3}+\alpha_{4})+\alpha_{0}+\alpha_{4}$ so by definition we get
$z_{\mathbf{d}}=s_{\alpha_{0}+\alpha_{6}}\cdot s_{\alpha_{0}+\alpha_{6}}\cdot s_{\alpha_{0}+\alpha_{6}}\cdot s_{\alpha_{0}+\alpha_{6}}\cdot s_{\alpha_{2}+\alpha_{3}+\alpha_{4}}\cdot s_{\alpha_{2}+\alpha_{3}+\alpha_{4}}\cdot s_{\alpha_{0}}\cdot s_{\alpha_{4}}.$
But notice that $s_{\alpha_{0}+\alpha_{6}}\cdot s_{\alpha_{0}+\alpha_{6}}=s_{\alpha_{0}+\alpha_{6}}$ and $s_{\alpha_{2}+\alpha_{3}+\alpha_{4}}\cdot s_{\alpha_{2}+\alpha_{3}+\alpha_{4}}=s_{\alpha_{2}+\alpha_{3}+\alpha_{4}}\cdot s_{\alpha_{3}}$, see the proof of Lemma \ref{lemma5.7}, so we have $z_{\mathbf{d}}=s_{\alpha_{0}+\alpha_{6}} \cdot s_{\alpha_{2}+\alpha_{3}+\alpha_{4}}\cdot s_{\alpha_{3}}\cdot s_{\alpha_{0}}\cdot s_{\alpha_{4}}.$ Now, also note that any two reflections that appear in $z_{\mathbf{d}}$ Hecke commute by Lemma \ref{lemma5.7}. Moreover,  $s_{\alpha_{0}+\alpha_{6}} \cdot s_{\alpha_{0}}=s_{\alpha_{0}+\alpha_{6}} $ and $s_{\alpha_{2}+\alpha_{3}+\alpha_{4}}\cdot s_{\alpha_{4}}=s_{\alpha_{2}+\alpha_{3}+\alpha_{4}}$, again see the proof of Lemma \ref{lemma5.7}. Thus $z_{\mathbf{d}}=s_{\alpha_{0}+\alpha_{6}} \cdot s_{\alpha_{2}+\alpha_{3}+\alpha_{4}}\cdot s_{\alpha_{3}}.$ But this multiplication for $z_{\mathbf{d}}$ is reduced; see the proof of Theorem \ref{thm12.7}. So $z_{\mathbf{d}}=s_{\alpha_{0}+\alpha_{6}}  s_{\alpha_{2}+\alpha_{3}+\alpha_{4}} s_{\alpha_{3}}.$ Furthermore, 
$$\ell{(z_{\mathbf{d}})}=2|\text{supp}(\alpha_{0}+\alpha_{6})|-1+2|\text{supp}(\alpha_{2}+\alpha_{3}+\alpha_{4})|-1+ 2|\text{supp}(\alpha_{3})|-1=9.$$

\end{example}

\subsection{Lemma about the Hecke Product}

Here, we will consider a lemma which allows us to manipulate a given Hecke multiplication of two reflections without changing the sum of the corresponding roots. 

\begin{lemma} \label{lemma5.7}Let $\alpha, \beta \in{\Pi_{\text{aff}}^{\text{re},\,+}}$ such that $\alpha, \beta <c$. Then
\begin{itemize}
\item[1)] Assume that $\alpha\leq \beta.$ Then $s_{\alpha}\cdot s_{\beta}=s_{\beta}\cdot s_{\alpha}$ 
 
\item[2)] Suppose that $\gamma=\alpha \cap \beta \neq \emptyset$ and $\gamma \neq \alpha, \beta$. Then
\begin{itemize}
\item[a)] If $\gamma$ is a root then $\alpha-\gamma$ and $\beta-\gamma$ are also roots. Moreover,
 $$s_{\alpha}\cdot s_{\beta}=s_{\alpha}\cdot s_{\beta-\gamma}\cdot s_{\gamma}=s_{\gamma}\cdot s_{\alpha-\gamma}\cdot s_{\beta}$$ 

 \item[b)] If $\gamma$ is not a root then $\gamma=\gamma_{1}+\gamma_{2}$ for some positive real roots $\gamma_{1}$, $\gamma_{2}$. Also, $\alpha-\gamma$,$\alpha-\gamma_{1}$, $\alpha-\gamma_{2}$, $ \beta-\gamma$, $\beta-\gamma_{1}$ and $\beta-\gamma_{2}$ are all roots. Moreover, $$s_{\alpha}\cdot s_{\beta}=s_{\alpha}\cdot s_{\beta-\gamma}\cdot s_{\gamma_{1}}\cdot s_{\gamma_{2}}=s_{\gamma_{1}}\cdot s_{\gamma_{2}}\cdot s_{\alpha-\gamma}\cdot s_{\beta}$$
\end{itemize} 

\item[3)] Assume that $\alpha \cap \beta =\emptyset.$ Then 

\begin{itemize}

\item[a)] If $\alpha+\beta$ is a root then $s_{\alpha}\cdot s_{\beta}\leq s_{\alpha+\beta}.$

\item[b)] If $\alpha+\beta$ is not a root then $s_{\alpha}\cdot s_{\beta}=s_{\beta}\cdot s_{\alpha}.$
\end{itemize}

\end{itemize}

\end{lemma}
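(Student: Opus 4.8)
The plan is to work explicitly in the combinatorics of the root system of type $A_{n-1}^{(1)}$, using the notation $p_{i,j}$ introduced above for the affine positive real roots smaller than $c$, together with the reduced-word formulas given for the associated reflections $s_{p_{i,j}}$. Every statement in the lemma is about two such roots $\alpha,\beta<c$, so I would first fix the combinatorial shape: write $\alpha=p_{i,j}$ and $\beta=p_{k,l}$ and translate each hypothesis ($\alpha\le\beta$; $\alpha\cap\beta\ne\emptyset$ and $\ne\alpha,\beta$; $\alpha\cap\beta=\emptyset$) into a condition on the index intervals. The key technical input is the explicit reduced expressions $s_{p_{i,j}}=s_i s_{i+1}\cdots s_{j-1} s_j s_{j-1}\cdots s_{i+1} s_i$ (and the longer, ``wrap-around'' version when $i>j$), each of which is a palindrome beginning and ending with a fixed simple reflection. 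From these I can read off the crucial cancellation facts $s_{p_{i,j}}\cdot s_i = s_{p_{i,j}}$ and $s_{p_{i,j}}\cdot s_j = s_{p_{i,j}}$, which are the engine behind all the identities in part (2).

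For part (1), when $\alpha\le\beta$ (say $\alpha=p_{i,j}\le\beta=p_{k,l}$), I would observe that $\mathrm{supp}(\alpha)\subseteq\mathrm{supp}(\beta)$, so $s_\alpha$ lies in the parabolic subgroup generated by $\mathrm{supp}(\beta)$; one then checks $\ell(s_\beta s_\alpha)=\ell(s_\alpha s_\beta)=\ell(s_\alpha)+\ell(s_\beta)$ directly from the length formula $\ell(s_\gamma)=2|\mathrm{supp}(\gamma)|-1$ and the root-counting formula $\ell(w)=|\{\mu\in\Pi_{\mathrm{aff}}^+: w\mu<0\}|$, using that $s_\alpha$ and $s_\beta$ are involutions. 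Actually the cleanest route is to note that $s_\alpha s_\beta s_\alpha = s_{s_\alpha(\beta)}$, and since $\alpha\le\beta$ and both are nested intervals, $s_\alpha(\beta)=\beta-\langle\beta,\alpha^\vee\rangle\alpha$; a short case check on $\langle\beta,\alpha^\vee\rangle\in\{0,1,2\}$ shows $s_\alpha(\beta)$ is again a positive real root $<c$ with the same reflection being reached, forcing $s_\alpha\cdot s_\beta=s_\beta\cdot s_\alpha$ (both equal the common reduced product). For parts (2a) and (2b), with $\gamma=\alpha\cap\beta$ meaning the ``overlap'' subinterval, I would peel off $s_\gamma$ (or $s_{\gamma_1}\cdot s_{\gamma_2}$ when $\gamma$ is a disconnected pair of intervals) from one end of $s_\beta$ using the palindrome structure: write $s_\beta = s_\gamma\cdot s_{\beta-\gamma}\cdot s_\gamma$ (as in the displayed computation in the proof of Theorem \ref{thm12.7}), then move $s_\gamma$ across $s_\alpha$ using the absorption identity $s_\alpha\cdot s_\gamma=s_\alpha$ on the side where $\gamma$ shares an endpoint with $\alpha$, and regroup — this yields exactly the two claimed factorizations $s_\alpha\cdot s_\beta=s_\alpha\cdot s_{\beta-\gamma}\cdot s_\gamma=s_\gamma\cdot s_{\alpha-\gamma}\cdot s_\beta$. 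That $\alpha-\gamma$, $\beta-\gamma$ (resp. the further differences in (2b)) are again roots is immediate from the interval description. Part (3a) follows from property (c) of the Hecke product applied after writing a reduced word for $s_{\alpha+\beta}$ that begins with a reduced word for $s_\alpha$ and continues with one for $s_\beta$ (possible exactly when $\alpha\cap\beta=\emptyset$ and $\alpha+\beta$ is a root, since then $\mathrm{supp}(\alpha)$ and $\mathrm{supp}(\beta)$ are adjacent intervals); part (3b) is the disjoint-and-nonadjacent case, where $s_\alpha$ and $s_\beta$ literally commute because their supports involve non-adjacent simple reflections.

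The step I expect to be the main obstacle is the careful bookkeeping of the ``wrap-around'' roots $p_{i,j}$ with $i>j$, whose reduced expressions pass through $s_0$ and $s_{n-1}$ and are considerably longer; verifying the absorption identities $s_{p_{i,j}}\cdot s_i=s_{p_{i,j}}$ and the factorization $s_\beta=s_\gamma\cdot s_{\beta-\gamma}\cdot s_\gamma$ in that case requires checking that the claimed reduced words really are reduced and really have the palindromic endpoint structure. I would handle this by treating $p_{i,j}$ with $i>j$ uniformly via the identity $p_{i,j}=c-p_{j+1,i-1}$ (so its reflection is conjugate, by a translation, to that of an ``ordinary'' interval root) or, more elementarily, by a direct induction on $|\mathrm{supp}(\alpha)|$ peeling simple reflections off the ends; either way the combinatorial content reduces to the non-wrap case already handled. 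The remaining work — enumerating the sub-cases of which endpoint $\gamma$ shares with $\alpha$ versus $\beta$ in (2), and checking the three values of the Cartan pairing — is routine once the palindrome/absorption toolkit is in place.
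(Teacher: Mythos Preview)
Your plan has two genuine gaps, both stemming from over-generalizing facts that hold only for \emph{simple} reflections.

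\textbf{Part (1).} Neither of your proposed routes works. The length claim $\ell(s_\alpha s_\beta)=\ell(s_\alpha)+\ell(s_\beta)$ is false whenever $\alpha\le\beta$ share an endpoint: take $\alpha=\alpha_1$, $\beta=\alpha_1+\alpha_2$, so $s_\alpha s_\beta=s_1\cdot s_1s_2s_1=s_2s_1$ has length $2$, not $4$. The conjugation observation $s_\alpha s_\beta s_\alpha=s_{s_\alpha(\beta)}$ is a group identity and says nothing directly about the Hecke product; when $\langle\beta,\alpha^\vee\rangle=1$ it gives $s_\alpha s_\beta=s_{\beta-\alpha}s_\alpha\ne s_\beta s_\alpha$ in the group, so you cannot deduce Hecke commutativity this way. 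What the paper does is first prove the \emph{simple} case $s_q\cdot s_\beta=s_\beta\cdot s_q$ for each $\alpha_q\in\mathrm{supp}(\beta)$ by an explicit braid-relation computation in the Hecke monoid (using that $s_\beta$ has palindromic reduced words starting and ending at either endpoint, and pushing $s_q$ through), and then deduces the general case by writing $s_\alpha$ as a Hecke product of simple reflections, each in $\mathrm{supp}(\beta)$.

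\textbf{Part (2).} Your factorization $s_\beta=s_\gamma\cdot s_{\beta-\gamma}\cdot s_\gamma$ is false in the Hecke monoid once $\gamma$ is not simple: with $\beta=\alpha_1+\alpha_2+\alpha_3$ and $\gamma=\alpha_2+\alpha_3$ one computes $\ell(s_\gamma\cdot s_{\beta-\gamma}\cdot s_\gamma)=6>5=\ell(s_\beta)$. Likewise the ``absorption'' $s_\alpha\cdot s_\gamma=s_\alpha$ fails for non-simple $\gamma$: with $\alpha=\alpha_1+\alpha_2+\alpha_3$ and $\gamma=\alpha_2+\alpha_3$ one has $s_\alpha\cdot s_\gamma=s_\alpha s_2$, of length $6$. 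The display you cite from Theorem~\ref{thm12.7} is $s_{\beta_j}=s_k\cdot s_{\beta_j-\alpha_k}\cdot s_k$ with $\alpha_k$ a \emph{simple} root; it does not generalize. The paper instead writes $s_\alpha=u\cdot s_{\alpha-\gamma}\cdot u^{-1}$ and $s_\beta=u^{-1}\cdot s_{\beta-\gamma}\cdot u$ where $u$ is the ``half-palindrome'' $s_j s_{j-1}\cdots s_k$ (so $s_\gamma=u\cdot u^{-1}$ via the idempotent $s_k\cdot s_k=s_k$), then moves $u^{-1}$ past $s_\beta$ using the simple-reflection commutation established for Part~(1). Your approach can be repaired by iterating the simple-root peeling, but that is precisely the paper's argument; the one-shot $s_\gamma$ version does not go through.

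Parts (3a) and (3b) are fine as you describe them; your direct argument for (3a) via an explicit reduced word of $s_{\alpha+\beta}$ is in fact a bit cleaner than the paper's appeal to Corollary~\ref{cor13.6}.
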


\begin{proof} Let  a simple reflection $s_{q}$ and a positive real root $p_{i,j}<c$ be given. We will show that $s_{q}\cdot  s_{p_{i,j}}=s_{p_{i,j}}\cdot s_{q}$ if $\alpha_{q}\in \text{supp}(p_{i,j}).$ First, assume that $q=i.$ Then we have three cases; either $i=j$ or $i<j$ or $i>j$. If $i=j$ then the equality is clear since $p_{i,j}=p_{i,i}=\alpha_{i}$ and $s_{q}=s_{p_{i,j}}=s_{i}$. Now, suppose that $i<j.$ Then by $s_{i}\cdot s_{i}=s_{i}$ we get
$$\begin{array}{lll}s_{q}\cdot s_{p_{i,j}}&=&s_{i}\cdot s_{i}\cdot s_{i+1}\cdot ...\cdot s_{j-1}\cdot s_{j}\cdot s_{j-1}\cdot ...\cdot s_{i-1}\cdot s_{i}\\ &=&s_{i}\cdot s_{i+1}\cdot ...\cdot s_{j-1}\cdot s_{j}\cdot s_{j-1}\cdot ...\cdot s_{i-1}\cdot s_{i}\cdot s_{i}=s_{p_{i,j}}\cdot s_{q}.\\\end{array}$$
The case, $i>j$ is similar. 
If $q=j$ then we have the equalities by the same argument since $s_{p_{i,j}}$ has a reduced expression which starts and ends with $s_{j}$ in all sub cases. Now assume that $q$ is neither $i$ nor $j.$ Here, we have two sub cases; $i<j$ or $i>j.$ First, assume that $i<j.$ Then $i<q<j$ and we have 
$$\begin{array}{lll}
s_{q}\cdot s_{p_{i,j}}&=&s_{q}\cdot s_{i}\cdot s_{i+1}\cdot ...\cdot (s_{q-1}\cdot s_{q})\cdot ...\cdot s_{j-1}\cdot s_{j}\cdot s_{j-1}\cdot ...\cdot (s_{q}\cdot s_{q-1})\cdot ...\cdot s_{i-1}\cdot s_{i} \\ &=&s_{i}\cdot s_{i+1}\cdot ...\cdot (s_{q}\cdot s_{q-1}\cdot s_{q})\cdot ...\cdot s_{j-1}\cdot s_{j}\cdot s_{j-1}\cdot ...\cdot (s_{q}\cdot s_{q-1})\cdot ...\cdot s_{i-1}\cdot s_{i}\\
&=& s_{i}\cdot s_{i+1}\cdot ...\cdot (s_{q-1}\cdot s_{q}\cdot s_{q-1})\cdot ...\cdot s_{j-1}\cdot s_{j}\cdot s_{j-1}\cdot ...\cdot (s_{q}\cdot s_{q-1})\cdot ...\cdot s_{i-1}\cdot s_{i}\\ &=& s_{i}\cdot s_{i+1}\cdot ...\cdot ( s_{q-1}\cdot s_{q})\cdot ...\cdot s_{j-1}\cdot s_{j}\cdot s_{j-1}\cdot ...\cdot (s_{q-1}\cdot s_{q}\cdot s_{q-1})\cdot ...\cdot s_{i-1}\cdot s_{i}\\&=& s_{i}\cdot s_{i+1}\cdot ...\cdot (s_{q-1}\cdot s_{q})\cdot ...\cdot s_{j-1}\cdot s_{j}\cdot s_{j-1}\cdot ...\cdot (s_{q}\cdot s_{q-1}\cdot s_{q})\cdot ...\cdot s_{i-1}\cdot s_{i} \\&=&s_{i}\cdot s_{i+1}\cdot ...\cdot (s_{q-1}\cdot s_{q})\cdot ...\cdot s_{j-1}\cdot s_{j}\cdot s_{j-1}\cdot ...\cdot (s_{q}\cdot s_{q-1})\cdot ...\cdot s_{i-1}\cdot s_{i}\cdot s_{q}\\&=&s_{p_{i,j}}\cdot s_{q}\\
\end{array}$$

The case, $i>j$ is similar.

$1)$ Assume that $\alpha \leq \beta$. Suppose that $\alpha=s_{i_{1}}\cdot s_{i_{2}}\cdot ...\cdot s_{i_{r}}$ is a reduced expression for $\alpha$. Now, since $\alpha \leq \beta$ we have  supp($\alpha)=\{i_{1},i_{2},...,i_{r}\} \subset$ supp($\beta)$. Thus the reflection $s_{i_{k}}$ will Hecke commute with $\beta$ for all $k$ by the argument above. So we will have 
$$\alpha \cdot \beta=s_{i_{1}}\cdot s_{i_{2}}\cdot ...\cdot s_{i_{r}}\cdot \beta=\beta \cdot s_{i_{1}}\cdot s_{i_{2}}\cdot ...\cdot s_{i_{r}}=\beta \cdot \alpha.$$

$2)$ \textit{Case a:} Let $\gamma=\alpha \cap \beta$ be a root and $\gamma \neq \alpha, \beta$. First, we will show that $s_{\alpha}=u\cdot s_{\alpha-\gamma}\cdot u^{-1}$ and $s_{\beta}=u^{-1}\cdot s_{\beta-\gamma}\cdot u$ for some permutation $u$ such that $\gamma=u \cdot u^{-1}.$  We have several cases here;

\textit{Case a1:} Assume that $\alpha=p_{i,j}$ and $\beta=p_{k,l}$ where $0\leq i<k<j<l\leq n-1.$ Then $\gamma=p_{k,j}.$ Furthermore, $\alpha-\gamma=p_{i,k-1}$ and $\beta-\gamma=p_{j+1,l},$ so both are roots. Now, note that 
$$s_{\alpha}=s_{j}\cdot s_{j-1}\cdot...\cdot s_{k+1}\cdot s_{k}\cdot s_{k-1}\cdot ...\cdot s_{i+1}\cdot s_{i}\cdot s_{i+1}\cdot ...\cdot s_{k-1}\cdot s_{k}\cdot s_{k+1}\cdot...\cdot s_{j-1}\cdot s_{j},$$
 $$s_{\beta}=s_{k}\cdot s_{k+1}\cdot ...\cdot s_{j-1}\cdot s_{j}\cdot s_{j+1}\cdot ...\cdot s_{l-1}\cdot s_{l}\cdot s_{l-1}\cdot ...s_{j+1}\cdot s_{j}\cdot s_{j-1}\cdot...\cdot s_{k+1}\cdot s_{k}, $$
 $s_{\gamma}=s_{j}\cdot s_{j-1}\cdot...\cdot s_{k+1}\cdot s_{k}\cdot s_{k+1}\cdot...\cdot s_{j-1}\cdot s_{j}$ and $s_{\alpha-\gamma}=s_{k-1}\cdot ...\cdot s_{i+1}\cdot s_{i}\cdot s_{i+1}\cdot ...\cdot s_{k-1}$ and $s_{\beta-\gamma}=s_{j+1}\cdot ...\cdot s_{l-1}\cdot s_{l}\cdot s_{l-1}\cdot ...s_{j+1}$ are some reduced expressions for the reflections. So we can take $u=s_{j}\cdot s_{j-1}\cdot...\cdot s_{k+1}\cdot s_{k}$ which will imply that $s_{\gamma}=u\cdot u^{-1}$ since $s_{k}\cdot s_{k}=s_{k}.$

\textit{Case a2:} Suppose that $\alpha=p_{i,j}$ and $\beta=p_{k,l}$ where $0\leq k<j<l\leq i\leq n-1.$ Then $\gamma=p_{k,j}.$ Moreover, $\alpha-\gamma=p_{i,n-1}$ and $\beta-\gamma=p_{j+1,l},$ hence both are roots. This case is similar to $a1).$

\textit{Case a3:} Suppose that $\alpha=p_{i,j}$ and $\beta=p_{k,l}$ where $0\leq j\leq k<i<l \leq n-1.$ Then $\gamma=p_{i,l}.$ Also, $\alpha-\gamma=p_{0,j}$ and $\beta-\gamma=p_{k,i-1},$ thus both are roots. Similar to $a1).$

\textit{Case a4:} Suppose that $\alpha=p_{i,j}$ and $\beta=p_{l,k}$ where $0\leq k<j\leq l < i\leq n-1.$ Then $\gamma=p_{i,k}.$ Moreover, $\alpha-\gamma=p_{k+1,j}$ and $\beta-\gamma=p_{l,i-1},$ so both are roots. Note that, 
$$\begin{array}{lll}
s_{\alpha}&=&s_{j}\cdot ...\cdot s_{k}\cdot ...\cdot s_{1}\cdot s_{0}\cdot s_{i}\cdot s_{i+1}\cdot...\cdot s_{n-1}\cdot ...\cdot s_{i-1}\cdot s_{i}\cdot s_{0}\cdot s_{1}\cdot...\cdot s_{k}\cdot...\cdot s_{j}\\
&=&s_{i}\cdot s_{i+1}\cdot...\cdot s_{n-1}\cdot s_{0}\cdot s_{1}\cdot  ...\cdot s_{k}\cdot ...\cdot s_{j}\cdot ...\cdot s_{k}\cdot ...\cdot s_{1}\cdot s_{0}\cdot s_{n-1}\cdot ...\cdot s_{i-1}\cdot s_{i}, \\
\end{array}$$
$$\begin{array}{lll}
s_{\beta}&=&s_{k}\cdot  ...\cdot s_{1}\cdot s_{0}\cdot s_{l}\cdot s_{l+1}\cdot ...\cdot s_{i}\cdot...s_{n-1}\cdot ...\cdot s_{i}\cdot ...\cdot s_{l+1}\cdot s_{l}\cdot s_{0}\cdot s_{1}\cdot ...\cdot s_{k}\\
&=& s_{k}\cdot  ...\cdot s_{0}\cdot s_{n-1}\cdot ...s_{i}\cdot s_{i-1}\cdot  ...\cdot s_{l+1}\cdot s_{l}\cdot s_{l+1}\cdot ...\cdot s_{i-1}\cdot s_{i}\cdot ...\cdot s_{n-1}\cdot s_{0} \cdot ...\cdot s_{k},\\
\end{array}$$
$$\begin{array}{lll} s_{\gamma}&=&s_{k}\cdot ..\cdot s_{1}\cdot s_{0}\cdot s_{i}\cdot s_{i+1}\cdot...\cdot s_{n-1}\cdot ...\cdot s_{i+1}\cdot s_{i}\cdot s_{0}\cdot s_{1}\cdot...\cdot s_{k}\\
&=&s_{i}\cdot s_{i+1}\cdot...\cdot s_{n-1}\cdot s_{0}\cdot s_{1}\cdot ...\cdot s_{k}\cdot ...\cdot s_{1}\cdot s_{0}\cdot s_{n-1}\cdot ...\cdot s_{i+1}\cdot s_{i},\\
\end{array}$$
and $s_{\alpha-\gamma}=s_{p_{k+1,j}}=s_{k+1}\cdot s_{k+2}\cdot ...\cdot s_{j}\cdot ..\cdot s_{k+2}\cdot s_{k+1},$ and 
 $$\begin{array}{lll}s_{\beta-\gamma}&=&s_{p_{l,i-1}}=s_{l}\cdot s_{l+1}\cdot ...\cdot s_{i-1}\cdot...\cdot s_{l+1}\cdot s_{l}\\ 
 &=&s_{i-1}\cdot s_{i-2}\cdot...\cdot s_{l+1}\cdot s_{l}\cdot s_{l+1}\cdot...\cdot s_{i-2}\cdot s_{i-1}.\\  
 \end{array}$$
Thus, we can take $u=s_{i}\cdot s_{i+1}\cdot...\cdot s_{n-1}\cdot s_{0}\cdot s_{1}\cdot  ...\cdot s_{k}$ since $s_{k}\cdot s_{k}=s_{k}.$

\textit{Case a5:} Assume that $\alpha=p_{i,j}$ and $\beta=p_{l,k}$ where $0\leq j<k \leq i< l\leq n-1.$ Then $\gamma=p_{l,j}.$ Furthermore, $\alpha-\gamma=p_{i,l-1}$ and $\beta-\gamma=p_{j+1,k},$ hence both are roots. This case is similar to $a4).$

Now, note that, in all cases above we have $s_{\alpha}=u\cdot s_{\alpha-\gamma}\cdot u^{-1},$ $s_{\beta}=u^{-1}\cdot s_{\beta-\gamma}\cdot u$ where $s_{\gamma}=u\cdot u^{-1}.$ Also, note that $u^{-1}\cdot s_{\beta}=s_{\beta}\cdot u^{-1}$ since each simple root that appears in $u^{-1}$ is in the support of $\beta$ since  $\gamma<\beta.$ Hence,

$\begin{array}{lll}s_{\alpha}\cdot s_{\beta}&=&u\cdot s_{\alpha-\gamma}\cdot u^{-1}\cdot s_{\beta}=u\cdot s_{\alpha-\gamma}\cdot s_{\beta}\cdot u^{-1}\\
&=&u\cdot s_{\alpha-\gamma}\cdot u^{-1}\cdot s_{\beta-\gamma}\cdot u \cdot u^{-1}=s_{\alpha}\cdot s_{\beta-\gamma}\cdot s_{\gamma}\\
\end{array}$

\textit{Case b:} Now, assume that $\gamma=\alpha \cap \beta$ is not a root. First, we will show that $\gamma=\gamma_{1}+\gamma_{2}$ for some $\gamma_{1},\gamma_{2}\in \Pi_{\text{aff}}^{\text{re},\,+}$ and $\alpha-\gamma_{1},\alpha-\gamma_{2},\alpha-\gamma,\beta-\gamma_{1},\beta-\gamma_{2}$ and $\beta-\gamma$ are all roots. Here, we have two cases;

\textit{Case b1:} Suppose that $\alpha=p_{i,j}$ and $\beta=p_{k,l}$ where $0\leq k\leq j<i-1\leq l-1<n-2.$ Note that, $\gamma=\alpha \cap \beta=p_{k,j}+p_{i,l}$ so we can take $\gamma_{1}=p_{k,j}$ and $\gamma_{2}=p_{i,l}.$ Also, note that $\alpha-\gamma_{1}=p_{i,k-1},$ $\alpha-\gamma_{2}=p_{l+1,j},$ $\alpha-\gamma=p_{l+1,k-1},$ $\beta-\gamma_{1}=p_{j+1,l},$ $\beta-\gamma_{2}=p_{k,i-1},$ and $\beta-\gamma=p_{j+1,i-1},$ thus all are roots.

\textit{Case b2:} Assume that $\alpha=p_{i,j}$ and $\beta=p_{k,l}$ where $0< k\leq j<i-1\leq l-1\leq n-2.$ Again, $\gamma=\alpha \cap \beta=p_{k,j}+p_{i,l}$ so we can take $\gamma_{1}=p_{k,j}$ and $\gamma_{2}=p_{i,l}.$ Furthermore, $\alpha-\gamma_{1}=p_{i,k-1},$ $\alpha-\gamma_{2}=p_{l+1,j},$ $\alpha-\gamma=p_{l+1,k-1},$ $\beta-\gamma_{1}=p_{j+1,l},$ $\beta-\gamma_{2}=p_{k,i-1},$ and $\beta-\gamma=p_{j+1,i-1},$ so all are roots.

Now, by $a)$ we can obtain some reduced expressions for $s_{\alpha}$ and $s_{\beta}$ such that $s_{\alpha}=u_{1}\cdot s_{\alpha-\gamma_{1}}\cdot u_{1}^{-1}$ and $s_{\beta-\gamma_{1}}=u_{1}^{-1}\cdot s_{\beta-\gamma_{1}}\cdot u_{1}$ for a permutation $u_{1}$ such that $s_{\gamma_{1}}=u_{1}\cdot u_{1}^{-1}$ since both $\alpha-\gamma_{1}$ and $\beta-\gamma_{1}$ are roots. Similarly, we can write $s_{\alpha-\gamma_{1}}=u_{2}\cdot s_{\alpha-\gamma_{1}-\gamma_{2}}\cdot u_{2}^{-1}$ and $s_{\beta-\gamma_{1}}=u_{2}^{-1}\cdot s_{\beta-\gamma_{1}-\gamma_{2}}\cdot u_{2}$ for a permutation $u_{2}$ such that $s_{\gamma_{2}}=u_{2}\cdot u_{2}^{-1}$ since both $\alpha-\gamma_{1}-\gamma_{2}$ and $\beta-\gamma_{1}-\gamma_{2}$ are roots. Also, note that $u_{1}^{-1}\cdot s_{\beta}=s_{\beta}\cdot u_{1}^{-1}$ and $u_{2}^{-1}\cdot s_{\beta}=s_{\beta}\cdot u_{2}^{-1}$ since each simple root that appears in $u_{1}^{-1}$ and $u_{2}^{-1}$ is in the support of $\beta$. Furthermore, $\text{supp}(\gamma_{1})$ and $\text{supp}(\gamma_{2})$ are disconnected so $s_{\gamma_{1}}\cdot s_{\gamma_{2}}=s_{\gamma_{2}}\cdot s_{\gamma_{1}}$ and $u_{1}\cdot s_{\gamma_{2}}=s_{\gamma_{2}}\cdot u_{1}$. Thus,

$\begin{array}{lll}
s_{\alpha}\cdot s_{\beta}&=&u_{1}\cdot s_{\alpha-\gamma_{1}}\cdot u_{1}^{-1}\cdot s_{\beta}
=u_{1}\cdot u_{2}\cdot s_{\alpha-\gamma_{1}-\gamma_{2}}\cdot u_{2}^{-1}\cdot u_{1}^{-1}\cdot s_{\beta} \\
&=&u_{1}\cdot u_{2}\cdot s_{\alpha-\gamma_{1}-\gamma_{2}}\cdot s_{\beta}\cdot u_{2}^{-1}\cdot u_{1}^{-1} \\
&=&u_{1}\cdot u_{2}\cdot s_{\alpha-\gamma_{1}-\gamma_{2}}\cdot u_{2}^{-1}\cdot u_{1}^{-1}\cdot s_{\beta-\gamma_{2}-\gamma_{1}}\cdot u_{1}\cdot u_{2}\cdot  u_{2}^{-1}\cdot u_{1}^{-1} \\

&=& s_{\alpha}\cdot s_{\beta-\gamma}\cdot s_{\gamma_{2}}\cdot s_{\gamma_{1}} 
= s_{\alpha}\cdot s_{\beta-\gamma}\cdot s_{\gamma_{1}}\cdot s_{\gamma_{2}} \\

\end{array}$

$3)$ \textit{Case a:} Note that, $\text{supp}(\alpha)\cap \text{supp}(\beta)=\emptyset$ in this case and since both $\alpha$ and $\beta$ are smaller than $c,$ the multiplication $s_{\alpha}s_{\beta}$ is reduced so $s_{\alpha}s_{\beta}=s_{\alpha}\cdot s_{\beta}.$ Now, by Corollary \ref{cor13.6} we get $s_{\alpha}\cdot s_{\beta}=s_{\alpha} s_{\beta}\leq s_{\alpha+\beta}.$

\textit{Case b:} Here, $\text{supp}(\alpha)$ and $\text{supp}(\beta)$ are disconnected. Hence, the multiplication, $s_{\alpha}s_{\beta}$ is reduced and by the braid relations any simple reflection that appears in $s_{\alpha}$ will commute with all simple reflections which appear in $s_{\beta}$ so $s_{\alpha}s_{\beta}=s_{\beta}s_{\alpha}.$ Thus, $s_{\alpha}\cdot s_{\beta}=s_{\alpha}s_{\beta}=s_{\beta}s_{\alpha}=s_{\beta}\cdot s_{\alpha}.$ \end{proof}

\begin{cor}\label{cor5.9} Let $w=s_{\beta_{1}}\cdot s_{\beta_{2}}\cdot ...\cdot s_{\beta_{r}}$ be such that $\beta_{i} \in \Pi_{\text{aff}}^{\text{re},\,+}$ for all $i$ and $\sum_{i=1}^{r}\beta_{i}=\alpha$ where $\alpha<c$. Then $w\leq s_{\alpha}$.

\end{cor}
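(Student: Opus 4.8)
I would argue by induction on $r$. The base case $r=1$ is immediate, since then $w=s_{\beta_1}=s_\alpha$. For the inductive step the plan is to produce a Hecke product of only $r-1$ reflections, say $s_{\gamma_1}\cdot s_{\gamma_2}\cdots s_{\gamma_{r-1}}$ with each $\gamma_i\in\Pi_{\text{aff}}^{\text{re},\,+}$ and $\sum_i\gamma_i=\alpha$, such that $w\le s_{\gamma_1}\cdot s_{\gamma_2}\cdots s_{\gamma_{r-1}}$; the induction hypothesis then finishes the argument.

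The key structural observation is that the supports of the $\beta_i$ are very constrained. Since $\alpha<c=\sum_{i=0}^{n-1}\alpha_i$, every coefficient of $\alpha$ in the simple root basis is $0$ or $1$; as $\sum_{i=1}^r\beta_i=\alpha$ with all $\beta_i$ positive roots, this forces $\text{supp}(\beta_1),\dots,\text{supp}(\beta_r)$ to be pairwise disjoint. Moreover each $\text{supp}(\beta_i)$ is a connected arc of the affine Dynkin diagram, being the support of a positive real root smaller than $c$, and the $\text{supp}(\beta_i)$ partition the arc $\text{supp}(\alpha)$. Hence the arcs $\text{supp}(\beta_i)$ are linearly ordered along $\text{supp}(\alpha)$, and I can choose $p\ne q$ so that $\text{supp}(\beta_p)$ sits at an end of this linear order and $\text{supp}(\beta_q)$ is its unique neighbor: then $\text{supp}(\beta_p)\cup\text{supp}(\beta_q)$ is again an arc, so $\beta_p+\beta_q$ is a positive real root smaller than $c$, whereas $\text{supp}(\beta_p)$ is disconnected from $\text{supp}(\beta_i)$ for every $i\ne p,q$.

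Next I would bring $s_{\beta_p}$ and $s_{\beta_q}$ together inside the Hecke product. For every $i\ne p,q$ the roots $\beta_p$ and $\beta_i$ have disjoint, disconnected supports, and $\beta_p+\beta_i$ is not a root (it is $\le\alpha<c$ with disconnected support), so $s_{\beta_p}\cdot s_{\beta_i}=s_{\beta_i}\cdot s_{\beta_p}$ by Lemma~\ref{lemma5.7}(3b). Using associativity of the Hecke product I can then slide $s_{\beta_p}$ past all the factors lying between it and $s_{\beta_q}$, rewriting $w=u\cdot(s_{\beta_p}\cdot s_{\beta_q})\cdot w'$, where $u$ and $w'$ are Hecke products of the remaining reflections $s_{\beta_i}$. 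Now $\text{supp}(\beta_p)\cap\text{supp}(\beta_q)=\emptyset$ and $\beta_p+\beta_q$ is a root, so Lemma~\ref{lemma5.7}(3a) gives $s_{\beta_p}\cdot s_{\beta_q}\le s_{\beta_p+\beta_q}$, and monotonicity of the Hecke product yields $w\le u\cdot s_{\beta_p+\beta_q}\cdot w'$. The right hand side is a Hecke product of $r-1$ reflections associated to positive real roots summing to $\alpha<c$, so by the induction hypothesis it is $\le s_\alpha$, whence $w\le s_\alpha$.

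The step I expect to require the most care is the combinatorial claim in the second paragraph: that pairwise disjointness of the supports together with $\sum_i\beta_i=\alpha$ forces the supports to partition the arc $\text{supp}(\alpha)$ with the adjacency structure of a path, so that an endpoint arc is disconnected from all but one of the others. This is exactly what guarantees that we can always decrease $r$, and verifying it amounts to a short check using the explicit description of the roots $p_{i,j}$ and their supports. Everything else is a direct application of Lemma~\ref{lemma5.7} and the basic properties of the Hecke product recalled in Section~\ref{hecke product}.
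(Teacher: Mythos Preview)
Your proof is correct and follows essentially the same route as the paper's own argument, which is simply the one-line remark ``The proof follows by using Lemma~\ref{lemma5.7} part 3) repeatedly.'' You have supplied exactly the details that make this work: the observation that $\alpha<c$ forces the $\text{supp}(\beta_i)$ to be pairwise disjoint sub-arcs partitioning the arc $\text{supp}(\alpha)$, so that an endpoint arc can always be commuted (via 3b) next to its unique neighbor and then merged (via 3a), reducing $r$ by one.
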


\begin{proof} The proof follows by using Lemma \ref{lemma5.7} part $3)$ repeatedly. 

\end{proof}

\subsection{Lemma about Decomposition}

\begin{lemma}\label{lemma6} Let $\alpha,\beta \in \Pi_{\text{aff}}^{\text{re},\,+}$ such that $\beta<c$ and $\alpha=mc+\beta$ for an integer $m\geq 1.$ Then $s_{\alpha}=(s_{\beta}s_{\beta'})^{m}s_{\beta}$ where $\beta+\beta'=c.$  \end{lemma}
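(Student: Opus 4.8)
The plan is to reduce the identity to a computation inside the translation part of $W_{\text{aff}}=W\ltimes Q^\vee$, using the formula $s_{\eta+k\delta}=s_\eta\, t_{k\eta}$ recorded in Section \ref{affine Weyl group} together with the type $A$ identification $\delta=c$. First I would write $\beta=\bar\beta+k\delta$ where $\bar\beta\in\Pi$ is the finite part of $\beta$ and $k\in\{0,1\}$: if $\beta=p_{i,j}$ with $1\le i\le j\le n-1$ then $k=0$ and $\bar\beta=\beta\in\Pi^+$, while if $\alpha_0\in\mathrm{supp}(\beta)$ then $k=1$ and $\bar\beta$ is the (negative) finite root obtained by deleting $\alpha_0$. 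Then $\beta'=c-\beta=-\bar\beta+(1-k)\delta$; from the explicit list of affine positive real roots smaller than $c$ (the $p_{i,j}$) one checks that $\beta'$ is again such a root — its support is the complement $p_{j+1,i-1}$ (indices mod $n$) — so $s_{\beta'}$ is defined and equals $s_{-\bar\beta}\,t_{(1-k)(-\bar\beta)}=s_{\bar\beta}\,t_{(k-1)\bar\beta}$, using $s_{-\bar\beta}=s_{\bar\beta}$.

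The key step is the identity $s_\beta s_{\beta'}=t_{-\bar\beta}$. To get it I would use $s_\beta=s_{\bar\beta}\,t_{k\bar\beta}$, the conjugation rule $t_{w\cdot\mu}=wt_\mu w^{-1}$ (equivalently $t_\mu w=w\,t_{w^{-1}\mu}$), and $s_{\bar\beta}(\bar\beta)=-\bar\beta$, so that
$$s_\beta s_{\beta'}=s_{\bar\beta}\,t_{k\bar\beta}\,s_{\bar\beta}\,t_{(k-1)\bar\beta}=s_{\bar\beta}\,s_{\bar\beta}\,t_{-k\bar\beta}\,t_{(k-1)\bar\beta}=t_{-\bar\beta}.$$
Iterating gives $(s_\beta s_{\beta'})^m=t_{-m\bar\beta}$.

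Then I would finish by expanding
$$(s_\beta s_{\beta'})^m s_\beta=t_{-m\bar\beta}\,s_{\bar\beta}\,t_{k\bar\beta}=s_{\bar\beta}\,t_{m\bar\beta}\,t_{k\bar\beta}=s_{\bar\beta}\,t_{(m+k)\bar\beta},$$
while on the other hand $\alpha=mc+\beta=\bar\beta+(m+k)\delta$, so again by $s_{\eta+k\delta}=s_\eta t_{k\eta}$ we have $s_\alpha=s_{\bar\beta}\,t_{(m+k)\bar\beta}$. The two expressions coincide, which is exactly the asserted formula.

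The computation itself is short; the only point requiring care is the bookkeeping of the finite part $\bar\beta$ and the $\delta$-coefficient of $\beta$, in particular the case $\alpha_0\in\mathrm{supp}(\beta)$ where $\bar\beta$ is a negative finite root. Since $s_{-\bar\beta}=s_{\bar\beta}$ and the translation formulas are insensitive to the sign of $\bar\beta$, no genuine case split is needed and the argument stays uniform; the one auxiliary fact worth stating explicitly beforehand is that $\beta'=c-\beta$ is itself an affine positive real root smaller than $c$, which is immediate from the $p_{i,j}$ description of such roots.
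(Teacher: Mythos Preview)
Your proof is correct and takes a genuinely different route from the paper. The paper argues by induction on $m$: it first establishes the identity $s_{\alpha}=s_{\beta}s_{\gamma}s_{\beta}$ with $\gamma=mc-\beta$ via the reflection formula $s_{s_{\beta}(\gamma)}=s_{\beta}s_{\gamma}s_{\beta}^{-1}$ (using $\langle\gamma,\beta^{\vee}\rangle=-2$), and then observes that $\gamma=(m-1)c+\beta'$, so the induction hypothesis applied to $s_{\gamma}$ unwinds the product. Your argument instead works directly in the semidirect product $W\ltimes Q^{\vee}$: you write both sides explicitly as $s_{\bar\beta}\,t_{(m+k)\bar\beta}$ using the formula $s_{\eta+m\delta}=s_{\eta}t_{m\eta}$ and the conjugation rule for translations, and simply compare. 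Your approach is shorter and avoids induction; it also makes transparent the identity $s_{\beta}s_{\beta'}=t_{-\bar\beta}$, which the paper derives separately later (in the proof of Lemma~\ref{lemma10}). The paper's inductive proof, on the other hand, stays entirely at the level of reflections and the pairing $\langle\,,\,\rangle$, never needing to decompose elements into their finite and translation parts, which is closer in spirit to how the surrounding arguments are organized.
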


\begin{proof} First, we will show that $s_{\alpha}=s_{\beta}s_{\gamma}s_{\beta}$ where  $\gamma=mc-\beta$. Note that, $\langle \gamma,\beta^\vee  \rangle =\langle mc-\beta,\beta^\vee \rangle=-2$, hence $s_{\beta}(\gamma)=\gamma-\langle \gamma,\beta^\vee \rangle \beta=\gamma+2\beta=\alpha.$ So $s_{\alpha}=s_{s_{\beta}(\gamma)}=s_{\beta}s_{\gamma}s_{\beta}^{-1}=s_{\beta}s_{\gamma}s_{\beta}.$
Next, we will prove the statement by induction on $m$. First, suppose that $m=1.$ Then $s_{\alpha}=s_{\beta}s_{\gamma}s_{\beta}$ where  $\gamma=mc-\beta=c-\beta.$ Now, assume that the statement is true for $m=k$ for a positive integer $k$. We will prove that it is also true for $m=k+1.$ Let $\alpha=(k+1)c+\beta$ where $\beta \in \Pi_{\text{aff}}^{\text{re},\,+}$ and $\beta<c.$ Again, we can write $s_{\alpha}=s_{\beta}s_{\gamma}s_{\beta}$ where  $\gamma=(k+1)c-\beta.$ Note that, $\gamma=(k+1)c-\beta=kc+c-\beta$ and $\beta':=c-\beta$ is a positive root which is smaller than $c.$ So by the induction assumption we can write  $s_{\gamma}=(s_{\beta'}s_{\beta})^{k}s_{\beta'}.$ Thus $s_{\alpha}=s_{\beta}s_{\gamma}s_{\beta}=s_{\beta}(s_{\beta'}s_{\beta})^{k}s_{\beta'}s_{\beta}=(s_{\beta}s_{\beta'})^{k+1}s_{\beta}.$ \end{proof}

\begin{cor}\label{cor7} Let $\alpha \in \Pi_{\text{aff}}^{\text{re},\,+}$ such that $\alpha>c.$ Then $s_{\alpha}=s_{\beta_{1}}s_{\beta_{2}}...s_{\beta_{k}}$ for some $k$ where $\beta_{i} \in \Pi_{\text{aff}}^{\text{re},\,+}$ such that $\beta_{i}<c$ for all $i$ and $\sum_{i=1}^{k}\beta_{i}=\alpha.$
\end{cor}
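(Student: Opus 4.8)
The plan is to strip off copies of $c$ from $\alpha$ until the remainder drops below $c$, and then to invoke Lemma \ref{lemma6} together with the observation, already used inside its proof, that the complementary root $c-\beta$ is again a positive real root smaller than $c$.

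The first and only substantive step is to show that $\alpha$ can be written as $\alpha = mc + \beta$ with $m \geq 1$ an integer and $\beta \in \Pi_{\text{aff}}^{\text{re},+}$ satisfying $\beta < c$. Recalling $c = \delta = \alpha_0 + \theta$, write $\alpha = M\delta + \bar\alpha$ with $\bar\alpha \in \Pi$ and $M \in \Z$. If $M = 0$ then $\alpha = \bar\alpha \in \Pi^+$, hence $\alpha \leq \theta < \delta = c$, contradicting $\alpha > c$; so $M \geq 1$. If $\bar\alpha \in \Pi^+$, I would set $m = M$ and $\beta = \bar\alpha$, so that $\beta \in \Pi^+ \subset \Pi_{\text{aff}}^{\text{re},+}$ and $\beta \leq \theta < c$. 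If instead $\bar\alpha = -\gamma_0$ with $\gamma_0 \in \Pi^+$, then $M \geq 2$ (otherwise $\alpha = \delta - \gamma_0$ and $c - \alpha = \gamma_0 > 0$ would force $\alpha < c$); here I would set $m = M - 1 \geq 1$ and $\beta = \delta - \gamma_0$, which is the affine positive real root $\delta + (-\gamma_0)$ and satisfies $c - \beta = \gamma_0 > 0$, i.e.\ $\beta < c$. In either case $\alpha = mc + \beta$ with the required properties.

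Then Lemma \ref{lemma6} yields $s_\alpha = (s_\beta s_{\beta'})^m s_\beta$ with $\beta + \beta' = c$, and $\beta' = c - \beta$ is a positive real root smaller than $c$ by the remark in the proof of that lemma. Expanding this product as the alternating word $s_{\beta_1} s_{\beta_2} \cdots s_{\beta_k}$ with $k = 2m+1$, $\beta_1 = \beta$, $\beta_2 = \beta'$, $\beta_3 = \beta$, and so on, every $\beta_i$ lies in $\Pi_{\text{aff}}^{\text{re},+}$ and is smaller than $c$, while $\sum_{i=1}^{k} \beta_i = (m+1)\beta + m\beta' = m(\beta + \beta') + \beta = mc + \beta = \alpha$. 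This is exactly the asserted decomposition, with $\beta_i < c$ for all $i$.

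The bookkeeping in the last paragraph is routine; the crux is the reduction in the middle paragraph, namely that an affine positive real root strictly above $c$ always admits, modulo $c$, a representative strictly below $c$ with strictly positive multiplicity. I expect this small case distinction, governed by whether the finite part $\bar\alpha$ of $\alpha$ is a positive or a negative root of $\mathfrak{g}$, to be the main (though elementary) point to get right; everything else is an immediate consequence of Lemma \ref{lemma6}.
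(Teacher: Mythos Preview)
Your proof is correct and follows exactly the paper's approach: write $\alpha = mc + \beta$ with $m \geq 1$ and $\beta < c$, then invoke Lemma \ref{lemma6} to obtain $s_\alpha = (s_\beta s_{\beta'})^m s_\beta$ with $\beta + \beta' = c$. You supply more detail than the paper in justifying the decomposition $\alpha = mc + \beta$ via the sign of the finite part $\bar\alpha$, whereas the paper simply asserts this form; the substance is otherwise identical.
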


\begin{proof} Now, suppose that $\alpha=mc+\beta$ for some integer $m$ such that $m\geq 1$ and an affine positive real root $\beta$ which is smaller than $c.$ Then by Lemma \ref{lemma6} we have $s_{\alpha}=(s_{\beta}s_{\beta'})^{m}s_{\beta}$ where  $\beta+\beta'=c.$\end{proof}

\subsection{Lemmas about Lengths}

The main goal of this section is to compute the lengths of some elements of the affine Weyl group $W_{\text{aff}}$ which appear in the curve neighborhoods.

\begin{lemma}\label{lemma10} Let $\beta,\beta',\alpha,\alpha' \in \Pi_{\text{aff}}^{\text{re},\,+}$ such that $\alpha+\alpha'=\beta+\beta'=c.$ Also, suppose that $\beta\neq \alpha$. Then $(s_{\beta}s_{\beta'})^{m}\neq (s_{\alpha}s_{\alpha'})^{m}$ for any positive integer $m.$
\end{lemma}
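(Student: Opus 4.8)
Proof plan for Lemma 2.10 (the final statement: if $\alpha+\alpha'=\beta+\beta'=c$ with $\beta\neq\alpha$, then $(s_\beta s_{\beta'})^m\neq(s_\alpha s_{\alpha'})^m$).

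The plan is to exploit the fact that $s_\beta s_{\beta'}$ is, up to the center, a translation, and to compute this translation explicitly. Since $\beta+\beta'=c$ and $\beta,\beta'<c$, we have $\beta'=c-\beta$, so $\langle\beta',\beta^\vee\rangle=\langle c-\beta,\beta^\vee\rangle=-2$ (using $\langle c,\beta^\vee\rangle=0$). Writing $\beta=\gamma+d\delta$ for the finite root $\gamma\in\Pi$ underlying $\beta$ (so $\beta'=-\gamma+(1-d)\delta$ after using $c=\delta$), a direct computation with $s_{\beta}=s_\gamma t_{d\gamma}$ and $s_{\beta'}=s_{-\gamma}t_{(1-d)(-\gamma)}=s_\gamma t_{(d-1)\gamma}$ gives
\begin{equation*}
s_\beta s_{\beta'}=s_\gamma t_{d\gamma}\,s_\gamma t_{(d-1)\gamma}=t_{d\gamma}\,t_{(d-1)(-\gamma)}\cdot(\text{check signs})=t_{\gamma}
\end{equation*}
— more carefully, $s_\gamma t_{d\gamma}s_\gamma = t_{s_\gamma(d\gamma)}=t_{-d\gamma}$, so $s_\beta s_{\beta'}=t_{-d\gamma}t_{(d-1)\gamma}=t_{-\gamma}$. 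The key point is that $s_\beta s_{\beta'}=t_{\pm\gamma}$ is a pure translation in $Q^\vee$, depending on $\beta$ only through its finite part $\gamma$ (with a definite sign determined by the convention for which of $\beta,\beta'$ is "first"). Then $(s_\beta s_{\beta'})^m=t_{\mp m\gamma}$.

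With this in hand the lemma is immediate: if $(s_\beta s_{\beta'})^m=(s_\alpha s_{\alpha'})^m$ then $t_{m\gamma}=t_{m\gamma''}$ where $\gamma,\gamma''$ are the finite parts of $\beta,\alpha$ respectively, forcing $m\gamma=m\gamma''$ in $Q^\vee$ (translations $t_\mu$ are distinct for distinct $\mu$), hence $\gamma=\gamma''$ since $Q^\vee$ is torsion-free and $m\geq 1$. But $\beta$ is determined by its finite part $\gamma$ together with the constraint $\beta<c$ (equivalently $0\le d\le 1$ with the appropriate positivity), so $\gamma=\gamma''$ and $\beta+\beta'=\alpha+\alpha'=c$ forces $\{\beta,\beta'\}=\{\alpha,\alpha'\}$, and combined with the fixed ordering convention, $\beta=\alpha$ — contradicting $\beta\neq\alpha$. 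I would phrase the write-up so as to sidestep any ordering subtlety: show $(s_\beta s_{\beta'})^m$ and $(s_{\beta'}s_\beta)^m$ are the two translations $t_{\pm m\gamma}$, and that $\alpha\neq\beta$ with $\alpha+\alpha'=\beta+\beta'=c$ means $\alpha\notin\{\beta\}$ and (since the pair sums to $c$) also the unordered pairs differ unless $\alpha=\beta'$; handle $\alpha=\beta'$ directly, where $(s_\alpha s_{\alpha'})^m=(s_{\beta'}s_\beta)^m=t_{\mp m\gamma}\ne t_{\pm m\gamma}$ for $m\ge 1$ since $\gamma\neq 0$.

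The main obstacle is bookkeeping the affine reflection formulas and sign conventions correctly: one must pin down, for a positive affine real root $\beta<c$ with finite part $\gamma$, the exact translation lattice vector produced by $s_\beta s_{\beta'}$, and verify it is nonzero and changes sign when $\beta,\beta'$ are swapped. Once that computation is done cleanly using the identities $s_{\alpha+m\delta}=s_\alpha t_{m\alpha}$, $t_{w\mu}=wt_\mu w^{-1}$, and $s_\gamma(\gamma)=-\gamma$ recalled in Section 2, the rest is a one-line consequence of the injectivity of $\mu\mapsto t_\mu$ and torsion-freeness of $Q^\vee$. Alternatively — and this may be the cleanest route for the paper — one can avoid coordinates entirely: note $s_\beta s_{\beta'}$ has infinite order (it is a nontrivial translation), so all its powers are distinct, and then observe that $(s_\alpha s_{\alpha'})^m=(s_\beta s_{\beta'})^m$ together with $\beta\neq\alpha$ would force, after one more reflection identity, an equality of translations by distinct lattice vectors. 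I would present the explicit-translation argument as the primary proof since it also records the useful fact $s_\beta s_{\beta'}=t_{\pm\gamma}$ for later sections.
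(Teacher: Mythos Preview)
Your approach is essentially the same as the paper's: both compute $s_\beta s_{\beta'}$ explicitly as a translation and then invoke the injectivity of $\mu\mapsto t_\mu$ on $Q^\vee$. The paper splits into the two cases $\alpha_0\notin\operatorname{supp}(\beta)$ (yielding $s_\beta s_{\beta'}=t_{-\beta}$) and $\alpha_0\in\operatorname{supp}(\beta)$ (yielding $s_\beta s_{\beta'}=t_{c-\beta}$), whereas you package both cases uniformly via the finite part $\gamma$ of $\beta$, obtaining $s_\beta s_{\beta'}=t_{-\gamma}$; your observation that a positive real root $\beta<c$ is uniquely determined by its finite part is exactly what collapses the paper's three contradictory alternatives ($\beta=\alpha$, $\beta-\alpha=\pm c$) into one. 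Once you clean up the sign bookkeeping you flagged, your write-up will be marginally tidier than the paper's but not materially different.
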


\begin{proof}  First, assume that $\alpha_{0}\notin \text{supp}(\beta).$ Now, $s_{\beta'}=s_{c-\beta}=s_{-\beta}t_{-\beta}=s_{\beta}t_{-\beta}$. Thus $s_{\beta}s_{\beta'}=s_{\beta}s_{c-\beta}=s_{\beta}s_{\beta}t_{-\beta}=t_{-\beta}.$ So $(s_{\beta}s_{\beta'})^{m}=(t_{-\beta})^{m}=t_{-m\beta}.$ Second, assume that $\alpha_{0}\in \text{supp}(\beta).$ Then $s_{\beta}=s_{c-(c-\beta)}=s_{-(c-\beta)}t_{-(c-\beta)}=s_{(c-\beta)}t_{-(c-\beta)}$. Hence
$$s_{\beta}s_{\beta'}=s_{(c-\beta)}t_{-(c-\beta)}s_{c-\beta}=t_{s_{c-\beta}(-(c-\beta))}
=t_{(c-\beta)}$$
So $(s_{\beta}s_{\beta'})^{m}=(t_{c-\beta})^{m}=t_{m(c-\beta)}.$
Similarly, we have either $(s_{\alpha}s_{\alpha'})^{m}=t_{-m\alpha}$ or $(s_{\alpha}s_{\alpha'})^{m}=t_{m(c-\alpha)}$. So if $(s_{\beta}s_{\beta'})^{m}= (s_{\alpha}s_{\alpha'})^{m}$ then either $t_{-m\beta}=t_{-m\alpha}$, $t_{-m\beta}=t_{m(c-\alpha)},$ $t_{m(c-\beta)}=t_{-m\alpha},$ or $t_{m(c-\beta)}=t_{m(c-\alpha)}$ so we have either $\beta=\alpha$, $\beta-\alpha=c,$ or $\alpha-\beta=c,$ but this is a contradiction. \end{proof}

\begin{lemma}\label{lemma10.2} Let $\beta \in \Pi_{\text{aff}}^{\text{re},\,+}$ such that $\beta<c$. Then $\sum_{\gamma \in \Pi^{+}}|\langle \beta,\gamma  \rangle |=2(n-1).$

\end{lemma}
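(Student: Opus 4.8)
The plan is to reduce the identity to a computation inside the \emph{finite} root system. Since $\beta$ is an affine real root, we may write $\beta = m\delta + \gamma_{0}$ with $m\in\Z$ and $\gamma_{0}\in\Pi$ a nonzero finite root (it cannot be $0$, or else $\beta$ would be imaginary); when $\beta = p_{i,j} < c$ one simply reads $\gamma_{0}$ off from the description in Section~\ref{affine Weyl group}, e.g. $\gamma_{0} = \varepsilon_{i}-\varepsilon_{j+1}$ when $i\le j$, and a finite root of the same shape $\varepsilon_{a}-\varepsilon_{b}$ in the remaining cases. Now for $\gamma\in\Pi^{+}$ the coroot $\gamma^{\vee}$ lies in $\mathfrak{h}$, on which $\delta$ vanishes, so $\langle\delta,\gamma^{\vee}\rangle = 0$; hence, under the type-$A$ identification of $\mathfrak{h}^{*}$ with $\mathfrak{h}$ (roots with coroots), $\langle\beta,\gamma\rangle = \langle\beta,\gamma^{\vee}\rangle = \langle\gamma_{0},\gamma^{\vee}\rangle$. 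Thus the left-hand side depends only on $\gamma_{0}$, and it suffices to prove $\sum_{\gamma\in\Pi^{+}}|\langle\gamma_{0},\gamma^{\vee}\rangle| = 2(n-1)$ for an arbitrary finite root $\gamma_{0} = \varepsilon_{a}-\varepsilon_{b}$ with $a\ne b$.

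For $\gamma = \varepsilon_{k}-\varepsilon_{l}$ with $k<l$ one has $\langle\varepsilon_{a}-\varepsilon_{b},\gamma^{\vee}\rangle = f(k)-f(l)$, where $f\colon\{1,\dots,n\}\to\Z$ is the function with $f(a)=1$, $f(b)=-1$ and $f\equiv 0$ elsewhere. Therefore
\[
\sum_{\gamma\in\Pi^{+}}|\langle\gamma_{0},\gamma^{\vee}\rangle| = \sum_{1\le k<l\le n}|f(k)-f(l)| = \tfrac12\sum_{k\ne l}|f(k)-f(l)|.
\]
Here $\sum_{l\ne a}|f(a)-f(l)| = |1-(-1)| + (n-2)\cdot|1-0| = n$, and likewise $\sum_{l\ne b}|f(b)-f(l)| = n$, while for each of the $n-2$ remaining indices $k$ one gets $\sum_{l\ne k}|f(k)-f(l)| = |0-1|+|0-(-1)| = 2$. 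Summing gives $\sum_{k\ne l}|f(k)-f(l)| = 2n + 2(n-2) = 4n-4$, and dividing by $2$ yields $2(n-1)$, as claimed.

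There is no serious obstacle in this argument; the two points requiring care are (i) confirming that the ``$\delta$-part'' of $\beta$ genuinely drops out of every pairing $\langle\beta,\gamma^{\vee}\rangle$ with $\gamma\in\Pi^{+}$ — which is precisely the statement that $\delta$ annihilates $\mathfrak{h}$ — and (ii) organizing the absolute-value sum so that the two indices $a,b$ enter symmetrically; symmetrizing over ordered pairs $(k,l)$ as above does this cleanly and makes manifest that the answer is the same for every $\gamma_{0}$. One could instead enumerate the finitely many shapes of $p_{i,j}$ from Section~\ref{affine Weyl group} and verify each case by hand, but the uniform computation above is shorter and avoids the case analysis.
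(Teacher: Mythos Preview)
Your proof is correct. The key observation that the $\delta$-part of $\beta$ drops out of every pairing $\langle\beta,\gamma^{\vee}\rangle$ with $\gamma\in\Pi^{+}$ (because $\delta$ vanishes on $\mathfrak{h}$) is exactly right, and the symmetrized count $\tfrac{1}{2}\sum_{k\ne l}|f(k)-f(l)|$ is clean and correct.

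Your route differs genuinely from the paper's. The paper fixes $\beta=p_{i,j}$ with $\alpha_{0}\notin\text{supp}(\beta)$ and then partitions $\Pi^{+}$ into several geometric types relative to $\beta$ (equal, contained in, disconnected from, adjacent to, overlapping, containing), computes $\langle\beta,\gamma\rangle$ and counts contributions in each type, and finally declares the case $\alpha_{0}\in\text{supp}(\beta)$ ``similar.'' Your argument dispenses with this bifurcation entirely: the reduction $\beta\mapsto\gamma_{0}$ handles both support cases at once, and the $\varepsilon$-coordinate computation via $f$ treats all $\gamma\in\Pi^{+}$ uniformly without any case split. What your approach buys is brevity and a manifestly $W$-invariant answer (the sum visibly depends only on the orbit of $\gamma_{0}$); what the paper's approach buys is a more hands-on tally that also exhibits, as a byproduct, exactly which $\gamma$ contribute $0,\pm1,2$ --- information that is not needed for the lemma itself but resonates with the inversion-set arguments used elsewhere in the paper.
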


\begin{proof} First, assume that $\alpha_{0}\notin \text{supp}(\beta).$ Let $\beta=p_{i,j}=\varepsilon_{i}-\varepsilon_{j+1}$ such that $1\leq i<j\leq n-1$ and $\gamma \in \Pi^{+}.$ We have several cases here;
\begin{itemize}

\item If $\gamma=\beta$ then $\langle \beta,\gamma  \rangle =2.$ If $\gamma<\beta$ then $\gamma=p_{k,l}$ such that either $i<k<l<j$ which implies $\langle\beta,\gamma  \rangle =0$, or $k=i<l<j$ or $i<k<l=j$ which implies $\langle\beta,\gamma  \rangle =1$ and there are $2(j-i)$ such $\gamma.$

\item If $\text{supp}(\gamma)$ and $\text{supp}(\beta)$ are disconnected then $\langle\beta,\gamma  \rangle =0.$

\item If $\beta+\gamma$ is a root then $\gamma=p_{k,l}$ such that either $1\leq k<l=i-1$ or $k=j+1<l\leq n-1$ and so $\langle \beta,\gamma  \rangle =-1.$ We have $n-j+i-2$ such $\gamma.$

\item If $\beta \cap \gamma\neq \emptyset$ and $\beta \cap \gamma\neq \beta, \gamma$ then $\gamma=p_{k,l}$ such that either $1\leq k<i<l<j$ or $i<k<j<l\leq n-1$ which implies $\langle \beta,\gamma  \rangle =0.$ 

\item If $\gamma>\beta$ then $\gamma=p_{k,l}$ such that either $1\leq k<i<j<l\leq n-1$ which implies $\langle \beta,\gamma  \rangle =0$, or $k=i<j<l\leq n-1$ or $1\leq k<i<j=l$ which follows by $\langle \beta,\gamma  \rangle =1$ and there are $n-j+i-2$ such $\gamma.$

\end{itemize}
Thus  $\sum_{\gamma \in \Pi^{+}}|\langle \beta,\gamma  \rangle |=2+(n-j+i-2)+2(j-i)+(n-j+i-2)=2(n-1).$ Next, if $\alpha_{0}\in \text{supp}(\beta)$ then the proof is similar. 

\end{proof}

\begin{lemma} \label{lemma11} Let $m$ be a positive integer and $\beta,\beta' \in \Pi_{\text{aff}}^{\text{re},\,+}$ such that $\beta+\beta'=c.$ Then $(s_{\beta}s_{\beta'})^{m}$ is reduced for any positive integer $m$. In particular, $\ell{((s_{\beta}s_{\beta'})^{m})}=2m(n-1)$. 
\end{lemma}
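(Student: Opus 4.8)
The plan is to deduce both claims from the translation length formula, using the structural computation already made inside the proof of Lemma \ref{lemma10}. That proof shows that $(s_{\beta}s_{\beta'})^{m}$ is a pure translation: if we let $\eta$ denote whichever of $\beta,\beta'$ does \emph{not} contain $\alpha_{0}$ in its support --- exactly one of them does, since $\beta+\beta'=c=\sum_{i=0}^{n-1}\alpha_{i}$ has every coefficient equal to $1$ --- then $\eta\in\Pi^{+}$ is a (finite) positive root with $\eta<c$, and $(s_{\beta}s_{\beta'})^{m}=t_{-m\eta}$ when $\alpha_{0}\notin\text{supp}(\beta)$, respectively $(s_{\beta}s_{\beta'})^{m}=t_{m\eta}$ when $\alpha_{0}\in\text{supp}(\beta)$.

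Next I would compute $\ell\big((s_{\beta}s_{\beta'})^{m}\big)$. Applying Equation \ref{equation5.5} with $w=\id$, for any $\lambda\in Q^{\vee}$ we have $\ell(t_{\lambda})=\sum_{\gamma\in\Pi^{+}}|\langle\lambda,\gamma\rangle|$, because $\chi(\id\cdot\gamma<0)=0$ for every $\gamma\in\Pi^{+}$. Taking $\lambda=\pm m\eta$ and pulling the scalar $m$ out of the absolute values gives $\ell\big((s_{\beta}s_{\beta'})^{m}\big)=m\sum_{\gamma\in\Pi^{+}}|\langle\eta,\gamma\rangle|$, and since $\eta<c$, Lemma \ref{lemma10.2} evaluates the remaining sum to $2(n-1)$. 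Hence $\ell\big((s_{\beta}s_{\beta'})^{m}\big)=2m(n-1)$, which is the quantitative part of the statement.

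It then remains to identify $2m(n-1)$ with $m\big(\ell(s_{\beta})+\ell(s_{\beta'})\big)$. Because $\beta,\beta'<c$ both have all coefficients in $\{0,1\}$ and $\beta+\beta'=c$, the supports $\text{supp}(\beta)$ and $\text{supp}(\beta')$ partition $\Delta_{\text{aff}}$, so $|\text{supp}(\beta)|+|\text{supp}(\beta')|=n$; combined with the formula $\ell(s_{\alpha})=2|\text{supp}(\alpha)|-1$ for $\alpha<c$ recalled in Section \ref{affine Weyl group}, this yields $\ell(s_{\beta})+\ell(s_{\beta'})=2(n-1)$, whence $m\big(\ell(s_{\beta})+\ell(s_{\beta'})\big)=2m(n-1)$. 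Since length is subadditive under multiplication, concatenating reduced words for the $2m$ factors $s_{\beta},s_{\beta'},\dots,s_{\beta},s_{\beta'}$ produces a word of length at most $2m(n-1)$ representing $(s_{\beta}s_{\beta'})^{m}$; as its true length is $2m(n-1)$, that word is reduced, which is exactly the assertion that the product $(s_{\beta}s_{\beta'})^{m}$ is reduced. I do not anticipate a real obstacle here: the only point requiring a little care is the invocation of Lemma \ref{lemma10} to see that $(s_{\beta}s_{\beta'})^{m}$ is a translation $t_{\pm m\eta}$ with $\eta$ a genuine positive root $<c$, so that both Lemma \ref{lemma10.2} and Equation \ref{equation5.5} apply directly; the rest is bookkeeping.
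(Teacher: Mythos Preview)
Your proposal is correct and follows essentially the same approach as the paper: both use the proof of Lemma \ref{lemma10} to identify $(s_{\beta}s_{\beta'})^{m}$ as a translation $t_{\pm m\eta}$, apply Equation \ref{equation5.5} with trivial Weyl part to compute the length, invoke Lemma \ref{lemma10.2} to evaluate the sum as $2(n-1)$, and compare with the subadditivity bound $m(\ell(s_{\beta})+\ell(s_{\beta'}))=2m(n-1)$ coming from the support formula. Your only variation is to package the two cases into a single $\eta$ (the summand without $\alpha_{0}$), which is a cosmetic streamlining rather than a different argument.
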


\begin{proof} First, note that, since both $\beta$ and $\beta'$ are smaller than $c$ we have $\ell{(\beta)}=2|\text{supp}(\beta)|-1$ and $\ell{(\beta')}=2|\text{supp}(\beta')|-1$.  Also, $|\text{supp}(\beta)|+|\text{supp}(\beta')|=n.$ So 
$$\ell{((s_{\beta}s_{\beta'})^{m})}\leq m(\ell{(\beta)}+\ell{(\beta')})=m(2|\text{supp}(\beta)|-1+2|\text{supp}(\beta')|-1)=2m(n-1).$$

Now, assume that $\alpha_{0}\notin \text{supp}(\beta).$ Then we have $(s_{\beta}s_{\beta'})^{m}=t_{-m\beta};$ see the proof of Lemma \ref{lemma10}. Moreover, 
$$\ell{(t_{-m\beta})}=\sum_{\gamma \in \Pi^{+}}|\chi{(\gamma<0)}+\langle -m\beta,\gamma  \rangle |$$
by Equation  (\ref{equation5.5}). Here, note that, $\chi{(\gamma<0)=0}$ for all $\gamma \in \Pi^{+}$ and $\langle-m\beta,\gamma  \rangle =-m\langle\beta,\gamma  \rangle $ so 
$$\ell{(t_{-m\beta})}=\sum_{\gamma \in \Pi^{+}}|-m \langle \beta,\gamma  \rangle |= m\sum_{\gamma \in \Pi^{+}}|\langle \beta,\gamma  \rangle |.$$
By Lemma \ref{lemma10.2}, $\sum_{\gamma \in \Pi^{+}}|\langle \beta,\gamma  \rangle |=2(n-1).$ So $\ell{(t_{-m\beta})}=2m(n-1).$ 
Next, assume that $\alpha_{0}\in \text{supp}(\beta).$ Then we have $(s_{\beta}s_{\beta'})^{m}=t_{m(c-\beta)},$ see the proof of Lemma \ref{lemma10}. By the similar arguments above, one can show that $\ell{(t_{m(c-\beta)})}=2m(n-1).$ Hence $(s_{\beta}s_{\beta'})^{m}$ is reduced and $\ell{((s_{\beta}s_{\beta'})^{m})}=2m(n-1)$. \end{proof}

\begin{remark} Alternatively, one can prove Lemma \ref{lemma11} by using the following arguments: For any $\beta^{\vee} \in Q^{\vee}$ and $w \in W$ we have $\ell{(t_{\beta^{\vee}})}=\ell{(t_{w(\beta)})}.$ So one may restrict to the case where $\beta^{\vee}$ is a dominant root. Then we have $\ell{(t_{\beta^{\vee}})}=\langle \beta^{\vee},2\rho\rangle$ where $\rho=\sum_{i=1}^{n-1}\omega_{i}$ is the sum of the fundamental weights, see \cite{LS01}. But note that $\langle \alpha^{\vee}_{i},\omega_{j}\rangle=\delta_{ij}$ for $1\leq i,j\leq n-1$ and if we take $\beta=\theta=\sum_{i=1}^{n-1}\alpha_{i}$ we get $\langle \beta^{\vee},2\rho\rangle=2(n-1).$
\end{remark}

\begin{lemma}\label{lemma12.71} Let $\beta, \gamma_{1},\gamma_{2},...,\gamma_{k} \in \Pi_{\text{aff}}^{\text{re},\,+}$ such that $\beta<c$ and for any $1\leq i,j\leq k$ such that $i\neq j$, either $\text{supp}(\gamma_{i})$ and $\text{supp}(\gamma_{j})$ are disconnected or both $\gamma_{i}\perp \gamma_{j}$ and $\gamma_{i},\gamma_{j}$ are comparable. Also, suppose that either $\gamma_{i}\leq \beta$ or both $\beta \cap \gamma_{i}=\emptyset$ and $\beta \perp \gamma_{i}$ for any $1\leq i\leq k.$ Moreover, let $\gamma \in \Pi^{+}$ such that  $s_{\gamma_{1}}s_{\gamma_{2}}...s_{\gamma_{k}}(\gamma)<0.$ Then $<s_{\gamma_{k}}...s_{\gamma_{2}}s_{\gamma_{1}}(\beta), \gamma>\leq 0.$ \end{lemma}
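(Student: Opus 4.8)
The plan is to reduce this affine statement to a finite, type‑$A_m$ computation inside the root subsystem spanned by $\text{supp}(\beta)$, in which $\beta$ is the highest root.

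\emph{Step 1.} For $i\neq j$ the roots $\gamma_i,\gamma_j$ are orthogonal (either their supports are disconnected, or $\gamma_i\perp\gamma_j$ is assumed), so the reflections $s_{\gamma_1},\dots,s_{\gamma_k}$ pairwise commute; hence $s_{\gamma_k}\cdots s_{\gamma_1}=w$ where $w:=s_{\gamma_1}\cdots s_{\gamma_k}$, and it suffices to bound $\langle w(\beta),\gamma\rangle$. Put $I=\{i:\gamma_i\leq\beta\}$ and $J=\{i:\beta\cap\gamma_i=\emptyset\text{ and }\beta\perp\gamma_i\}$; by hypothesis $I\cup J=\{1,\dots,k\}$, and every $\gamma_i<c$ (if $\gamma_i\leq\beta$ this is clear, while $\gamma_i>c$ would force $\beta\leq\gamma_i$, contradicting $\beta\cap\gamma_i=\emptyset$). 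Reordering the commuting factors, $w=\big(\prod_{i\in I}s_{\gamma_i}\big)\big(\prod_{j\in J}s_{\gamma_j}\big)$, and each $s_{\gamma_j}$ with $j\in J$ fixes $\beta$; so $w(\beta)=w_I(\beta)$ with $w_I:=\prod_{i\in I}s_{\gamma_i}$, and $w_I(\beta)=\beta-\sum_{i\in I}\langle\beta,\gamma_i^{\vee}\rangle\gamma_i$ is a vector supported on $\text{supp}(\beta)$.

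\emph{Step 2.} Since $w$ is a product of pairwise orthogonal reflections $s_{\gamma_i}$ with $\gamma_i<c$, the argument in the proof of Theorem~\ref{thm12.7} applies verbatim: the set of positive real roots $\nu$ with $w(\nu)<0$ equals $\bigcup_i\{\nu\in\Pi_{\text{aff}}^{\text{re},\,+}:\nu\leq\gamma_i,\ \nu\not\perp\gamma_i\}$, and this union, of cardinality $\ell(w)=\sum_i(2|\text{supp}(\gamma_i)|-1)$, is disjoint. From $\gamma\in\Pi^{+}$ and $w(\gamma)<0$ we get a \emph{unique} $i_0$ with $\gamma\leq\gamma_{i_0}$ and $\gamma\not\perp\gamma_{i_0}$. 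If $i_0\in J$, then $\beta\cap\gamma_{i_0}=\emptyset$ and $\beta\perp\gamma_{i_0}$ force $\text{supp}(\beta)$ and $\text{supp}(\gamma_{i_0})$ to be disconnected (an overlap contradicts the first, an adjacency the second); as $\text{supp}(\gamma)\subseteq\text{supp}(\gamma_{i_0})$ is then disconnected from $\text{supp}(\beta)$ and $w(\beta)=w_I(\beta)$ is supported on $\text{supp}(\beta)$, we obtain $\langle w(\beta),\gamma\rangle=0$. So assume from now on $i_0\in I$, i.e. $\gamma\leq\gamma_{i_0}\leq\beta$.

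\emph{Step 3.} Now $\gamma$, the $\gamma_i$ ($i\in I$), and $w_I(\beta)$ all lie in the type‑$A_m$ subsystem spanned by $\text{supp}(\beta)$ ($m=|\text{supp}(\beta)|$), in which $\beta$ is the highest root, and I would work there. As $\beta$ is highest, $\langle\beta,\gamma_i^{\vee}\rangle$ is the number of ends of the arc $\text{supp}(\beta)$ met by $\gamma_i$ (so $0$, $1$, or $2$), and pairwise orthogonality forces at most one $\gamma_i$ to meet the left end, at most one the right end, coinciding only if $\gamma_i=\beta$. Hence $u:=w_I(\beta)$ equals $-\beta$ (if some $\gamma_i=\beta$, the rest then being $\perp\beta$), or $\beta$ (if no $\gamma_i$ meets an end), or $\beta-\gamma_L-\gamma_R$ with $\gamma_L$ and/or $\gamma_R$ possibly absent (otherwise), in which last case the pairwise condition makes $u$ a positive root of $A_m$ whose support is a proper subinterval of $\text{supp}(\beta)$ missing at least one end. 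If $u=-\beta$ then $\langle u,\gamma\rangle=-\langle\beta,\gamma\rangle\leq0$, since $\gamma$ is a positive and $\beta$ the highest root of $A_m$. If $u=\beta$ then $\gamma_{i_0}$, hence $\gamma\leq\gamma_{i_0}$, misses both ends of the arc, so $\langle u,\gamma\rangle=\langle\beta,\gamma\rangle=0$.

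\emph{Step 4.} The remaining case is $u=\beta-\gamma_L-\gamma_R$. Writing $u=\varepsilon_s-\varepsilon_t$ and $\gamma=\varepsilon_a-\varepsilon_b$ in the coordinates of $A_m$, we have $\langle u,\gamma\rangle=\delta_{sa}-\delta_{sb}-\delta_{ta}+\delta_{tb}$, so it suffices to show $s\neq a$ and $t\neq b$. For $s\neq a$: if $\gamma_L$ is absent then $s$ is the left end of the arc and $s=a$ would force $\gamma_{i_0}\leq\beta$ to meet that end, excluded in this case; if $\gamma_L$ is present then $s$ is the right endpoint of $\gamma_L$, and $s=a$ together with $\gamma\leq\gamma_{i_0}$ forces $\gamma_{i_0}\neq\gamma_L$, with the left endpoint of $\gamma_{i_0}$ strictly left of $s$ and its right endpoint strictly right of $s$; then $\text{supp}(\gamma_{i_0})$ and $\text{supp}(\gamma_L)$ overlap but are neither disconnected nor comparable ($\gamma_{i_0}$ runs past $s$ so is not nested in $\gamma_L$, and $\gamma_L$ reaches the left end of the arc while $\gamma_{i_0}$ does not), contradicting the hypothesis on the pair $\gamma_{i_0},\gamma_L$. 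The inequality $t\neq b$ then follows by applying the arc‑reversing diagram automorphism of $A_m$, which swaps $\gamma_L\leftrightarrow\gamma_R$ and the roles of $(s,a)$ with $(t,b)$. I expect Step 4 to be the main obstacle: it is where the laminar (nested‑or‑disconnected) structure of the $\gamma_i$ must be combined with $\gamma$ lying below the \emph{unique} inversion index $i_0$ to exclude every coincidence of endpoints, and the endpoint bookkeeping for $\gamma,\gamma_{i_0},\gamma_L,\gamma_R$ requires care.
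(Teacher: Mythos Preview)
Your proof is correct and takes a genuinely different route from the paper's. The paper uses the adjoint identity $\langle s_{\gamma_k}\cdots s_{\gamma_1}(\beta),\gamma\rangle=\langle\beta,s_{\gamma_1}\cdots s_{\gamma_k}(\gamma)\rangle$, expands $s_{\gamma_1}\cdots s_{\gamma_k}(\gamma)=\gamma-\sum_i\langle\gamma,\gamma_i^\vee\rangle\gamma_i$, and then runs a case analysis on the one or two indices $i$ with $\langle\gamma,\gamma_i\rangle\neq0$, matching each case against the possible values of $\langle\beta,\gamma_i\rangle$, $\langle\beta,\gamma\rangle$. You instead simplify $w(\beta)$ first: only the $\gamma_i\leq\beta$ touching an end of $\text{supp}(\beta)$ move $\beta$, so $w(\beta)\in\{\pm\beta,\ \beta-\gamma_L-\gamma_R\}$; you then pin $\gamma$ under a unique inversion root $\gamma_{i_0}$ via the length count of Theorem~\ref{thm12.7} and finish with endpoint arithmetic in $\varepsilon$-coordinates. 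Your route is more geometric and makes the interval structure explicit; the paper's is more algebraic and avoids naming $\gamma_L,\gamma_R$. Both require comparable casework.

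One small point in Step~4: from $s=a$ and $\gamma\leq\gamma_{i_0}$ you cannot conclude that the left $\varepsilon$-endpoint of $\gamma_{i_0}$ is \emph{strictly} left of $s$; it could equal $s$ (when $\gamma_{i_0}$ shares $\gamma$'s left endpoint). In that sub-case $\text{supp}(\gamma_{i_0})$ and $\text{supp}(\gamma_L)$ are adjacent rather than overlapping, but the pairwise hypothesis on $\gamma_{i_0},\gamma_L$ still fails (adjacent supports are not disconnected, and $\langle\gamma_{i_0},\gamma_L\rangle=-1$, so not perpendicular either), so the same contradiction goes through with a one-line check.
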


\begin{proof}First, observe that $<s_{\gamma_{k}}...s_{\gamma_{2}}s_{\gamma_{1}}(\beta), \gamma>=<\beta,s_{\gamma_{1}}s_{\gamma_{2}}...s_{\gamma_{k}}(\gamma)>.$ Now, note that $<\beta,\gamma_{i}>$ is either $2$, $1$ or $0$ since we have either $\gamma_{i}\leq \beta$ or both $\beta \cap \gamma_{i}=\emptyset$ and $\beta \perp \gamma_{i}$. Also, note that $s_{\gamma_{1}}s_{\gamma_{2}}...s_{\gamma_{k}}(\gamma)=\gamma-\sum_{i=1}^{k}<\gamma,\gamma_{i}^{\vee}>\gamma_{i}$ since $\gamma_{i}\perp \gamma_{j}$ for any $1\leq i,j\leq k$ such that $i\neq j.$ Now, if $<\gamma,\gamma_{i}^{\vee}>\leq 0$ for all $i=1,2,...,k$ then $s_{\gamma_{1}}s_{\gamma_{2}}...s_{\gamma_{k}}(\gamma)\geq \gamma >0.$ So $<\gamma,\gamma_{i}^{\vee}>>0$ for some $i.$ This implies that either $<\gamma,\gamma_{i}^{\vee}>=2$ or $<\gamma,\gamma_{i}^{\vee}>=1.$ Now, if $<\gamma,\gamma_{i}^{\vee}>=2$ then $\gamma=\gamma_{i}.$ But then $\gamma \perp \gamma_{j}$ for all $j\in \{1,2,...,k\} \setminus \{i\},$ which follows by $s_{\gamma_{1}}s_{\gamma_{2}}...s_{\gamma_{k}}(\gamma)=-\gamma_{i}$. Then $<\beta,s_{\gamma_{1}}s_{\gamma_{2}}...s_{\gamma_{k}}(\gamma)>=<\beta,-\gamma_{i}>=-<\beta,\gamma_{i}>$. Here, note that if $<\beta,\gamma_{i}>\neq 0$ then $\beta$ is not perpendicular to $\gamma_{i}$ which implies that  $\gamma_{i}\leq \beta$ and $<\beta,\gamma_{i}>$ is either $2$ or $1$ hence we are done with this case. Now, suppose that $<\gamma,\gamma_{i}^{\vee}>=1.$ Then either $\gamma <\gamma_{i}$ or $\gamma_{i} <\gamma$. Now suppose that $\gamma <\gamma_{i}$. Then we have three cases;

\begin{itemize}

\item $\gamma \perp \gamma_{j}$ for all $j\in \{1,2,...,k\} \setminus \{i\}$ which follows by $s_{\gamma_{1}}s_{\gamma_{2}}...s_{\gamma_{k}}(\gamma)=\gamma-\gamma_{i}<0.$ Then $<\beta,s_{\gamma_{1}}s_{\gamma_{2}}...s_{\gamma_{k}}(\gamma)>=<\beta,\gamma-\gamma_{i}>=<\beta,\gamma>-<\beta,\gamma_{i}>.$  If $<\beta,\gamma_i>=2$ then $\beta=\gamma_{i}$ and $<\beta,\gamma>=1$ in this case. If $<\beta,\gamma_i>=1$ then either $<\beta,\gamma>=1$ or $<\beta,\gamma>=0$. Last, if $<\beta,\gamma_i>=0$ then $<\beta,\gamma>=0.$ In all cases, we get $<\beta,\gamma>-<\beta,\gamma_{i}>\leq 0.$

\item For an index $q$, $<\gamma,\gamma_{q}^{\vee}>=-1$ and $\gamma \perp \gamma_{j}$ for all $j\in \{1,2,...,k\} \setminus \{i,q\}.$ Thus $s_{\gamma_{1}}s_{\gamma_{2}}...s_{\gamma_{k}}(\gamma)=\gamma-\gamma_{i}+\gamma_{q}<0.$ Now, we have either $\text{supp}(\gamma_{i})$ and $\text{supp}(\gamma_{q})$ are disconnected or $\gamma_{q}<\gamma_{i}$ or $\gamma_{i}<\gamma_{q}$. Now, by the facts that $\gamma <\gamma_{i}$ and  $<\gamma,\gamma_{q}^{\vee}>=-1$ we have to have $\gamma_{q}<\gamma_{i}.$ So $$<\beta,s_{\gamma_{1}}s_{\gamma_{2}}...s_{\gamma_{k}}(\gamma)>=<\beta,\gamma-\gamma_{i}+\gamma_{q}>=<\beta,\gamma>-<\beta,\gamma_{i}>+<\beta,\gamma_{q}>.$$ If $<\beta,\gamma_i>=2$ then $\beta=\gamma_{i}$ which follows by $<\beta,\gamma>=1$ and $<\beta,\gamma_{q}>=0$. If $<\beta,\gamma_i>=1$ then $<\beta,\gamma_{q}>=0 $ since $\gamma_{q}$ is smaller than $\gamma_{i}$ and $\gamma_{q}\perp \gamma_{i}$. Also, we have either $<\beta,\gamma>=1$ or $<\beta,\gamma>=0$ in this case since $\gamma<\gamma_{i}\leq \beta$. Last, if $<\beta,\gamma_i>=0$ then $<\beta,\gamma>=<\beta,\gamma_{q}>=0$ since both $\gamma_{q}$ and $\gamma$ are smaller than $\gamma_{i}.$ Note that, in all cases, we get $<\beta,\gamma>-<\beta,\gamma_{i}>+<\beta,\gamma_{q}>\leq 0.$

\item For an index $r\neq i$, $<\gamma,\gamma_{q}^{\vee}>=1$ and $\gamma \perp \gamma_{j}$ for all $j\in \{1,2,...,k\} \setminus \{i,r\}.$ Then either $\gamma<\gamma_{r}$ or $\gamma_{r}<\gamma$. But since either $\text{supp}(\gamma_{i})$ and $\text{supp}(\gamma_{r})$ are disconnected or both $\gamma_{i}\perp \gamma_{r}$ and $\gamma_{i}$ and $\gamma_{r}$ are comparable we have to have $\gamma_{r}<\gamma$ which implies that $\gamma_{r} <\gamma_{i}$. Thus $s_{\gamma_{1}}s_{\gamma_{2}}...s_{\gamma_{k}}(\gamma)=\gamma-\gamma_{i}-\gamma_{r}.$ So $$<\beta,s_{\gamma_{1}}s_{\gamma_{2}}...s_{\gamma_{k}}(\gamma)>=<\beta,\gamma-\gamma_{i}+\gamma_{r}>=<\beta,\gamma>-<\beta,\gamma_{i}>+<\beta,\gamma_{r}>.$$ If $<\beta,\gamma_i>=2$ then $\beta=\gamma_{i}$ which follows by $<\beta,\gamma>=1$ and $<\beta,\gamma_{r}>=0$. If $<\beta,\gamma_i>=1$ then $<\beta,\gamma_{r}>=0 $ since $\gamma_{r}$ is smaller than $\gamma_{i}$ and $\gamma_{r}\perp \gamma_{i}$. Also, we have either $<\beta,\gamma>=1$ or $<\beta,\gamma>=0$ in this case since $\gamma<\gamma_{i}\leq \beta$. Last, if $<\beta,\gamma_i>=0$ then $<\beta,\gamma>=<\beta,\gamma_{r}>=0$ since both $\gamma_{r}$ and $\gamma$ are smaller than $\gamma_{i}.$ Note that, in all cases, we get $<\beta,\gamma>-<\beta,\gamma_{i}>+<\beta,\gamma_{r}>\leq 0.$
\end{itemize}

The case where $\gamma_{i} <\gamma$ is similar.

\end{proof}

\begin{lemma}\label{lemma12.72} Let $\beta,\beta' \in \Pi_{\text{aff}}^{\text{re},\,+}$ such that $\beta+\beta'=c$ and let $\mathbf{d}=(d_0,d_1,...,d_{n-1})$ be a degree where $d_{i}=0$ for some $i.$ Also, assume that $z_{\mathbf{d}}=s_{\gamma_{1}}s_{\gamma_{2}}...s_{\gamma_{k}}$ for some k where this expression is obtained in Theorem \ref{thm12.7}. If either $\beta' \cap \gamma_{i}=\emptyset$ or both $\beta' > \gamma_{i}$ and $\beta' \perp \gamma_{i}$ for any $1\leq i\leq k$ then $w=(s_{\beta}s_{\beta'})^{m}z_{\mathbf{d}}$ is reduced for any positive integer $m.$
In particular, $\ell{((s_{\beta}s_{\beta'})^{m} z_{\mathbf{d}})}=2m(n-1)+\ell{(z_{\mathbf{d}})}.$ 

\end{lemma}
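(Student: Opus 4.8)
The plan is to show that the product $w=(s_{\beta}s_{\beta'})^{m}z_{\mathbf{d}}$ is \emph{reduced}; granting this, the ``in particular'' assertion is immediate from Lemma \ref{lemma11} (which gives $\ell((s_{\beta}s_{\beta'})^{m})=2m(n-1)$) and Theorem \ref{thm12.7} (which gives $\ell(z_{\mathbf{d}})$). Since $\ell(uv)\le\ell(u)+\ell(v)$ always holds, only the inequality $\ell(w)\ge 2m(n-1)+\ell(z_{\mathbf{d}})$ needs proof. First I would recast this as a disjointness statement for inversion sets. Writing $N(g)=\{\alpha\in\Pi_{\text{aff}}^{\text{re},+}:g(\alpha)<0\}$, so that $\ell(g)=|N(g)|$, one has the standard identity $\ell(uv)=\ell(u)+\ell(v)-2\,|N(u)\cap N(v^{-1})|$: this comes from partitioning $\{\alpha>0\}$ by the sign of $v(\alpha)$ and noting that $v$ restricts to a bijection $\{\alpha>0:v(\alpha)>0\}\to\Pi_{\text{aff}}^{\text{re},+}\setminus N(v^{-1})$ while $\alpha\mapsto -v(\alpha)$ is a bijection $N(v)\to N(v^{-1})$. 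With $u=(s_{\beta}s_{\beta'})^{m}$ and $v=z_{\mathbf{d}}$ it therefore suffices to prove $N\!\left((s_{\beta}s_{\beta'})^{m}\right)\cap N(z_{\mathbf{d}}^{-1})=\emptyset$. Moreover, in the reduced word $z_{\mathbf{d}}=s_{\gamma_{1}}\cdots s_{\gamma_{k}}$ of Theorem \ref{thm12.7} the roots $\gamma_{i}$ are pairwise orthogonal (disconnected supports are orthogonal, and the other alternative in Theorem \ref{thm12.7} is $\gamma_{i}\perp\gamma_{j}$), so the reflections $s_{\gamma_{i}}$ pairwise commute and $z_{\mathbf{d}}$ is an involution; hence $N(z_{\mathbf{d}}^{-1})=N(z_{\mathbf{d}})$ and the goal is $N\!\left((s_{\beta}s_{\beta'})^{m}\right)\cap N(z_{\mathbf{d}})=\emptyset$.

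Next I would make the two inversion sets explicit. By the computation in the proof of Lemma \ref{lemma10} we have $(s_{\beta}s_{\beta'})^{m}=t_{\lambda}$, where $\lambda=-m\beta\in Q^{\vee}$ if $\alpha_{0}\notin\text{supp}(\beta)$ (so $\beta$ is a finite positive root), and $\lambda=m(c-\beta)=m\beta'\in Q^{\vee}$ if $\alpha_{0}\in\text{supp}(\beta)$ (so $\beta'$ is a finite positive root and $\beta=\delta-\beta'$). Using $t_{\lambda}(\eta+r\delta)=\eta+(r-\langle\eta,\lambda\rangle)\delta$, together with the fact that every root $\nu<c$ is either a finite positive root or of the form $\delta-\xi$ with $\xi$ a finite positive root, and rewriting everything through $\beta$ via $\langle\nu,c\rangle=0$ (valid since the columns of the affine Cartan matrix sum to $0$), a short case check shows that $\nu\in N(t_{\lambda})$ forces $\langle\nu,\beta\rangle<0$ in every case. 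On the other side, the proof of Theorem \ref{thm12.7} identifies $N(z_{\mathbf{d}})=\{\nu\in\Pi_{\text{aff}}^{\text{re},+}:\nu\le\gamma_{i}\text{ and }\nu\not\perp\gamma_{i}\text{ for some }i\}$; in particular every $\nu\in N(z_{\mathbf{d}})$ satisfies $\nu<c$.

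Finally I would establish the disjointness. Take $\nu\in N(z_{\mathbf{d}})$ and fix $i$ with $\nu\le\gamma_{i}$, $\nu\not\perp\gamma_{i}$. The hypothesis relating $\beta'$ to $\gamma_{i}$ translates into a statement about $\beta$: viewing supports of roots $<c$ as arcs of the affine Dynkin circle and using that $\cdot\mapsto c-\cdot$ interchanges $\beta$ and $\beta'$, one checks that ``$\beta'\cap\gamma_{i}=\emptyset$'' forces $\text{supp}(\gamma_{i})\subseteq\text{supp}(\beta)$, hence $\gamma_{i}\le\beta$, whereas ``$\beta'>\gamma_{i}$ and $\beta'\perp\gamma_{i}$'' forces $\text{supp}(\gamma_{i})$ to lie at distance $\ge 1$ from $\text{supp}(\beta)$ on the circle, hence $\beta\cap\gamma_{i}=\emptyset$ and $\beta\perp\gamma_{i}$; this is precisely the hypothesis of Lemma \ref{lemma12.71}. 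In the first case $\nu\le\gamma_{i}\le\beta$, so $\langle\nu,\beta\rangle\in\{0,1,2\}$; in the second case $\text{supp}(\nu)\subseteq\text{supp}(\gamma_{i})$ is also at distance $\ge 1$ from $\text{supp}(\beta)$, so $\nu\perp\beta$. Either way $\langle\nu,\beta\rangle\ge 0$, whence $\nu\notin N(t_{\lambda})$ by the previous paragraph. Thus $N\!\left((s_{\beta}s_{\beta'})^{m}\right)\cap N(z_{\mathbf{d}})=\emptyset$, so $w$ is reduced and the lemma follows. (One may instead run the argument through the length formula $(\ref{equation5.5})$: writing $z_{\mathbf{d}}=x_{0}t_{\mu_{0}}$ gives $\ell(t_{\lambda}z_{\mathbf{d}})=\sum_{\gamma\in\Pi^{+}}|c_{\gamma}+d_{\gamma}|$ with $\sum|c_{\gamma}|=\ell(z_{\mathbf{d}})$ and $\sum|d_{\gamma}|=2m(n-1)$, and Lemma \ref{lemma12.71} is exactly what shows $c_{\gamma}$ and $d_{\gamma}$ are never of opposite sign.)

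The step I expect to be the main obstacle is the bookkeeping in the last two paragraphs: keeping track of which of $\beta,\beta'$ is the honest finite (co)root, and verifying the pairing inequalities in the four combinations given by whether or not $\alpha_{0}\in\text{supp}(\beta)$ and whether or not $\alpha_{0}\in\text{supp}(\nu)$, including the boundary case $r=1$ in the action of $t_{\lambda}$. This is exactly where the definition of $\Pi_{\text{aff}}^{\text{re},+}(\alpha)$ in the hypothesis is used, and where the combinatorial Lemma \ref{lemma12.71} does the heavy lifting in the author's treatment.
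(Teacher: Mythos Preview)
Your main argument via inversion--set disjointness is correct and takes a genuinely different route from the paper. The paper first uses the Dynkin automorphism to arrange $d_0=0$ (so that $z_{\mathbf{d}}\in W$), rewrites $w=z_{\mathbf{d}}\,t_{z_{\mathbf{d}}^{-1}(-m\beta)}$, and then computes $\ell(w)$ directly from the length formula~(\ref{equation5.5}); Lemma~\ref{lemma12.71} is invoked precisely to guarantee that the term $\chi(z_{\mathbf{d}}\gamma<0)$ and the term $-m\langle z_{\mathbf{d}}^{-1}\beta,\gamma\rangle$ are never of opposite sign, so that the absolute values split and the sum collapses to $2m(n-1)+\ell(z_{\mathbf{d}})$ via Lemma~\ref{lemma10.2}. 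This is exactly the alternative you sketch in your closing parenthetical. Your primary route instead exploits that $z_{\mathbf{d}}$ is an involution and verifies $N(t_\lambda)\cap N(z_{\mathbf{d}})=\emptyset$ directly: you replace the appeal to Lemma~\ref{lemma12.71} by the cleaner pairing statement that every $\nu\in N(z_{\mathbf{d}})$ satisfies $\langle\nu,\beta\rangle\ge 0$, while every $\nu<c$ lying in $N(t_\lambda)$ satisfies $\langle\nu,\beta\rangle<0$. Both proofs ultimately rest on the same translation of the hypothesis on $\beta'$ into a hypothesis on $\beta$ (your ``arcs on the circle'' check), but your version avoids both the conjugation step and the Dynkin--automorphism reduction, at the cost of a slightly longer case analysis for $N(t_\lambda)$ restricted to roots $<c$.
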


\begin{proof} Note that by Lemma \ref{lemma11} we have $\ell{((s_{\beta}s_{\beta'})^{m})}=2m(n-1)$ so $\ell{((s_{\beta}s_{\beta'})^{m}{z_{\mathbf{d}}}})\leq \ell{((s_{\beta}s_{\beta'})^{m})}+\ell{(z_{\mathbf{d}})}=2m(n-1)+\ell{(z_{\mathbf{d}})}$. We will show that $\ell{((s_{\beta}s_{\beta'})^{m}{z_{\mathbf{d}}}})=2m(n-1)+\ell{(z_{\mathbf{d}})}$ to prove that $w=(s_{\beta}s_{\beta'})^{m}z_{\mathbf{d}}$ is reduced for any positive integer $m$. 

First, we can suppose that $d_{0}=0$ by the automorphism of the Dynkin diagram, $\varphi$; see Section (\ref{dynkin}) which implies that $\alpha_{0}\notin \text{supp}(\gamma_{i})$ for any $i=1,2,...,k$ so $\gamma_{i}\in \Pi^{+}$ for all $i.$ Now, suppose that $\alpha_{0}\notin \text{supp}(\beta).$ Then by Lemma \ref{lemma10}, we have $(s_{\beta}s_{\beta'})^{m}=t_{-m\beta}.$ So $w=(s_{\beta}s_{\beta'})^{m} z_{\mathbf{d}}=(s_{\beta}s_{\beta'})^{m}s_{\gamma_{1}}s_{\gamma_{2}}...s_{\gamma_{k}}=t_{-m\beta}s_{\gamma_{1}}s_{\gamma_{2}}...s_{\gamma_{k}}.$ We also have $t_{-m\beta}s_{\gamma_{1}}s_{\gamma_{2}}...s_{\gamma_{k}}=s_{\gamma_{1}}s_{\gamma_{2}}...s_{\gamma_{k}}t_{s_{\gamma_{k}}...s_{\gamma_{2}}s_{\gamma_{1}}(-m\beta)}$. Now, by Lemma $3.1$ in \cite{LS01}, we get
 
$$\begin{array}{lll}\ell{(w)}&=&\sum_{\gamma \in \Pi^{+}}|\chi{(s_{\gamma_{1}}...s_{\gamma_{k}}(\gamma)<0)}+<s_{\gamma_{k}}...s_{\gamma_{1}}(-m\beta),\gamma>|\\
&=&\sum_{\gamma \in \Pi^{+}}|\chi{(s_{\gamma_{1}}...s_{\gamma_{k}}(\gamma)<0)}-m<s_{\gamma_{k}}...s_{\gamma_{1}}(\beta),\gamma>|.

\end{array}$$
Now, note that  $\chi{(s_{\gamma_{1}}...s_{\gamma_{k}}(\gamma)<0)}=0$ if $s_{\gamma_{1}}...s_{\gamma_{k}}(\gamma)>0$  and $\chi{(s_{\gamma_{1}}...s_{\gamma_{k}}(\gamma)<0)}=1$ otherwise. Thus 
$$\ell{(w)}=m\sum_{s_{\gamma_{1}}...s_{\gamma_{k}}(\gamma)>0} |<s_{\gamma_{k}}...s_{\gamma_{1}}(\beta),\gamma>|+\sum_{s_{\gamma_{1}}...s_{\gamma_{k}}(\gamma)<0}|1-m<s_{\gamma_{k}}...s_{\gamma_{1}}(\beta),\gamma>|$$
where $\gamma \in \Pi^{+}.$ Moreover, either $ \gamma_{i}\leq \beta$ or both $\beta \cap \gamma_{i}=\emptyset$ and $\beta \perp \gamma_{i}$ for any $1\leq i\leq k$ since we have either $\beta' \cap \gamma_{i}=\emptyset$ or both $\beta' > \gamma_{i}$ and $\beta' \perp \gamma_{i}$ for any $1\leq i\leq k$. Here, notice that, for $\gamma \in \Pi^{+}$ if $s_{\gamma_{1}}s_{\gamma_{2}}...s_{\gamma_{k}}(\gamma)<0$ then $<s_{\gamma_{k}}...s_{\gamma_{2}}s_{\gamma_{1}}(\beta), \gamma>\leq 0$ by Lemma \ref{lemma12.71}, which implies that $|1-m<s_{\gamma_{k}}...s_{\gamma_{1}}(\beta),\gamma>|=1+m|<s_{\gamma_{k}}...s_{\gamma_{1}}(\beta),\gamma>|$. Hence, 
$$\begin{array}{lll}\ell{(w)}&=&m\sum_{s_{\gamma_{1}}...s_{\gamma_{k}}(\gamma)>0} |<s_{\gamma_{k}}...s_{\gamma_{1}}(\beta),\gamma>|+\sum_{s_{\gamma_{1}}...s_{\gamma_{k}}(\gamma)<0}|1-m<s_{\gamma_{k}}...s_{\gamma_{1}}(\beta),\gamma>|\\
&&\\
&=& m\sum_{s_{\gamma_{1}}...s_{\gamma_{k}}(\gamma)>0} |<s_{\gamma_{k}}...s_{\gamma_{1}}(\beta),\gamma>|+\sum_{s_{\gamma_{1}}...s_{\gamma_{k}}(\gamma)<0}(1+m|<s_{\gamma_{k}}...s_{\gamma_{1}}(\beta),\gamma>|)\\
&&\\
&=&m\sum_{\gamma \in \Pi^{+}}|<s_{\gamma_{k}}...s_{\gamma_{1}}(\beta),\gamma>|+|\{\gamma \in \Pi^{+}:s_{\gamma_{1}}...s_{\gamma_{k}}(\gamma)<0\}|.

\end{array}$$
Here, the fact that $\gamma_{i}\leq \beta$ for all $i$ and the assumptions on $\gamma_{i}$'s force us to have $s_{\gamma_{k}}...s_{\gamma_{1}}(\beta)=\pm \beta$, $s_{\gamma_{k}}...s_{\gamma_{1}}(\beta)=\beta-\gamma_{i}$ or $s_{\gamma_{k}}...s_{\gamma_{1}}(\beta)=\beta-\gamma_{i}-\gamma_{q}$ for some $1\leq i,q\leq k.$ In all these cases $|s_{\gamma_{k}}...s_{\gamma_{1}}(\beta)|$ is a positive real root which is smaller than $c.$ Thus, we get $\sum_{\gamma \in \Pi^{+}}|<s_{\gamma_{k}}...s_{\gamma_{1}}(\beta),\gamma>|=2(n-1)$ by Lemma \ref{lemma10.2}. Furthermore, $|\{\gamma \in \Pi^{+}:s_{\gamma_{1}}...s_{\gamma_{k}}(\gamma)<0\}|=\ell{(s_{\gamma_{1}}...s_{\gamma_{k}})}=\ell{(z_{\mathbf{d}})}$ since $\gamma_{i}\in \Pi^{+}$ for all $i.$ 

If $\alpha_{0} \in \text{supp}(\beta)$ then the proof is similar.

\end{proof}

\begin{cor}\label{cor12.8} Let $\beta,\beta',\alpha \in \Pi_{\text{aff}}^{\text{re},\,+}$ such that $\beta+\beta'=c$ and $\alpha<c$. If either $\beta'\leq c-\alpha$ or both $\beta'>\alpha$ and $\beta' \perp \alpha$ then $(s_{\beta}s_{\beta'})^{m}s_{\alpha}$ is reduced for any positive integer $m.$
In particular, $\ell{((s_{\beta}s_{\beta'})^{m}s_{\alpha}})=2m(n-1)+\ell{(s_{\alpha})}$.

\end{cor}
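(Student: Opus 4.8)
The plan is to obtain this statement as a direct specialization of Lemma \ref{lemma12.72}. Since $\alpha \in \Pi_{\text{aff}}^{\text{re},\,+}$ with $\alpha < c$, writing $c = \sum_{j=0}^{n-1}\alpha_j$ and $\alpha = \sum_j a_j\alpha_j$ forces $a_j \in \{0,1\}$ for all $j$, with at least one $a_j = 0$; hence $\alpha$ may be regarded as a degree $\mathbf{d} = \alpha$ satisfying $d_i = 0$ for some $i$. For this degree Definition \ref{def12.5} gives $z_{\alpha} = s_\alpha \cdot z_0 = s_\alpha$, because $\alpha$ is itself the maximal root which is $\le \alpha$; this is also the $r = 1$ case of the decomposition of Theorem \ref{thm12.7}, with $\gamma_1 = \alpha$.

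Next I would match the hypotheses. Applying Lemma \ref{lemma12.72} with $\mathbf{d} = \alpha$, $k = 1$, and $\gamma_1 = \alpha$, the condition it requires reads: either $\beta' \cap \alpha = \emptyset$, or both $\beta' > \alpha$ and $\beta' \perp \alpha$. The second alternative is literally the second alternative in the corollary, so it suffices to check that $\beta' \le c - \alpha \Rightarrow \beta' \cap \alpha = \emptyset$. This is a support computation: from $c - \alpha = \sum_j (1 - a_j)\alpha_j$ one sees that $\text{supp}(c-\alpha)$ and $\text{supp}(\alpha)$ are complementary subsets of $\{\alpha_0,\dots,\alpha_{n-1}\}$, hence disjoint; if $\beta' \le c - \alpha$ then $\text{supp}(\beta') \subseteq \text{supp}(c-\alpha)$ is disjoint from $\text{supp}(\alpha)$, which by definition means $\beta' \cap \alpha = \emptyset$.

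Finally I would invoke Lemma \ref{lemma12.72}: it yields that $(s_\beta s_{\beta'})^m z_\alpha = (s_\beta s_{\beta'})^m s_\alpha$ is reduced for every positive integer $m$, with $\ell((s_\beta s_{\beta'})^m s_\alpha) = 2m(n-1) + \ell(z_\alpha) = 2m(n-1) + \ell(s_\alpha)$, which is exactly the assertion. There is no real obstacle here; the only point that needs a word of justification is the implication $\beta' \le c-\alpha \Rightarrow \beta' \cap \alpha = \emptyset$, and that follows at once from the complementarity of the supports of $\alpha$ and $c - \alpha$.
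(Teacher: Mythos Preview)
Your proof is correct and follows essentially the same approach as the paper: set $\mathbf{d}=\alpha$ so that $z_{\mathbf{d}}=s_\alpha$ with $\gamma_1=\alpha$, observe that $\beta'\le c-\alpha$ implies $\beta'\cap\alpha=\emptyset$, and then apply Lemma~\ref{lemma12.72}. The paper's proof is terser (it only spells out the first alternative, the second being verbatim the hypothesis of Lemma~\ref{lemma12.72}), but the argument is the same.
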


\begin{proof} Let $d=\alpha.$ Then $z_{\mathbf{d}}=s_{\alpha}.$ Now, if $\beta'\leq c-\alpha$ then $\beta' \cap \alpha=\emptyset$ so by Lemma \ref{lemma12.72} $(s_{\beta}s_{\beta'})^{m}s_{\alpha}$ is reduced for any positive integer $m$ and $\ell{((s_{\beta}s_{\beta'})^{m}s_{\alpha}})=2m(n-1)+\ell{(s_{\alpha})}$.

\end{proof}

\section{The Moment Graph and Curve Neighborhoods}\label{curve neighborhoods}

In this chapter we recall some basic facts about the affine flag manifold of type $A_{n-1}^{(1)}$ and give the definition of the moment graph and the (combinatorial) curve neighborhoods. We refer to\cite{kumar} especially $\S 12$ and $\S 13$ for further details. Let $G$ be the Lie group $SL_{n}(\C)$. Let $B\subset G$ be the Borel subgroup, the set of upper triangular matrices. Now, let $G(\C[t,t^{-1}])$ be the group of Laurent polynomial loops from $\C^{*}$ to $G$ and $\mathcal{G}:=\C^{*}\ltimes G(\C[t,t^{-1}])$ where $\C^{*}$ acts by loop rotation. Let $\mathcal{B}$ be the standard Iwahori subgroup of $\mathcal{G}$ determined by $B$. The affine flag manifold in type $A_{n-1}^{(1)}$ is given by $\mathcal{X}:=\mathcal{G}/ \mathcal{B}$. The Schubert varieties and the curve neighborhoods of Schubert varieties in this context were defined in \cite[\S5]{MM01}. In the following we give a different, but equivalent definition, based on the moment graph, which generalizes the observations from \cite[\S5.2]{MB01}. 

The (undirected) $\mathbf{moment}$ $\mathbf{ graph}$ for $\mathcal{X}$ is the graph given by the following data:

\begin{itemize}

\item The set of $V$ of $\mathbf{vertices}$ is the group $W_{\text{aff}};$ 

\item Let $u, v\in V$ be vertices. Then there is an edge from $u$ to $v$ iff there exists an affine root $\alpha$ such that $v=us_{\alpha}.$ We denote this situation by 
$$u \stackrel{\alpha} \longrightarrow v$$ and we say that the $\mathbf{degree}$ of this edge is $\alpha.$

\end{itemize}

A  \textit{chain} between $u$ and $v$ in the moment graph is a succession of adjacent edges starting with $u$ and ending with $v$

$$\pi: u=u_{0} \stackrel{\beta_{0}} \longrightarrow u_{1}\stackrel{\beta_{1}}\longrightarrow ...\stackrel{\beta_{k-2}}\longrightarrow u_{k-1}\stackrel{\beta_{k-1}}\longrightarrow u_{k}=v .$$

The \textit{degree} of the chain $\pi$ is $\text{deg}(\pi)=\beta_{0}+\beta_{1}+...+\beta_{k-1}.$ A chain is called \textit{increasing} if $\ell{(u_{i})}>\ell{(u_{i-1})}$ for all $i.$ Define the (Bruhat) partial ordering on the elements of $W_{\text{aff}}$ by $u<v$ iff there exists an increasing chain starting with $u$ and ending with $v.$

A $\mathbf{degree}$ is a tuple of nonnegative integers $\mathbf{d}=(d_{0},...,d_{n-1}).$ Notice that it has $n$ components, corresponding to the $n$ affine simple roots $\alpha_{0},...,\alpha_{n-1}.$ There is a natural partial order on degrees: If $\mathbf{d}=(d_{0},...,d_{n-1})$ and $\mathbf{d'}=(d'_{0},...,d'_{n-1})$ then $\mathbf{d}\geq \mathbf{d'}$ iff $d_{i}\geq d'_{i}$ for all $i\in \{0,1,...,n-1\}.$

\begin{defn} Fix a degree $\mathbf{d}$ and $u\in W_{\text{aff}}.$ The $\mathbf{(combinatorial)}$ $\mathbf{curve}$ $\mathbf{neighborhood}$ is the set $\Gamma_{\mathbf{d}}(u)$ consisting of elements $v\in W_{\text{aff}}$ such that:

\begin{itemize}

\item[1)] There exists a chain of some degree $\mathbf{d'}\leq \mathbf{d}$ from $u'\leq u$ to $v$ in the moment graph of $\mathcal{X};$

\item[2)] The elements $v$ are maximal among all of those satisfying the condition in $1).$

\end{itemize}
\end{defn}

\begin{remark} \label{remark12.9}  Let $id \stackrel{\beta_{1}} \longrightarrow s_{\beta_{1}} \stackrel{\beta_{2}} \longrightarrow s_{\beta_{1}}s_{\beta_{2}} ...\stackrel{\beta_{q}} \longrightarrow w=s_{\beta_{1}}s_{\beta_{2}}...s_{\beta_{q}}$
be a path in the moment graph such that $\sum_{i=1}^{q} \beta_{i} <\mathbf{d}$.  Then there are some $\beta_{q+1},\beta_{q+2},...,\beta_{p} \in \Pi_{\text{aff}}^{\text{re},\,+}$ such that $\sum_{i=1}^{p} \beta_{i}=\mathbf{d}.$ Note that, by Corollary \ref{cor7} we can assume that $\beta_{i}<c$ for all $i$. Also, observe that $w=s_{\beta_{1}}s_{\beta_{2}}...s_{\beta_{q}}\leq s_{\beta_{1}}\cdot s_{\beta_{2}}\cdot ...\cdot s_{\beta_{q}}\leq s_{\beta_{1}}\cdot s_{\beta_{2}}\cdot ...\cdot s_{\beta_{q}}\cdot s_{\beta_{q+1}}\cdot s_{\beta_{q+2}}\cdot ...\cdot s_{\beta_{p}}.$
Now, if one can show that $s_{\beta_{1}}\cdot s_{\beta_{2}}\cdot ...\cdot s_{\beta_{q}}\cdot s_{\beta_{q+1}}\cdot s_{\beta_{q+2}}\cdot ...\cdot s_{\beta_{p}}\leq u$ for some $u \in W_{\text{aff}}$ such that $u$ can be reached by a path of degree at most $\mathbf{d}$ that starts with $id$ then it will follow that $u\in \Gamma_{\mathbf{d}}(id).$ 

\end{remark}

\section{Calculation of the Curve Neighborhoods }

In this chapter we will prove our results. In Section \ref{reduction} we will state Theorem \ref{thm22} which implies that it is sufficient to calculate $ \Gamma_{\mathbf{d}}(id)$ to compute $\Gamma_{\mathbf{d}}(w)$ for any given degree $\mathbf{d}$ and $w \in W_{\text{aff}}.$ For that reason we will focus on the (combinatorial) curve neighborhood of the identity element at some degree $\mathbf{d}$.

\subsection{Curve Neighborhood $\Gamma_{\mathbf{d}}(id)$ for Finite Degrees}\label{case of finite degrees}

Let $\mathbf{d}=(d_0,d_1,...,d_{n-1})\in Q_{\text{aff}}$. If $d_{0}=0$ then $\mathbf{d}\in  Q.$ If $d_{0}\neq 0$ but $d_{i}=0$ for some $i\neq 0$ then for an integer $1\leq k\leq n-1$ we will have  $\varphi^{k}(\mathbf{d})=\mathbf{d'}$ where $\mathbf{d'}=(d'_0,d'_1,...,d'_{n-1})$ such that $d'_{0}=0$ and $\varphi$ is the automorphism of the Dynkin diagram; see Section (\ref{dynkin}). So we will consider $\mathbf{d}$ as an element of the finite root lattice, $Q,$ whenever $d_{i}=0$ for some $i.$

\begin{thm}\label{thm13} Let $\mathbf{d}=(d_0,d_1,...,d_{n-1})$ be a degree such that $d_{i}=0$ for some $i.$ Then  $\Gamma_{\mathbf{d}} ( id)=\{z_{\mathbf{d}}\}.$
\end{thm}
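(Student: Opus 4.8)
The plan is to show the two inclusions $\{z_{\mathbf{d}}\} \subseteq \Gamma_{\mathbf{d}}(id)$ and $\Gamma_{\mathbf{d}}(id) \subseteq \{z_{\mathbf{d}}\}$, treating $\mathbf{d}$ as a finite root (via the automorphism $\varphi$ we may assume $d_0 = 0$, so all the relevant roots lie in $\Pi^+$). Since this case should essentially reduce to the finite-type result of Buch--Mihalcea, the main work is to translate the combinatorial curve-neighborhood definition in the affine moment graph into the setup of \cite{MB01} and verify that no ``affine'' phenomena intervene when the degree is finite.

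First I would establish that $z_{\mathbf{d}} \in \Gamma_{\mathbf{d}}(id)$. By Definition \ref{def12.5}, $z_{\mathbf{d}} = s_{\alpha} \cdot z_{\mathbf{d} - \alpha}$ for a maximal root $\alpha \le \mathbf{d}$, and by Theorem \ref{thm12.7} we may write $z_{\mathbf{d}} = s_{\gamma_1} s_{\gamma_2} \cdots s_{\gamma_r}$ as a reduced product with $\sum \gamma_i \le \mathbf{d}$ and $\gamma_i < c$ for all $i$. Because this product is reduced, the chain $id \xrightarrow{\gamma_1} s_{\gamma_1} \xrightarrow{\gamma_2} s_{\gamma_1}s_{\gamma_2} \to \cdots \to z_{\mathbf{d}}$ is an increasing chain in the moment graph of degree $\sum \gamma_i \le \mathbf{d}$ (one must check each partial product $s_{\gamma_1}\cdots s_{\gamma_i}$ is indeed joined to the next by an edge labelled $\gamma_{i+1}$, which follows since $s_{\gamma_1}\cdots s_{\gamma_i} s_{\gamma_{i+1}}$ is obtained by right-multiplication by a reflection). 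So $z_{\mathbf{d}}$ satisfies condition (1). Maximality will follow once the reverse inclusion is done.

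For the reverse inclusion, suppose $v \in \Gamma_{\mathbf{d}}(id)$, so there is a chain from some $u' \le id$ — hence $u' = id$ — to $v$ of degree $\mathbf{d}' \le \mathbf{d}$. Using Remark \ref{remark12.9}: given any such chain $id \xrightarrow{\beta_1} \cdots \xrightarrow{\beta_q} w$ with $\sum \beta_i \le \mathbf{d}$, Corollary \ref{cor7} lets us take all $\beta_i < c$, and then $w = s_{\beta_1}\cdots s_{\beta_q} \le s_{\beta_1}\cdot s_{\beta_2}\cdot \cdots \cdot s_{\beta_q}$ in the Hecke product, which by Hecke-product monotonicity (property (b), (c) of Section \ref{hecke product}) is $\le s_{\beta_1}\cdot \cdots \cdot s_{\beta_p}$ where $\sum_{i=1}^p \beta_i = \mathbf{d}$. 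The crux is then to show that any Hecke product $s_{\beta_1}\cdot s_{\beta_2}\cdot \cdots \cdot s_{\beta_p}$ of reflections in positive real roots $\beta_i < c$ with $\sum \beta_i = \mathbf{d} \in Q$ satisfies $s_{\beta_1}\cdot \cdots \cdot s_{\beta_p} \le z_{\mathbf{d}}$. I expect to prove this by induction on $p$ (equivalently on the height of $\mathbf{d}$): pick a maximal root $\alpha \le \mathbf{d}$; using Lemma \ref{lemma5.7} and Corollary \ref{cor5.9} one can shuffle/absorb reflections so that the Hecke product is dominated by $s_{\alpha} \cdot (\text{Hecke product of reflections summing to } \mathbf{d} - \alpha)$, and the inductive hypothesis gives the latter $\le z_{\mathbf{d}-\alpha}$, whence the whole thing is $\le s_{\alpha}\cdot z_{\mathbf{d}-\alpha} = z_{\mathbf{d}}$ by monotonicity of $\cdot$. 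This shows every $v$ with property (1) satisfies $v \le z_{\mathbf{d}}$, so $z_{\mathbf{d}}$ is the unique maximal such element and $\Gamma_{\mathbf{d}}(id) = \{z_{\mathbf{d}}\}$.

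The main obstacle will be the combinatorial bookkeeping in the induction step — verifying that an arbitrary Hecke product of reflections $s_{\beta_i}$ with $\sum\beta_i = \mathbf{d}$ can always be bounded above by $s_\alpha \cdot (\text{something summing to } \mathbf{d}-\alpha)$ for a maximal root $\alpha$. This requires carefully applying the three cases of Lemma \ref{lemma5.7} (disjoint supports, overlapping, adjacent-but-disjoint) to rearrange the product without decreasing it and without changing the total degree, and handling the fact that $\alpha$ need not be one of the $\beta_i$ but may have to be ``assembled'' from several of them. This is exactly the finite-type argument of \cite[\S4]{MB01} transplanted to the affine moment graph, and the restriction $d_i = 0$ for some $i$ (so $\mathbf{d}$ is finite) is what guarantees all roots involved stay $< c$ and the argument does not leak into genuinely affine behaviour.
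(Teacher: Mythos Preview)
Your proposal is correct and takes essentially the same approach as the paper: both reduce to the finite case via the Dynkin automorphism $\varphi$ (so $d_0=0$) and invoke the Buch--Mihalcea result \cite{MB01}. The paper's proof is in fact even terser than yours---it is literally a one-line citation of \cite{MB01}---whereas you have helpfully sketched out how that finite-type argument actually runs (the Hecke-product induction on a maximal root $\alpha\le\mathbf{d}$), which is exactly the content of \cite[\S4]{MB01}.
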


\begin{proof} Here, we are in the finite case since $d_{i}=0$ for some $i$. We get the conclusion by the finite case, see  \cite{MB01}.   \end{proof}

\begin{example} Let $W_{\text{aff}}$ be the affine Weyl group associated to $A_{6}^{(1)}$ and $\mathbf{d}=(5,0,2,2,3,0,4).$ Then by Theorem \ref{thm13}, we get $\Gamma_{\mathbf{d}} ( id)=\{z_{\mathbf{d}}\}$ where $z_{\mathbf{d}}=s_{\alpha_{0}+\alpha_{6}}  s_{\alpha_{2}+\alpha_{3}+\alpha_{4}} s_{\alpha_{3}},$ see Example \ref{example12.01}. 

\end{example}

\begin{thm} \label{thm13.5} Let $\alpha \in \Pi_{\text{aff}}^{\text{re},\,+}$ such that $\alpha<c$. Then $\Gamma_{\alpha} (id)=\{s_{\alpha}\}$.

\end{thm}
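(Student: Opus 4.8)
The plan is to identify $s_{\alpha}$ as the unique maximal element of $W_{\text{aff}}$ that can be joined to $id$ by a chain of degree $\le\alpha$. First I would note that, since $id$ is the minimum of the Bruhat order, condition $(1)$ in the definition of $\Gamma_{\alpha}(id)$ forces $u'=id$; so every element $v$ satisfying $(1)$ comes from a chain
$$id \stackrel{\beta_{1}}\longrightarrow s_{\beta_{1}} \stackrel{\beta_{2}}\longrightarrow s_{\beta_{1}}s_{\beta_{2}} \longrightarrow ...\stackrel{\beta_{q}}\longrightarrow v=s_{\beta_{1}}s_{\beta_{2}}...s_{\beta_{q}},$$
where $\beta_{i}\in\Pi_{\text{aff}}^{\text{re},\,+}$ and $\sum_{i=1}^{q}\beta_{i}\le\alpha$. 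Since $\alpha<c$ and $\beta_{i}\le\sum_{j=1}^{q}\beta_{j}\le\alpha$, each $\beta_{i}$ is automatically $<c$, so no splitting via Corollary \ref{cor7} is needed at this point.

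Next I would prove the upper bound $v\le s_{\alpha}$. Choosing positive real roots $\beta_{q+1},...,\beta_{p}<c$ (e.g. simple roots) so that $\sum_{i=1}^{p}\beta_{i}=\alpha$, and arguing exactly as in Remark \ref{remark12.9} — using the monotonicity of the Hecke product and the inequality $uv\le u\cdot v$ from Section \ref{hecke product} — one gets
$$v=s_{\beta_{1}}s_{\beta_{2}}...s_{\beta_{q}}\ \le\ s_{\beta_{1}}\cdot s_{\beta_{2}}\cdot ...\cdot s_{\beta_{q}}\ \le\ s_{\beta_{1}}\cdot s_{\beta_{2}}\cdot ...\cdot s_{\beta_{p}},$$
and then Corollary \ref{cor5.9} gives $s_{\beta_{1}}\cdot s_{\beta_{2}}\cdot ...\cdot s_{\beta_{p}}\le s_{\alpha}$. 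Hence $v\le s_{\alpha}$ for every $v$ satisfying $(1)$.

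Finally I would observe that $s_{\alpha}$ itself satisfies $(1)$, via the one-edge chain $id\stackrel{\alpha}\longrightarrow s_{\alpha}$ of degree $\alpha\le\alpha$; together with the bound above this makes $s_{\alpha}$ the maximum, hence the unique maximal element, among the elements satisfying $(1)$, so $\Gamma_{\alpha}(id)=\{s_{\alpha}\}$. As a shortcut, one may instead note that $\alpha<c$ has proper support, so the degree $\alpha$ has a vanishing coordinate and Theorem \ref{thm13} applies; since $\alpha$ is the unique maximal root which is $\le\alpha$, Definition \ref{def12.5} gives $z_{\alpha}=s_{\alpha}\cdot z_{0}=s_{\alpha}\cdot id=s_{\alpha}$, whence $\Gamma_{\alpha}(id)=\{z_{\alpha}\}=\{s_{\alpha}\}$. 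I do not expect a real obstacle here: the only point that needs care is that a chain in the moment graph need not be increasing, so $v$ need not equal the Hecke product $s_{\beta_{1}}\cdot ...\cdot s_{\beta_{q}}$, and this is precisely what Remark \ref{remark12.9} and Corollary \ref{cor5.9} are designed to handle.
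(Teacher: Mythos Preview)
Your ``shortcut'' at the end is exactly the paper's proof: since $\alpha<c$, some coordinate of the degree $\alpha$ vanishes, Theorem~\ref{thm13} gives $\Gamma_\alpha(id)=\{z_\alpha\}$, and $z_\alpha=s_\alpha$ by Definition~\ref{def12.5}. That argument is correct and complete.

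Your primary argument, however, is circular within the paper's dependency structure. You invoke Corollary~\ref{cor5.9} to obtain $s_{\beta_1}\cdot\ldots\cdot s_{\beta_p}\le s_\alpha$. But Corollary~\ref{cor5.9} is proved by repeated use of Lemma~\ref{lemma5.7} part~3(a), and the proof of that part explicitly cites Corollary~\ref{cor13.6}, which in turn is deduced from Theorem~\ref{thm13.5} itself. So the chain is
\[
\text{Cor.~\ref{cor5.9}} \;\Rightarrow\; \text{Lemma~\ref{lemma5.7}(3a)} \;\Rightarrow\; \text{Cor.~\ref{cor13.6}} \;\Rightarrow\; \text{Thm.~\ref{thm13.5}},
\]
and your direct route feeds back into the statement you are proving. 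In substance Corollary~\ref{cor5.9} \emph{is} the finite curve-neighborhood bound repackaged, so both approaches ultimately rest on the same external input from \cite{MB01}; but as the paper is organized, only the shortcut (i.e.\ going through Theorem~\ref{thm13}) avoids the loop. If you want to keep the direct argument, you would need to supply an independent proof of Lemma~\ref{lemma5.7}(3a) --- for instance by exhibiting $s_\alpha s_\beta$ as a subword of a reduced expression for $s_{\alpha+\beta}$ --- rather than appealing to Corollary~\ref{cor13.6}.
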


\begin{proof} Let $\alpha=\sum_{i=0}^{n-1}a_{i}\alpha_{i} \in \Pi_{\text{aff}}^{\text{re},\,+}.$ Then $a_{i}=0$ for some $i$ since $\alpha<c$. So $\Gamma_{\alpha} (id)=\{z_{\alpha}\}$. Observe that $z_{\alpha}=s_{\alpha}$ by definition.     
\end{proof}

\begin{example} Assume that the affine Weyl group is associated to $A_{4}^{(1)}$ and $\alpha=\alpha_{0}+\alpha_{4}.$ Then $\Gamma_{\alpha}( id)=\{s_{\alpha_{0}+\alpha_{4}}\}$ by Theorem \ref{thm13.5}. 

\end{example}

\begin{cor} \label{cor13.6} Let $\beta \in \Pi_{\text{aff}}^{\text{re},\,+}$ such that $\beta<c$. If $\,\,\sum_{i=1}^{r}\beta_{i} \leq \beta$ for some $\beta_{i} \in \Pi_{\text{aff}}^{\text{re},\,+}$ then $s_{\beta_{1}}s_{\beta_{2}}...s_{\beta_{r}}\leq s_{\beta}.$

\end{cor}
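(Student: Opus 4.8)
The plan is to obtain the corollary as a short consequence of Theorem~\ref{thm13.5} together with the definition of the combinatorial curve neighborhood. The point is that $s_{\beta_{1}}s_{\beta_{2}}\cdots s_{\beta_{r}}$ is visibly reachable from the identity by a chain of small degree, and Theorem~\ref{thm13.5} pins down the unique maximal element of the relevant curve neighborhood.

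Concretely, I would first observe that, since each $\beta_{i}$ is an affine positive real root, the sequence
$$id \stackrel{\beta_{1}}\longrightarrow s_{\beta_{1}} \stackrel{\beta_{2}}\longrightarrow s_{\beta_{1}}s_{\beta_{2}} \stackrel{\beta_{3}}\longrightarrow \cdots \stackrel{\beta_{r}}\longrightarrow s_{\beta_{1}}s_{\beta_{2}}\cdots s_{\beta_{r}}$$
is a chain in the moment graph of $\mathcal{X}$: the $i$-th step is the edge joining $s_{\beta_{1}}\cdots s_{\beta_{i-1}}$ to $\bigl(s_{\beta_{1}}\cdots s_{\beta_{i-1}}\bigr)s_{\beta_{i}}$, which has degree $\beta_{i}$. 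Its total degree is $\sum_{i=1}^{r}\beta_{i}$, and this is $\leq\beta$ by hypothesis; moreover, since $\beta\in\Pi_{\text{aff}}^{\text{re},\,+}$ with $\beta<c$, writing $\beta=\sum_{i=0}^{n-1}a_{i}\alpha_{i}$ forces $a_{i}\in\{0,1\}$ with $a_{i}=0$ for at least one $i$, so $\beta$ is itself a (finite) degree. Hence $v:=s_{\beta_{1}}s_{\beta_{2}}\cdots s_{\beta_{r}}$ lies in the set $R$ of elements of $W_{\text{aff}}$ reachable from $id$ by a chain of degree $\leq\beta$, i.e. $v$ satisfies condition (1) in the definition of $\Gamma_{\beta}(id)$.

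It remains to pass from ``$v\in R$'' to ``$v\leq s_{\beta}$''. Here I would use that $R$ is finite: a chain of degree $\leq\beta$ has at most $|\text{supp}(\beta)|\leq n-1$ edges, since each edge degree is a nonzero non-negative combination of simple roots and is $\leq\beta$, while the coefficient-sum of $\beta$ equals $|\text{supp}(\beta)|$; and each such edge degree lies in the finite set $\{\gamma\in\Pi_{\text{aff}}^{\text{re},\,+}:\gamma<c\}$. In a finite poset every element sits below a maximal one, and by definition the maximal elements of $R$ are precisely those of $\Gamma_{\beta}(id)$; since Theorem~\ref{thm13.5} gives $\Gamma_{\beta}(id)=\{s_{\beta}\}$, we conclude $v\leq s_{\beta}$, as desired. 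The only step that is more than bookkeeping is this last passage: the definition only guarantees that $s_{\beta}$ is the unique \emph{maximal} reachable element, so one genuinely needs the finiteness of $R$ (equivalently, the ascending chain condition) to conclude that $s_{\beta}$ dominates all of $R$; alternatively one may invoke the geometry underlying Theorem~\ref{thm13.5}, namely that for a finite degree the curve neighborhood of a point is an irreducible Schubert variety $X_{z_{\beta}}$ whose torus-fixed locus equals both $\{w\in W_{\text{aff}}:w\leq s_{\beta}\}$ and $R$.
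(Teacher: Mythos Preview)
Your argument is correct and follows exactly the paper's route: exhibit the obvious chain from $id$ to $s_{\beta_1}\cdots s_{\beta_r}$ of degree $\sum_i\beta_i\leq\beta$, then invoke Theorem~\ref{thm13.5} to conclude. The paper's proof is the same but terser, omitting your finiteness discussion for $R$; this is implicitly handled by the reduction to the ``finite case'' (since $\beta<c$ has a zero coordinate, all reflections along such a chain lie in the finite parabolic subgroup generated by $\{s_j:\alpha_j\in\mathrm{supp}(\beta)\}$).
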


\begin{proof} Let $w=s_{\beta_{1}}s_{\beta_{2}}...s_{\beta_{r}}$. Then $id \stackrel{\beta_{1}} \longrightarrow s_{\beta_{1}} \stackrel{\beta_{2}} \longrightarrow s_{\beta_{1}}s_{\beta_{2}} ...\stackrel{\beta_{r}} \longrightarrow w=s_{\beta_{1}}s_{\beta_{2}}...s_{\beta_{r}}$
is a path in the moment graph which has degree $\sum_{i=1}^{r}\beta_{i}\leq \beta$. Thus $w\leq s_{\beta}$ since $\Gamma_{\beta} ( id)=\{s_{\beta}\}$ by Theorem \ref{thm13.5}.

\end{proof}

\subsection{Curve Neighborhood $\Gamma_{c}(id)$} \label{case of c}

Here we will consider one of our basic yet crucial results. The result will play a key role in proving the rest of the results.

\begin{thm} \label{thm14}  We have 
\begin{enumerate}
\item[1)] $\Gamma_{c} ( id)=\{t_{\gamma}:\gamma \in \Pi \}$. 
\item[2)] $|\Gamma_{c}( id)|=n(n-1)$.
\item[3)] For all $w\in \Gamma_{c}( id)$, $\ell(w)=2(n-1)$.
\end{enumerate}
 
\end{thm}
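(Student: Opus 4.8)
The strategy is to describe the Bruhat-maximal elements reachable from $\id$ (the Bruhat minimum) by a chain in the moment graph of total degree $\le c$; by Corollary \ref{cor7} we may assume all edge-degrees $\beta_i$ satisfy $\beta_i<c$. First I would record that a real root $\beta<c$ has all simple-root coefficients in $\{0,1\}$, so that in any expression $\beta_1+\dots+\beta_p=c$ with the $\beta_i$ positive real roots $<c$, the supports $\mathrm{supp}(\beta_1),\dots,\mathrm{supp}(\beta_p)$ partition $\{\alpha_0,\dots,\alpha_{n-1}\}$ into nonempty contiguous arcs of the cyclic Dynkin diagram, with $p\ge2$ since $c$ is not a real root. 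The core claim is that then $s_{\beta_1}\cdot s_{\beta_2}\cdot\dots\cdot s_{\beta_p}\le t_\gamma$ for some $\gamma\in\Pi$, which I would prove by induction on $p$. For $p=2$ we have $\beta_2=c-\beta_1$, so by Lemma \ref{lemma11} (with $m=1$) the product $s_{\beta_1}s_{\beta_2}$ is reduced of length $2(n-1)$; hence $s_{\beta_1}\cdot s_{\beta_2}=s_{\beta_1}s_{\beta_2}$, which by the computation inside Lemma \ref{lemma10} is a translation $t_\gamma$ with $\gamma\in\Pi$ ($\gamma=-\beta_1$ if $\alpha_0\notin\mathrm{supp}(\beta_1)$, and $\gamma=c-\beta_1$ otherwise). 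For $p\ge3$, pick two cyclically consecutive arcs, say $\mathrm{supp}(\beta_1)$ and $\mathrm{supp}(\beta_2)$; their union is again a contiguous arc, proper because $\mathrm{supp}(\beta_3)\ne\emptyset$, so $\beta_1\cap\beta_2=\emptyset$ and $\beta_1+\beta_2$ is a real root $<c$; then Lemma \ref{lemma5.7}(3a) gives $s_{\beta_1}\cdot s_{\beta_2}\le s_{\beta_1+\beta_2}$, and by associativity and monotonicity of the Hecke product $s_{\beta_1}\cdot\dots\cdot s_{\beta_p}\le s_{\beta_1+\beta_2}\cdot s_{\beta_3}\cdot\dots\cdot s_{\beta_p}$, a Hecke product of $p-1$ reflections whose roots sum to $c$, so the inductive hypothesis applies. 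Combining the claim with Remark \ref{remark12.9}, every $w$ reachable by a chain of degree $\le c$ satisfies $w\le t_\gamma$ for some $\gamma\in\Pi$, and in particular $\ell(w)\le 2(n-1)$.

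For the matching lower bound, I would show each $t_\gamma$ ($\gamma\in\Pi$) is itself reachable with $\ell(t_\gamma)=2(n-1)$. For $\gamma\in\Pi^+$ take $\beta=\gamma$ and $\beta'=c-\gamma$, a positive real root $<c$ with $\alpha_0\in\mathrm{supp}(\beta')$: by Lemma \ref{lemma10}, $s_\beta s_{\beta'}=t_{-\gamma}$, so $\id\xrightarrow{\beta}s_\beta\xrightarrow{\beta'}s_\beta s_{\beta'}$ is a chain of degree $c$, and as $\gamma$ runs over $\Pi^+$ this realizes all $t_\mu$ with $\mu\in\Pi^-$; dually, letting $\beta$ run over the positive real roots $<c$ with $\alpha_0\in\mathrm{supp}(\beta)$ and $\beta'=c-\beta\in\Pi^+$ gives $s_\beta s_{\beta'}=t_{\beta'}$ and realizes all $t_\mu$ with $\mu\in\Pi^+$. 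By Lemma \ref{lemma11}, $\ell(t_\gamma)=\ell(s_\beta s_{\beta'})=2(n-1)$ in every case. Since the $t_\gamma$ all have the same length they are pairwise Bruhat-incomparable, and by the previous paragraph nothing reachable can strictly exceed one of them; hence the Bruhat-maximal reachable elements are exactly $\{t_\gamma:\gamma\in\Pi\}$, which is $(1)$. Part $(2)$ follows since $\gamma\mapsto t_\gamma$ is injective and $|\Pi|=n(n-1)$, and $(3)$ has been established along the way.

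The step I expect to be the main obstacle is the inductive claim $s_{\beta_1}\cdot\dots\cdot s_{\beta_p}\le t_\gamma$: one must use the cyclic structure of the affine $A_{n-1}$ Dynkin diagram to locate, for $p\ge3$, a pair of roots that can be "merged" (consecutive arcs with proper union) and check carefully that the hypotheses of Lemma \ref{lemma5.7}(3a) hold so the merge is legitimate; the $p=2$ base case also requires pinning down exactly which translation $s_\beta s_{c-\beta}$ equals and confirming that, as $\beta$ varies, these translations exhaust $\{t_\gamma:\gamma\in\Pi\}$.
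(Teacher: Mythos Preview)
Your approach is essentially the paper's: both reduce (via Remark~\ref{remark12.9}) to bounding a Hecke product $s_{\beta_1}\cdot\ldots\cdot s_{\beta_p}$ with $\sum\beta_i=c$ and each $\beta_i<c$, and both rely on Lemma~\ref{lemma5.7}(3) to merge reflections. The paper packages the merging as a single application of Corollary~\ref{cor5.9}: since $\sum_{i\ge 2}\beta_i=c-\beta_1$ is a positive real root $<c$, that corollary gives $s_{\beta_2}\cdot\ldots\cdot s_{\beta_r}\le s_{c-\beta_1}$ at once, whence $w\le s_{\beta_1}\cdot s_{c-\beta_1}=s_{\beta_1}s_{c-\beta_1}=t_\gamma$. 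Your explicit induction on $p$ is Corollary~\ref{cor5.9} unpacked in this special case, and the treatment of reachability, length, and cardinality matches the paper's.

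There is one genuine gap in your inductive step. When $p\ge 3$ you write ``pick two cyclically consecutive arcs, say $\mathrm{supp}(\beta_1)$ and $\mathrm{supp}(\beta_2)$,'' but the Hecke product is not commutative, so you cannot freely relabel to make the first two factors have adjacent supports. For instance, with $n=6$ and $(\beta_1,\ldots,\beta_6)=(\alpha_0,\alpha_2,\alpha_4,\alpha_1,\alpha_3,\alpha_5)$, no two \emph{consecutive} factors in the product have adjacent supports, so Lemma~\ref{lemma5.7}(3a) does not apply directly to any neighboring pair. The fix is short: let $j\ge 2$ be the least index with $\mathrm{supp}(\beta_j)$ adjacent to $\mathrm{supp}(\beta_1)$ (such $j$ exists since the arcs cover the cycle); by minimality the supports of $\beta_2,\ldots,\beta_{j-1}$ are disconnected from $\mathrm{supp}(\beta_1)$, so Lemma~\ref{lemma5.7}(3b) lets $s_{\beta_1}$ Hecke-commute past them to sit next to $s_{\beta_j}$, and then Lemma~\ref{lemma5.7}(3a) gives $s_{\beta_1}\cdot s_{\beta_j}\le s_{\beta_1+\beta_j}$ with $\beta_1+\beta_j<c$ since $p\ge 3$. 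With this commuting step inserted, your induction goes through and your proof is complete.
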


\begin{proof} $1)$ Let $id \stackrel{\beta_{1}} \longrightarrow s_{\beta_{1}} \stackrel{\beta_{2}} \longrightarrow s_{\beta_{1}}s_{\beta_{2}} ...\stackrel{\beta_{r}} \longrightarrow w=s_{\beta_{1}}s_{\beta_{2}}...s_{\beta_{r}}$
be a path in the moment graph such that $\sum_{i=1}^{r} \beta_{i} \leq c$. Note that, by Remark \ref{remark12.9} we can assume that $w=s_{\beta_{1}} \cdot s_{\beta_{2}}\cdot...\cdot s_{\beta_{r}}.$ where $\sum_{i=1}^{r} \beta_{i} = c$ and $\beta_{i}<c$ for all $i$. Now, note that $\sum_{i=2}^{r}\beta_{i}=c-\beta_{1}$ is an affine positive real root. Moreover, $s_{\beta_{2}}\cdot s_{\beta_{3}}\cdot ...\cdot s_{\beta_{r}}\leq s_{c-\beta_{1}}$ by Corollary  \ref{cor5.9}. Thus $w=s_{\beta_{1}}\cdot (s_{\beta_{2}}\cdot ...\cdot s_{\beta_{r}}) \leq s_{\beta_{1}}\cdot s_{c-\beta_{1}}$. But by Lemma \ref{lemma11}, $s_{\beta_{1}}s_{c-\beta_{1}}$ is reduced so $s_{\beta_{1}}\cdot s_{c-\beta_{1}}= s_{\beta_{1}}s_{c-\beta_{1}}$. We also know that $s_{\beta_{1}}s_{c-\beta_{1}}=t_{\gamma}$ for some $\gamma \in \Pi,$ see the proof of Lemma \ref{lemma10}.
 
 $2)$ Let $\beta,\beta',\mu,\mu' \in \Pi_{\text{aff}}^{\text{re},\,+}$ such that $\mu+\mu'=\beta+\beta'=c.$ Then $s_{\beta}s_{\beta'}\neq s_{\mu}s_{\mu'}$ if $\beta\neq \mu$ by Lemma \ref{lemma10}. Also, note that  $|\{\beta \in \Pi_{\text{aff}}^{\text{re},\,+}: \beta<c\}|=n(n-1)$. 
 
$3)$ Let $w=s_{\beta}s_{\beta'} \in \Gamma_{c}( id)$ for some positive real roots, $\beta,\beta'$ such that $\beta+\beta'=c.$ Then $\ell(w)=2(n-1)$, by Lemma \ref{lemma11}.
\end{proof}

\begin{remark} \label{remark14.5} We have
\begin{equation}\label{equation14.5} \Gamma_{c} ( id)=\{s_{\beta}s_{\beta'}:\beta, \beta' \in   \Pi_{\text{aff}}^{\text{re},\,+}\, \text{such that}\,\, \beta+\beta'=c\},
\end{equation}
see the proof of Theorem \ref{thm14}.
\end{remark}

\begin{example} Suppose that the affine Weyl group $W_{\text{aff}}$ is associated to $A_{2}^{(1)}.$ Then $$\Gamma_{c}( id)=\{t_{\alpha_{1}},t_{\alpha_{2}},t_{\alpha_{1}+\alpha_{2}}, t_{-\alpha_{1}}, t_{-\alpha_{2}},t_{-(\alpha_{1}+\alpha_{2})}\}.$$ 
Moreover, for any $w\in \Gamma_{c}( id)$, $\ell(w)=2(n-1)=2\cdot 2=4$. 

\end{example}

\subsection{Curve Neighborhood $\Gamma_{c+\alpha}(id)$} \label{case of c plus alpha}

Here, we will state the result which inspires us to calculate the most general case. First, we will have some preliminary lemmas.

\begin{lemma}\label{lemma17.3} let $\alpha<c$ be a positive real root. Then $s_{\alpha}\cdot s_{\alpha}\cdot s_{c-\alpha}\leq s_{\alpha}\cdot s_{c-\alpha}\cdot s_{\alpha}$ and $s_{c-\alpha}\cdot s_{\alpha}\cdot s_{\alpha}\leq s_{\alpha}\cdot s_{c-\alpha}\cdot s_{\alpha}.$ 

\end{lemma}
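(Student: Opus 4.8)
The plan is to deduce both inequalities from a single structural fact about $s_\alpha\cdot s_\alpha$ together with the monotonicity of the Hecke product. Write $\beta:=c-\alpha$; since $\alpha<c$ is a positive real root, so is $\beta$, it satisfies $\beta<c$, and $\text{supp}(\alpha)$ and $\text{supp}(\beta)$ are complementary arcs of the (cyclic) affine Dynkin diagram of type $A_{n-1}^{(1)}$.

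First I would record the identity $s_\alpha\cdot s_\alpha = s_\alpha\cdot s_{\alpha'} = s_{\alpha'}\cdot s_\alpha$, where $\alpha'$ is the positive real root obtained from $\alpha$ by deleting the two simple roots sitting at the ends of the arc $\text{supp}(\alpha)$, with the convention $s_0:=\mathrm{id}$ (so that $\alpha'=0$ precisely when $|\text{supp}(\alpha)|\leq 2$). This is exactly the simplification performed in the proof of Theorem \ref{thm12.7} and illustrated in Example \ref{example12.01}: one applies the ``shared endpoint'' rewriting of part 2) of Lemma \ref{lemma5.7} twice, and then part 1) of Lemma \ref{lemma5.7} for the last equality. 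Two features of $\alpha'$ will be used: first, $\alpha'\leq\alpha<c$, so $s_{\alpha'}\leq s_\alpha$ by Corollary \ref{cor13.6} (trivial when $\alpha'=0$); second, when $\alpha'\neq 0$ the arc $\text{supp}(\alpha')$ is obtained from $\text{supp}(\alpha)$ by removing its two \emph{non-adjacent} endpoints, hence $\text{supp}(\alpha')$ and $\text{supp}(c-\alpha)$ are disconnected, which gives $\alpha'\cap(c-\alpha)=\emptyset$ and $\alpha'+(c-\alpha)$ not a root, so $s_{\alpha'}$ and $s_{c-\alpha}$ Hecke-commute by part 3)b) of Lemma \ref{lemma5.7} (again trivial when $\alpha'=0$).

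Granting this, both inequalities follow formally from associativity of the Hecke product and property b) of Section \ref{hecke product} (if $v\leq v'$ then $u\cdot v\cdot w\leq u\cdot v'\cdot w$). For the first,
\[
s_\alpha\cdot s_\alpha\cdot s_{c-\alpha}
\;=\; s_\alpha\cdot s_{\alpha'}\cdot s_{c-\alpha}
\;=\; s_\alpha\cdot s_{c-\alpha}\cdot s_{\alpha'}
\;\leq\; s_\alpha\cdot s_{c-\alpha}\cdot s_\alpha,
\]
where the middle step uses the Hecke-commutation of $s_{\alpha'}$ and $s_{c-\alpha}$, and the last step enlarges the final factor from $s_{\alpha'}$ to $s_\alpha$. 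For the second,
\[
s_{c-\alpha}\cdot s_\alpha\cdot s_\alpha
\;=\; s_{c-\alpha}\cdot s_{\alpha'}\cdot s_\alpha
\;=\; s_{\alpha'}\cdot s_{c-\alpha}\cdot s_\alpha
\;\leq\; s_\alpha\cdot s_{c-\alpha}\cdot s_\alpha,
\]
this time enlarging the first factor. The only ingredient that is not purely formal is the identity $s_\alpha\cdot s_\alpha = s_\alpha\cdot s_{\alpha'}$ together with the disconnectedness of $\text{supp}(\alpha')$ from $\text{supp}(c-\alpha)$; I expect the short case analysis there ($\alpha=p_{i,j}$ with $i\leq j$ versus $i>j$, exactly as in the proof of Lemma \ref{lemma5.7}) to be the only place that requires care, everything else being a two-line manipulation with the Hecke product.
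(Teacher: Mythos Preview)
Your proof is correct and follows essentially the same approach as the paper's: both arguments hinge on reducing $s_\alpha\cdot s_\alpha$ to $s_\alpha\cdot s_{\alpha'}$ with $\alpha'$ the root obtained by stripping the two endpoints of $\text{supp}(\alpha)$, then using that $\text{supp}(\alpha')$ is disconnected from $\text{supp}(c-\alpha)$ and that $s_{\alpha'}\leq s_\alpha$. The only difference is packaging: you isolate the identity $s_\alpha\cdot s_\alpha = s_\alpha\cdot s_{\alpha'}$ upfront (citing the simplification in the proof of Theorem~\ref{thm12.7}), whereas the paper derives it inline via an explicit chain of Hecke absorptions and commutations starting from $s_\alpha = s_i\cdot s_{p_{i,j}-\alpha_i}\cdot s_i$.
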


\begin{proof} First, assume that $\alpha=p_{i,i}=\alpha_{i}$ for some $i.$ Then $s_{\alpha}=s_{i}$ and since $s_{i}\cdot s_{i}=s_{i}$ we get $s_{\alpha}\cdot s_{\alpha}\cdot s_{c-\alpha}=s_{i}\cdot s_{i}\cdot s_{c-\alpha}=s_{i}\cdot s_{c-\alpha}\leq s_{i}\cdot s_{c-\alpha}\cdot s_{i}=s_{\alpha}\cdot s_{c-\alpha}\cdot s_{\alpha}.$
Now, assume that $\alpha=p_{i,j}$ where $i\neq j.$ Then $s_{\alpha}=s_{i}\cdot s_{p_{i,j}-\alpha_{i}}\cdot s_{i}$ and $s_{p_{i,j}-\alpha_{i}}=s_{j}\cdot s_{p_{i,j}-\alpha_{i}-\alpha_{j}}\cdot s_{j}$. Note that $s_{\alpha}\cdot s_{i}=s_{\alpha}$ and similarly $s_{\alpha}\cdot s_{j}=s_{\alpha}.$ Also, by Lemma \ref{lemma5.7}, $s_{\alpha}$ Hecke commutes with $s_{p_{i,j}-\alpha_{i}}$ and with $s_{p_{i,j}-\alpha_{i}-\alpha_{j}}$ since $p_{i,j}-\alpha_{i}<\alpha$ and $p_{i,j}-\alpha_{i}-\alpha_{j}<\alpha$. We also have $s_{p_{i,j}-\alpha_{i}-\alpha_{j}}<s_{\alpha}$ by Corollary \ref{cor13.6}. Furthermore, $\text{supp}(p_{i,j}-\alpha_{i}-\alpha_{j})$ and $\text{supp}(c-p_{i,j})$ are disconnected so $s_{p_{i,j}-\alpha_{i}-\alpha_{j}}$ Hecke commutes with $s_{c-p_{i,j}}.$ Thus, 
$$\begin{array}{lll} s_{\alpha}\cdot s_{\alpha}\cdot s_{c-\alpha}&=&s_{\alpha}\cdot s_{i}\cdot s_{p_{i,j}-\alpha_{i}}\cdot s_{i}\cdot s_{c-\alpha}
=s_{\alpha}\cdot s_{p_{i,j}-\alpha_{i}}\cdot s_{i}\cdot s_{c-\alpha}\\
&=& s_{p_{i,j}-\alpha_{i}}\cdot s_{\alpha}\cdot s_{i}\cdot s_{c-\alpha}
=s_{p_{i,j}-\alpha_{i}}\cdot s_{\alpha}\cdot s_{c-\alpha}\\
&=&s_{j}\cdot s_{p_{i,j}-\alpha_{i}-\alpha_{j}}\cdot s_{j}\cdot s_{\alpha}\cdot s_{c-\alpha}
= s_{j}\cdot s_{p_{i,j}-\alpha_{i}-\alpha_{j}}\cdot s_{\alpha}\cdot s_{c-\alpha}\\
&=& s_{j}\cdot s_{\alpha}\cdot s_{p_{i,j}-\alpha_{i}-\alpha_{j}}\cdot s_{c-\alpha}
=s_{\alpha}\cdot s_{p_{i,j}-\alpha_{i}-\alpha_{j}}\cdot s_{c-\alpha} \\
&=& s_{\alpha}\cdot s_{c-\alpha}\cdot s_{p_{i,j}-\alpha_{i}-\alpha_{j}}
\leq  s_{\alpha}\cdot s_{c-\alpha}\cdot s_{\alpha}.\\
\end{array}$$
The proof of the inequality $s_{c-\alpha}\cdot s_{\alpha}\cdot s_{\alpha}\leq s_{\alpha}\cdot s_{c-\alpha}\cdot s_{\alpha}$ is similar. 
\end{proof}

\begin{lemma}\label{lemma17.71} Let $w=s_{\beta_{1}}\cdot s_{\beta_{2}}\cdot ...\cdot s_{\beta_{r}}$ where $\beta_{i}\in \Pi_{\text{aff}}^{\text{re},\,+}$ and $\beta_{i}<c$ for all $i.$ Also, assume that $\mathbf{d}=\sum_{i=1}^{r}\beta_{i}=\mathbf{d'}+\mathbf{d''}$ where $\mathbf{d'}$ is the biggest degree that $0<\mathbf{d'}\leq c$ and $\mathbf{d''}=\mathbf{d}-\mathbf{d'}>0$ may or may not be the degree of a root. Then there are some positive real roots, $\beta'_{1}, \beta'_{2},...,\beta'_{r'}$ such that $w\leq s_{\beta'_{1}}\cdot s_{\beta'_{2}}\cdot ...\cdot s_{\beta'_{r'}}$ where $ \sum_{i=1}^{t}\beta'_{i}=\mathbf{d'}$ and $ \sum_{i=t+1}^{r'}\beta'_{i}= \mathbf{d''}$ for some $t$ and $\beta'_{i}<c$ for all $i.$
\end{lemma}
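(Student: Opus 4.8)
First, note that $\mathbf{d}'$ is simply the componentwise minimum $\min(\mathbf{d},c)$ (the requirement that $\mathbf{d}''$ be a degree forces $\mathbf{d}'\le\mathbf{d}$): its $j$-th entry is $1$ if $d_j\ge 1$ and $0$ otherwise, so $\mathbf{d}'$ records the support of $\mathbf{d}$, and $\mathbf{d}''=\mathbf{d}-\mathbf{d}'$ has $j$-th entry $\max(d_j-1,0)$. It does no harm to prove the formally stronger statement in which $\mathbf{d}''$ is only required to be $\ge 0$: when $\mathbf{d}''=0$ (that is, $\mathbf{d}\le c$) the statement is trivial, with $\beta'_i=\beta_i$. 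The plan is then an induction on the number $r$ of Hecke factors of $w$, peeling $s_{\beta_1}$ off on the left and reorganizing.

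Suppose $\mathbf{d}''>0$, which forces $r\ge 2$. Set $w'=s_{\beta_2}\cdots s_{\beta_r}$ and $\mathbf{e}=\mathbf{d}-\beta_1=\sum_{i\ge 2}\beta_i$. Applying the inductive hypothesis to $w'$ (which has $r-1$ factors) with the split $\mathbf{e}=\mathbf{e}'+\mathbf{e}''$, $\mathbf{e}'=\min(\mathbf{e},c)$, yields $w'\le (s_{\delta_1}\cdots s_{\delta_p})\cdot(s_{\epsilon_1}\cdots s_{\epsilon_q})$ with all $\delta_i,\epsilon_j\in\Pi_{\mathrm{aff}}^{\mathrm{re},+}$ smaller than $c$, $\sum_i\delta_i=\mathbf{e}'$ and $\sum_j\epsilon_j=\mathbf{e}''$; since $\sum_i\delta_i\le c$, the arcs $\delta_i$ have pairwise disjoint supports. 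As $v\le v'$ implies $u\cdot v\le u\cdot v'$, we get $w=s_{\beta_1}\cdot w'\le s_{\beta_1}\cdot(s_{\delta_1}\cdots s_{\delta_p})\cdot(s_{\epsilon_1}\cdots s_{\epsilon_q})$. An entrywise computation gives $\beta_1+\mathbf{e}'=\mathbf{d}'+\mathbf{1}_J$, where $J=\{j:\alpha_j\in\mathrm{supp}(\beta_1)\text{ and }d_j\ge 2\}$ and $\mathbf{1}_J$ is the $0/1$ vector supported on $J$; writing $\mathbf{1}_J=\nu_1+\cdots+\nu_s$ as a sum of pairwise disjoint arc roots, one also checks $\mathbf{e}''+\mathbf{1}_J=\mathbf{d}''$.

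Thus everything reduces to showing that $s_{\beta_1}\cdot(s_{\delta_1}\cdots s_{\delta_p})$ is bounded above by a product of the form $(\text{a Hecke product of roots }<c\text{ of degree }\mathbf{d}')\cdot(s_{\nu_1}\cdots s_{\nu_s})$; granting this and multiplying on the right by $s_{\epsilon_1}\cdots s_{\epsilon_q}$, the identity $\mathbf{e}''+\mathbf{1}_J=\mathbf{d}''$ closes the induction. I would prove the reduced claim by absorbing the pairwise disjoint arcs $\delta_i$ into the front one at a time, according to how each meets the arc $\beta_1$: if $\mathrm{supp}(\delta_i)\cap\mathrm{supp}(\beta_1)=\emptyset$, Lemma \ref{lemma5.7}(3) lets $s_{\delta_i}$ commute past $s_{\beta_1}$, or merge with it into $s_{\beta_1+\delta_i}$, and it stays in the front; if $\delta_i\le\beta_1$, Lemma \ref{lemma5.7}(1) commutes $s_{\delta_i}$ rightwards past $s_{\beta_1}$ into the back; if $\delta_i$ overlaps $\beta_1$ in a sub-arc, Lemma \ref{lemma5.7}(2a) peels exactly the overlap $\delta_i\cap\beta_1$ off to the right (one of the $\nu_k$) while the remainder, now disjoint from $\beta_1$, recombines into the front; the wrap-around case where $\delta_i\cap\beta_1$ is a union of two arcs is handled by Lemma \ref{lemma5.7}(2b); and the case where the front is a complementary pair $s_\alpha\cdot s_{c-\alpha}$ and $\beta_1$ equals one of the two is exactly the shape $s_\alpha\cdot s_\alpha\cdot s_{c-\alpha}\le s_\alpha\cdot s_{c-\alpha}\cdot s_\alpha$ of Lemma \ref{lemma17.3}. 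Each such move either preserves the Hecke product or replaces it by a larger one, so the inequality is maintained; at the end, the surviving front is a family of pairwise disjoint arcs summing to $\mathbf{d}'$ and the peeled pieces are the $\nu_k$.

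The main obstacle is precisely the bookkeeping in this last reduction. As the $\delta_i$ are absorbed, one must keep verifying that the running front remains a family of pairwise disjoint arcs of total degree $\le c$, that the overlap of the incoming arc with the current front can actually be isolated by one of the cases of Lemma \ref{lemma5.7}, and that when a peeled piece $\nu_k$ is commuted past the remaining factors to reach the back it does not get absorbed by an adjacent arc along the way — and here the cyclic geometry intervenes, since an arc can meet the front in two separate pieces, two front arcs can add up to exactly $c$ (so a translation, not a reflection, appears, as in the proof of Lemma \ref{lemma10}), and ``adjacent'' must be read modulo $n$. This is a finite case analysis over the interval configurations on the affine Dynkin cycle; once it is completed, the accumulated back roots sum to $\mathbf{1}_J$ and the bound has the claimed split form $\sum_{i\le t}\beta'_i=\mathbf{d}'$, $\sum_{i>t}\beta'_i=\mathbf{d}''$.
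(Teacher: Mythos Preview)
Your inductive framework differs from the paper's: you induct on the number $r$ of Hecke factors, peel $s_{\beta_1}$ off on the left, and apply the hypothesis to $w'=s_{\beta_2}\cdots s_{\beta_r}$, which produces disjoint arcs $\delta_1,\ldots,\delta_p$ summing to $\mathbf{e}'\le c$. The entrywise identity $\beta_1+\mathbf{e}'=\mathbf{d}'+\mathbf{1}_J$ and $\mathbf{e}''+\mathbf{1}_J=\mathbf{d}''$ are correct, and the reduction to reorganizing $s_{\beta_1}\cdot(s_{\delta_1}\cdots s_{\delta_p})$ is a clean packaging. The paper instead inducts on $\mathbf{d}$ and branches into five cases according to how $\beta_1,\beta_r$ and the tail $\sum_{i\ge 2}\beta_i$ sit relative to $\mathbf{d}''$ and to~$c$.

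The gap is exactly where you locate it, and it is not minor bookkeeping: the reorganization of $s_{\beta_1}\cdot(s_{\delta_1}\cdots s_{\delta_p})$ into $(\text{front of degree }\mathbf{d}')\cdot(s_{\nu_1}\cdots s_{\nu_s})$ is the entire content of the lemma in this special case, and several moves you describe do not work as stated. When $\delta_i\le\beta_1$ you say Lemma~\ref{lemma5.7}(1) ``commutes $s_{\delta_i}$ rightwards past $s_{\beta_1}$ into the back'', but $s_{\delta_i}$ is already to the right of $s_{\beta_1}$; the real obstruction is moving it past the remaining $\delta_j$, and if $\delta_i+\delta_j$ is a root then Lemma~\ref{lemma5.7}(3a) only gives $s_{\delta_i}\cdot s_{\delta_j}\le s_{\delta_i+\delta_j}$, not commutation. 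Your appeal to Lemma~\ref{lemma17.3} (``the front is a complementary pair $s_\alpha\cdot s_{c-\alpha}$ and $\beta_1$ equals one of the two'') does not match your configuration either, since your front begins as $s_{\beta_1}$ alone. Your closing phrase ``once it is completed'' is an honest admission that the core argument is missing.

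The paper supplies precisely this missing analysis, but via its own induction on $\mathbf{d}$: cases~1--4 reduce to smaller degree, and the hard residual cases~5a (where $\sum_{i\ge2}\beta_i<c$) and~5b (where $\sum_{i\ge2}\beta_i=c$) are worked out by an explicit sequence of Hecke moves using Lemma~\ref{lemma5.7} and, in the $\beta_1=\beta$ subcase, the specific inequality $s_{\beta_1}\cdot s_{\beta_1}\cdot s_{c-\beta_1}\le s_{\beta_1}\cdot s_{c-\beta_1}\cdot s_{\beta_1}$ of Lemma~\ref{lemma17.3}. Your strategy could in principle be completed along similar lines, but as written it stops short of the substantive work.
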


\begin{proof}First, assume that there is an integer $b$ such that $\sum_{i=1}^{b}\beta_{i}=\mathbf{d'}.$ Then $ \sum_{i=b+1}^{r'}\beta'_{i}= \mathbf{d''}$ so we can take $t=b$ and $\beta_{i}=\beta'_{i}$ for all $i.$ Now, suppose that for any integer b such that $1\leq b \leq r$ we have $\sum_{i=1}^{b}\beta_{i}\neq \mathbf{d'}.$ We will prove the statement by induction on $\mathbf{d}.$ Here, for the base case we have $\mathbf{d}=2\alpha_{i},$ for some simple root $\alpha_{i}$ where $\mathbf{d'}=\mathbf{d''}=\alpha_{i}$. Then, we have to have $w=s_{\alpha_{i}}\cdot s_{\alpha_{i}}$ which satisfies the statement, so we are done with the base case. Now assume that the statement is true for all degrees that are smaller than $\mathbf{d}$. We will prove that it is also true for $\mathbf{d}.$ 

\begin{itemize}

\item[1)] Suppose that $\beta_{r}<\mathbf{d''}$. Then since $\mathbf{d}-\beta_{r}<\mathbf{d}$ and $\mathbf{d}-\beta_{r}=\mathbf{d'}+\mathbf{d''}-\beta_{r}$ where $\mathbf{d''}-\beta_{r}>0$ by the induction assumption we have some real positive roots, $\beta'_{1}, \beta'_{2},...,\beta'_{r'},$ such that 
$w=(s_{\beta_{1}}\cdot s_{\beta_{2}}\cdot ...\cdot s_{\beta_{r-1}})\cdot s_{\beta_{r}}\leq (s_{\beta'_{1}}\cdot s_{\beta'_{2}}\cdot ...\cdot s_{\beta'_{r'}})\cdot s_{\beta_{r}}$ where $ \sum_{i=1}^{t}\beta'_{i}=\mathbf{d'}$ and $ \sum_{i=t+1}^{r'}\beta'_{i}= \mathbf{d''}-\beta_{r}$ for some $t$ and $\beta'_{i}<c$ for all $i.$ This implies that we are done with this case since $ \sum_{i=1}^{t}\beta'_{i}=\mathbf{d'}$ and $( \sum_{i=t+1}^{r'}\beta'_{i})+\beta_{r}= \mathbf{d''}.$ 

\item[2)] Assume that $\beta_{1}<\mathbf{d''}$. Then again since  $\mathbf{d}-\beta_{1}<\mathbf{d}$ and $\mathbf{d}-\beta_{1}=\mathbf{d'}+\mathbf{d''}-\beta_{1}$ where $\mathbf{d''}-\beta_{1}>0$ by induction assumption we have some real positive roots, $\beta'_{1}, \beta'_{2},...,\beta'_{r'}$ such that 
$w=s_{\beta_{1}}\cdot (s_{\beta_{2}}\cdot ...\cdot s_{\beta_{r-1}}\cdot s_{\beta_{r}} )\leq s_{\beta_{1}}\cdot (s_{\beta'_{1}}\cdot s_{\beta'_{2}}\cdot ...\cdot s_{\beta'_{r'}})$ where $ \sum_{i=1}^{t}\beta'_{i}=\mathbf{d'}$ and $ \sum_{i=t+1}^{r'}\beta'_{i}= \mathbf{d''}-\beta_{1}$ for some $t$ and $\beta'_{i}<c$ for all $i.$ Now, observe that $\beta'_{r'}<\mathbf{d''}-\beta_{1}<\mathbf{d''}.$ Thus, this case follows by the previous case. 

\item[3)] Suppose that $\beta_{1} \cap \mathbf{d''}=\emptyset$. Then the statement will follow by the same arguments in the previous cases.

\item[4)] Assume that $\beta_{i}\cap \beta_{j}\neq \emptyset$ for some $i,j$ such that $2\leq i<j\leq r.$  Then  $\sum_{i=2}^{r}\beta_{i}=\mathbf{d}-\beta_{1}<\mathbf{d}$ can be written as $\mathbf{D}+\mathbf{D'}$ where $\mathbf{D'}$ is the biggest degree such that $0<\mathbf{D'}\leq \mathbf{d'}\leq  c$ and $0<\mathbf{D''}\leq \mathbf{d''}.$ Thus by the induction assumption there are some real positive roots, $\beta'_{1}, \beta'_{2},...,\beta'_{r'},$ such that 
$s_{\beta_{2}}\cdot s_{\beta_{3}}\cdot ...\cdot s_{\beta_{r}} \leq s_{\beta'_{1}}\cdot s_{\beta'_{2}}\cdot ...\cdot s_{\beta'_{r'}}$ where $ \sum_{i=1}^{t}\beta'_{i}=\mathbf{D'}$ and $ \sum_{i=t+1}^{r'}\beta'_{i}= \mathbf{D''}$ for some $t$ and $\beta'_{i}<c$ for all $i.$ This follows by
$w=s_{\beta_{1}}\cdot (s_{\beta_{2}}\cdot ...\cdot s_{\beta_{r}} )\leq s_{\beta_{1}} \cdot (s_{\beta'_{1}}\cdot s_{\beta'_{2}}\cdot ...\cdot s_{\beta'_{r'}}).$ 
Note that
$\beta'_{r'}\leq \mathbf{D''}\leq \mathbf{d''}$ thus we are done with this case.

\item[5)] Suppose that $\beta_{i}\cap \beta_{j}=\emptyset$ for any $i,j$ such that $2\leq i<j\leq r$. This implies that $\sum_{i=2}^{r}\beta_{i}\leq c$ so $\mathbf{d''}\leq \beta_{1}$ and $\mathbf{d}<2c$. We have two main cases here:\\

\begin{itemize}

\item[5a)] Assume that $\sum_{i=2}^{r}\beta_{i}< c.$ Now, for any $i$ such that $2\leq i\leq r-1$ if $\beta_{i}+\beta_{i+1}$ is a root then by Lemma \ref{lemma5.7}  we have $s_{\beta_{i}}\cdot s_{\beta_{i+1}}\leq s_{\beta_{i}+\beta_{i+1}}.$ So if we repeatedly combine consecutive roots in $s_{\beta_{2}}\cdot ...\cdot s_{\beta_{r}}$ which add up to a root we can assume that no two consecutive roots in the multiplication add up to a root. By the same lemma, this implies that any consecutive roots of the multiplication Hecke commute. So, we can assume that for any $i,j$ such that $2\leq i,j\leq r$,  $s_{\beta_{i}}\cdot s_{\beta_{j}}=s_{\beta_{j}}\cdot s_{\beta_{i}}.$ Thus, we can also suppose that $\beta_{1}\cap \beta_{2}\neq \emptyset.$ Now, if $\beta_{1}\cap \beta_{2}\leq \mathbf{d''}$ is not a root and $\beta_{1}\cap \beta_{2}=\gamma_{1}+\gamma_{2}$ for some positive real roots, $\gamma_{1}, \gamma_{2}$ we will have $s_{\beta_{1}}\cdot s_{\beta_{2}}=s_{\gamma_{2}}\cdot s_{\gamma_{1}}\cdot s_{\beta_{1}-\gamma_{1}-\gamma_{2}}\cdot s_{\beta_{2}}$ by Lemma \ref{lemma5.7}. So we get
$$w=(s_{\beta_{1}}\cdot s_{\beta_{2}})\cdot s_{\beta_{3}}\cdot ...\cdot s_{\beta_{r}}\leq (s_{\gamma_{2}}\cdot s_{\gamma_{1}}\cdot s_{\beta_{1}-\gamma_{1}-\gamma_{2}}\cdot s_{\beta_{2}})\cdot s_{\beta_{3}}\cdot...\cdot s_{\beta_{r}}.$$ Note that $\gamma_{2}<\mathbf{d''}$ so we are done with this case by case $2)$ above. Now assume that $\beta_{1}\cap \beta_{2}\leq \mathbf{d''}$ is a root. Let $\gamma:=\beta_{1}\cap \beta_{2}$. Then by Lemma \ref{lemma5.7} we have $s_{\beta_{1}}\cdot s_{\beta_{2}}=s_{\gamma}\cdot s_{\beta_{1}-\gamma}\cdot s_{\beta_{2}}.$ So
$w=(s_{\beta_{1}}\cdot s_{\beta_{2}})\cdot s_{\beta_{3}}\cdot ...\cdot s_{\beta_{r}}\leq (s_{\gamma}\cdot s_{\beta_{1}-\gamma}\cdot s_{\beta_{2}})\cdot s_{\beta_{3}}\cdot...\cdot s_{\beta_{r}}.$ Now, if $\gamma <\mathbf{d''}$ then again we are done by case $2)$ above. Assume that $\gamma=\mathbf{d''}$ which implies that for any $i\geq 3$ we have both $\beta_{1}\cap \beta_{i}=\emptyset$ and $\mathbf{d''}\cap \beta_{i}=\emptyset.$ Here, we will consider the multiplication, $s_{\beta_{1}}\cdot s_{\beta_{2}}\cdot s_{\beta_{3}}\cdot ...\cdot s_{\beta_{r}}.$ By Hecke commutation, we can assume that $s_{\beta_{2}}$ is the last reflection of the multiplication. Now, if the sum of $\beta_{1}$ and $\beta_{3}$ is not a root then they will Hecke commute by Lemma \ref{lemma5.7} and we will have $w=s_{\beta_{1}}\cdot s_{\beta_{3}}\cdot s_{\beta_{4}}\cdot ...\cdot s_{\beta_{r}}\cdot s_{\beta_{2}}=s_{\beta_{3}}\cdot s_{\beta_{1}}\cdot s_{\beta_{4}}\cdot ...\cdot s_{\beta_{r}}\cdot s_{\beta_{2}}.$ But this implies that we are done with this case by case $3)$ above since $\beta_{3}\cap \mathbf{d''}=\emptyset.$ Now, if the sum is a root then we will replace $s_{\beta_{1}}\cdot s_{\beta_{3}}$ with $s_{\beta_{1}+\beta_{3}}$ by the same lemma to get $w=s_{\beta_{1}}\cdot s_{\beta_{3}}\cdot s_{\beta_{4}}\cdot ...\cdot s_{\beta_{r}}\cdot s_{\beta_{2}}\leq s_{\beta_{1}+\beta_{3}}\cdot s_{\beta_{4}}\cdot ...\cdot s_{\beta_{r}}\cdot s_{\beta_{2}}.$ Next, we will apply the same argument to $\beta_{1}+\beta_{3}$ and $\beta_{4}$ etc. So either we will be done by case $3)$ above that is because $w$ can be written in a way that the first root which appears in it is $s_{\beta_{i}}$ such that $\beta_{i}\cap \mathbf{d''}=\emptyset$ or we will get $w=s_{\beta_{1}}\cdot s_{\beta_{3}}\cdot s_{\beta_{4}}\cdot ...\cdot s_{\beta_{r}}\cdot s_{\beta_{2}}\leq s_{\beta_{1}+\beta_{3}+\beta_{4}+...+\beta_{r}}\cdot s_{\beta_{2}}.$ Last, note that, $(\beta_{1}+\beta_{3}+\beta_{4}+...+\beta_{r})\cap  \beta_{2}=\gamma=\mathbf{d''}.$ By Lemma \ref{lemma5.7} we can write $s_{\beta_{1}+\beta_{3}+\beta_{4}+...+\beta_{r}}\cdot s_{\beta_{2}}=s_{\beta_{1}+\beta_{3}+\beta_{4}+...+\beta_{r}}\cdot s_{\beta_{2}-\gamma}\cdot s_{\gamma}.$ Hence, we have done with this case as well.

\item[5b)] Suppose that $\sum_{i=2}^{r}\beta_{i}= c$ then $\mathbf{d'}=c$ and $\beta_{1}=\mathbf{d''}.$ Now, note that, by Equation \ref{equation14.5} there is a positive real root, $\beta<c$ such that $s_{\beta_{2}}\cdot s_{\beta_{3}}\cdot ...\cdot s_{\beta_{r}}\leq s_{\beta}\cdot s_{c-\beta}.$ So we get 
$w=s_{\beta_{1}}\cdot (s_{\beta_{2}}\cdot ...\cdot s_{\beta_{r}})\leq s_{\beta_{1}}\cdot s_{\beta}\cdot s_{c-\beta}.$
Here, we can assume that $w=s_{\beta_{1}}\cdot s_{\beta}\cdot s_{c-\beta}$;

\begin{itemize}

\item[i)] If $\beta_{1}\cap \beta=\emptyset$ then $\beta_{1}\leq c-\beta.$ Now, first assume that $\beta_{1}+\beta=c$ then $c-\beta=\beta_{1}=\mathbf{d''}$ and this implies that $w$ is already in the desired form. Second, if $\beta_{1}+\beta<c$ is not a root then by Lemma \ref{lemma5.7} we have $s_{\beta_{1}}\cdot s_{\beta}=s_{\beta}\cdot s_{\beta_{1}}$. Also, by the same lemma we have $s_{\beta_{1}}\cdot s_{c-\beta}=s_{c-\beta}\cdot s_{\beta_{1}}$ since $\beta_{1}\leq c-\beta.$ Hence $w= s_{\beta_{1}}\cdot s_{\beta}\cdot s_{c-\beta}=s_{\beta}\cdot s_{c-\beta}\cdot s_{\beta_{1}}.$So we are done with this case as well. Last, suppose that $\beta_{1}+\beta<c$ is a root then by Lemma \ref{lemma5.7} we have $s_{\beta_{1}}\cdot s_{\beta}\leq s_{\beta_{1}+\beta}.$ So we get 
$w= s_{\beta_{1}}\cdot s_{\beta}\cdot s_{c-\beta}\leq s_{\beta_{1}+\beta}\cdot s_{c-\beta}.$ Now, note that $(\beta_{1}+\beta)\cap (c-\beta)=\beta_{1}$ and by the same lemma we have $s_{\beta_{1}+\beta}\cdot s_{c-\beta} =s_{\beta_{1}+\beta}\cdot s_{c-\beta-\beta_{1}}\cdot s_{\beta_{1}}.$ Thus the statement follows.

\item[ii)] If $\beta_{1}\cap \beta\neq \emptyset$ then we have several cases here; first, suppose if $\beta_{1}=\beta$ then by Lemma \ref{lemma17.3} we have $w=s_{\beta_{1}}\cdot s_{\beta}\cdot s_{c-\beta}=s_{\beta_{1}}\cdot s_{\beta_{1}}\cdot s_{c-\beta_{1}}\leq s_{\beta_{1}}\cdot s_{c-\beta_{1}}\cdot s_{\beta_{1}}.$ Thus, we are done with case. Next, if $\beta_{1}\neq \beta$ then either $\beta_{1}<\beta$, $\beta_{1}>\beta$ or $\beta_{1}$ is incomparable with $\beta.$ Now, if $\beta_{1}<\beta$ or $\beta_{1}>\beta$ then $s_{\beta_{1}}$ and $s_{\beta}$ will Hecke commute by Lemma \ref{lemma5.7} so have $w=s_{\beta_{1}}\cdot s_{\beta}\cdot s_{c-\beta}=s_{\beta}\cdot s_{\beta_{1}}\cdot s_{c-\beta}.$ Now, if $\beta_{1}<\beta$ then $\beta_{1}+c-\beta<c$ so we are done with this case by case $5a)$ above. Now, if $\beta_{1}>\beta$ then $\beta_{1}\cap (c-\beta)\neq \emptyset$ so the statement follows by case $4)$ above. Last, assume that $\beta_{1}$ is incomparable with $\beta.$ Then $\beta_{1}$ intersects with both $\beta$ and $c-\beta$ which implies that $\beta_{1}\cap \beta<\mathbf{d''}.$ Then by Lemma \ref{lemma5.7} we can replace the multiplication, $s_{\beta_{1}}\cdot s_{\beta}$ in $w$ with $s_{\gamma_{2}}\cdot s_{\gamma_{1}}\cdot s_{\beta_{1}-\gamma_{1}-\gamma_{2}}\cdot s_{\beta}$ if $\beta_{1}\cap \beta$ is not a root and $\beta_{1}\cap \beta=\gamma_{1}+\gamma_{2}$ for some positive real roots, $\gamma_{1},\gamma_{2}$ and with $s_{\gamma}\cdot s_{\beta_{1}-\gamma}\cdot s_{\beta}$ if $\beta_{1}\cap \beta$ is a root and $\beta_{1}\cap \beta=\gamma.$ So, the first root that appears in the permutation $w$ is either $\gamma_{2}$ or $\gamma$ which are strictly smaller than $\mathbf{d''}.$ But then the statement follows by case $2)$ above.

\end{itemize}

\end{itemize}

\end{itemize} \end{proof}

\begin{lemma}\label{lemma17.8} Let $\mu,\mu'$ and $\alpha \in \Pi_{\text{aff}}^{\text{re},\,+}$ such that $\mu+\mu'=c$ and $\alpha<c.$ Then there are some positive real roots $\beta$ and $\beta'$ such that $s_{\mu}\cdot s_{\mu'}\cdot s_{\alpha}\leq s_{\beta}\cdot s_{\beta'}\cdot s_{\alpha}$ where $\beta+\beta'=c$ and either $\beta'\leq c-\alpha$ or both $\beta'>\alpha$ and $\beta'\perp \alpha.$
\end{lemma}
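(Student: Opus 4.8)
The plan is to reduce the problem to the three disjoint configurations of $\mu$ relative to $\alpha$ and $c-\alpha$, and in each case either exhibit $\beta,\beta'$ directly or pass to a smaller instance using the machinery already assembled (Lemma \ref{lemma5.7}, Lemma \ref{lemma17.3}, and Corollary \ref{cor13.6}). Note first that since $\mu+\mu'=c$, exactly one of $\mu,\mu'$ lies "on each side" of $\alpha$ in the obvious combinatorial sense; write $\beta_0:=\mu$, $\beta_0':=\mu'=c-\mu$. If already $\beta_0'\leq c-\alpha$, or if $\beta_0'>\alpha$ and $\beta_0'\perp\alpha$, we are done by taking $\beta=\beta_0,\ \beta'=\beta_0'$, with equality in place of $\leq$. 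So assume neither holds. The goal is to replace $(\mu,\mu')$ by a better pair without decreasing the Hecke product $s_\mu\cdot s_{\mu'}\cdot s_\alpha$.

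The first main step is a case split on $\mu\cap\alpha$ (equivalently on how $\mathrm{supp}(\mu)$ meets $\mathrm{supp}(\alpha)$). If $\mu\cap\alpha=\emptyset$, then $\mu\leq c-\alpha$; if moreover $\mu+\alpha$ is not a root, Lemma \ref{lemma5.7}(1) and (3b) let $s_\mu$ Hecke-commute past both $s_{\mu'}$ (using $\mu\leq\mu'$ or the disjointness) and, carefully, one reorganizes so that the roles of $\beta'$ and $c-\alpha$ align; if $\mu+\alpha$ is a root one uses Lemma \ref{lemma5.7}(3a) to write $s_\mu\cdot s_\alpha\leq s_{\mu+\alpha}$ and then splits $s_{\mu'}$ across the overlap $\mu'\cap(\mu+\alpha)$ via part (2). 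This is essentially the mechanism already used inside the proof of Lemma \ref{lemma17.71}, case 5b(i), so I would cite that pattern rather than redo it. If instead $\mu=\alpha$, then $\mu'=c-\alpha$ and Lemma \ref{lemma17.3} gives directly $s_\alpha\cdot s_{c-\alpha}\cdot s_\alpha\geq s_\alpha\cdot s_\alpha\cdot s_{c-\alpha}=s_\mu\cdot s_{\mu'}\cdot s_\alpha$, and $\beta=\alpha,\ \beta'=c-\alpha$ works since $\beta'=c-\alpha\leq c-\alpha$. If $\mu<\alpha$ or $\mu>\alpha$ (comparable but unequal), Lemma \ref{lemma5.7}(1) lets $s_\mu$ and $s_\alpha$ Hecke-commute, so $s_\mu\cdot s_{\mu'}\cdot s_\alpha=s_{\mu'}\cdot s_\mu\cdot s_\alpha$ (after also swapping $s_\mu,s_{\mu'}$, which commute since $\mu\leq\mu'$ or vice versa); now $\mu'=c-\mu$, and one checks that $c-\mu$ relates to $\alpha$ in a way that falls into the disjoint case just handled (when $\mu<\alpha$, $\mu'\cap\alpha$ may be nonempty but then $\mu'>\alpha$; when $\mu>\alpha$, $\mu'\cap\alpha=\emptyset$). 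The remaining possibility is that $\mu$ is incomparable to $\alpha$: then $\mu$ overlaps both $\alpha$ and $c-\alpha$, so $\gamma:=\mu\cap\alpha$ is a proper nonempty sub-object strictly smaller than $\mu$; Lemma \ref{lemma5.7}(2) rewrites $s_\mu\cdot s_\alpha$ (or $s_\mu\cdot s_{\mu'}$) by peeling off $s_\gamma$ (or $s_{\gamma_1}\cdot s_{\gamma_2}$), producing a factorization whose first relevant root is strictly smaller, and then Corollary \ref{cor13.6} bounds the tail.

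After the reductions, the second step is to verify that the perpendicularity clause "$\beta'>\alpha$ and $\beta'\perp\alpha$" is actually attained in the cases where $\beta'\leq c-\alpha$ fails; this is where the structure of $p_{i,j}$-type roots enters, since $\beta'\perp\alpha$ with $\beta'>\alpha$ forces a specific nesting of supports (the situation $\beta'=p_{i,l}\supset\alpha=p_{k,l}$ or $p_{i,k}$ with the "middle" removed), and one must check the sum $\beta+\beta'=c$ is preserved throughout — which it is, because every application of Lemma \ref{lemma5.7} conserves the total root. I expect the main obstacle to be exactly this bookkeeping: ensuring that the replacement pair $(\beta,\beta')$ simultaneously satisfies $\beta+\beta'=c$ \emph{and} lands in $\Pi_{\text{aff}}^{\text{re},\,+}(\alpha)$ (in the sense of Theorem \ref{thm0.59}), rather than merely improving one condition at the expense of the other. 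I would organize the proof so that each rewrite strictly decreases a well-founded quantity (e.g. the degree of the "offending" initial segment, as in Lemma \ref{lemma17.71}), guaranteeing termination at a pair with the required properties.
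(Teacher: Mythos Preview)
Your plan has a genuine gap that breaks most of the case analysis: you repeatedly assume that $s_{\mu}$ and $s_{\mu'}$ Hecke-commute (``after also swapping $s_\mu,s_{\mu'}$, which commute since $\mu\leq\mu'$ or vice versa'', or ``using $\mu\leq\mu'$ or the disjointness''). This is never available. Since $\mu+\mu'=c$, the supports of $\mu$ and $\mu'$ are complementary nonempty arcs of the affine Dynkin cycle, so $\mu$ and $\mu'$ are \emph{never} comparable, and Lemma~\ref{lemma5.7}(3b) does not apply either: their supports are adjacent at both ends, and indeed $s_{\mu}s_{\mu'}$ and $s_{\mu'}s_{\mu}$ are distinct translations by Lemma~\ref{lemma10} (and both are reduced by Lemma~\ref{lemma11}), so $s_{\mu}\cdot s_{\mu'}\neq s_{\mu'}\cdot s_{\mu}$ in general. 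Consequently, in the product $s_{\mu}\cdot s_{\mu'}\cdot s_{\alpha}$ you cannot bring $s_{\mu}$ adjacent to $s_{\alpha}$, and all of your appeals to Lemma~\ref{lemma5.7} on the pair $(\mu,\alpha)$, as well as the ``$s_\mu\cdot s_\alpha\leq s_{\mu+\alpha}$'' move, are not justified. (Your $\mu=\alpha$ case is also garbled: if $\mu=\alpha$ then $s_{\mu}\cdot s_{\mu'}\cdot s_{\alpha}=s_{\alpha}\cdot s_{c-\alpha}\cdot s_{\alpha}$ already has $\beta'=c-\alpha$, no Lemma~\ref{lemma17.3} needed; the case that actually requires Lemma~\ref{lemma17.3} is $\mu'=\alpha$.)

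The paper avoids this obstruction by doing the case split on $\mu'$ (relative to $c-\alpha$ and to $\alpha$) rather than on $\mu$. The point is that $s_{\mu'}$ and $s_{\alpha}$ \emph{are} adjacent in the Hecke product, so Lemma~\ref{lemma5.7}(2) can be applied directly to peel off $\gamma=\mu'\cap\alpha$ (or $\gamma_1+\gamma_2$) from $s_{\mu'}\cdot s_{\alpha}$; the resulting $s_{\gamma}$ factors are then absorbed into $s_{\mu}$ on the left via Lemma~\ref{lemma5.7}(3a), yielding a new pair $(\beta,\beta')=(\mu+\gamma,\mu'-\gamma)$ with $\beta+\beta'=c$ preserved. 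The only place commutation across $s_{\mu'}$ is used is when $\mu'<\alpha$ or $\mu'>\alpha$, and there it is $s_{\mu'}$ and $s_{\alpha}$ that commute by Lemma~\ref{lemma5.7}(1), not $s_{\mu}$ and $s_{\mu'}$. Reorganizing your argument around $\mu'$ instead of $\mu$ would fix the gap and essentially recover the paper's proof.
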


\begin{proof} We have three cases;
\begin{itemize}
\item[1)] If $\mu' \leq c-\alpha$ then take $\beta'=\mu'.$
\item[2)] If $\mu'>c-\alpha$ then $\mu' \cap \alpha \neq \emptyset.$ Note that, $\mu'\cap \alpha\neq \mu', \alpha.$ We have two cases here;

\begin{itemize}

\item[a)] If $\mu' \cap \alpha$ is a root then let $\gamma:=\mu' \cap \alpha$. Now, by Lemma \ref{lemma5.7} we get $s_{\mu}\cdot s_{\mu'}\cdot s_{\alpha}=s_{\mu}\cdot s_{\gamma}\cdot s_{\mu'-\gamma}\cdot s_{\alpha}.$
Here, note that $\mu+\gamma$ is a root since $\mu+\gamma=c-\mu'+\gamma=c-(\mu'-\gamma)$ and $\mu'-\gamma$ is a root. Thus $s_{\mu}\cdot s_{\mu'}\cdot s_{\alpha}=s_{\mu}\cdot s_{\gamma}\cdot s_{\mu'-\gamma}\cdot s_{\alpha}\leq s_{\mu+\gamma}\cdot s_{\mu'-\gamma}\cdot s_{\alpha}.$
Note that, $\mu'-\gamma=c-\alpha$ and $(\mu+\gamma)+(\mu'-\gamma)=c$ so we will take $\beta'=\mu'-\gamma$ in this case.

\item[b)] If $\mu' \cap \alpha$ is not a root then by Lemma \ref{lemma5.7} we have  $\mu' \cap \alpha=\gamma_{1}+\gamma_{2}$ for some positive real roots $\gamma_{1}$ and $\gamma_{2}$ and $s_{\mu}\cdot s_{\mu'}\cdot s_{\alpha}=s_{\mu}\cdot s_{\gamma_{1}}\cdot s_{\gamma_{2}}\cdot s_{\mu'-\gamma}\cdot s_{\alpha}.$ 
Again, here $\mu+\gamma$ is a root since both $\mu+\alpha=c-\mu'+\gamma=c-(\mu'-\gamma)$ and $\mu'-\gamma$ are roots. Thus 
$s_{\mu}\cdot s_{\mu'}\cdot s_{\alpha}=s_{\mu}\cdot s_{\gamma_{1}}\cdot s_{\gamma_{2}}\cdot s_{\mu'-\gamma}\cdot s_{\alpha}\leq s_{\mu+\gamma_{1}+\gamma_{2}}\cdot s_{\mu'-\gamma}\cdot s_{\alpha}.$
Now, observe that, $\mu'-\gamma=c-\alpha$ and $(\mu+\gamma_{1}+\gamma_{2})+(\mu'-\gamma)=c$ so again we will take $\beta'=\mu'-\gamma$ in this case.

\end{itemize}

\item[3)] If $\mu'$ is incomparable with $c-\alpha$ then again $\mu' \cap \alpha \neq \emptyset.$ But then we will either have the same two cases in the second case which then we will be done by the arguments in the case or $\mu' \leq \alpha$ or $\mu'> \alpha.$ 
\begin{enumerate}
\item[a)] Assume that $\mu'\leq \alpha.$ If $\mu'=\alpha$ then $s_{\mu}\cdot s_{\mu'}\cdot s_{\alpha} =s_{c-\alpha}\cdot s_{\alpha}\cdot s_{\alpha}.$ Now, by Lemma \ref{lemma17.3}, we have $s_{c-\alpha}\cdot s_{\alpha}\cdot s_{\alpha} \leq s_{\alpha}\cdot s_{c-\alpha}\cdot s_{\alpha}$. So we can take $\beta'=c-\alpha$ in this case. Now, suppose that $\mu'< \alpha.$ Then $s_{\mu'}$ and $s_{\alpha}$ Hecke commute by Lemma \ref{lemma5.7} part $a).$ Also,  $\mu \cap \alpha \neq \emptyset$. Note that the intersection of two positive roots is either a root or the sum of two positive roots. Now, assume that  $\mu \cap \alpha$ is a root. Let  $\gamma:=\mu \cap \alpha.$ Then again by Lemma \ref{lemma5.7} we have $s_{\mu}\cdot s_{\alpha}=s_{\gamma}\cdot s_{\mu-\gamma}\cdot s_{\alpha}.$ Thus $s_{\mu}\cdot s_{\mu'}\cdot s_{\alpha} =s_{\mu}\cdot s_{\alpha}\cdot s_{\mu'}=s_{\gamma}\cdot s_{\mu-\gamma}\cdot s_{\alpha}\cdot s_{\mu'}=s_{\gamma}\cdot s_{\mu-\gamma}\cdot s_{\mu'}\cdot s_{\alpha}.$ Here, note that $\mu+\mu'-\gamma=c-\gamma$ so $s_{\mu-\gamma}\cdot s_{\mu'}\leq s_{c-\gamma}$ by Corollary \ref{cor13.6}. So $s_{\gamma}\cdot s_{\mu-\gamma}\cdot s_{\mu'}\cdot s_{\alpha}\leq s_{\gamma}\cdot s_{c-\gamma}\cdot s_{\alpha}.$ Note that, $c-\gamma>c-\alpha$, hence this case falls into the case $2)$ above. So we are done with this case. If $\mu \cap \alpha$ is not a root then the proof is similar.

\item[b)] Assume that $\mu'> \alpha$. Now, if $\mu' \perp \alpha$ then we can take $\beta'=\mu'.$ Now, assume that $\mu'$ is not perpendicular to $\alpha.$ Then $<\mu',\alpha>=1$ which implies $s_{\alpha}(\mu')=\mu'-<\mu',\alpha^{\vee}>\alpha=\mu'-\alpha.$ So $\mu'-\alpha$ is a root. Also, $s_{\mu'}$ and $s_{\alpha}$ Hecke commute by Lemma \ref{lemma5.7}. Furthermore, $\mu+\alpha=c-\mu'+\alpha=c-(\mu'-\alpha)$ is also a root and $(\mu+\alpha)\cap \mu'=\alpha$ since $\mu$ and $\mu'$ are disjoint and $\mu'>\alpha.$ We also have $s_{\mu}\cdot s_{\alpha}\leq s_{\mu+\alpha}$ by Corollary \ref{cor13.6}. Thus  $s_{\mu}\cdot s_{\mu'}\cdot s_{\alpha} =s_{\mu}\cdot s_{\alpha}\cdot s_{\mu'}\leq s_{\mu+\alpha}\cdot s_{\mu'}.$ Here, note that $s_{\mu+\alpha}\cdot s_{\mu'}=s_{\mu+\alpha}\cdot s_{\mu'-\alpha}\cdot s_{\alpha}$ by case $2)$ in Lemma \ref{lemma5.7}. Now, observe that $(\mu+\alpha)+(\mu'-\alpha)=c$ and $\mu'-\alpha\leq c-\alpha.$ Thus, we can take $\beta'=\mu'-\alpha$ in this case.

\end{enumerate}
\end{itemize} \end{proof}

We will state the result in the following theorem.

\begin{thm}\label{thm17.9} Let $\alpha\in \Pi_{\text{aff}}^{\text{re},\,+}$ be such that $\alpha<c.$ Then

\begin{enumerate}
\item[1)] $\Gamma_{c+\alpha}( id)=\{t_{\beta'} s_{\alpha}:\beta'\in \Pi_{\text{aff}}^{\text{re},\,+}(\alpha)\} \cup \{t_{\beta'-c} s_{\alpha}:\beta'\in \Pi_{\text{aff}}^{\text{re},\,+}(\alpha)\}$
\item[2)] $|\Gamma_{c+\alpha}( id)|=|\{\beta': \beta' \in \Pi_{\text{aff}}^{\text{re},\,+}(\alpha)\}|$
\item[3)] For all $w\in \Gamma_{c+\alpha}( id)$, $\ell(w)=2(n-1)+\ell{(s_{\alpha})}$

\end{enumerate}

where $ \Pi_{\text{aff}}^{\text{re},\,+}(\alpha):=\{\beta' \in \Pi_{\text{aff}}^{\text{re},\,+}: \beta'<c \,\,\text{and either}\,\,  \beta'\leq c-\alpha \,\, \text{or both} \,\,\beta'>\alpha \,\,\text{and} \,\, \beta'\perp \alpha \}$.

\end{thm}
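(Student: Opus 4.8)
The plan is to prove the three statements by combining the structural results already established: the description $\Gamma_c(\mathrm{id})=\{s_\mu s_{\mu'}:\mu+\mu'=c\}$ from Remark~\ref{remark14.5}, the reduction-of-words Lemma~\ref{lemma17.71} and the ``normal form'' Lemma~\ref{lemma17.8}, together with the length computation in Corollary~\ref{cor12.8}. I would first prove the upper-bound inclusion: any element reachable from $\mathrm{id}$ by a path of degree $\le c+\alpha$ is bounded above, in Bruhat order, by an element of the candidate set.

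First I would take an arbitrary path $\mathrm{id}\xrightarrow{\beta_1}\cdots\xrightarrow{\beta_r}w=s_{\beta_1}\cdots s_{\beta_r}$ with $\sum\beta_i\le c+\alpha$. By Remark~\ref{remark12.9} (invoking Corollary~\ref{cor7}) I may assume $\beta_i<c$ for all $i$ and $\sum\beta_i=c+\alpha$ exactly, and that $w=s_{\beta_1}\cdot\ldots\cdot s_{\beta_r}$ is the Hecke product. Writing $c+\alpha=\mathbf{d}'+\mathbf{d}''$ with $\mathbf{d}'=c$ the largest degree $\le c$ and $\mathbf{d}''=\alpha$, Lemma~\ref{lemma17.71} gives roots $\beta'_1,\dots,\beta'_{r'}<c$ with $w\le s_{\beta'_1}\cdot\ldots\cdot s_{\beta'_{r'}}$, where $\sum_{i\le t}\beta'_i=c$ and $\sum_{i>t}\beta'_i=\alpha$. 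By Remark~\ref{remark14.5} the first block satisfies $s_{\beta'_1}\cdot\ldots\cdot s_{\beta'_t}\le s_\mu s_{\mu'}$ for some $\mu+\mu'=c$, and by Corollary~\ref{cor5.9} the second block satisfies $s_{\beta'_{t+1}}\cdot\ldots\cdot s_{\beta'_{r'}}\le s_\alpha$ (since $\alpha<c$). Hence $w\le s_\mu\cdot s_{\mu'}\cdot s_\alpha$. Now Lemma~\ref{lemma17.8} replaces this by $s_\beta\cdot s_{\beta'}\cdot s_\alpha$ with $\beta+\beta'=c$ and $\beta'\in\Pi_{\mathrm{aff}}^{\mathrm{re},+}(\alpha)$; by Corollary~\ref{cor12.8} the product $s_\beta s_{\beta'}s_\alpha$ is reduced. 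Finally, as in the proof of Lemma~\ref{lemma10}, $s_\beta s_{\beta'}$ equals $t_{\beta'}$ if $\alpha_0\notin\mathrm{supp}(\beta)$ and $t_{\beta'-c}$ if $\alpha_0\in\mathrm{supp}(\beta)$ (here I am choosing notation so that the translation part is indexed by $\beta'$), so $w\le t_{\beta'}s_\alpha$ or $w\le t_{\beta'-c}s_\alpha$ with $\beta'\in\Pi_{\mathrm{aff}}^{\mathrm{re},+}(\alpha)$.

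Next I would show each candidate $t_{\beta'}s_\alpha$ (resp. $t_{\beta'-c}s_\alpha$) with $\beta'\in\Pi_{\mathrm{aff}}^{\mathrm{re},+}(\alpha)$ is actually reachable: it equals $s_\beta s_{\beta'} s_\alpha$ for the appropriate $\beta=c-\beta'$, which is the endpoint of the explicit path $\mathrm{id}\to s_\beta\to s_\beta s_{\beta'}\to s_\beta s_{\beta'}s_\alpha$ of degree $\beta+\beta'+\alpha=c+\alpha$; moreover this product is reduced by Corollary~\ref{cor12.8}, so the three steps are genuinely length-increasing edges. Combined with the upper bound, every element reachable within degree $c+\alpha$ lies below some candidate, and every candidate is reachable; maximality then forces $\Gamma_{c+\alpha}(\mathrm{id})$ to be exactly the set of candidates. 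For (2), distinctness of the $t_{\beta'}$ and $t_{\beta'-c}$ follows because distinct translations are distinct Weyl group elements (and $\beta',\beta'-c$ never coincide as $c\ne 0$), so the count equals $|\Pi_{\mathrm{aff}}^{\mathrm{re},+}(\alpha)|$ as claimed—though I should double-check whether the two families $\{t_{\beta'}\}$ and $\{t_{\beta'-c}\}$ can overlap after multiplying by $s_\alpha$, which would require $\beta'_1-(\beta'_2-c)\in\{0\}$, impossible. Part (3) is immediate from Corollary~\ref{cor12.8}: $\ell(s_\beta s_{\beta'}s_\alpha)=\ell((s_\beta s_{\beta'}))+\ell(s_\alpha)=2(n-1)+\ell(s_\alpha)$.

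The main obstacle I anticipate is the bookkeeping in the upper-bound step, specifically making sure that Lemma~\ref{lemma17.71} applies cleanly with $\mathbf{d}'=c$ and that the resulting decomposition really does split as ``a degree-$c$ block followed by a degree-$\alpha$ block'' so that Remark~\ref{remark14.5} and Corollary~\ref{cor5.9} can be applied to the two blocks separately; there is also a subtlety in identifying which of $\beta'$ or $\beta'-c$ arises, i.e. in matching the $\alpha_0\in\mathrm{supp}(\beta)$ dichotomy with the statement's union, and in verifying that as $\mu+\mu'=c$ ranges over all splittings, Lemma~\ref{lemma17.8} lands in $\Pi_{\mathrm{aff}}^{\mathrm{re},+}(\alpha)$ and covers it surjectively. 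I would handle the surjectivity by the explicit-path argument above, which sidesteps needing Lemma~\ref{lemma17.8} to be onto.
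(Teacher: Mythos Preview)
Your proposal is correct and follows essentially the same approach as the paper: reduce to a Hecke product via Remark~\ref{remark12.9}, split into a degree-$c$ block and a degree-$\alpha$ block via Lemma~\ref{lemma17.71}, bound each block using Remark~\ref{remark14.5} and Corollary~\ref{cor5.9}, normalize with Lemma~\ref{lemma17.8}, and use Corollary~\ref{cor12.8} for reducedness and length. One small bookkeeping correction: the dichotomy you state is swapped. From the proof of Lemma~\ref{lemma10}, with $\beta+\beta'=c$ one has $s_\beta s_{\beta'}=t_{-\beta}=t_{\beta'-c}$ when $\alpha_0\notin\mathrm{supp}(\beta)$ (equivalently $\alpha_0\in\mathrm{supp}(\beta')$), and $s_\beta s_{\beta'}=t_{c-\beta}=t_{\beta'}$ when $\alpha_0\in\mathrm{supp}(\beta)$ (equivalently $\alpha_0\notin\mathrm{supp}(\beta')$); you flagged this as a subtlety to check, and indeed you just need to flip the two cases. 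Note also that the union in the statement is not a genuine doubling: each $\beta'\in\Pi_{\mathrm{aff}}^{\mathrm{re},+}(\alpha)$ contributes exactly one element (since $t_{\beta'}$ only makes sense as an element of $W_{\mathrm{aff}}=W\ltimes Q^\vee$ when $\beta'$ is a finite root), which is why the count in (2) is $|\Pi_{\mathrm{aff}}^{\mathrm{re},+}(\alpha)|$ rather than twice that.
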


\begin{proof} $1)$ Let $id \stackrel{\beta_{1}} \longrightarrow s_{\beta_{1}} \stackrel{\beta_{2}} \longrightarrow s_{\beta_{1}}s_{\beta_{2}} ...\stackrel{\beta_{r}} \longrightarrow w=s_{\beta_{1}}s_{\beta_{2}}...s_{\beta_{r}}$
be a path in the moment graph such that $\sum_{i=1}^{r} \beta_{i} \leq c+\alpha$. Note that, by Remark \ref{remark12.9} we can assume that $w=s_{\beta_{1}} \cdot s_{\beta_{2}}\cdot...\cdot s_{\beta_{r}}.$ where $\sum_{i=1}^{r} \beta_{i} = c+\alpha$ and $\beta_{i}<c$ for all $i$. Now, by Lemma \ref{lemma17.71}, we can also suppose that $ \sum_{i=1}^{t}\beta_{i}=c$ and $ \sum_{i=t+1}^{r}\beta_{i}= \alpha$ for some $t.$ Then  
$$w= (s_{\beta_{1}}\cdot...\cdot s_{\beta_{t}})\cdot(s_{\beta_{t+1}}\cdot s_{\beta_{t+2}}\cdot ...\cdot s_{\beta_{r}})\leq (s_{\mu}\,s_{\mu'})\cdot s_{\alpha}=s_{\mu} \cdot s_{\mu'}\cdot s_{\alpha}$$  for some $\mu, \mu' \in \Pi_{\text{aff}}^{\text{re},\,+}$ such that $\mu+\mu'=c$ by Equation \ref{equation14.5} and Corollary  \ref{cor5.9}. Furthermore, $s_{\mu}\cdot s_{\mu'}\cdot s_{\alpha}\leq s_{\beta}\cdot s_{\beta'}\cdot s_{\alpha}$ where $\beta+\beta'=c$ and either $\beta'\leq c-\alpha$ or both $\beta'>\alpha$ and $\beta'\perp \alpha,$ by Lemma \ref{lemma17.8}. Now, by Corollary \ref{cor12.8} $s_{\beta} s_{\beta'}s_{\alpha}$ is reduced so $s_{\beta}\cdot s_{\beta'}\cdot s_{\alpha}=s_{\beta} s_{\beta'}s_{\alpha}.$ Now, also note that $s_{\beta}s_{\beta'}=t_{\beta'}$ if $\alpha_{0}\notin  \text{supp}(\beta')$ and $s_{\beta}s_{\beta'}=t_{\beta'-c}$ if $\alpha_{0}\in \text{supp}(\beta')$, see the proof of Lemma \ref{lemma10}.

$2)$ Let $s_{\beta}s_{\beta'}s_{\alpha}$ and  $s_{\nu}s_{\nu'}s_{\alpha}\in \Gamma_{c+\beta}( id)$ be given. We need to show that $s_{\beta}s_{\beta'}s_{\alpha}\neq s_{\nu}s_{\nu'}s_{\alpha}$ if $\beta'\neq \nu'.$ Now assume that $\beta'\neq \nu'$ but $s_{\beta}s_{\beta'}s_{\alpha}= s_{\nu}s_{\nu'}s_{\alpha}$. By Corollary \ref{cor12.8}, both $s_{\beta}s_{\beta'}s_{\alpha}$ and $s_{\nu}s_{\nu'}s_{\alpha}$ are reduced so the equality, $s_{\beta}s_{\beta'}s_{\alpha}= s_{\nu}s_{\nu'}s_{\alpha}$ implies that $s_{\beta}s_{\beta'}= s_{\nu}s_{\nu'}$ but this is a contradiction by Lemma \ref{lemma10}. 
 
$3)$ Let $w=s_{\beta}s_{\beta'} s_{\alpha}\in \Gamma_{c+\alpha}( id).$ Then by Corollary \ref{cor12.8}, $\ell(w)=2(n-1)+\ell{(s_{\alpha})}$.
\end{proof}

\begin{remark} \label{remark17.91} Let $\alpha\in \Pi_{\text{aff}}^{\text{re},\,+}$ be such that $\alpha<c.$ We have
\begin{equation}\label{equation17.92} \Gamma_{c+\alpha} (id)=\{s_{\beta}s_{\beta'}s_{\alpha}:\beta' \in  \Pi_{\text{aff}}^{\text{re},\,+}(\alpha)\,\, \text{such that}\,\, \beta+\beta'=c\},
\end{equation}
see the proof of Theorem \ref{thm17.9}.
\end{remark}

\begin{example} \label{example17.93} Let $W_{\text{aff}}$ be the Weyl group of type $A_{4}^{(1)}.$ We compute $\Gamma_{c+\alpha}( id)$ where $\alpha=\alpha_{0}+\alpha_{4}.$ We have six positive real roots which are smaller than $c-\alpha=\alpha_{1}+\alpha_{2}+\alpha_{3};$ $\beta'_{1}=\alpha_{1}, \beta'_{2}=\alpha_{2}, \beta'_{3}=\alpha_{3}, \beta'_{4}=\alpha_{1}+\alpha_{2}, \beta'_{5}=\alpha_{2}+\alpha_{3} , \beta'_{6}=\alpha_{1}+\alpha_{2}+\alpha_{3}$ and only one positive real root which  is smaller than $c$, strictly bigger than $\alpha$ and perpendicular to $\alpha$; $\beta'_{7}=\alpha_{0}+\alpha_{1}+\alpha_{3}+\alpha_{4}.$ So $ \Gamma_{c+\alpha}( id)=\{t_{\beta'_{1}}s_{\alpha}, t_{\beta'_{2}}s_{\alpha},t_{\beta'_{3}}s_{\alpha},t_{\beta'_{4}}s_{\alpha},t_{\beta'_{5}}s_{\alpha},t_{\beta'_{6}}s_{\alpha},t_{\beta'_{7}-c}s_{\alpha} \}$ by Theorem \ref{thm17.9}.
Moreover, for any $w\in \Gamma_{c+\alpha}( id),$ we have $\ell(w)=2(n-1)+\ell{(s_{\alpha})}=2(n-1)+2\, \text{supp}(\alpha)-1=2\cdot4+2\cdot2-1=11.$

\end{example}

\subsection{Curve Neighborhood $\Gamma_{mc+\alpha}(id)$} \label{case of mc plus alpha}

This section is devoted to a generalization of the result in the previous section. We will begin with some lemmas.

\begin{lemma}\label{lemma17.95} Let $\beta,\beta',\nu $ and $\nu' \in \Pi_{\text{aff}}^{\text{re},\,+}$ such that $\nu+\nu'=\beta+\beta'=c$ and $\nu'\leq \beta'.$ Then  $s_{\nu}\cdot s_{\nu'}\cdot s_{\beta}\cdot s_{\beta'}\leq (s_{\nu}\cdot s_{\nu'})^2.$
\end{lemma}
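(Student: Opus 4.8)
The plan is to start with two reductions. Since $\nu+\nu'=\beta+\beta'=c$, the hypothesis $\nu'\le\beta'$ is the same as $\beta\le\nu$; and by Lemma \ref{lemma11} (with $m=2$) the element $(s_{\nu}\cdot s_{\nu'})^{2}=s_{\nu}s_{\nu'}s_{\nu}s_{\nu'}$ is reduced of length $4(n-1)$, so it equals this ordinary product. Next, the diagram automorphism $\varphi$ of Section \ref{dynkin} extends to a length-preserving automorphism of $W_{\text{aff}}$ that preserves the Bruhat order and the Hecke product, fixes $c$, preserves $\le$, and sends $s_{\gamma}\mapsto s_{\varphi(\gamma)}$ for every affine real root $\gamma$; applying a suitable power of it I may assume $\alpha_{0}\notin\text{supp}(\nu)$. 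Then $\nu,\beta\in\Pi^{+}$, both $\nu'$ and $\beta'$ contain $\alpha_{0}$, and by the proof of Lemma \ref{lemma10} one has $s_{\nu}\cdot s_{\nu'}=t_{-\nu}$ and $s_{\beta}\cdot s_{\beta'}=t_{-\beta}$, so by associativity of the Hecke product the assertion becomes $t_{-\nu}\cdot t_{-\beta}\le t_{-2\nu}$ for $\beta\le\nu$ in $\Pi^{+}$.

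I would establish this by induction on $\ell(s_{\beta'})-\ell(s_{\nu'})$, equivalently on the number of simple roots in $\text{supp}(\nu)\setminus\text{supp}(\beta)$, the base case $\beta'=\nu'$ being the trivial equality. For the inductive step put $\eta=\nu-\beta>0$ and distinguish whether $\text{supp}(\beta)$ meets an endpoint of the interval $\text{supp}(\nu)$ (so that $\eta$ is a single root and $\nu'+\beta$, $\nu'+\eta$ are roots) or $\text{supp}(\beta)$ lies in the interior of $\text{supp}(\nu)$ (so that $\eta=\eta_{1}+\eta_{2}$ with $\text{supp}(\eta_{1})$, $\text{supp}(\eta_{2})$ disconnected). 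In either case I would use Lemma \ref{lemma5.7}: part 2) to record $s_{\nu}\cdot s_{\beta'}=s_{\nu}\cdot s_{\nu'}\cdot s_{\eta}$ (respectively $s_{\nu}\cdot s_{\nu'}\cdot s_{\eta_{1}}\cdot s_{\eta_{2}}$), and parts 1) and 3) to rearrange the consecutive reflections whose supports are comparable or disjoint, together with Lemma \ref{lemma17.3} for the boundary reorderings — in the style of the proofs of Lemmas \ref{lemma17.71} and \ref{lemma17.8} — so as to bound $s_{\nu}\cdot s_{\nu'}\cdot s_{\beta}\cdot s_{\beta'}$ by a product of the same shape in which the gap $\ell(s_{\beta'})-\ell(s_{\nu'})$ has strictly decreased and to which the inductive hypothesis applies; configurations that become finite along the way are absorbed by Theorem \ref{thm13} and Theorem \ref{thm17.9}.

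The step I expect to be the main obstacle is controlling the Hecke collapse. The product $t_{-\nu}\cdot t_{-\beta}$ is in general strictly shorter than $t_{-\nu-\beta}$, and the two blocks $s_{\nu}s_{\nu'}$ and $s_{\beta}s_{\beta'}$, which both lie in $\Gamma_{c}(\id)$, are pairwise Bruhat-incomparable; hence the inequality cannot be produced by monotonicity of the Hecke product in a single factor — replacing $s_{\beta}$ by $s_{\nu}$ (legitimate, since $s_{\beta}\le s_{\nu}$) overshoots $(s_{\nu}\cdot s_{\nu'})^{2}$, whereas $s_{\beta}\cdot s_{\beta'}\le s_{\nu}\cdot s_{\nu'}$ simply fails. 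One must instead keep precise track of which simple reflections are absorbed when $t_{-\nu}$ meets $t_{-\beta}$; the interior subcase, with its two extra fragments $\eta_{1}$ and $\eta_{2}$, is the delicate one, and it is there that the length identity of Lemma \ref{lemma10.2} and the reduced normal forms of Section \ref{affine Weyl group} carry the argument.
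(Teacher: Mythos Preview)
Your proposal is a plan rather than a proof: the inductive step is never carried out, and the appeals ``in the style of Lemmas~\ref{lemma17.71} and~\ref{lemma17.8}'' and to Theorems~\ref{thm13} and~\ref{thm17.9} do not substitute for the actual Hecke manipulations. More importantly, the induction is unnecessary: the paper proves the lemma directly, with a single case split that is equivalent to yours (whether $\eta=\nu-\beta$ is a root is the same as whether $\nu'+\beta$ is a root).

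In the case where $\nu'+\beta$ is a root, the paper does not try to extract $s_{\nu}\cdot s_{\beta'}$ at all. Instead it uses $s_{\nu'}\cdot s_{\beta}\le s_{\nu'+\beta}$ (Lemma~\ref{lemma5.7}(3a)), then applies Lemma~\ref{lemma5.7}(2a) twice: first with $(\nu'+\beta)\cap\beta'=\nu'$ to peel off $s_{\nu'}$ on the right, then with $\nu\cap(\nu'+\beta)=\beta$ to peel off $s_{\beta}$; since $\beta+(\beta'-\nu')=\nu$, one more application of Lemma~\ref{lemma5.7}(3a) finishes. In the case where $\nu'+\beta$ is not a root, the paper first commutes $s_{\beta}$ past $s_{\nu}$ and $s_{\nu'}$ (Lemma~\ref{lemma5.7}(1) and (3b)) and $s_{\nu'}$ past $s_{\beta'}$, obtaining $s_{\beta}\cdot s_{\nu}\cdot s_{\beta'}\cdot s_{\nu'}$; then it applies exactly your identity $s_{\nu}\cdot s_{\beta'}=s_{\nu}\cdot s_{\nu'}\cdot s_{\eta_1}\cdot s_{\eta_2}$ from Lemma~\ref{lemma5.7}(2b) (here $\eta=\eta_1+\eta_2$ necessarily has two components), and concludes via $s_{\beta}\cdot s_{\eta_1}\cdot s_{\eta_2}\le s_{\nu}$. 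No recursion, no length bookkeeping, and no appeal to curve-neighborhood theorems is needed.

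So your two reductions and your case split are fine, and the identity $s_{\nu}\cdot s_{\beta'}=s_{\nu}\cdot s_{\nu'}\cdot s_{\eta}$ is genuinely the key to one of the two cases; but the missing idea is that in the other case one should merge $s_{\nu'}\cdot s_{\beta}$ rather than try to reach $s_{\nu}\cdot s_{\beta'}$, and that in both cases two applications of Lemma~\ref{lemma5.7} already land you at $(s_{\nu}\cdot s_{\nu'})^{2}$ without any inductive descent.
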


\begin{proof} Here we have two main cases;
\begin{itemize}
\item[1)] $\nu'+\beta$ is a root. Then $c-(\nu'+\beta)=c-\nu'-\beta=\nu-\beta=\beta'-\nu'$ is also a root. We have $s_{\nu'}\cdot s_{\beta}\leq s_{\nu'+\beta}$ and $(\nu'+\beta)\cap \beta'=\nu'$ so by Lemma \ref{lemma5.7} part $2a)$ we get 
$$s_{\nu}\cdot s_{\nu'}\cdot s_{\beta}\cdot s_{\beta'}\leq s_{\nu}\cdot s_{\nu'+\beta}\cdot s_{\beta'}\leq s_{\nu}\cdot s_{\nu'+\beta}\cdot s_{\beta'-\nu'}\cdot s_{\nu'}.$$ Also, we have $\nu \cap (\nu'+\beta)=\beta$ and $\beta+\beta'-\nu'=c-\nu'=\nu$. Then by Lemma \ref{lemma5.7} part $2a)$ we get 
$s_{\nu}\cdot s_{\nu'+\beta}\cdot s_{\beta'-\nu'}\cdot s_{\nu'}\leq s_{\nu}\cdot s_{\nu'}\cdot s_{\beta}\cdot s_{\beta'-\nu'}\cdot s_{\nu'}\leq s_{\nu}\cdot s_{\nu'}\cdot s_{\nu}\cdot s_{\nu'}.$

\item[2)] $\nu'+\beta$ is not a root. Note that, since $\nu'+\beta<c$ and $\beta<\nu$ we have $s_{\nu'} \cdot s_{\beta}=s_{\beta}\cdot s_{\nu'}$ and $s_{\nu} \cdot s_{\beta}=s_{\beta} \cdot s_{\nu}$ which implies that 
$s_{\nu}\cdot s_{\nu'}\cdot s_{\beta}\cdot s_{\beta'}=s_{\beta}\cdot s_{\nu} \cdot s_{\nu'}\cdot s_{\beta'}.$ Also note that, $\nu'+\beta=c-\nu+c-\beta'=2c-(\nu+\beta')$ and since $\nu'+\beta$ is not a root $\nu+\beta'$ is also not a root. Then $s_{\nu} \cdot s_{\beta'}=s_{\beta'} \cdot s_{\nu}$. Furthermore, $s_{\nu'} \cdot s_{\beta'}=s_{\beta'} \cdot s_{\nu'}$ since $\nu'<\beta'$. We get 
$s_{\beta}\cdot s_{\nu} \cdot s_{\nu'}\cdot s_{\beta'}=s_{\beta}\cdot s_{\nu} \cdot s_{\beta'}\cdot s_{\nu'}.$ Note that, $\nu \cap \beta' \neq \emptyset.$ Here we have two cases;

\begin{itemize}
\item[a)] If $\gamma:=\nu \cap \beta'$ is a root then $\nu-\gamma$ and $\beta'-\gamma$ are also roots. By Lemma \ref{lemma5.7} we get $s_{\beta} \cdot s_{\nu} \cdot s_{\beta'} \cdot s_{\nu'}=s_{\beta} \cdot s_{\nu} \cdot s_{\beta'-\gamma}\cdot s_{\gamma} \cdot s_{\nu'}.$ Also note that, $\nu'=\beta'-\gamma$ and $\beta+\gamma=c-\beta'+\gamma=c-(\beta'-\gamma)=c-\nu'=\nu.$ Again, if we also use the facts that $s_{\beta} \cdot s_{\nu'}=s_{\nu'} \cdot s_{\beta}$ and $s_{\beta} \cdot s_{\nu} =s_{\nu} \cdot s_{\beta}$ we will obtain 

$\begin{array}{lll}s_{\beta} \cdot s_{\nu} \cdot s_{\beta'-\gamma} \cdot s_{\gamma} \cdot s_{\nu'}&=&s_{\beta} \cdot s_{\nu} \cdot s_{\nu'} \cdot s_{\gamma} \cdot s_{\nu'}
=s_{\nu} \cdot s_{\beta} \cdot s_{\nu'} \cdot s_{\gamma} \cdot s_{\nu'}\\
&=&s_{\nu} \cdot s_{\nu'} \cdot s_{\beta} \cdot s_{\gamma} \cdot s_{\nu'} \leq s_{\nu} \cdot s_{\nu'} \cdot s_{\nu} \cdot s_{\nu'}

\end{array}$ 

since $s_{\beta}\cdot s_{\gamma}\leq s_{\beta+\gamma}= s_{\nu}.$ 

\item[b)]If $\nu \cap \beta'$ is not a root then there are some roots $\gamma_{1}$ and $\gamma_{2}$ such that $\nu \cap \beta'=\gamma_{1}+\gamma_{2}$. Now, by Lemma \ref{lemma5.7} part $2b)$   we have $s_{\nu}\cdot s_{\beta'}=s_{\nu}\cdot s_{\beta'-\gamma_{1}-\gamma_{2}}\cdot s_{\gamma_{1}} \cdot s_{\gamma_{2}}$. We also have $\beta'-\gamma_{1}-\gamma_{2}=\nu'$ and $\beta+\gamma_{1}+\gamma_{2}=\nu$ which implies that 

$\begin{array}{lll}s_{\nu}\cdot s_{\nu'}\cdot s_{\beta}\cdot s_{\beta'}&=&s_{\beta}\cdot s_{\nu}\cdot s_{\beta'}\cdot s_{\nu'}=s_{\beta}\cdot s_{\nu}\cdot s_{\beta'-\gamma_{1}-\gamma_{2}}\cdot s_{\gamma_{1}} \cdot s_{\gamma_{2}}\cdot s_{\nu'}\\
&=&s_{\beta}\cdot s_{\nu}\cdot s_{\nu'} \cdot s_{\gamma_{1}} \cdot s_{\gamma_{2}}\cdot s_{\nu'}

\end{array}$ 

Again, $s_{\beta}$ commutes with $s_{\nu}$ and $s_{\nu'}$. Thus, 

$\begin{array}{lll}s_{\beta}\cdot s_{\nu}\cdot s_{\nu'} \cdot s_{\gamma_{1}} \cdot s_{\gamma_{2}}\cdot s_{\nu'}&=& s_{\nu}\cdot s_{\nu'} \cdot s_{\beta} \cdot s_{\gamma_{1}} \cdot s_{\gamma_{2}}\cdot s_{\nu'}\leq s_{\nu}\cdot s_{\nu'} \cdot s_{\beta+\gamma_{1}+\gamma_{2}}\cdot s_{\nu'}\\
&=&s_{\nu}\cdot s_{\nu'} \cdot s_{\nu}\cdot s_{\nu'}

\end{array}$ 

due to the fact that $ s_{\beta} \cdot s_{\gamma_{1}} \cdot s_{\gamma_{2}}\leq s_{\beta+\gamma_{1}+\gamma_{2}}=s_{\nu}.$ 
\end{itemize}
\end{itemize} \end{proof}

\begin{lemma}\label{lemma17.98} Let $\beta,\beta',\nu $ and $\nu' \in \Pi_{\text{aff}}^{\text{re},\,+}$ such that $\nu+\nu'=\beta+\beta'=c$. Also, assume that $\nu'>\beta$ and $\nu'\perp \beta.$ Then  $s_{\nu}\cdot s_{\nu'}\cdot s_{\beta}\cdot s_{\beta'}\leq (s_{\beta}\cdot s_{\beta'})^2.$ \end{lemma}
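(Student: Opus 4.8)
The goal is to show $s_{\nu}\cdot s_{\nu'}\cdot s_{\beta}\cdot s_{\beta'}\leq (s_{\beta}\cdot s_{\beta'})^2$ when $\nu+\nu'=\beta+\beta'=c$ with $\nu'>\beta$ and $\nu'\perp\beta$. The plan is to mirror the structure of the proof of Lemma \ref{lemma17.95}, but with the roles adapted to the new hypothesis. The first step is to exploit the orthogonality $\nu'\perp\beta$ together with $\nu'>\beta$: since $\nu$ and $\nu'$ have disjoint supports adding to $c$, the condition $\nu'>\beta$ forces $\mathrm{supp}(\beta)\subseteq\mathrm{supp}(\nu')$, hence $\beta\cap\nu=\emptyset$, so $s_{\beta}$ Hecke-commutes with $s_{\nu}$ by Lemma \ref{lemma5.7} (part 3b, since $\beta+\nu$ can't be a root — it would have to be $c$ minus something in $\mathrm{supp}(\nu')\setminus\mathrm{supp}(\beta)$, and one checks this is not a root, or $\beta+\nu>c$). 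Likewise $s_{\beta}$ commutes with $s_{\nu'}$ because $\beta<\nu'$ (Lemma \ref{lemma5.7} part 1). So $s_{\nu}\cdot s_{\nu'}\cdot s_{\beta}\cdot s_{\beta'} = s_{\beta}\cdot s_{\nu}\cdot s_{\nu'}\cdot s_{\beta'}$.

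Next I would analyze $\nu\cap\beta'$. Since $\beta'=c-\beta$ and $\nu'=c-\nu$, and $\beta\subseteq\nu'$, one gets $\beta'=c-\beta\supseteq c-\nu'=\nu$ is false in general; rather $\nu\cap\beta'$ is governed by $\mathrm{supp}(\nu)\cap\mathrm{supp}(\beta')$. Because $\mathrm{supp}(\beta)\subseteq\mathrm{supp}(\nu')$ and supports of $\nu,\nu'$ partition the circular diagram, $\mathrm{supp}(\nu)\subseteq\mathrm{supp}(\beta')$, so in fact $\nu\leq\beta'$ (one must check the convexity/interval condition, which holds since these are the roots $p_{i,j}$ and the support containment of such "interval" roots gives comparability). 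Then $s_{\nu}$ and $s_{\beta'}$ Hecke-commute by Lemma \ref{lemma5.7} part 1, giving $s_{\beta}\cdot s_{\nu}\cdot s_{\nu'}\cdot s_{\beta'} = s_{\beta}\cdot s_{\nu}\cdot s_{\beta'}\cdot s_{\nu'} = s_{\beta}\cdot s_{\beta'}\cdot s_{\nu}\cdot s_{\nu'}$, using also $s_{\nu'}$ vs. $s_{\beta'}$: here $\nu'\cap\beta'\neq\emptyset$, so I would instead keep $s_{\nu'}$ in place and handle the remaining product $s_{\nu}\cdot s_{\nu'}$ sitting after $s_{\beta}\cdot s_{\beta'}$. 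The point is to rewrite the expression so that $s_{\beta}\cdot s_{\beta'}$ appears as a prefix and what remains is a product of reflections whose roots sum to $c$; then Corollary \ref{cor5.9} (or \ref{cor13.6} applied appropriately, noting the sum is exactly $c$) bounds that tail by $s_{c-\text{(something)}}$, and one more application of Lemma \ref{lemma5.7} part 2 reassembles $(s_{\beta}\cdot s_{\beta'})^2$. Concretely: after commutations I expect to reach $s_{\beta}\cdot s_{\beta'}\cdot(\text{product summing to }c)$, and since $\Gamma_c(id)$ consists exactly of the $s_{\mu}s_{\mu'}$ with $\mu+\mu'=c$ (Remark \ref{remark14.5}), the tail is $\leq s_{\beta}\cdot s_{\beta'}$ provided I can realize $s_{\beta}\cdot s_{\beta'}$ itself as the Hecke-maximal element reachable from that tail — which is where the specific arithmetic $\nu'>\beta$, $\nu'\perp\beta$ is used to guarantee the resulting $\mu,\mu'$ can be taken to be $\beta,\beta'$ via Lemma \ref{lemma5.7} part 2a manipulations on the intersections $\nu\cap\beta'$ and $\nu'\cap\beta'$.

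I would organize the final write-up into the same two cases as Lemma \ref{lemma17.95}: namely whether $\nu\cap\beta'$ (equivalently, whether the relevant combination) is a root or a sum of two roots $\gamma_1+\gamma_2$ with disconnected supports. In the "root" case, set $\gamma:=\nu\cap\beta'$; then $\nu-\gamma$, $\beta'-\gamma$ are roots (Lemma \ref{lemma5.7} part 2a), and I push $s_{\gamma}$ through using the commutations established above, together with $s_{\beta}\cdot s_{\gamma}\leq s_{\beta+\gamma}$ when $\beta\cap\gamma=\emptyset$ (Lemma \ref{lemma5.7} part 3a) — checking that $\beta+\gamma$ is the correct root to land on $\beta'$ or $\beta$. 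In the "non-root" case I do the same with the pair $\gamma_1,\gamma_2$. The main obstacle I anticipate is \emph{bookkeeping the support/interval combinatorics}: verifying at each step that the various differences and sums ($\nu-\gamma$, $\beta+\gamma$, $\nu'\cap\beta'$, etc.) really are positive real roots smaller than $c$ and that the required comparability/orthogonality holds, so that Lemma \ref{lemma5.7} applies in the precise form needed. This is exactly the kind of case-by-case $p_{i,j}$ verification done throughout Section 3; it is routine but delicate, and the hypothesis $\nu'>\beta$, $\nu'\perp\beta$ is precisely what makes the chain of reductions close up to $(s_{\beta}\cdot s_{\beta'})^2$ rather than $(s_{\nu}\cdot s_{\nu'})^2$ as in the previous lemma.
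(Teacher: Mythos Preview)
Your first move is correct and matches the paper: since $\nu'>\beta$ and $\nu'\perp\beta$, the supports of $\nu$ and $\beta$ are disconnected (not merely disjoint), so $s_\beta$ Hecke-commutes with both $s_\nu$ and $s_{\nu'}$, giving
\[
s_{\nu}\cdot s_{\nu'}\cdot s_{\beta}\cdot s_{\beta'}=s_{\beta}\cdot s_{\nu}\cdot s_{\nu'}\cdot s_{\beta'}.
\]

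After this the proposal goes off track. You turn to $\nu\cap\beta'$ and propose a two-case split on whether it is a root or a sum of two roots. But you yourself observe that $\mathrm{supp}(\nu)\subseteq\mathrm{supp}(\beta')$, hence $\nu\leq\beta'$ and $\nu\cap\beta'=\nu$ is always a root; there is nothing to split on, and this intersection carries no new information. The attempted commutation $s_{\nu'}\cdot s_{\beta'}\to s_{\beta'}\cdot s_{\nu'}$ you then reach for is not available (as you note, $\nu'\cap\beta'\neq\emptyset$), and the remainder of the sketch never recovers a concrete mechanism for producing the prefix $s_\beta\cdot s_{\beta'}$.

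The object that actually drives the argument is $\nu'\cap\beta'$, and the hypothesis pins it down completely: because $\mathrm{supp}(\nu)$ and $\mathrm{supp}(\beta)$ are disconnected, $\nu'\cap\beta'$ is \emph{never} a root --- it is always $\gamma_1+\gamma_2$ with disconnected supports (the two ``ends'' of $\nu'$ left over after removing $\beta$). So there is only one case. Lemma~\ref{lemma5.7}(2b) gives
\[
s_{\nu'}\cdot s_{\beta'}=s_{\gamma_1}\cdot s_{\gamma_2}\cdot s_{\nu'-\gamma_1-\gamma_2}\cdot s_{\beta'},
\]
and the key identities are $\nu'-\gamma_1-\gamma_2=\beta$ and $\nu+\gamma_1+\gamma_2=\beta'$. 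Substituting, one gets
\[
s_{\beta}\cdot s_{\nu}\cdot s_{\gamma_1}\cdot s_{\gamma_2}\cdot s_{\beta}\cdot s_{\beta'}
\;\leq\;
s_{\beta}\cdot s_{\nu+\gamma_1+\gamma_2}\cdot s_{\beta}\cdot s_{\beta'}
= (s_{\beta}\cdot s_{\beta'})^2,
\]
using $s_{\nu}\cdot s_{\gamma_1}\cdot s_{\gamma_2}\leq s_{\nu+\gamma_1+\gamma_2}$ from Lemma~\ref{lemma5.7}(3a)/Corollary~\ref{cor5.9}. Your plan to ``push $s_\gamma$ through'' and use $s_\beta\cdot s_\gamma\leq s_{\beta+\gamma}$ is pointed in the right direction, but it needs to be applied to $\nu'\cap\beta'$ rather than $\nu\cap\beta'$, and the case split is unnecessary.
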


\begin{proof}First, note that $s_{\nu'}$ and $s_{\beta}$ Hecke commute by part $1)$ in Lemma \ref{lemma5.7}. Also, by the assumptions on $\nu'$ and $\beta$, the support of $\nu$ and $\beta$ are disconnected which implies that $s_{\nu}$ and $s_{\beta}$ Hecke commute. So we get $s_{\nu}\cdot s_{\nu'}\cdot s_{\beta}\cdot s_{\beta'}= s_{\beta}\cdot s_{\nu}\cdot s_{\nu'} \cdot s_{\beta'}.$ Here, the fact that the support of $\nu$ and $\beta$ are disconnected forces us to have $\nu'\cap \beta'\neq \emptyset$ and $\nu'\cap \beta'$ not to be a root. But then $\nu'\cap \beta'=\gamma_{1}+\gamma_{2}$ for some $\gamma_{1},\gamma_{2}\in \Pi_{\text{aff}}^{\text{re},\,+}.$ By part $2)$ in Lemma \ref{lemma5.7}, we get $s_{\nu'}\cdot s_{\beta'}=s_{\gamma_{1}}\cdot s_{\gamma_{2}}\cdot s_{\nu'-\gamma_{1}-\gamma_{2}}\cdot s_{\beta'}.$ Note that $\nu+\gamma_{1}+\gamma_{2}=\beta'$ and $\nu'-\gamma_{1}-\gamma_{2}=\beta.$ Thus 
$$s_{\beta}\cdot s_{\nu}\cdot s_{\nu'} \cdot s_{\beta'}=s_{\beta}\cdot s_{\nu}\cdot s_{\gamma_{1}}\cdot s_{\gamma_{2}}\cdot s_{\nu'-\gamma_{1}-\gamma_{2}}\cdot s_{\beta'}\leq s_{\beta}\cdot s_{\nu+\gamma_{1}+\gamma_{2}}\cdot s_{\beta}\cdot s_{\beta'}= s_{\beta}\cdot s_{\beta'}\cdot  s_{\beta}\cdot s_{\beta'}.$$ \end{proof}

\begin{lemma} \label{lemma17.99} Let $\beta,\beta',\nu $ and $\nu' \in \Pi_{\text{aff}}^{\text{re},\,+}$ such that $\nu+\nu'=\beta+\beta'=c$. Also, assume that $\nu'\leq \beta'$, $\beta'>\alpha$, and $\beta'\perp \alpha.$ Then,  for any positive integer $m,$  $s_{\nu}\cdot s_{\nu'}\cdot (s_{\beta}\cdot s_{\beta'})^m\cdot s_{\alpha}\leq (s_{\mu}\cdot s_{\mu'})^{m+1}\cdot s_{\alpha}$ for some positive real roots $\mu, \mu'$ such that $\mu+\mu'=c$ and either $\mu' \leq c-\alpha$ or both $\mu'>\alpha$ and $\mu' \perp \alpha$.

\end{lemma}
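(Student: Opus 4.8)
The plan is to argue by induction on $m$. For the inductive step I would peel off one copy of $s_{\beta}\cdot s_{\beta'}$, apply the inductive hypothesis, and then re-absorb the peeled factor using Lemmas \ref{lemma17.95} and \ref{lemma17.98} together with the splitting moves of Lemma \ref{lemma5.7}; for the base case $m=1$ I would collapse the product with Lemma \ref{lemma17.95} and then run the case analysis of Lemma \ref{lemma17.8}.

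The key preliminary observation is that the hypotheses $\beta'>\alpha$ and $\beta'\perp\alpha$ make $s_{\alpha}$ Hecke-commute with $s_{\beta}\cdot s_{\beta'}$: since $\alpha\le\beta'$, Lemma \ref{lemma5.7}(1) gives $s_{\alpha}\cdot s_{\beta'}=s_{\beta'}\cdot s_{\alpha}$; and $\text{supp}(\alpha)\subseteq\text{supp}(\beta')$ is disjoint from $\text{supp}(\beta)=\text{supp}(c-\beta')$ while $\beta+\alpha=c-(\beta'-\alpha)$ is not a root (because $\alpha<\beta'$ with $\alpha\perp\beta'$ forces $\alpha$ to be strictly interior to $\beta'$, so $\beta'-\alpha$, and hence $\beta+\alpha$, splits into two pieces with disconnected supports), so Lemma \ref{lemma5.7}(3b) gives $s_{\beta}\cdot s_{\alpha}=s_{\alpha}\cdot s_{\beta}$; hence $(s_{\beta}\cdot s_{\beta'})\cdot s_{\alpha}=s_{\alpha}\cdot(s_{\beta}\cdot s_{\beta'})$. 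Using this and associativity, for $m\ge 2$ one has
$$s_{\nu}\cdot s_{\nu'}\cdot(s_{\beta}\cdot s_{\beta'})^{m}\cdot s_{\alpha}=\bigl(s_{\nu}\cdot s_{\nu'}\cdot(s_{\beta}\cdot s_{\beta'})^{m-1}\cdot s_{\alpha}\bigr)\cdot(s_{\beta}\cdot s_{\beta'}).$$
The inductive hypothesis bounds the first factor by $(s_{\rho}\cdot s_{\rho'})^{m}\cdot s_{\alpha}$ for some $\rho,\rho'$ with $\rho+\rho'=c$ and $\rho'\in\Pi_{\text{aff}}^{\text{re},\,+}(\alpha)$, so by monotonicity of the Hecke product (property (b) of Section \ref{hecke product}) and another use of the commutation it remains to bound $(s_{\rho}\cdot s_{\rho'})^{m}\cdot(s_{\beta}\cdot s_{\beta'})\cdot s_{\alpha}$ by $(s_{\mu}\cdot s_{\mu'})^{m+1}\cdot s_{\alpha}$ with $\mu'\in\Pi_{\text{aff}}^{\text{re},\,+}(\alpha)$. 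In the base case $m=1$, the hypothesis $\nu'\le\beta'$ and Lemma \ref{lemma17.95} give $s_{\nu}\cdot s_{\nu'}\cdot s_{\beta}\cdot s_{\beta'}\le(s_{\nu}\cdot s_{\nu'})^{2}$, leaving the analogous task of bounding $(s_{\nu}\cdot s_{\nu'})^{2}\cdot s_{\alpha}$.

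Thus in all cases the remaining task has the shape: bound $(s_{\lambda}\cdot s_{\lambda'})^{k}\cdot(s_{\eta}\cdot s_{\eta'})\cdot s_{\alpha}$ (with $\lambda+\lambda'=\eta+\eta'=c$, and the case $\eta=\lambda$, $\eta'=\lambda'$ permitted) by $(s_{\mu}\cdot s_{\mu'})^{k+1}\cdot s_{\alpha}$ with $\mu'\in\Pi_{\text{aff}}^{\text{re},\,+}(\alpha)$. For this I would first absorb $s_{\eta}\cdot s_{\eta'}$ into the adjacent $s_{\lambda}\cdot s_{\lambda'}$: if $\lambda'\le\eta'$ use Lemma \ref{lemma17.95}, if $\eta'>\lambda$ and $\eta'\perp\lambda$ use Lemma \ref{lemma17.98}, and in the remaining overlapping or incomparable configurations split $s_{\lambda}\cdot s_{\eta}$ or $s_{\lambda'}\cdot s_{\eta'}$ through their intersection by Lemma \ref{lemma5.7}(2) (as in the proof of Lemma \ref{lemma17.8}), reducing to a smaller instance; one is then left with $(s_{\lambda}\cdot s_{\lambda'})^{k+1}\cdot s_{\alpha}$, and a final pass of the Lemma \ref{lemma17.8} case analysis on the last three factors replaces $\lambda'$ by a root $\mu'\in\Pi_{\text{aff}}^{\text{re},\,+}(\alpha)$ while the surplus translations $(s_{\mu}\cdot s_{\mu'})^{k}$ are carried along using that translations commute in $W_{\text{aff}}$.

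The step I expect to be the main obstacle is exactly this last re-absorption. After the Lemma \ref{lemma5.7} and Lemma \ref{lemma17.8} rewrites the surplus copies of the translation $s_{\mu}\cdot s_{\mu'}$ typically end up on the side of $s_{\mu}\cdot s_{\mu'}\cdot s_{\alpha}=s_{c+\alpha}$ on which Lemmas \ref{lemma17.95} and \ref{lemma17.98} do not apply verbatim, so one must either commute translations past one another while verifying that no Hecke product is enlarged (using the explicit lengths coming from Lemma \ref{lemma6} and Corollary \ref{cor12.8}), or prove a small ``powered'' version of Lemmas \ref{lemma17.95} and \ref{lemma17.98}. All the comparability and perpendicularity bookkeeping needed to keep the hypotheses of those lemmas in force is type-$A$ interval combinatorics on the roots $p_{i,j}$, carried out exactly as in the proofs of Lemmas \ref{lemma5.7}, \ref{lemma17.3} and \ref{lemma17.8}.
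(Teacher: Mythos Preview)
Your key commutation observation --- that $\beta'>\alpha$ and $\beta'\perp\alpha$ force $s_{\alpha}$ to Hecke-commute with both $s_{\beta}$ and $s_{\beta'}$ --- is exactly the paper's starting point. But the induction on $m$ is a detour, and it is precisely what creates the ``main obstacle'' you identify at the end.

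The paper's proof is direct, with no induction on $m$. Using the commutation, one pulls $s_{\alpha}$ left past $(s_{\beta}\cdot s_{\beta'})^{m}$ and case-analyzes the relation between $\nu'$ and $\alpha$ (the same cases as in Lemma~\ref{lemma17.8}), rewriting $s_{\nu}\cdot s_{\nu'}\cdot s_{\alpha}\le s_{\tau}\cdot s_{\tau'}\cdot s_{\alpha}$ with $\tau+\tau'=c$ and $\tau'\in\Pi_{\text{aff}}^{\text{re},+}(\alpha)$. The crucial extra bookkeeping is that in every case the $\tau'$ one produces also satisfies either $\tau'\le\beta'$ (because it is $\nu'$, or $\nu'-\gamma$ for some $\gamma\le\alpha$, and $\nu'\le\beta'$) or else $\tau'>\beta$ with $\tau'\perp\beta$ (because $\tau'=c-\gamma$ for some $\gamma\le\alpha$, whence $\tau'\ge c-\alpha>\beta$). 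One then pushes $s_{\alpha}$ back to the right and applies Lemma~\ref{lemma17.95} in the first situation, or Lemma~\ref{lemma17.98} in the second, iteratively $m$ times, obtaining either $(s_{\tau}\cdot s_{\tau'})^{m+1}\cdot s_{\alpha}$ or $(s_{\beta}\cdot s_{\beta'})^{m+1}\cdot s_{\alpha}$, and takes $\mu'=\tau'$ or $\mu'=\beta'$ accordingly.

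Your inductive step discards exactly this bookkeeping. After the inductive hypothesis you hold $(s_{\rho}\cdot s_{\rho'})^{m}\cdot(s_{\beta}\cdot s_{\beta'})\cdot s_{\alpha}$ where the only information about $\rho'$ is $\rho'\in\Pi_{\text{aff}}^{\text{re},+}(\alpha)$; the relation $\rho'\le\beta'$ is gone, so neither Lemma~\ref{lemma17.95} nor Lemma~\ref{lemma17.98} applies to absorb the trailing $s_{\beta}\cdot s_{\beta'}$, and ``reducing to a smaller instance'' via Lemma~\ref{lemma5.7}(2) has no well-founded measure. The same problem already appears in your base case: collapsing to $(s_{\nu}\cdot s_{\nu'})^{2}\cdot s_{\alpha}$ throws away $\beta'$ and with it both the hypothesis $\beta'>\alpha$, $\beta'\perp\alpha$ and the commutation it buys; you are back to the $m=1$ instance of the very lemma you are proving, with $\beta'$ replaced by a $\nu'$ that need not dominate $\alpha$. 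The fix is to drop the induction, do the $\nu'$-versus-$\alpha$ case analysis \emph{first}, and track how the resulting $\tau'$ sits relative to $\beta'$, exactly as above.
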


\begin{proof} First, observe that $\beta'>\alpha$ and $\beta'\perp \alpha$ imply that $c-\alpha>\beta$ and $(c-\alpha)\perp \beta$. Also, $\text{supp}(\beta)$ and $\text{supp}(\alpha)$ are disconnected. So $\alpha$ Hecke commute with both $\beta'$ and $\beta$. We have several cases here;
\begin{itemize}

\item[1)] If $\nu'=\beta'$ then take $\mu'=\beta'.$

\item[2)] If $\nu'\cap \alpha=\emptyset$ then $\nu'\leq c-\alpha.$ Note that, by Lemma \ref{lemma17.95}, we have $s_{\nu}\cdot s_{\nu'}\cdot s_{\beta}\cdot s_{\beta'}\leq (s_{\nu}\cdot s_{\nu'})^2$ since $\nu'\leq \beta'$ which follows by 
$ s_{\nu}\cdot s_{\nu'}\cdot (s_{\beta}\cdot s_{\beta'})^{m}\cdot s_{\alpha}\leq (s_{\nu}\cdot s_{\nu'})^{m+1}\cdot s_{\alpha}.$ So we can take $\mu'=\nu'.$

\item[3)] If $\nu'\cap \alpha \neq \emptyset$ then we have several cases;

\item If we have both $\nu'>\alpha$ and $\nu' \perp \alpha$ then by the same arguments in case $2)$ we can take $\mu'=\nu'.$

\item If $v'=\alpha$ then $s_{\nu}\cdot s_{\nu'}\cdot s_{\alpha}=s_{c-\alpha}\cdot s_{\alpha}\cdot s_{\alpha}$. By Lemma \ref{lemma17.3}, $s_{c-\alpha}\cdot s_{\alpha}\cdot s_{\alpha}\leq s_{\alpha}\cdot s_{c-\alpha}\cdot s_{\alpha}.$ Thus $s_{\nu}\cdot s_{\nu'}\cdot (s_{\beta}\cdot s_{\beta'})^{m}\cdot s_{\alpha}=s_{\nu}\cdot s_{\nu'}\cdot s_{\alpha}\cdot (s_{\beta}\cdot s_{\beta'})^{m}\leq s_{\alpha}\cdot s_{c-\alpha}\cdot s_{\alpha}\cdot (s_{\beta}\cdot s_{\beta'})^{m}=s_{\alpha}\cdot s_{c-\alpha}\cdot (s_{\beta}\cdot s_{\beta'})^{m}\cdot s_{\alpha}.$ Here, observe that by Lemma \ref{lemma17.98}, $s_{\alpha}\cdot s_{c-\alpha}\cdot s_{\beta}\cdot s_{\beta'}\leq (s_{\beta}\cdot s_{\beta'})^{2}$ since $c-\alpha>\beta$ and $(c-\alpha) \perp \beta$ which follows by $s_{\alpha}\cdot s_{c-\alpha}\cdot (s_{\beta}\cdot s_{\beta'})^{m}\cdot s_{\alpha}\leq (s_{\beta}\cdot s_{\beta'})^{m+1}\cdot s_{\alpha}.$ Hence we can take $\mu'=\beta'$ in this case.

\item If $\nu'<\alpha$ then $\nu'$ and $\alpha$ Hecke commute by part $1)$ in Lemma \ref{lemma5.7}. Also, note that $\nu\cap \alpha\neq \emptyset.$ Now, assume that $\nu\cap \alpha$ is a root, let $\gamma:=\nu\cap \alpha.$ Then, by part $2)$ in Lemma \ref{lemma5.7} we have $s_{\nu}\cdot s_{\alpha}=s_{\gamma}\cdot s_{\nu-\gamma}\cdot s_{\alpha}.$ Here, note that $\nu-\gamma+\nu'=c-\gamma$ is a root and by the same lemma $s_{\nu-\gamma}\cdot s_{\nu'}\leq s_{c-\gamma}.$ Hence $s_{\nu}\cdot s_{\nu'}\cdot s_{\alpha}=s_{\nu}\cdot s_{\alpha}\cdot s_{\nu'}=s_{\gamma}\cdot s_{\nu-\gamma}\cdot s_{\alpha}\cdot s_{\nu'}=s_{\gamma}\cdot s_{\nu-\gamma}\cdot s_{\nu'}\cdot s_{\alpha}\leq s_{\gamma}\cdot s_{c-\gamma}\cdot s_{\alpha}.$ Moreover, $c-\gamma>c-\alpha>\beta$ and $(c-\gamma)\perp \beta$ since $(c-\alpha )\perp \beta.$ Then by Lemma \ref{lemma17.98}, $s_{\gamma}\cdot s_{c-\gamma}\cdot s_{\beta}\cdot s_{\beta'}\leq (s_{\beta}\cdot s_{\beta'})^{2}$ which follows by $s_{\nu}\cdot s_{\nu'}\cdot (s_{\beta}\cdot s_{\beta'})^{m}\cdot s_{\alpha}\leq s_{\nu}\cdot s_{\nu'}\cdot s_{\alpha}\cdot (s_{\beta}\cdot s_{\beta'})^m\leq  s_{\gamma}\cdot s_{c-\gamma}\cdot s_{\alpha}\cdot (s_{\beta}\cdot s_{\beta'})^m=  s_{\gamma}\cdot s_{c-\gamma}\cdot (s_{\beta}\cdot s_{\beta'})^m \cdot s_{\alpha}\leq (s_{\beta}\cdot s_{\beta'})^{m+1} \cdot s_{\alpha}.$ Hence we can take $\mu'=\beta'$ in this case. Now, if $\nu\cap \alpha$ is not a root then one can show that we can take $\mu'=\beta'$, by the same arguments in this case.

\item If $\nu'>\alpha$ but $\nu'$ is not perpendicular to $\alpha$ then $<\nu',\alpha^{\vee}>=1$ so $s_{\alpha}(\nu')=\nu'-\alpha$. So $\nu'-\alpha$ is a root which implies that $\nu+\alpha=c-\nu'+\alpha=c-(\nu'-\alpha)$ is also a root. Thus  $s_{\nu}\cdot s_{\nu'}\cdot s_{\alpha}=s_{\nu}\cdot s_{\alpha}\cdot s_{\nu'}=s_{\nu+\alpha}\cdot s_{\nu'}$ by Lemma \ref{lemma5.7}. Here, note that $(\nu+\alpha)\cap \nu'=\alpha$. So by the same lemma we get $s_{\nu+\alpha} \cdot s_{\nu'}=s_{\nu+\alpha}\cdot s_{\nu'-\alpha}\cdot s_{\alpha}.$ Now, observe that $\nu'-\alpha\leq \beta'$ since both $\nu'$ and $\alpha$ are smaller than $\beta'.$ Then by Lemma \ref{lemma17.95}, $s_{\nu+\alpha}\cdot s_{\nu'-\alpha}\cdot s_{\beta}\cdot s_{\beta'} \leq (s_{\nu+\alpha}\cdot s_{\nu'-\alpha})^2$ which follows by $s_{\nu}\cdot s_{\nu'}\cdot (s_{\beta}\cdot s_{\beta'})^m\cdot s_{\alpha}=s_{\nu}\cdot s_{\nu'}\cdot s_{\alpha}\cdot(s_{\beta}\cdot s_{\beta'})^m \leq s_{\nu+\alpha}\cdot s_{\nu'-\alpha}\cdot s_{\alpha}\cdot (s_{\beta}\cdot s_{\beta'})^m  \leq s_{\nu+\alpha}\cdot s_{\nu'-\alpha}\cdot (s_{\beta}\cdot s_{\beta'})^m\cdot s_{\alpha}\leq (s_{\nu+\alpha}\cdot s_{\nu'-\alpha})^{m+1}\cdot s_{\alpha}.$ Last, note that $\nu'-\alpha\leq c-\alpha$ so we can take $\mu'=\nu'-\alpha$ in this case. 

\item Assume that $\nu'\cap \alpha\neq \emptyset$ and also $\nu'\cap \alpha \neq \nu',\alpha.$ Then $\nu'\cap \alpha$ is a root since $\nu'+\alpha$ would be bigger than $c$ otherwise, and that would contradict the facts that $\beta'>\alpha$ and $\beta'>\nu'$. Now, let $\gamma:=\nu'\cap \alpha.$
Then $\nu'-\gamma$ is a root and $s_{\nu'}\cdot s_{\alpha}=s_{\gamma}\cdot s_{\nu'-\gamma}\cdot s_{\alpha}$ by Lemma \ref{lemma5.7}. Also, note that $\nu+\gamma=c-\nu'+\gamma=c-(\nu'-\gamma)$ is a root so by the same lemma we have $s_{\nu}\cdot s_{\gamma}\leq s_{\nu+\gamma}$. Thus $s_{\nu}\cdot s_{\nu'}\cdot s_{\alpha}=s_{\nu}\cdot s_{\gamma}\cdot s_{\nu'-\gamma}\cdot s_{\alpha}\leq s_{\nu+\gamma}\cdot s_{\nu'-\gamma}\cdot s_{\alpha}.$ Here, note that $\nu'-\gamma\leq \beta'$ since $\nu'\leq \beta'$. Then by Lemma \ref{lemma17.95}, $s_{\nu+\gamma}\cdot s_{\nu'-\gamma}\cdot s_{\beta}\cdot s_{\beta'} \leq (s_{\nu+\gamma}\cdot s_{\nu'-\gamma})^2$ which follows by $s_{\nu}\cdot s_{\nu'}\cdot (s_{\beta}\cdot s_{\beta'})^m\cdot s_{\alpha}=s_{\nu}\cdot s_{\nu'}\cdot s_{\alpha}\cdot(s_{\beta}\cdot s_{\beta'})^m \leq s_{\nu+\gamma}\cdot s_{\nu'-\gamma}\cdot s_{\alpha}\cdot (s_{\beta}\cdot s_{\beta'})^m  \leq s_{\nu+\gamma}\cdot s_{\nu'-\gamma}\cdot (s_{\beta}\cdot s_{\beta'})^m\cdot s_{\alpha}\leq (s_{\nu+\gamma}\cdot s_{\nu'-\gamma})^{m+1}\cdot s_{\alpha}.$ Last, note that $\nu'-\gamma\leq c-\alpha$ so we can take $\mu'=\nu'-\gamma$ in this case.

\end{itemize} \end{proof}

The result of this section is:

\begin{thm}\label{thm18} Let $\alpha<c$ be an affine positive real root and let $m$ be a positive integer. Then

\begin{enumerate}
\item[1)] $\Gamma_{mc+\alpha}(id)=\{t_{m\beta'} s_{\alpha}:\beta'\in \Pi_{\text{aff}}^{\text{re},\,+}(\alpha)\} \cup \{t_{m(\beta'-c)} s_{\alpha}:\beta'\in \Pi_{\text{aff}}^{\text{re},\,+}(\alpha)\}$
 
\item[2)] $|\Gamma_{mc+\alpha}( id)|=|\{\beta': \beta' \in \Pi_{\text{aff}}^{\text{re},\,+}(\alpha)\}|$

\item[3)] For all $w\in \Gamma_{mc+\alpha}( id)$, $\ell(w)=2m(n-1)+\ell{(s_{\alpha})}$

\end{enumerate}
where $ \Pi_{\text{aff}}^{\text{re},\,+}(\alpha):=\{\beta' \in \Pi_{\text{aff}}^{\text{re},\,+}: \beta'<c \,\,\text{and either}\,\,  \beta'\leq c-\alpha \,\, \text{or both} \,\,\beta'>\alpha \,\,\text{and} \,\, \beta'\perp \alpha \}$.

\end{thm}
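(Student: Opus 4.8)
The plan is to follow the template of the proof of Theorem~\ref{thm17.9}, upgrading the single ``block of degree $c$'' to $m$ such blocks by an induction on $m$. For the upper bound in part~(1), start from a path $id \stackrel{\beta_1}\longrightarrow s_{\beta_1}\longrightarrow\dots\longrightarrow w=s_{\beta_1}\cdots s_{\beta_r}$ with $\sum_i\beta_i\le mc+\alpha$; by Remark~\ref{remark12.9} it suffices to treat a Hecke product $w=s_{\beta_1}\cdot s_{\beta_2}\cdot\cdots\cdot s_{\beta_r}$ with $\sum_i\beta_i=mc+\alpha$ and $\beta_i<c$ for all $i$. The statement to prove by induction on $m$ is: every such $w$ satisfies $w\le (s_{\mu}s_{\mu'})^{m}s_{\alpha}$ for some $\mu,\mu'\in\Pi_{\text{aff}}^{\text{re},\,+}$ with $\mu+\mu'=c$ and $\mu'\in\Pi_{\text{aff}}^{\text{re},\,+}(\alpha)$. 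The base case $m=1$ is exactly Theorem~\ref{thm17.9} (equivalently, Lemmas~\ref{lemma17.71} and~\ref{lemma17.8}).

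For the inductive step I would apply Lemma~\ref{lemma17.71} with $\mathbf{d'}=c$ and $\mathbf{d''}=(m-1)c+\alpha$ to obtain $w\le (s_{\gamma_1}\cdot\cdots\cdot s_{\gamma_t})\cdot(s_{\gamma_{t+1}}\cdot\cdots\cdot s_{\gamma_{r'}})$ with $\gamma_i<c$, the first Hecke factor of degree $c$ and the second of degree $(m-1)c+\alpha$. As in the proof of Theorem~\ref{thm14}, Corollary~\ref{cor5.9} gives $s_{\gamma_2}\cdot\cdots\cdot s_{\gamma_t}\le s_{c-\gamma_1}$, so the first factor is $\le s_{\nu}\cdot s_{\nu'}$ with $\nu=\gamma_1$, $\nu'=c-\gamma_1$; by the induction hypothesis the second factor is $\le (s_{\beta}s_{\beta'})^{m-1}s_{\alpha}$ with $\beta'\in\Pi_{\text{aff}}^{\text{re},\,+}(\alpha)$. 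Hence $w\le s_{\nu}\cdot s_{\nu'}\cdot(s_{\beta}s_{\beta'})^{m-1}s_{\alpha}$, and it remains to absorb the extra degree-$c$ block. When the inner block is of the perpendicular type ($\beta'>\alpha$, $\beta'\perp\alpha$) this is precisely Lemma~\ref{lemma17.99}; when $\beta'\le c-\alpha$ one argues in parallel, first bringing $\nu'$ below $\beta'$ using the root/non-root intersection cases of Lemma~\ref{lemma5.7} and then applying Lemmas~\ref{lemma17.95} and~\ref{lemma17.98}, to reach $w\le (s_{\bar\mu}s_{\bar\mu'})^{m}s_{\alpha}$ with $\bar\mu'\in\Pi_{\text{aff}}^{\text{re},\,+}(\alpha)$. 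This settles ``$\le$''.

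For the reverse inclusion and parts~(2),~(3): each $w=(s_{\beta}s_{\beta'})^{m}s_{\alpha}$ with $\beta'\in\Pi_{\text{aff}}^{\text{re},\,+}(\alpha)$ and $\beta=c-\beta'$ is reached by the explicit chain that alternates the edges labelled $\beta,\beta'$ a total of $m$ times and then appends the edge labelled $\alpha$, of total degree $m\beta+m\beta'+\alpha=mc+\alpha$. By Corollary~\ref{cor12.8} this $w$ is reduced with $\ell(w)=2m(n-1)+\ell(s_{\alpha})$, which by the bound of the previous paragraph is the maximal length among all elements reachable within degree $mc+\alpha$; since an element of maximal length is automatically maximal in Bruhat order among reachable elements, these $w$ are exactly $\Gamma_{mc+\alpha}(id)$, proving~(3) and the description $\Gamma_{mc+\alpha}(id)=\{(s_{\beta}s_{\beta'})^{m}s_{\alpha}:\beta'\in\Pi_{\text{aff}}^{\text{re},\,+}(\alpha)\}$. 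Identifying $(s_{\beta}s_{\beta'})^{m}$ with $t_{m\beta'}$ when $\alpha_0\notin\text{supp}(\beta')$ and with $t_{m(\beta'-c)}$ when $\alpha_0\in\text{supp}(\beta')$ (proof of Lemma~\ref{lemma10}) rewrites this as the stated union. Finally, if $(s_{\beta}s_{\beta'})^{m}s_{\alpha}=(s_{\mu}s_{\mu'})^{m}s_{\alpha}$ then $(s_{\beta}s_{\beta'})^{m}=(s_{\mu}s_{\mu'})^{m}$, forcing $\beta=\mu$ and $\beta'=\mu'$ by Lemma~\ref{lemma10}; hence the parametrization by $\beta'$ is injective and~(2) follows.

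The hard part will be the absorption step in the induction: the induction hypothesis may return an inner block of either type allowed in $\Pi_{\text{aff}}^{\text{re},\,+}(\alpha)$, whereas Lemma~\ref{lemma17.99} is phrased only for the perpendicular type and carries the auxiliary hypothesis $\nu'\le\beta'$. Reconciling these — reducing to the case $\nu'\le\beta'$ when possible, and handling the $\beta'\le c-\alpha$ case by a parallel case analysis over how $\nu,\nu'$ meet $\alpha$ and $\beta'$ (the root / non-root / disjoint trichotomy of Lemma~\ref{lemma5.7}, exactly as in the proof of Lemma~\ref{lemma17.8}) — is the bookkeeping-heavy core of the argument and the place where care is needed to ensure the output block still lies in $\Pi_{\text{aff}}^{\text{re},\,+}(\alpha)$.
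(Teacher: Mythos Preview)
Your proposal is correct and follows the same strategy as the paper: induction on $m$, splitting off a degree-$c$ block via Lemma~\ref{lemma17.71}, bounding the two pieces by Equation~\eqref{equation14.5} and the induction hypothesis, and then absorbing the extra block using Lemmas~\ref{lemma17.95}, \ref{lemma17.98}, and~\ref{lemma17.99}. The one simplification you are groping for in your ``hard part'' is that the paper obtains the reduction to $\nu'\le\beta'$ (or $\nu'>\beta$ with $\nu'\perp\beta$) in a single stroke by applying Lemma~\ref{lemma17.8} with $\beta$ in the role of $\alpha$, after which the two cases are dispatched directly by Lemmas~\ref{lemma17.95}/\ref{lemma17.99} and Lemma~\ref{lemma17.98} respectively, with no further ad hoc case analysis.
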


\begin{proof} $1)$ We will prove the statement by induction on $m.$ Assume that $m=1.$ Then the statement is true by Theorem \ref{thm17.9}. Now, assume that the statement is true for $m=k.$ We will prove that it is also true for $m=k+1.$ Let $id \stackrel{\beta_{1}} \longrightarrow s_{\beta_{1}} \stackrel{\beta_{2}} \longrightarrow s_{\beta_{1}}s_{\beta_{2}} ...\stackrel{\beta_{r}} \longrightarrow w=s_{\beta_{1}}s_{\beta_{2}}...s_{\beta_{r}}$
be a path in the moment graph such that $\sum_{i=1}^{r} \beta_{i} \leq (k+1)c+\alpha$. Note that, by Remark \ref{remark12.9} we can assume that $w=s_{\beta_{1}} \cdot s_{\beta_{2}}\cdot...\cdot s_{\beta_{r}}$ where $\sum_{i=1}^{r} \beta_{i} = (k+1)c+\alpha$ and $\beta_{i}<c$ for all $i$. It is enough to show that $w\leq (s_{\beta} s_{\beta'})^{k+1}s_{\alpha}$ for some $\beta, \beta' \in \Pi_{\text{aff}}^{\text{re},\,+}$ such that $\beta+\beta'=c$ where either $\beta' \leq c-\alpha$ or both $\beta'>\alpha$ and $\beta'\perp \alpha$ since $t_{\beta'}=s_{\beta}s_{\beta'}$ if $\alpha_{0}\in \text{supp}(\beta)$ and $t_{\beta'-c}=s_{\beta}s_{\beta'}$ if $\alpha_{0}\notin \text{supp}(\beta)$, see the proof of Lemma \ref{lemma10}.  Moreover, by Lemma \ref{lemma17.71} we can make the assumption that there is an integer $p$ such that $\sum_{i=1}^{p}\beta_{i}=c$ and $\sum_{i=p+1}^{r}\beta_{i}=kc+\alpha$. Then by Equation \ref{equation14.5} we have $s_{\beta_{1}}\cdot s_{\beta_{2}}\cdot ...\cdot s_{\beta_{p}}\leq s_{\gamma} s_{\gamma'}$ for some $\gamma, \gamma' \in \Pi_{\text{aff}}^{\text{re},\,+}$ such that $\gamma+\gamma'=c$ and $s_{\beta_{p+1}}\cdot ...\cdot s_{\beta_{r}}\leq (s_{\beta} s_{\beta'})^{k}s_{\alpha}$ for some $\beta, \beta' \in \Pi_{\text{aff}}^{\text{re},\,+}$ such that $\beta+\beta'=c$ where either $\beta' \leq c-\alpha$ or both $\beta'>\alpha$ and $\beta'\perp \alpha$ by the induction assumption. Here, note that, $s_{\gamma} s_{\gamma'}$ is reduced by Lemma  \ref{lemma11}. Moreover, $ (s_{\beta} s_{\beta'})^{k}s_{\alpha}$ is also reduced by Corollary \ref{cor12.8}. Thus, 
$$w=(s_{\beta_{1}}\cdot s_{\beta_{2}}\cdot ...\cdot s_{\beta_{p}})\cdot (s_{\beta_{p+1}}\cdot ...\cdot s_{\beta_{r}})\leq (s_{\gamma} s_{\gamma'})\cdot ((s_{\beta}s_{\beta'})^{k}s_{\alpha})=(s_{\gamma}\cdot s_{\gamma'})\cdot ((s_{\beta}\cdot s_{\beta'})^{k}\cdot s_{\alpha}).$$
Now note that by Lemma \ref{lemma17.8} there are $\nu, \nu' \in \Pi_{\text{aff}}^{\text{re},\,+}$ such that $s_{\gamma}\cdot s_{\gamma'}\cdot s_{\beta}\leq s_{\nu}\cdot s_{\nu'}\cdot s_{\beta}$ where $\nu+\nu'=c$ and either $\nu' \leq c-\beta=\beta'$ or both $\nu'>\beta$ and $\nu'\perp \beta.$ 

\begin{itemize}

\item If $\nu'\leq c-\beta=\beta'$ then by Lemma \ref{lemma17.95}, we get $s_{\nu}\cdot s_{\nu'}\cdot s_{\beta}\cdot s_{\beta'}\leq (s_{\nu}\cdot s_{\nu'})^2$ which follows by 
$w\leq (s_{\gamma}\cdot s_{\gamma'})\cdot (s_{\beta}\cdot s_{\beta'})^{k}\cdot s_{\alpha}\leq s_{\nu}\cdot s_{\nu'}\cdot (s_{\beta}\cdot s_{\beta'})^{k}\cdot s_{\alpha}\leq (s_{\nu}\cdot s_{\nu'})^{k+1}\cdot s_{\alpha}.$ Now, if we also have $\beta'\leq c-\alpha$ then $c-\alpha \geq \beta'\geq \nu'$ so we are done with this case. Now if  $\beta' >\alpha$ and $\beta'\perp \alpha$ then by Lemma \ref{lemma17.99}, $s_{\nu}\cdot s_{\nu'}\cdot (s_{\beta}\cdot s_{\beta'})^m\cdot s_{\alpha}\leq (s_{\mu}\cdot s_{\mu'})^{m+1}\cdot s_{\alpha}$ for some positive real roots, $\mu, \mu'$ such that $\mu+\mu'=c$ and either $\mu' \leq c-\alpha$ or both $\mu'>\alpha$ and $\mu' \perp \alpha$. Hence we are done with this case too.

\item If $\nu'>\beta$ and $\nu'\perp \beta$ then by Lemma \ref{lemma17.98} we get $s_{\nu}\cdot s_{\nu'}\cdot s_{\beta}\cdot s_{\beta'}\leq (s_{\beta}\cdot s_{\beta'})^2$ which follows by 
$w\leq (s_{\gamma}\cdot s_{\gamma'})\cdot (s_{\beta}\cdot s_{\beta'})^{k}\cdot s_{\alpha}\leq s_{\nu}\cdot s_{\nu'}\cdot (s_{\beta}\cdot s_{\beta'})^{k}\cdot s_{\alpha}\leq (s_{\beta}\cdot s_{\beta'})^{k+1}\cdot s_{\alpha}.$ Thus we are done with all cases. 
\end{itemize}
Moreover, by Corollary \ref{cor12.8},  $(s_{\beta}s_{\beta'})^{k+1}s_{\alpha} $ is reduced so $(s_{\beta}\cdot s_{\beta'})^{k+1}\cdot s_{\alpha}=(s_{\beta}s_{\beta'})^{k+1}s_{\alpha}.$ 

$2)$ Let $(s_{\beta}s_{\beta'})^{m}s_{\alpha}$ and $ (s_{\nu}s_{\nu'})^{m}s_{\alpha}\in \Gamma_{mc+\alpha}( id).$ We need to show that $(s_{\beta}s_{\beta'})^{m}s_{\alpha}\neq (s_{\nu}s_{\nu'})^{m}s_{\alpha}$ if $\beta'\neq \nu'.$ Now assume that $\beta'\neq \nu'$ but 
\begin{equation}\label{equation18.5}(s_{\beta}s_{\beta'})^{m}s_{\alpha}= (s_{\nu}s_{\nu'})^{m}s_{\alpha}.\end{equation} By Corollary \ref{cor12.8} both $(s_{\beta}s_{\beta'})^{m}s_{\alpha}$ and $(s_{\nu}s_{\nu'})^{m}s_{\alpha}$ are reduced, so Equation \ref{equation18.5} implies that $(s_{\beta}s_{\beta'})^{m}= (s_{\nu}s_{\nu'})^{m}$ but this is a contradiction by Lemma \ref{lemma10}.

$3)$ Let $w=(s_{\beta}s_{\beta'})^{m} s_{\alpha}\in  \Gamma_{mc+\alpha}( id)$. Then $\ell(w)=2m(n-1)+\ell{(s_{\alpha})}$, by Corollary \ref{cor12.8}.

\end{proof}

\begin{remark} \label{remark18.7} Let $\alpha\in \Pi_{\text{aff}}^{\text{re},\,+}$ be such that $\alpha<c$ and $m$ be a positive integer. Note that \begin{equation}\label{equation18.8} \Gamma_{mc+\alpha} (id)=\{(s_{\beta}s_{\beta'})^{m}s_{\alpha}:\beta' \in  \Pi_{\text{aff}}^{\text{re},\,+}(\alpha)\,\, \text{such that}\,\, \beta+\beta'=c\},
\end{equation}
see the proof of Theorem \ref{thm18}.

\end{remark}

\begin{example}Suppose that the affine Weyl group $W_{\text{aff}}$ is of type $A_{4}^{(1)}.$ We compute $\Gamma_{12c+\alpha}( id)$ where $\alpha=\alpha_{0}+\alpha_{4}.$  Then, by Theorem \ref{thm18}, we get 
$$ \Gamma_{12c+\alpha}( id)=\{t_{12\beta'_{1}}s_{\alpha}, t_{12\beta'_{2}}s_{\alpha},t_{12\beta'_{3}}s_{\alpha},t_{12\beta'_{4}}s_{\alpha},t_{12\beta'_{5}}s_{\alpha},t_{12\beta'_{6}}s_{\alpha},t_{12(\beta'_{7}-c)}s_{\alpha}\}$$
where $\beta'_{1}=\alpha_{1}, \beta'_{2}=\alpha_{2}, \beta'_{3}=\alpha_{3}, \beta'_{4}=\alpha_{1}+\alpha_{2}, \beta'_{5}=\alpha_{2}+\alpha_{3} , \beta'_{6}=\alpha_{1}+\alpha_{2}+\alpha_{3}$ and $\beta'_{7}=\alpha_{0}+\alpha_{1}+\alpha_{3}+\alpha_{4}$, see Example \ref{example17.93}. Moreover, for any $w\in \Gamma_{12c+\alpha}( id),$ we have $\ell(w)=2m(n-1)+\ell{(s_{\alpha})}=2m(n-1)+2\, \text{supp}(\alpha)-1=2\cdot12\cdot4+2\cdot2-1=99.$

\end{example}

\subsection{Curve Neighborhood $\Gamma_{mc}(id)$} \label{case of mc}

\begin{thm} \label{thm19} Let $m\geq 2$ be a positive integer.  Then we have 

\begin{enumerate}

\item[1)] $\Gamma_{mc} ( id)=\{t_{m\gamma}:\gamma \in \Pi \}$. 

\item[2)]  $|\Gamma_{mc}( id)|=n(n-1)$.

\item[3)] For all $w\in \Gamma_{mc}( id)$, $\ell(w)=2m(n-1)$.

\end{enumerate}

\end{thm}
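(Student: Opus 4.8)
The plan is to mirror the proofs of Theorems \ref{thm14} and \ref{thm18}, treating $\mathbf{d}=mc$ as a degenerate instance of the $mc+\alpha$ analysis with $\alpha=0$. First I would take a path $id\stackrel{\beta_1}\longrightarrow s_{\beta_1}\longrightarrow\cdots\stackrel{\beta_r}\longrightarrow w=s_{\beta_1}s_{\beta_2}\cdots s_{\beta_r}$ in the moment graph with $\sum_{i=1}^r\beta_i\le mc$, and invoke Remark \ref{remark12.9} to reduce to the case $w=s_{\beta_1}\cdot s_{\beta_2}\cdot\cdots\cdot s_{\beta_r}$ with $\sum_i\beta_i=mc$ and each $\beta_i<c$. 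Then, by induction on $m$ using Lemma \ref{lemma17.71} (applied with $\mathbf{d'}=c$, $\mathbf{d''}=(m-1)c$), I can assume there is an index $p$ with $\sum_{i=1}^p\beta_i=c$ and $\sum_{i=p+1}^r\beta_i=(m-1)c$; then $s_{\beta_1}\cdot\cdots\cdot s_{\beta_p}\le s_\gamma s_{\gamma'}$ for some $\gamma+\gamma'=c$ by Equation \ref{equation14.5}, and $s_{\beta_{p+1}}\cdot\cdots\cdot s_{\beta_r}\le (s_\beta s_{\beta'})^{m-1}$ by the induction hypothesis, so $w\le (s_\gamma\cdot s_{\gamma'})\cdot(s_\beta\cdot s_{\beta'})^{m-1}$.

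The crux is then to show $(s_\gamma\cdot s_{\gamma'})\cdot(s_\beta\cdot s_{\beta'})^{m-1}\le (s_\mu s_{\mu'})^m$ for suitable $\mu+\mu'=c$. For this I would prove the $\alpha=0$ analogue of Lemmas \ref{lemma17.95} and \ref{lemma17.98}: given $\nu+\nu'=\beta+\beta'=c$, one has $s_\nu\cdot s_{\nu'}\cdot s_\beta\cdot s_{\beta'}\le (s_\nu\cdot s_{\nu'})^2$ whenever $\nu'\le\beta'$ (this is precisely Lemma \ref{lemma17.95}), and $s_\nu\cdot s_{\nu'}\cdot s_\beta\cdot s_{\beta'}\le(s_\beta\cdot s_{\beta'})^2$ whenever $\nu'>\beta$ and $\nu'\perp\beta$ (Lemma \ref{lemma17.98}). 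Since the two roots $\gamma'$ and $\beta'$ are always comparable or have disconnected support (both are $<c$, so this follows from the structure of the $p_{i,j}$), one of these two hypotheses applies after possibly swapping roles, and iterating telescopes the product down to a single $(s_\mu s_{\mu'})^m$. By Lemma \ref{lemma11}, $(s_\mu s_{\mu'})^m$ is reduced, so the Hecke product equals the ordinary product, and by the proof of Lemma \ref{lemma10} it equals $t_{-m\mu}$ or $t_{m(c-\mu)}$, hence $t_{m\gamma}$ for some $\gamma\in\Pi$. Conversely every such $t_{m\gamma}=(s_\beta s_{\beta'})^m$ is reachable by the length-$2m(n-1)$ path spelling out a reduced word, giving containment the other way and proving (1).

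For (2), distinctness of the elements $t_{m\gamma}$ over $\gamma\in\Pi$ follows from Lemma \ref{lemma10}: if $(s_\beta s_{\beta'})^m=(s_\alpha s_{\alpha'})^m$ with $\beta\ne\alpha$ it is a contradiction, and $|\{\beta\in\Pi_{\text{aff}}^{\text{re},+}:\beta<c\}|=n(n-1)$, which matches $|\Pi|=n(n-1)$. For (3), each $w=(s_\beta s_{\beta'})^m$ has $\ell(w)=2m(n-1)$ directly by Lemma \ref{lemma11}. The main obstacle I anticipate is verifying the telescoping step cleanly: one must be careful that after applying Lemma \ref{lemma17.95} or \ref{lemma17.98} the new pair $(\nu,\nu')$ (resp. $(\beta,\beta')$) still satisfies the comparability dichotomy needed to continue the induction — this is where the bookkeeping of intersections $\gamma=\nu'\cap\beta'$ being a root or a sum of two roots (exactly as in the proof of Lemma \ref{lemma17.99}) has to be handled, though here it is genuinely simpler because there is no $s_\alpha$ tail to carry along.
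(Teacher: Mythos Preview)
Your overall strategy (induction on $m$, split $mc=c+(m-1)c$ via Lemma~\ref{lemma17.71}, bound each piece, then telescope) is viable, but the justification you give for the telescoping step contains a false claim. You assert that ``the two roots $\gamma'$ and $\beta'$ are always comparable or have disconnected support (both are $<c$, so this follows from the structure of the $p_{i,j}$).'' This is not true: two positive real roots $<c$ can overlap without being comparable, e.g.\ in $A_4^{(1)}$ take $\gamma'=\alpha_1+\alpha_2$ and $\beta'=\alpha_2+\alpha_3$; then $\beta=c-\beta'=\alpha_0+\alpha_1+\alpha_4$, and neither the hypothesis $\gamma'\le\beta'$ of Lemma~\ref{lemma17.95} nor the hypothesis $\gamma'>\beta,\ \gamma'\perp\beta$ of Lemma~\ref{lemma17.98} holds. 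The fix is to first apply Lemma~\ref{lemma17.8} to $s_\gamma\cdot s_{\gamma'}\cdot s_\beta$ to replace $(\gamma,\gamma')$ by some $(\nu,\nu')$ with $\nu'\le c-\beta=\beta'$ or $\nu'>\beta,\ \nu'\perp\beta$; only then do Lemmas~\ref{lemma17.95}/\ref{lemma17.98} apply. This is exactly the manoeuvre used in the proof of Theorem~\ref{thm18}, so your approach essentially reproves that theorem in the degenerate case $\alpha=0$.

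The paper's own proof is shorter and avoids the induction on $m$ altogether: rather than splitting off the \emph{first} block of degree $c$, it peels off the \emph{last} reflection $s_{\beta_r}$, writes $\sum_{i<r}\beta_i=(m-1)c+(c-\beta_r)$, and invokes the already-established Theorem~\ref{thm18} (Remark~\ref{remark18.7}) for $\Gamma_{(m-1)c+\alpha}(id)$ with $\alpha=c-\beta_r$. That theorem hands back $(s_\beta s_{\beta'})^{m-1}s_\alpha$ with the compatibility condition on $\beta'$ relative to $\alpha$ built in, so a single application of Lemma~\ref{lemma17.95} or~\ref{lemma17.98} (to $s_\beta\cdot s_{\beta'}\cdot s_\alpha\cdot s_{\alpha'}$ with $\alpha'=\beta_r$) finishes the bound $w\le(s_\mu s_{\mu'})^m$. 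Your parts (2) and (3) are fine and match the paper.
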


\begin{proof} $1)$ Let $id \stackrel{\beta_{1}} \longrightarrow s_{\beta_{1}} \stackrel{\beta_{2}} \longrightarrow s_{\beta_{1}}s_{\beta_{2}} ...\stackrel{\beta_{r}} \longrightarrow w=s_{\beta_{1}}s_{\beta_{2}}...s_{\beta_{r}}$
be a path in the moment graph such that $\sum_{i=1}^{r} \beta_{i} \leq mc$. Note that, by Remark \ref{remark12.9} we can assume that $w=s_{\beta_{1}} \cdot s_{\beta_{2}}\cdot...\cdot s_{\beta_{r}}.$ where $\sum_{i=1}^{r} \beta_{i}=mc$ and $\beta_{i}<c$ for all $i$. Now, note that $\sum_{i=1}^{r-1}\beta_{i}=mc-\beta_{r}=(m-1)c+c-\beta_{r}$. Let $\alpha:=c-\beta_{r}$ and $\alpha':=\beta_{r}$. Then by Remark \ref{remark18.7} we have $s_{\beta_{1}}\cdot s_{\beta_{2}}\cdot ...\cdot s_{\beta_{r-1}}\leq (s_{\beta}s_{\beta'})^{m-1}s_{\alpha}$ for some positive real roots $\beta$ and $\beta'$ such that $\beta+\beta'=c$ and either $\beta'\leq c-\alpha$ or both $\beta'>\alpha$ and $\beta'\perp \alpha.$ Then, we get
$$w= (s_{\beta_{1}}\cdot s_{\beta_{2}}\cdot ...\cdot s_{\beta_{r-1}})\cdot s_{\beta_{r}}\leq (s_{\beta}s_{\beta'})^{m-1}s_{\alpha}\cdot s_{\alpha'}.$$
Now, note that $(s_{\beta}s_{\beta'})^{m-1}s_{\alpha}=(s_{\beta}\cdot s_{\beta'})^{m-1}\cdot s_{\alpha},$ since $=(s_{\beta}s_{\beta'})^{m-1}s_{\alpha}$ is reduced by Corollary \ref{cor12.8}. Furthermore, if $\beta'\leq c-\alpha$ then $s_{\beta}\cdot s_{\beta'}\cdot s_{\alpha}\cdot s_{\alpha'}\leq (s_{\beta}\cdot s_{\beta'})^2$ by Lemma \ref{lemma17.95} which follows by $(s_{\beta}s_{\beta'})^{m-1}s_{\alpha}\cdot s_{\alpha'}= (s_{\beta}\cdot s_{\beta'})^{m-1}\cdot s_{\alpha}\cdot s_{\alpha'} \leq (s_{\beta}\cdot s_{\beta'})^m.$ If $\beta'>\alpha$ and $\beta'\perp \alpha$ then $s_{\beta}\cdot s_{\beta'}\cdot s_{\alpha}\cdot s_{\alpha'}\leq (s_{\alpha}\cdot s_{\alpha'})^2$ by Lemma \ref{lemma17.98} which follows by $(s_{\beta}s_{\beta'})^{m-1}s_{\alpha}\cdot s_{\alpha'}= (s_{\beta}\cdot s_{\beta'})^{m-1}\cdot s_{\alpha}\cdot s_{\alpha'} \leq (s_{\alpha}\cdot s_{\alpha'})^m.$ We also know that $(s_{\alpha}s_{\alpha'})^m=t_{m\gamma}$ for some $\gamma \in \Pi,$ see the proof of Lemma \ref{lemma10}.

$2)$ Let $\beta,\beta',\nu,\nu' \in \Pi_{\text{aff}}^{\text{re},\,+}$ such that $\nu+\nu'=\beta+\beta'=c.$ Then $(s_{\beta}s_{\beta'})^{m}\neq (s_{\nu}s_{\nu'})^{m}$ for any positive integer $m$ if $\beta\neq \nu$ by Lemma \ref{lemma10}. Also, note that  $|\{\beta \in \Pi_{\text{aff}}^{\text{re},\,+}: \beta<c\}|=n(n-1)$.\end{proof}

$3)$ Let $w=(s_{\beta}s_{\beta'})^{m} \in \Gamma_{mc}( id)$ for some positive real roots $\beta,\beta'$ such that $\beta+\beta'=c.$ Then $\ell{(w)=2m(n-1)}$ by Lemma \ref{lemma11}.

\begin{remark} Let $m$ be a positive integer. Note that, we have \begin{equation}\label{equation19.2} \Gamma_{mc} ( id)=\{(s_{\beta}s_{\beta'})^{m}:\beta, \beta' \in   \Pi_{\text{aff}}^{\text{re},\,+}\, \text{such that}\,\, \beta+\beta'=c\},
\end{equation}
see the proof of Theorem \ref{thm19}.
\end{remark}

\begin{example} Suppose that the affine Weyl group $W_{\text{aff}}$ is associated to $A_{2}^{(1)}.$ Then $$\Gamma_{10c}( id)=\{t_{10\alpha_{1}},t_{10\alpha_{2}},t_{10(\alpha_{1}+\alpha_{2})}, t_{-10\alpha_{1}}, t_{-10\alpha_{2}},t_{-10(\alpha_{1}+\alpha_{2})}\}.$$ 
Moreover, for any $w\in \Gamma_{10c}( id)$, $\ell(w)=2m(n-1)=2\cdot 10 \cdot 2=40$. 

\end{example}

\subsection{Curve Neighborhood $\Gamma_{\mathbf{d}}(id)$ for any $\mathbf{d}>c$} \label{the most general case}

Finally, we will discuss the most general case in this section. In order to prove the result we will first consider two lemmas.  

\begin{lemma} \label{lemma19.3} Let $\alpha,\alpha' \in \Pi_{\text{aff}}^{\text{re},\,+}$ such that $\alpha+\alpha'=c$ Also, let $\gamma_{1},\gamma_{2},...,\gamma_{k} \in \Pi_{\text{aff}}^{\text{re},\,+}$ such that $\text{supp}(\gamma_{i})$ and $\text{supp}(\gamma_{j})$ are disconnected for any $1\leq i,j\leq k$ such that $i\neq j.$ Then $s_{\alpha}\cdot s_{\alpha'}\cdot s_{\gamma_{1}}\cdot s_{\gamma_{2}}\cdot ...\cdot s_{\gamma_{k}} \leq s_{\beta}\cdot s_{\beta'}\cdot s_{\gamma_{1}}\cdot s_{\gamma_{2}}\cdot ...\cdot s_{\gamma_{k}}$ for some affine positive real roots, $\beta,\beta'  \in \Pi_{\text{aff}}^{\text{re},\,+}$ such that either $\beta'\cap \gamma_{i}=\emptyset$ or both $\beta'>\gamma_{i}$ and $\beta'\perp \gamma_{i}$ for any $i=1,2,...,k.$

\end{lemma}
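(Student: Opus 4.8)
The plan is to induct on $\sum_{i=1}^{k}|\mathrm{supp}(\gamma_i)|$ (equivalently on the degree $\sum_i \gamma_i$), and at each stage to use Lemma~\ref{lemma17.8} to rewrite the leading pair $s_\alpha\cdot s_{\alpha'}$ against a single $\gamma_i$, then peel that $\gamma_i$ off and apply the inductive hypothesis to the remaining $\gamma$'s. Concretely, if $k=0$ there is nothing to prove, and if $\alpha'\cap\gamma_i=\emptyset$ for all $i$ we may take $\beta=\alpha,\beta'=\alpha'$ and are done; so assume some $\gamma_i$ is not disjoint from $\alpha'$. Since the $\gamma_j$ have pairwise disconnected supports, at most one $\gamma_i$ fails to be disjoint from $\alpha'$ after we reorganize, and I would single out that index. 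Applying Lemma~\ref{lemma17.8} with ``$\alpha$'' taken to be $\gamma_i$ gives affine positive real roots $\mu,\mu'$ with $\mu+\mu'=c$, $s_\alpha\cdot s_{\alpha'}\cdot s_{\gamma_i}\le s_\mu\cdot s_{\mu'}\cdot s_{\gamma_i}$, and with $\mu'\le c-\gamma_i$ or both $\mu'>\gamma_i$ and $\mu'\perp\gamma_i$. In the first case $\mu'\cap\gamma_i=\emptyset$, in the second $\mu'$ is already in the desired relation to $\gamma_i$; either way the pair $(\mu,\mu')$ is ``good for $\gamma_i$''.

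The subtlety is that replacing $(\alpha,\alpha')$ by $(\mu,\mu')$ can ruin disjointness/perpendicularity with respect to the \emph{other} $\gamma_j$'s, so the induction must be run carefully. The key observation is that the $\gamma_j$ for $j\ne i$ have supports disconnected from $\mathrm{supp}(\gamma_i)$, and since $\mu,\mu'$ are obtained from $\alpha,\alpha'$ and $\gamma_i$ by the moves in Lemma~\ref{lemma5.7} (intersections and sums involving only $\alpha,\alpha',\gamma_i$, whose supports union to everything only through the $c$-constraint), one can track where the support of $\mu'$ lies relative to each $\gamma_j$. I would argue: using Hecke-commutation (Lemma~\ref{lemma5.7}(1) and part~3b) one may move the reflection $s_{\gamma_i}$ to the far right of $s_{\gamma_1}\cdots s_{\gamma_k}$, write $s_\alpha\cdot s_{\alpha'}\cdot s_{\gamma_1}\cdots s_{\gamma_k} = s_\alpha\cdot s_{\alpha'}\cdot (\prod_{j\ne i} s_{\gamma_j})\cdot s_{\gamma_i}$ after suitable commutations, and then the inductive hypothesis applied to the word $s_\alpha\cdot s_{\alpha'}\cdot \prod_{j\ne i}s_{\gamma_j}$ (strictly smaller degree) yields $\beta_0,\beta_0'$ good for all $\gamma_j$, $j\ne i$; finally one more application of Lemma~\ref{lemma17.8} to the pair $(\beta_0,\beta_0')$ against $\gamma_i$ produces $\beta,\beta'$, and one checks that because $\mathrm{supp}(\gamma_i)$ is disconnected from all $\mathrm{supp}(\gamma_j)$, this last move does not disturb goodness with respect to $j\ne i$.

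I expect the main obstacle to be exactly this last compatibility check: verifying that the final Lemma~\ref{lemma17.8} step, while making $\beta'$ good for $\gamma_i$, preserves ``$\beta'\cap\gamma_j=\emptyset$ or ($\beta'>\gamma_j$ and $\beta'\perp\gamma_j$)'' for the other indices. The reason it should go through is that all the modifications in Lemma~\ref{lemma17.8} affect only the part of $\beta_0,\beta_0'$ supported on $\mathrm{supp}(\gamma_i)\cup\mathrm{supp}(c-\gamma_i)$, and the roots $\gamma_j$ ($j\ne i$) have support disconnected from $\mathrm{supp}(\gamma_i)$; whenever the support of $c-\gamma_i$ meets $\mathrm{supp}(\gamma_j)$ one uses that $\gamma_j<c$ together with the disconnectedness of the $\gamma$'s to conclude the pairing $\langle\beta',\gamma_j\rangle$ is unchanged. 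A careful case analysis paralleling the one in the proof of Lemma~\ref{lemma17.8} (splitting on whether $\mu'$, resp.\ the intermediate roots, are $\le c-\gamma_i$ or $>\gamma_i$) should close this. Once the relation is established, the ``good'' pair $(\beta,\beta')$ is what the statement asks for, completing the induction.
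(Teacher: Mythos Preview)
Your approach is essentially the paper's: both argue by induction on $k$, first obtaining a pair $(\nu,\nu')$ (your $(\beta_0,\beta_0')$) that is ``good'' for all but one of the $\gamma$'s, and then adjusting against the remaining $\gamma_i$ while checking that goodness for the others survives. The only difference is packaging: you want to invoke Lemma~\ref{lemma17.8} as a black box for the final adjustment and then do a separate compatibility check, whereas the paper unpacks that lemma and performs the case analysis directly (its cases a), b), c) are precisely the cases of Lemma~\ref{lemma17.8}, redone while tracking the conditions on the other $\gamma_j$).

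Two remarks. First, your claim that ``at most one $\gamma_i$ fails to be disjoint from $\alpha'$'' is false in general: an interval $\alpha'<c$ can overlap two pairwise-disconnected intervals at its two ends. This does not matter for your argument, since you only need to single out \emph{some} index for the induction; but drop the claim. Second, and more importantly, Lemma~\ref{lemma17.8} only asserts the \emph{existence} of a suitable $(\beta,\beta')$; it does not tell you which one. To carry out your compatibility check you must open its proof and observe that the $\beta'$ produced is always one of $\beta_0'$, $c-\gamma_i$, or $\beta_0'-\gamma_i$. For each of these one then verifies, using that $\mathrm{supp}(\gamma_i)$ and $\mathrm{supp}(\gamma_j)$ are disconnected (so $\langle\gamma_i,\gamma_j\rangle=0$ and $\gamma_j<c-\gamma_i$ with $\langle c-\gamma_i,\gamma_j\rangle=0$), that the condition ``$\beta'\cap\gamma_j=\emptyset$ or ($\beta'>\gamma_j$ and $\beta'\perp\gamma_j$)'' is preserved. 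This is straightforward, but it \emph{is} the entire content of the proof; it cannot be outsourced to Lemma~\ref{lemma17.8} alone, and the paper's cases a)--c) are exactly this verification written out.
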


\begin{proof} We will prove the statement by the induction on $k.$ Let $k=1$. Then by Equation \ref{equation17.92},  $ (s_{\alpha}\cdot s_{\alpha'})\cdot s_{\gamma_{1}}\leq (s_{\beta}\cdot s_{\beta'})\cdot s_{\gamma_{1}}$ for some $\beta, \beta' \in \Pi_{\text{aff}}^{\text{re},\,+}$ such that $\beta+\beta'=c$ where either $\beta'\leq c- \gamma_{1}$ i.e $\beta'\cap \gamma_{1}=\emptyset$ or both $\beta'>\gamma_{1}$ and $\beta'\perp \gamma_{1}$. So we are done with this case. Now we will assume that the statement is true for $k$ and prove that it is also true for $k+1$ i.e we will show that $ (s_{\alpha}\cdot s_{\alpha'})\cdot s_{\gamma_{1}}\cdot s_{\gamma_{2}}\cdot ...\cdot s_{\gamma_{k+1}}\leq (s_{\beta}\cdot s_{\beta'})\cdot s_{\gamma_{1}}\cdot s_{\gamma_{2}}\cdot ...\cdot s_{\gamma_{k+1}}$ for some affine positive real roots, $\beta,\beta'  \in \Pi_{\text{aff}}^{\text{re},\,+}$ such that either $\beta'\cap \gamma_{i}=\emptyset$ or both $\beta'>\gamma_{i}$ and $\beta'\perp \gamma_{i}$ for any $i=1,2,...,k+1.$  Now note that $ (s_{\alpha}\cdot s_{\alpha'})\cdot s_{\gamma_{1}}\cdot s_{\gamma_{2}}\cdot ...\cdot s_{\gamma_{k}}\leq (s_{\nu} \cdot s_{\nu'})\cdot s_{\gamma_{1}}\cdot s_{\gamma_{2}}\cdot ...\cdot s_{\gamma_{k}}$ for some $\nu, \nu' \in \Pi_{\text{aff}}^{\text{re},\,+}$ such that $\nu+\nu'=c$ where $\nu'\cap \gamma_{i}=\emptyset$ or both $\nu'>\gamma_{i}$ and $\nu'\perp \gamma_{i}$ for any $i=1,2,...,k$ by the induction assumption. So there is a subset $I$ of $\{\gamma_{1},\gamma_{2},...,\gamma_{k}\}$ such that $\nu'\cap \gamma_{i}=\emptyset$ for all $\gamma_{i}\in I$ and $\nu'>\gamma_{j}$ and $\nu'\perp \gamma_{j}$   for all $\gamma_{j}\in \{\gamma_{1},\gamma_{2},...,\gamma_{k}\} \setminus I$. Thus $ (s_{\alpha}\cdot s_{\alpha'})\cdot s_{\gamma_{1}}\cdot s_{\gamma_{2}}\cdot ...\cdot s_{\gamma_{k}}\cdot s_{\gamma_{k+1}}\leq (s_{\nu}\cdot  s_{\nu'}\cdot s_{\gamma_{1}}\cdot s_{\gamma_{2}}\cdot ...\cdot s_{\gamma_{k}})\cdot s_{\gamma_{k+1}}.$ Now, if we also have $\nu'\cap \gamma_{k+1}=\emptyset$ or both $\nu'>\gamma_{k+1}$ and $\nu'\perp \gamma_{k+1}$ then we are done. Assume not i.e assume that $\nu'\cap \gamma_{k+1}\neq \emptyset$ and $\nu'$ is not strictly bigger than or not perpendicular to $\gamma_{k+1}.$ Note that $s_{\gamma_{i}}$ and $s_{\gamma_{j}}$ Hecke commute for any $1\leq i,j\leq k+1$  so $s_{\nu}\cdot s_{\nu'}\cdot s_{\gamma_{1}}\cdot s_{\gamma_{2}}\cdot ...\cdot s_{\gamma_{k}}\cdot s_{\gamma_{k+1}}=s_{\nu} \cdot s_{\nu'}\cdot s_{\gamma_{k+1}}\cdot s_{\gamma_{1}}\cdot s_{\gamma_{2}}\cdot ...\cdot s_{\gamma_{k}}.$ Here we have several cases;

a) Assume that $\nu'\cap \gamma_{k+1} \neq \nu',\gamma_{k+1}$ and $\nu'\cap \gamma_{k+1}$ is a root. Let $\gamma:=\nu'\cap \gamma_{k+1}.$
Then $\nu'-\gamma$ is a root and $s_{\nu'}\cdot s_{\gamma_{k+1}}=s_{\gamma}\cdot s_{\nu'-\gamma}\cdot s_{\gamma_{k+1}}$ by Lemma \ref{lemma5.7}. Also, note that $\nu+\gamma=c-\nu'+\gamma=c-(\nu'-\gamma)$ is a root so by the same lemma we have $s_{\nu}\cdot s_{\gamma}\leq s_{\nu+\gamma}$. Thus $s_{\nu}\cdot s_{\nu'}\cdot s_{\gamma_{k+1}}=s_{\nu}\cdot s_{\gamma}\cdot s_{\nu'-\gamma}\cdot s_{\gamma_{k+1}}\leq s_{\nu+\gamma}\cdot s_{\nu'-\gamma}\cdot s_{\gamma_{k+1}}$ which follows by $s_{\nu}\cdot s_{\nu'}\cdot s_{\gamma_{k+1}}\cdot s_{\gamma_{1}}\cdot s_{\gamma_{2}}\cdot ...\cdot s_{\gamma_{k}}\leq s_{\nu+\gamma}\cdot s_{\nu'-\gamma}\cdot s_{\gamma_{k+1}}\cdot s_{\gamma_{1}}\cdot s_{\gamma_{2}}\cdot ...\cdot s_{\gamma_{k}}.$ Here, note that $(\nu'-\gamma) \cap \gamma_{k+1}=\emptyset$ and $(\nu'-\gamma)\cap \gamma_{i}=\emptyset$ for all $\gamma_{i}\in I$ and  $(\nu'-\gamma)>\gamma_{j}$ and $(\nu'-\gamma)\perp \gamma_{j}$   for all $\gamma_{j}\in \{\gamma_{1},\gamma_{2},...,\gamma_{k}\} \setminus I$ since $\nu'-\gamma< \nu'$ and $\text{supp}(\gamma)$ and $\text{supp}(\gamma_{i})$ are disconnected for any $i=1,2,...,k$. So we can take $\beta'=\nu'-\gamma$ in this case. The case where $\nu'\cap \gamma_{k+1}$ is not a root is similar.

b) If $\nu'\cap \gamma_{k+1}=\gamma_{k+1}$ then $\gamma_{k+1}\leq \nu'$. 

\begin{itemize}

\item Assume that $v'=\gamma_{k+1}$. Then $s_{\nu}\cdot s_{\nu'}\cdot s_{\gamma_{k+1}}=s_{c-\gamma_{k+1}}\cdot s_{\gamma_{k+1}}\cdot s_{\gamma_{k+1}}$. By Lemma \ref{lemma17.3}, $s_{c-\gamma_{k+1}}\cdot s_{\gamma_{k+1}}\cdot s_{\gamma_{k+1}}\leq s_{\gamma_{k+1}}\cdot s_{c-\gamma_{k+1}}\cdot s_{\gamma_{k+1}}$ which follows by $s_{\nu}\cdot s_{\nu'}\cdot s_{\gamma_{k+1}}\cdot s_{\gamma_{1}}\cdot s_{\gamma_{2}}\cdot ...\cdot s_{\gamma_{k}}\leq s_{\gamma_{k+1}}\cdot s_{c-\gamma_{k+1}}\cdot s_{\gamma_{k+1}}\cdot s_{\gamma_{1}}\cdot s_{\gamma_{2}}\cdot ...\cdot s_{\gamma_{k}}.$  Here, observe that $(c-\gamma_{k+1})\cap \gamma_{k+1}=\emptyset.$ Also, $(c-\gamma_{k+1})>\gamma_{i}$ and  $(c-\gamma_{k+1})\perp \gamma_{i}$ for all $i=1,2,...,k$ since $\text{supp}(\gamma_{k+1})$ and $\text{supp}(\gamma_{i})$ are disconnected for any $i=1,2,...,k$. Hence we can take $\beta'=c-\gamma_{k+1}$ in this case.

\item  Assume that $\gamma_{k+1}<\nu'.$ Now since $\nu'$ is not perpendicular to $\gamma_{k+1}$ we have  $<\nu',\gamma_{k+1}^{\vee}>=1$ so $s_{\gamma_{k+1}}(\nu')=\nu'-\gamma_{k+1}$. So $\nu'-\gamma_{k+1}$ is a root which implies that $\nu+\gamma_{k+1}=c-\nu'+\gamma_{k+1}=c-(\nu'-\gamma_{k+1})$ is also a root. Thus  $s_{\nu}\cdot s_{\nu'}\cdot s_{\gamma_{k+1}}=s_{\nu}\cdot s_{\gamma_{k+1}}\cdot s_{\nu'}=s_{\nu+\gamma_{k+1}}\cdot s_{\nu'}$ by Lemma \ref{lemma5.7}. Here, note that $(\nu+\gamma_{k+1})\cap \nu'=\gamma_{k+1}$. So by the same lemma we get $s_{\nu+\gamma_{k+1}} \cdot s_{\nu'}=s_{\nu+\gamma_{k+1}}\cdot s_{\nu'-\gamma_{k+1}}\cdot s_{\gamma_{k+1}}$ which follows by $s_{\nu}\cdot s_{\nu'}\cdot s_{\gamma_{k+1}}\cdot s_{\gamma_{1}}\cdot s_{\gamma_{2}}\cdot ...\cdot s_{\gamma_{k}}\leq s_{\nu+\gamma_{k+1}}\cdot s_{\nu'-\gamma_{k+1}}\cdot s_{\gamma_{k+1}}\cdot s_{\gamma_{1}}\cdot s_{\gamma_{2}}\cdot ...\cdot s_{\gamma_{k}}.$  Here, observe that $(\nu'-\gamma_{k+1})\cap \gamma_{k+1}=\emptyset.$ Also, $(\nu'-\gamma_{k+1})\cap \gamma_{i}=\emptyset$ for all $\gamma_{i}\in I$ and  $(\nu'-\gamma_{k+1})>\gamma_{j}$ and $(\nu'-\gamma_{k+1})\perp \gamma_{j}$ for all $\gamma_{j}\in \{\gamma_{1},\gamma_{2},...,\gamma_{k}\} \setminus I$ since $\nu'-\gamma_{k+1}< \nu'$ and $\text{supp}(\gamma_{k+1})$ and $\text{supp}(\gamma_{i})$ are disconnected for any $i=1,2,...,k$. So we can take $\beta'=\nu'-\gamma_{k+1}$ in this case.

\end{itemize}

c) If $\nu'\cap \gamma_{k+1}=\nu'$ then $\nu'\leq \gamma_{k+1}$. We already consider the case where $\nu'= \gamma_{k+1}$ in case b) above so assume that $\nu'< \gamma_{k+1}.$ Then $s_{\nu'}$ and $s_{\gamma_{k+1}}$ Hecke commute by part $1)$ in Lemma \ref{lemma5.7}. Also, note that $\nu\cap \gamma_{k+1}\neq \emptyset.$ Now, assume that $\nu\cap \gamma_{k+1}$ is a root; let $\gamma:=\nu\cap \gamma_{k+1}.$ Then, by part $2)$ in Lemma \ref{lemma5.7} we have $s_{\nu}\cdot s_{\gamma_{k+1}}=s_{\gamma}\cdot s_{\nu-\gamma}\cdot s_{\gamma_{k+1}}.$ Here, note that $\nu-\gamma+\nu'=c-\gamma$ is a root and by the same lemma $s_{\nu-\gamma}\cdot s_{\nu'}\leq s_{c-\gamma}.$ Hence $s_{\nu}\cdot s_{\nu'}\cdot s_{\gamma_{k+1}}=s_{\nu}\cdot s_{\gamma_{k+1}}\cdot s_{\nu'}=s_{\gamma}\cdot s_{\nu-\gamma}\cdot s_{\gamma_{k+1}}\cdot s_{\nu'}=s_{\gamma}\cdot s_{\nu-\gamma}\cdot s_{\nu'}\cdot s_{\gamma_{k+1}}\leq s_{\gamma}\cdot s_{c-\gamma}\cdot s_{\gamma_{k+1}}$ which follows by $s_{\nu}\cdot s_{\nu'}\cdot s_{\gamma_{k+1}}\cdot s_{\gamma_{1}}\cdot s_{\gamma_{2}}\cdot ...\cdot s_{\gamma_{k}}\leq s_{\gamma}\cdot s_{c-\gamma} \cdot s_{\gamma_{k+1}}\cdot s_{\gamma_{1}}\cdot s_{\gamma_{2}}\cdot ...\cdot s_{\gamma_{k}}.$ Moreover, $(c-\gamma )\cap \gamma_{k+1}\neq \emptyset$ and $(c-\gamma) \cap \gamma_{k+1}\neq c-\gamma,\gamma_{k+1}$ since $\gamma< \gamma_{k+1}.$  Also, observe that $(c-\gamma)> \gamma_{i}$ and $(c-\gamma)\perp \gamma_{i}$ for all $i=1,2,...,k$ since $\text{supp}(\gamma_{k+1})$ and $\text{supp}(\gamma_{i})$ are disconnected for any $i=1,2,...,k$. Thus we can consider this case under case a) above. Now, if $\nu\cap \gamma_{k+1}$ is not a root then the proof is similar.

\end{proof}

\begin{lemma} \label{lemma19.5} Let $\nu, \nu' ,\mu, \mu' \in \Pi_{\text{aff}}^{\text{re},\,+}$ such that $\nu+\nu'=\mu+\mu'=c$ and either $\nu'\leq \mu'$ or both $\nu'>\mu$ and $\nu'\perp \mu$. Also, let $\gamma_{1},\gamma_{2},...,\gamma_{k} \in \Pi_{\text{aff}}^{\text{re},\,+}$ such that $\text{supp}(\gamma_{i})$ and $\text{supp}(\gamma_{j})$ are disconnected for any $1\leq i,j\leq k$ such that $i\neq j$. Furthermore, suppose that either $\mu'\cap \gamma_{i}=\emptyset$ or both $\mu'>\gamma_{i}$ and $\mu'\perp \gamma_{i}$ for any $1\leq i\leq k.$ Then $s_{\nu}\cdot s_{\nu'}\cdot (s_{\mu}\cdot s_{\mu'})^m\cdot s_{\gamma_{1}}\cdot s_{\gamma_{2}}\cdot ...\cdot s_{\gamma_{k}} \leq (s_{\beta}\cdot s_{\beta'})^{m+1}\cdot s_{\gamma_{1}}\cdot s_{\gamma_{2}}\cdot ...\cdot s_{\gamma_{k}}$ for some affine positive real roots, $\beta,\beta'  \in \Pi_{\text{aff}}^{\text{re},\,+}$ such that either $\beta'\cap \gamma_{i}=\emptyset$ or both $\beta'>\gamma_{i}$ and $\beta'\perp \gamma_{i}$ for any $i=1,2,...,k$ where $m\geq 1$ is an integer.

\end{lemma}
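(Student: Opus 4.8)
The plan is to proceed by induction on $m$, closely mirroring the structure of Lemma \ref{lemma17.99} but now carrying along the ``tail'' $s_{\gamma_{1}}\cdot s_{\gamma_{2}}\cdot\ldots\cdot s_{\gamma_{k}}$ whose reflections pairwise Hecke commute (since their supports are disconnected). For the base case $m=1$, the claim reads $s_{\nu}\cdot s_{\nu'}\cdot s_{\mu}\cdot s_{\mu'}\cdot s_{\gamma_{1}}\cdots s_{\gamma_{k}} \leq (s_{\beta}\cdot s_{\beta'})^{2}\cdot s_{\gamma_{1}}\cdots s_{\gamma_{k}}$. Here I would split on the hypothesis on $\nu'$: if $\nu'\leq\mu'$, apply Lemma \ref{lemma17.95} to get $s_{\nu}\cdot s_{\nu'}\cdot s_{\mu}\cdot s_{\mu'}\leq(s_{\nu}\cdot s_{\nu'})^{2}$, so $\beta=\nu,\ \beta'=\nu'$ works unless $\nu'$ fails the compatibility condition with some $\gamma_i$, in which case I invoke Lemma \ref{lemma19.3} (with $\alpha=\nu,\ \alpha'=\nu'$) to replace $(s_\nu s_{\nu'})^2$'s inner factor by a compatible pair — but to keep the $(m+1)$-th power I must be careful, so instead I would apply Lemma \ref{lemma19.3} first to $s_{\nu}\cdot s_{\nu'}\cdot s_{\gamma_1}\cdots s_{\gamma_k}$ to arrange compatibility, then re-run the doubling. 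If instead $\nu'>\mu$ and $\nu'\perp\mu$, apply Lemma \ref{lemma17.98} to get $s_{\nu}\cdot s_{\nu'}\cdot s_{\mu}\cdot s_{\mu'}\leq(s_{\mu}\cdot s_{\mu'})^{2}$ and take $\beta=\mu,\ \beta'=\mu'$, which already satisfies the compatibility hypothesis on the $\gamma_i$'s by assumption.

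For the inductive step, assume the statement for $m$ and prove it for $m+1$. Write
$$s_{\nu}\cdot s_{\nu'}\cdot (s_{\mu}\cdot s_{\mu'})^{m+1}\cdot s_{\gamma_{1}}\cdots s_{\gamma_{k}}
= \bigl(s_{\nu}\cdot s_{\nu'}\cdot s_{\mu}\cdot s_{\mu'}\bigr)\cdot (s_{\mu}\cdot s_{\mu'})^{m}\cdot s_{\gamma_{1}}\cdots s_{\gamma_{k}}.$$
By the base-case analysis, $s_{\nu}\cdot s_{\nu'}\cdot s_{\mu}\cdot s_{\mu'}\leq (s_{\rho}\cdot s_{\rho'})^{2}$ for a pair $\rho,\rho'$ with $\rho+\rho'=c$ which is either $(\nu,\nu')$ or $(\mu,\mu')$; in either case the compatibility hypothesis of the lemma is met for $\rho'$ against the $\gamma_i$'s (for $(\mu,\mu')$ it is the standing assumption; for $(\nu,\nu')$ one must argue that $\nu'\leq\mu'$ together with $\mu'$'s compatibility forces $\nu'$'s compatibility, or else route through Lemma \ref{lemma19.3} as above). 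Then
$$s_{\nu}\cdot s_{\nu'}\cdot (s_{\mu}\cdot s_{\mu'})^{m+1}\cdot s_{\gamma_{1}}\cdots s_{\gamma_{k}}
\leq (s_{\rho}\cdot s_{\rho'})\cdot\Bigl[s_{\rho}\cdot s_{\rho'}\cdot(s_{\mu}\cdot s_{\mu'})^{m}\cdot s_{\gamma_{1}}\cdots s_{\gamma_{k}}\Bigr],$$
and the bracketed expression is handled by the induction hypothesis, yielding $\leq (s_{\rho}\cdot s_{\rho'})\cdot (s_{\beta}\cdot s_{\beta'})^{m+1}\cdot s_{\gamma_{1}}\cdots s_{\gamma_{k}}$ for compatible $\beta,\beta'$. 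Finally one more application of the base case (or Lemma \ref{lemma17.95}/\ref{lemma17.98}) collapses $s_{\rho}\cdot s_{\rho'}\cdot s_{\beta}\cdot s_{\beta'}$ into $(s_{\beta''}\cdot s_{\beta''}{}')^{2}$, giving $(s_{\beta''}\cdot s_{\beta''}{}')^{m+2}\cdot s_{\gamma_{1}}\cdots s_{\gamma_{k}}$ as required, after one last compatibility check.

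Throughout, the monoid properties of the Hecke product (associativity, monotonicity $v\leq v'\Rightarrow u\cdot v\cdot w\leq u\cdot v'\cdot w$) from Section \ref{hecke product} are used freely, and the Hecke-commutation relations of Lemma \ref{lemma5.7} are used to move the $s_{\gamma_i}$'s past each other and to split intersections $\nu'\cap\gamma_i$ into roots. I expect the main obstacle to be bookkeeping the compatibility condition ``$\beta'\cap\gamma_i=\emptyset$ or ($\beta'>\gamma_i$ and $\beta'\perp\gamma_i$)'' through each reduction: every time a pair $(\beta,\beta')$ is replaced, one must re-verify this against all $k$ of the $\gamma_i$, and the replacements coming from Lemmas \ref{lemma17.95} and \ref{lemma17.98} produce $\beta'$ that is either unchanged, shrunk by a $\gamma_i$, or shrunk by $\gamma_i+\gamma_q$. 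The key uniform fact making this work is that shrinking $\beta'$ to $\beta'-\gamma$ with $\operatorname{supp}(\gamma)$ disjoint from $\operatorname{supp}(\gamma_i)$ preserves all the inner products $\langle\beta',\gamma_i^{\vee}\rangle$, exactly as exploited in Lemma \ref{lemma19.3}; I would isolate this as the repeated observation that closes each case.
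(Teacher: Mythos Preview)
Your inductive scheme has a structural gap that prevents it from closing. The problem appears already in the base case $m=1$ when $\nu'\leq\mu'$ but $\nu'$ fails the compatibility condition with some $\gamma_j$: you propose to ``apply Lemma \ref{lemma19.3} first to $s_{\nu}\cdot s_{\nu'}\cdot s_{\gamma_1}\cdots s_{\gamma_k}$ to arrange compatibility, then re-run the doubling,'' but Lemma \ref{lemma19.3} replaces $(\nu,\nu')$ by some compatible $(\beta,\beta')$ with no control over whether $\beta'\leq\mu'$ or $\beta'>\mu,\ \beta'\perp\mu$. Without that relationship, neither Lemma \ref{lemma17.95} nor Lemma \ref{lemma17.98} applies, so the doubling cannot be re-run. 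The same obstruction reappears in your inductive step: after the induction hypothesis produces $(s_\rho\cdot s_{\rho'})\cdot(s_\beta\cdot s_{\beta'})^{m+1}\cdot s_{\gamma_1}\cdots s_{\gamma_k}$, you need to collapse $s_\rho\cdot s_{\rho'}\cdot s_\beta\cdot s_{\beta'}$, but the pair $(\beta,\beta')$ comes from the black box of the hypothesis and you have no a priori relationship between $\rho'$ and $\beta'$ or $\beta$. Invoking Lemma \ref{lemma17.8} to force such a relationship gives a new pair that may again fail compatibility with the $\gamma_i$, and you are back where you started. In short, your induction cannot simultaneously maintain the power $(m+1)$ \emph{and} the compatibility condition.

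The paper avoids this entirely by \emph{not} inducting on $m$. The key observation you are missing is that the standing hypothesis on $\mu'$ (either $\mu'\cap\gamma_i=\emptyset$, or $\mu'>\gamma_i$ and $\mu'\perp\gamma_i$) forces $s_{\gamma_i}$ to Hecke-commute with both $s_\mu$ and $s_{\mu'}$; hence any offending $s_{\gamma_j}$ can be moved past the entire block $(s_\mu\cdot s_{\mu'})^m$ to sit adjacent to $s_\nu\cdot s_{\nu'}$. One then manipulates $s_\nu\cdot s_{\nu'}\cdot s_{\gamma_j}$ directly by a case analysis on the relative position of $\nu'$ and $\gamma_j$ (the cases $\nu'=\gamma_j$, $\nu'<\gamma_j$, $\gamma_j<\nu'$, exactly as in Lemma \ref{lemma19.3}), each time producing a new pair that either equals $(\mu,\mu')$ (so compatibility is inherited) or has the form $(\nu+\gamma,\nu'-\gamma)$ with $\nu'-\gamma<\mu'$ (so compatibility can be checked directly against the remaining $\gamma_i$). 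Since the supports of the $\gamma_i$ are pairwise disconnected, at most two of them can fail, and the argument terminates after at most two such local fixes. This direct case analysis, not induction on $m$, is what makes the compatibility bookkeeping tractable.
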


\begin{proof} First, if $\nu'=\mu'$ then we can take $\beta'=\mu'.$ We will assume that $\nu'\neq \mu'.$ If both $\nu'>\mu$ and $\nu'\perp \mu$ are true then by Lemma \ref{lemma17.98} we get $s_{\nu}\cdot s_{\nu'}\cdot (s_{\mu}\cdot s_{\mu'})^m\leq (s_{\mu}\cdot s_{\mu'})^{m+1}$ which follows by $s_{\nu}\cdot s_{\nu'}\cdot (s_{\mu}\cdot s_{\mu'})^m\cdot  s_{\gamma_{1}}\cdot s_{\gamma_{2}}\cdot ...\cdot s_{\gamma_{k}} \leq (s_{\mu}\cdot s_{\mu'})^{m+1}\cdot s_{\gamma_{1}}\cdot s_{\gamma_{2}}\cdot ...\cdot s_{\gamma_{k}}$. Thus we can take $\beta'=\mu'$ in this case. Suppose that $\nu'<\mu'$. Now, observe that there is a subset, $I$ of $\{\gamma_{1},\gamma_{2},...,\gamma_{k}\}$ such that $\nu'\cap \gamma_{i}=\emptyset$ for all $\gamma_{i}\in I$ and $\nu'>\gamma_{j}$ and $\nu'\perp \gamma_{j}$   for all $\gamma_{j}\in \{\gamma_{1},\gamma_{2},...,\gamma_{k}\} \setminus I$ since we have either $\mu'\cap \gamma_{i}=\emptyset$ or both $\mu'>\gamma_{i}$ and $\mu'\perp \gamma_{i}$ for any $1\leq i\leq k.$ Then $\nu'\cap \gamma_{i}=\emptyset$ for all $\gamma_{i}\in I$ since $\nu'< \mu'.$ Here, we have two cases;

a)  If we have either $\nu'\cap \gamma_{j}=\emptyset$ or $\nu'>\gamma_{j}$ and $\nu'\perp \gamma_{j}$ for any $\gamma_{j}$ such that $\gamma_{j}\in \{\gamma_{1},\gamma_{2},...,\gamma_{k}\} \setminus I$ then we can take $\beta'=\nu'$ since $s_{\nu}\cdot s_{\nu'}\cdot (s_{\mu}\cdot s_{\mu'})^m\leq (s_{\nu}\cdot s_{\nu'})^{m+1}$ by Lemma \ref{lemma17.95} which follows by $s_{\nu}\cdot s_{\nu'}\cdot (s_{\mu}\cdot s_{\mu'})^m\cdot z_{d-(m+1)c}\leq (s_{\nu}\cdot s_{\nu'})^{m+1}\cdot z_{\mathbf{d}-(m+1)c}.$

b) Assume that we have neither $\nu'\cap \gamma_{j}=\emptyset$ nor both $\nu'>\gamma_{j}$ and $\nu'\perp \gamma_{j}$  for all $\gamma_{j}\in B$ for some  $B \subseteq \{\gamma_{1},\gamma_{2},...,\gamma_{k}\} \setminus I$.  Note that $s_{\gamma_{i}}$ and $s_{\gamma_{j}}$ Hecke commutes for any $1\leq i,j\leq k$ so we can assume that $\gamma_{1}\in B.$ Now, note that $\text{supp}(\mu) $ and $\text{supp}(\gamma_{1}) $ are disconnected since  $\mu'>\gamma_{1}$ and $\mu'\perp \gamma_{1}$ so $\gamma_{1}$ Hecke commute with $s_{\mu}$ and $s_{\mu'}.$ Thus $s_{\nu}\cdot s_{\nu'}\cdot (s_{\mu}\cdot s_{\mu'})^m\cdot s_{\gamma_{1}}\cdot s_{\gamma_{2}}\cdot ...\cdot s_{\gamma_{k}} = s_{\nu}\cdot s_{\nu'}\cdot s_{\gamma_{1}}\cdot (s_{\mu}\cdot s_{\mu'})^m  s_{\gamma_{2}}\cdot ...\cdot s_{\gamma_{k}}$. Here, we have several cases;

\begin{itemize}

\item If $\nu'=\gamma_{1}$ then $s_{\nu}\cdot s_{\nu'}\cdot s_{\gamma_{1}}=s_{c-\gamma_{1}}\cdot s_{\gamma_{1}}\cdot s_{\gamma_{1}}$. By Lemma \ref{lemma17.3}, $s_{c-\gamma_{1}}\cdot s_{\gamma_{1}}\cdot s_{\gamma_{1}}\leq s_{\gamma_{1}}\cdot s_{c-\gamma_{1}}\cdot s_{\gamma_{1}}$ which follows by $s_{\nu}\cdot s_{\nu'}\cdot s_{\gamma_{1}}\cdot (s_{\mu}\cdot s_{\mu'})^m \cdot  s_{\gamma_{2}}\cdot ...\cdot s_{\gamma_{k}} \leq s_{\gamma_{1}}\cdot s_{c-\gamma_{1}}\cdot s_{\gamma_{1}}\cdot (s_{\mu}\cdot s_{\mu'})^m \cdot s_{\gamma_{2}}\cdot ...\cdot s_{\gamma_{k}}=s_{\gamma_{1}}\cdot s_{c-\gamma_{1}}\cdot  (s_{\mu}\cdot s_{\mu'})^m \cdot s_{\gamma_{1}}\cdot s_{\gamma_{2}}\cdot ...\cdot s_{\gamma_{k}}.$  Here, observe that $(c-\gamma_{1})>\mu$ and  $(c-\gamma_{1})\perp \mu$ since $\mu'>\gamma_{1}$ and $\mu'\perp \gamma_{1}$. So by Lemma  \ref{lemma17.98} we get $s_{\gamma_{1}}\cdot s_{c-\gamma_{1}}\cdot (s_{\mu}\cdot s_{\mu'})^m\leq (s_{\mu}\cdot s_{\mu'})^{m+1}$ which follows by $s_{\gamma_{1}}\cdot s_{c-\gamma_{1}}\cdot  (s_{\mu}\cdot s_{\mu'})^m \cdot s_{\gamma_{1}}\cdot s_{\gamma_{2}}\cdot ...\cdot s_{\gamma_{k}}. \leq (s_{\mu}\cdot s_{\mu'})^{m+1}\cdot s_{\gamma_{1}}\cdot s_{\gamma_{2}}\cdot ...\cdot s_{\gamma_{k}}$. Thus we can take $\beta'=\mu'$ in this case.

\item If $\nu'<\gamma_{1}$ then $\nu\cap \gamma_{1}\neq \emptyset$ since $\nu+\nu'=c.$ Assume that  $\nu\cap \gamma_{1}$ is a root and let $\gamma:= \nu\cap \gamma_{1}.$ Then, by part $2)$ in Lemma \ref{lemma5.7} we have $s_{\nu}\cdot s_{\gamma_{1}}=s_{\gamma}\cdot s_{\nu-\gamma}\cdot s_{\gamma_{1}}.$ Here, note that $\nu-\gamma+\nu'=c-\gamma$ is a root and by the same lemma $s_{\nu-\gamma}\cdot s_{\nu'}\leq s_{c-\gamma}.$ Hence $s_{\nu}\cdot s_{\nu'}\cdot s_{\gamma_{1}}=s_{\nu}\cdot s_{\gamma_{1}}\cdot s_{\nu'}=s_{\gamma}\cdot s_{\nu-\gamma}\cdot s_{\gamma_{1}}\cdot s_{\nu'}=s_{\gamma}\cdot s_{\nu-\gamma}\cdot s_{\nu'}\cdot s_{\gamma_{1}}\leq s_{\gamma}\cdot s_{c-\gamma}\cdot s_{\gamma_{1}}$ which follows by $s_{\nu}\cdot s_{\nu'}\cdot s_{\gamma_{1}}\cdot (s_{\mu}\cdot s_{\mu'})^m \cdot  s_{\gamma_{2}}\cdot ...\cdot s_{\gamma_{k}} \leq s_{\gamma}\cdot s_{c-\gamma}\cdot s_{\gamma_{1}}\cdot (s_{\mu}\cdot s_{\mu'})^m \cdot s_{\gamma_{2}}\cdot ...\cdot s_{\gamma_{k}}=s_{\gamma}\cdot s_{c-\gamma}\cdot  (s_{\mu}\cdot s_{\mu'})^m \cdot s_{\gamma_{1}}\cdot s_{\gamma_{2}}\cdot ...\cdot s_{\gamma_{k}}.$  Here, observe that $(c-\gamma)>\mu$ and  $(c-\gamma)\perp \mu$ since $\gamma<\gamma_{1}$ where $\mu'>\gamma_{1}$ and $\mu'\perp \gamma_{1}$. So by Lemma  \ref{lemma17.98} we get $s_{\gamma}\cdot s_{c-\gamma}\cdot (s_{\mu}\cdot s_{\mu'})^m\leq (s_{\mu}\cdot s_{\mu'})^{m+1}$ which follows by $s_{\gamma}\cdot s_{c-\gamma}\cdot  (s_{\mu}\cdot s_{\mu'})^m \cdot s_{\gamma_{1}}\cdot s_{\gamma_{2}}\cdot ...\cdot s_{\gamma_{k}}. \leq (s_{\mu}\cdot s_{\mu'})^{m+1}\cdot s_{\gamma_{1}}\cdot s_{\gamma_{2}}\cdot ...\cdot s_{\gamma_{k}}$. Again we can take $\beta'=\mu'$ in this case. If $\nu\cap \gamma_{1}$ is not a root then the proof is similar.

\item If $\gamma_{1}<\nu'$ then since $\nu'$ is not perpendicular to $\gamma_{1}$ we have  $<\nu',\gamma_{1}^{\vee}>=1$ so $s_{\gamma_{1}}(\nu')=\nu'-\gamma_{1}$. So $\nu'-\gamma_{1}$ is a root which implies that $\nu+\gamma_{1}=c-\nu'+\gamma_{1}=c-(\nu'-\gamma_{1})$ is also a root. Thus  $s_{\nu}\cdot s_{\nu'}\cdot s_{\gamma_{1}}=s_{\nu}\cdot s_{\gamma_{1}}\cdot s_{\nu'}=s_{\nu+\gamma_{1}}\cdot s_{\nu'}$ by Lemma \ref{lemma5.7}. Here, note that $(\nu+\gamma_{1})\cap \nu'=\gamma_{1}$. So by the same lemma we get $s_{\nu+\gamma_{1}} \cdot s_{\nu'}=s_{\nu+\gamma_{1}}\cdot s_{\nu'-\gamma_{1}}\cdot s_{\gamma_{1}}$ which follows by $s_{\nu}\cdot s_{\nu'}\cdot s_{\gamma_{1}}\cdot (s_{\mu}\cdot s_{\mu'})^m \cdot  s_{\gamma_{2}}\cdot ...\cdot s_{\gamma_{k}} \leq s_{\nu+\gamma_{1}}\cdot s_{\nu'-\gamma_{1}}\cdot s_{\gamma_{1}} \cdot (s_{\mu}\cdot s_{\mu'})^m \cdot s_{\gamma_{2}}\cdot ...\cdot s_{\gamma_{k}}=s_{\nu+\gamma_{1}}\cdot s_{\nu'-\gamma_{1}}\cdot (s_{\mu}\cdot s_{\mu'})^m \cdot s_{\gamma_{1}}\cdot s_{\gamma_{2}}\cdot ...\cdot s_{\gamma_{k}}.$ Here, observe that $(\nu'-\gamma_{1})\cap \gamma_{1}=\emptyset.$ Now, if $B=\{\gamma_{1}\}$ then for any $1\leq i\leq k$ we have either $(\nu'-\gamma_{1})\cap \gamma_{i}=\emptyset$ or both $(\nu'-\gamma_{1})>\gamma_{i}$ and $(\nu'-\gamma_{1})\perp \gamma_{i}$. Also, note that $(\nu'-\gamma_{1})<\mu'$ since $\nu'<\mu'$ which implies that $s_{\nu+\gamma_{1}}\cdot s_{\nu'-\gamma_{1}}\cdot (s_{\mu}\cdot s_{\mu'})^m\leq (s_{\nu+\gamma_{1}}\cdot s_{\nu'-\gamma_{1}})^{m+1}$ which follows by $s_{\nu+\gamma_{1}}\cdot s_{\nu'-\gamma_{1}}\cdot  (s_{\mu}\cdot s_{\mu'})^m \cdot s_{\gamma_{1}}\cdot s_{\gamma_{2}}\cdot ...\cdot s_{\gamma_{k}}. \leq (s_{\nu+\gamma_{1}}\cdot s_{\nu'-\gamma_{1}})^{m+1}\cdot s_{\gamma_{1}}\cdot s_{\gamma_{2}}\cdot ...\cdot s_{\gamma_{k}}$. Thus we can take $\beta'=\nu'-\gamma_{1}$ in this case. Now, suppose that $|B|> 1$. Then $B$ can have at most two elements since we have neither $\nu'\cap \gamma_{j}=\emptyset$ nor both $\nu'>\gamma_{j}$ and $\nu'\perp \gamma_{j}$  for all $\gamma_{j}\in B$ which implies that $\nu'$ intersect with $\gamma_{j}$ and it is not strictly bigger than and not perpendicular to $\gamma_{j}$ for any $\gamma_{j}\in B$ and also by the fact that $\text{supp}(\gamma_{i})$ and $\text{supp}(\gamma_{j})$ are disconnected for any $1\leq i,j\leq k$. Now, let $\gamma_{q}\in B$ such that $\gamma_{q}\neq \gamma_{1}.$ Then one can show that $s_{\nu+\gamma_{1}}\cdot s_{\nu'-\gamma_{1}}\cdot  (s_{\mu}\cdot s_{\mu'})^m \cdot s_{\gamma_{1}}\cdot s_{\gamma_{2}}\cdot ...\cdot s_{\gamma_{k}}. \leq (s_{\beta}\cdot s_{\beta'})^{m+1}\cdot s_{\gamma_{1}}\cdot s_{\gamma_{2}}\cdot ...\cdot s_{\gamma_{k}}$ for some affine positive real roots $\beta,\beta'  \in \Pi_{\text{aff}}^{\text{re},\,+}$ such that either $\beta'\cap \gamma_{i}=\emptyset$ or both $\beta'>\gamma_{i}$ and $\beta'\perp \gamma_{i}$ for any $i=1,2,...,k$ by considering the relationships between $\nu'-\gamma_{1}$ and $\gamma_{q}$ which are identical with those between $\nu'$ and $\gamma_{1}$ above under case b).

\end{itemize}

\end{proof}

The result for the most general case is:

\begin{thm} \label{thm20} Let $\mathbf{d}=(d_0,d_1,...,d_{n-1})>c$ be a degree and $m=\text{min}\{d_0,d_1,...,d_{n-1}\}$. Also, assume that $z_{\mathbf{d}-mc}=s_{\gamma_{1}} s_{\gamma_{2}}...s_{\gamma_{k}}$ for some k where this expression is obtained in Theorem \ref{thm12.7}. Then 

\begin{enumerate}

\item[1)] $\Gamma_{\mathbf{d}}( id)=\{t_{m\beta'}z_{\mathbf{d}-mc}: \beta' \in \Pi_{\text{aff}}^{\text{re},+}(\mathbf{d}-mc)\}\cup \{t_{m(\beta'-c)}z_{\mathbf{d}-mc}: \beta' \in \Pi_{\text{aff}}^{\text{re},+}(\mathbf{d}-mc)\}$

\item[2)] $|\Gamma_{\mathbf{d}}( id)|=|\{\beta': \beta' \in \Pi_{\text{aff}}^{\text{re},\,+}(\mathbf{d}-mc)\}|$

\item[3)] For all $w\in \Gamma_{\mathbf{d}}(id)$, $\ell(w)=2m(n-1)+\ell(z_{\mathbf{d}-mc})$ 

\end{enumerate}

where $ \Pi_{\text{aff}}^{\text{re},\,+}(\mathbf{d}-mc)$ is the set of $\beta' \in \Pi_{\text{aff}}^{\text{re},\,+}$ such that $ \beta'<c$ and either  $\beta' \cap \gamma_{i}=\emptyset $ or both $\beta'>\gamma_{i} $ and $\beta'\perp \gamma_{i}$ for any $ i=1,...,k$. 
\end{thm}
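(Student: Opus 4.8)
The plan is to follow the template of Theorems~\ref{thm17.9}, \ref{thm18} and \ref{thm19}, exploiting the decomposition $\mathbf{d}=mc+(\mathbf{d}-mc)$. Since $m=\min\{d_0,\dots,d_{n-1}\}$, the degree $\mathbf{d}-mc$ has a vanishing coordinate, so $z_{\mathbf{d}-mc}$ is defined and $\Gamma_{\mathbf{d}-mc}(id)=\{z_{\mathbf{d}-mc}\}$ by Theorem~\ref{thm13}; after applying a suitable power of the Dynkin automorphism $\varphi$ (Section~\ref{dynkin}) I may further assume $d_0-m=0$, so that every $\gamma_i\in\Pi^+$. I would dispose of parts (2) and (3) first, as they are quick: part (3) is Lemma~\ref{lemma12.72} applied to $w=(s_\beta s_{\beta'})^m z_{\mathbf{d}-mc}$, which also shows each such $w$ is reduced with $\ell(w)=2m(n-1)+\ell(z_{\mathbf{d}-mc})$; part (2) then follows because distinct $\beta'$ give distinct $(s_\beta s_{\beta'})^m$ by Lemma~\ref{lemma10}, and reducedness propagates this to the products $(s_\beta s_{\beta'})^m z_{\mathbf{d}-mc}$, each of which coincides with exactly one of $t_{m\beta'}z_{\mathbf{d}-mc}$ or $t_{m(\beta'-c)}z_{\mathbf{d}-mc}$ according to whether $\alpha_0\in\text{supp}(\beta)$ (as in the proof of Lemma~\ref{lemma10}).

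The heart of the proof is part (1). For the inclusion of $\Gamma_{\mathbf{d}}(id)$ into the right-hand side $R$ of (1), take a chain $id\xrightarrow{\beta_1}s_{\beta_1}\to\cdots\to w=s_{\beta_1}\cdots s_{\beta_r}$ of degree $\le\mathbf{d}$; by Remark~\ref{remark12.9} and Corollary~\ref{cor7} I may assume $w=s_{\beta_1}\cdot\ldots\cdot s_{\beta_r}$ (Hecke product), $\sum_{i}\beta_i=\mathbf{d}$ and $\beta_i<c$ for all $i$. I would induct on $m$: the case $m=0$ is Theorem~\ref{thm13}, and, when $\mathbf{d}-mc$ is the degree of a single root, $m=1$ is Theorem~\ref{thm18}. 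For the inductive step, the iterated form of Lemma~\ref{lemma17.71} lets me assume there is $p$ with $\sum_{i\le p}\beta_i=c$ and $\sum_{i>p}\beta_i=(m-1)c+(\mathbf{d}-mc)$; bounding the first block by Equation~\ref{equation14.5} gives $s_{\beta_1}\cdot\ldots\cdot s_{\beta_p}\le s_\gamma s_{\gamma'}$ with $\gamma+\gamma'=c$, and bounding the second by the inductive hypothesis gives $s_{\beta_{p+1}}\cdot\ldots\cdot s_{\beta_r}\le (s_\mu s_{\mu'})^{m-1}z_{\mathbf{d}-mc}$ with $\mu'$ satisfying the stated relation with every $\gamma_i$. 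Hence $w\le (s_\gamma\cdot s_{\gamma'})\cdot(s_\mu\cdot s_{\mu'})^{m-1}\cdot z_{\mathbf{d}-mc}$. Applying Lemma~\ref{lemma19.3} to normalize $s_\gamma s_{\gamma'}$ against the factors $\gamma_1,\dots,\gamma_k$ of $z_{\mathbf{d}-mc}$ and then Lemma~\ref{lemma19.5} to absorb it into the $(m-1)$-st power yields $w\le (s_\beta s_{\beta'})^m z_{\mathbf{d}-mc}$ with $\beta'\in\Pi_{\text{aff}}^{\text{re},\,+}(\mathbf{d}-mc)$; by Lemma~\ref{lemma12.72} this product is reduced, and $(s_\beta s_{\beta'})^m$ is the translation $t_{m\beta'}$ or $t_{m(\beta'-c)}$ as above, so $w$ lies below an element of $R$.

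For the reverse inclusion and maximality: each $\beta'\in\Pi_{\text{aff}}^{\text{re},\,+}(\mathbf{d}-mc)$ is realized because the alternating chain $id\to s_\beta\to s_\beta s_{\beta'}\to\cdots$ of degree $mc$ reaches $(s_\beta s_{\beta'})^m$, and then, using $(s_\beta s_{\beta'})^m z_{\mathbf{d}-mc}=(s_\beta s_{\beta'})^m\cdot z_{\mathbf{d}-mc}$ (reducedness, Lemma~\ref{lemma12.72}) together with Theorem~\ref{thm13} and the reduction Theorem~\ref{thm0.3}, a further chain of degree $\mathbf{d}-mc$ reaches $(s_\beta s_{\beta'})^m z_{\mathbf{d}-mc}$; concatenation produces a chain from $id$ of degree $\le\mathbf{d}$. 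Finally, since by part (3) all the elements in $R$ share the common length $2m(n-1)+\ell(z_{\mathbf{d}-mc})$, none can be strictly below another in the Bruhat order, so they are pairwise incomparable; together with the two inclusions, this identifies them as precisely the maximal reachable elements, proving (1).

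The main obstacle I anticipate is the combinatorial bookkeeping in the inductive step: ensuring that after absorbing $s_\gamma s_{\gamma'}$ into $(s_\mu s_{\mu'})^{m-1}$ the resulting root $\beta'$ simultaneously satisfies ``$\beta'\cap\gamma_i=\emptyset$ or ($\beta'>\gamma_i$ and $\beta'\perp\gamma_i$)'' for \emph{every} $i$. This is exactly what Lemmas~\ref{lemma19.3} and~\ref{lemma19.5} are built to deliver, but care is needed because the factors $\gamma_i$ of $z_{\mathbf{d}-mc}$ supplied by Theorem~\ref{thm12.7} are only guaranteed to have pairwise disconnected supports \emph{or} to be comparable and mutually perpendicular, whereas those lemmas are phrased for the disconnected-support case; one must either invoke the routine extension of their proofs to the comparable-perpendicular case (the manipulations of Lemma~\ref{lemma5.7} go through verbatim) or first rewrite $z_{\mathbf{d}-mc}$ as a product over roots of pairwise disconnected support, and that is where I expect most of the work to lie.
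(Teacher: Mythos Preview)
Your proposal follows essentially the same route as the paper: induction on $m$, splitting via Lemma~\ref{lemma17.71}, bounding the blocks by Equation~\ref{equation14.5} and the inductive hypothesis, and then absorbing via Lemmas~\ref{lemma19.3}/\ref{lemma19.5}; parts (2) and (3) via Lemmas~\ref{lemma10} and~\ref{lemma12.72}. Your anticipated obstacle is exactly the one the paper resolves, and in the same way: it passes to the subset $B\subseteq\{\gamma_1,\dots,\gamma_k\}$ of maximal $\gamma_i$, whose supports are pairwise disconnected, and checks that the condition defining $\Pi_{\text{aff}}^{\text{re},\,+}(\mathbf{d}-mc)$ is unchanged under this replacement.

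One small correction to your inductive step: you cannot apply Lemma~\ref{lemma19.3} to ``normalize $s_\gamma s_{\gamma'}$ against the factors $\gamma_1,\dots,\gamma_k$'' at that point, because $z_{\mathbf{d}-mc}$ sits on the far right of $(s_\mu s_{\mu'})^{m-1}$ and is not adjacent to $s_\gamma s_{\gamma'}$. The paper instead first uses Equation~\ref{equation17.92} (i.e.\ Lemma~\ref{lemma17.8}) on $s_\gamma\cdot s_{\gamma'}\cdot s_\mu$ to replace $(\gamma,\gamma')$ by $(\nu,\nu')$ satisfying either $\nu'\le\mu'$ or $\nu'>\mu$, $\nu'\perp\mu$; this is precisely the hypothesis required to invoke Lemma~\ref{lemma19.5}, which then does the absorption and simultaneously outputs a $\beta'$ with the correct relation to every $\gamma_i$. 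Lemma~\ref{lemma19.3} is used only for the base case $m=1$, where there is no intervening power $(s_\mu s_{\mu'})^{m-1}$. With this adjustment, your argument is the paper's.
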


\begin{proof}  $1)$ First, note that we have either $\text{supp}(\gamma_{i})$ and $\text{supp}(\gamma_{j})$ are disconnected or both $\gamma_{i}\perp \gamma_{j}$ and $\gamma_{i},\gamma_{j}$ are comparable, for any $1\leq i,j\leq k$ such that $i\neq j$; see Theorem \ref{thm12.7}. Now, we define $B:=\{\gamma_{i_{1}},\gamma_{i_{2}},...,\gamma_{i_{t}}\} \subseteq \{\gamma_{1},\gamma_{2},...,\gamma_{k}\}$ such that for any $1\leq i\leq k$, we have $\gamma_{i}\leq \gamma_{i_{l}}$ for some $\gamma_{i_{l}}\in B.$ Note that, any two elements of $B$ have disconnected supports. Now, observe that for a root $\gamma_{i_{j}}\in B$ and $\beta' \in \Pi_{\text{aff}}^{\text{re},+}$ such that  $\beta'\cap \gamma_{i_{j}}=\emptyset$ or both $\beta'>\gamma_{i_{j}}$ and $\beta'\perp \gamma_{i_{j}}$  implies that we have either  $\beta'\cap \gamma_{i}=\emptyset$ or both $\beta'>\gamma_{i}$ and $\beta'\perp \gamma_{i}$  for all $1\leq i \leq k$ such that $\gamma_{i} \leq \gamma_{i_{j}},$ respectively. Also, for any $1\leq i \leq k$ such that  $\beta'\cap \gamma_{i}=\emptyset$ or both $\beta'>\gamma_{i}$ and $\beta'\perp \gamma_{i}$ implies that we have either  $\beta'\cap \gamma_{i_{j}}=\emptyset$ or both $\beta'>\gamma_{i_{j}}$ and $\beta'\perp \gamma_{i_{j}}$ where $\gamma_{i} \leq \gamma_{i_{j}},$ respectively. So we may assume that $B=\{\gamma_{1},\gamma_{2},...,\gamma_{k}\}$ i.e we will suppose that $\text{supp}(\gamma_{i})$ and $\text{supp}(\gamma_{j})$ are disconnected for any $1\leq i,j\leq k$ such that $i\neq j$.

Let $id \stackrel{\beta_{1}} \longrightarrow s_{\beta_{1}} \stackrel{\beta_{2}} \longrightarrow s_{\beta_{1}}s_{\beta_{2}} ...\stackrel{\beta_{r}} \longrightarrow w=s_{\beta_{1}}s_{\beta_{2}}...s_{\beta_{r}}$ be a path in the moment graph such that $\sum_{i=1}^{r} \beta_{i} \leq \mathbf{d}$. Note that by Remark \ref{remark12.9} we can assume that $w=s_{\beta_{1}} \cdot s_{\beta_{2}}\cdot...\cdot s_{\beta_{r}}$ where $\sum_{i=1}^{r} \beta_{i} = \mathbf{d}$ and $\beta_{i}<c$ for all $i$. Now, note that $t_{m\beta'}$ and $t_{m(\beta'-c)}$ are given by $(s_{\beta} s_{\beta'})^m$ for some $\beta,\beta'  \in \Pi_{\text{aff}}^{\text{re},\,+}$ such that $\beta+\beta'=c$; see the proof of Lemma \ref{lemma10}. So it is enough to show that $w\leq (s_{\beta} s_{\beta'})^{m}z_{\mathbf{d}-mc}$ where we have either $\beta'\cap \gamma_{i}=\emptyset$ or both $\beta'>\gamma_{i}$ and $\beta'\perp \gamma_{i}$ for any $i=1,2,...,k.$ Furthermore, we can assume that there is an integer $p$ such that $\sum_{i=1}^{p}\beta_{i}=c$ and $\sum_{i=p+1}^{r}\beta_{i}=\mathbf{d}-c$, by Lemma \ref{lemma17.71}.  We will prove the statement by the induction on $m.$ 

First, suppose that $m=1.$ Then by Equation \ref{equation14.5}, $s_{\beta_{1}}\cdot s_{\beta_{2}}\cdot ...\cdot s_{\beta_{p}}\leq s_{\alpha}\cdot s_{\alpha'}$ for some $\alpha, \alpha' \in \Pi_{\text{aff}}^{\text{re},\,+}$ such that $\alpha+\alpha'=c$. Moreover, $s_{\beta_{p+1}}\cdot ...\cdot s_{\beta_{r}}\leq z_{\mathbf{d}-c}$, by Theorem \ref{thm13}. Thus 
$$w=(s_{\beta_{1}}\cdot s_{\beta_{2}}\cdot ...\cdot s_{\beta_{p}})\cdot (s_{\beta_{p+1}}\cdot ...\cdot s_{\beta_{r}})\leq (s_{\alpha}\cdot s_{\alpha'})\cdot z_{\mathbf{d}-c}.$$ Now, by Lemma \ref{lemma19.3} we have $s_{\alpha}\cdot s_{\alpha'}\cdot s_{\gamma_{1}}\cdot s_{\gamma_{2}}\cdot ...\cdot s_{\gamma_{k}} \leq s_{\beta}\cdot s_{\beta'}\cdot s_{\gamma_{1}}\cdot s_{\gamma_{2}}\cdot ...\cdot s_{\gamma_{k}}$ for some affine positive real roots, $\beta,\beta'  \in \Pi_{\text{aff}}^{\text{re},\,+}$ such that we have either $\beta'\cap \gamma_{i}=\emptyset$ or both $\beta'>\gamma_{i}$ and $\beta'\perp \gamma_{i}$ for any $i=1,2,...,k.$

Now, we will suppose that the statement is true for $m$ and prove that it is also true for $m+1.$ Again, by Equation \ref{equation14.5} we have $s_{\beta_{1}}\cdot s_{\beta_{2}}\cdot ...\cdot s_{\beta_{p}}\leq s_{\alpha}\cdot s_{\alpha'}$ for some $\alpha, \alpha' \in \Pi_{\text{aff}}^{\text{re},\,+}$ such that $\alpha+\alpha'=c$ and $s_{\beta_{p+1}}\cdot ...\cdot s_{\beta_{r}}\leq (s_{\mu}\cdot s_{\mu'})^m \cdot z_{\mathbf{d}-(m+1)c}$ for some $\mu,\mu' \in \Pi_{\text{aff}}^{\text{re},\,+}$ such that $\mu+\mu'=c$ where either $\mu'\cap \gamma_{i}=\emptyset$ or $\mu'>\gamma_{j}$ and $\mu'\perp \gamma_{j}$ by induction assumption. Thus  
$$w=(s_{\beta_{1}}\cdot s_{\beta_{2}}\cdot ...\cdot s_{\beta_{p}})\cdot (s_{\beta_{p+1}}\cdot ...\cdot s_{\beta_{r}})\leq (s_{\alpha}\cdot s_{\alpha'}) \cdot(s_{\mu}\cdot s_{\mu'})^m \cdot z_{\mathbf{d}-(m+1)c}.$$  Now, observe that $s_{\alpha}\cdot s_{\alpha'}\cdot s_{\mu}\leq s_{\nu}\cdot s_{\nu'}\cdot s_{\mu}$ for some $\nu, \nu' \in \Pi_{\text{aff}}^{\text{re},\,+}$ such that $\nu+\nu'=c$ and either $\nu'\leq \mu'$ or both $\nu'>\mu$ and $\nu'\perp \mu$ by Equation \ref{equation17.92}. Hence $s_{\alpha}\cdot s_{\alpha'}\cdot (s_{\mu}\cdot s_{\mu'})^m\cdot z_{\mathbf{d}-(m+1)c}\leq (s_{\nu}\cdot s_{\nu'})\cdot (s_{\mu}\cdot s_{\mu'})^m\cdot z_{\mathbf{d}-(m+1)c}.$ By Lemma \ref{lemma19.5} we have $s_{\nu}\cdot s_{\nu'}\cdot (s_{\mu}\cdot s_{\mu'})^m\cdot s_{\gamma_{1}}\cdot s_{\gamma_{2}}\cdot ...\cdot s_{\gamma_{k}} \leq (s_{\beta}\cdot s_{\beta'})^{m+1}\cdot s_{\gamma_{1}}\cdot s_{\gamma_{2}}\cdot ...\cdot s_{\gamma_{k}}$ for some affine positive real roots, $\beta, \beta'$ such that $\beta+\beta'=c$ where either $\beta'\cap \gamma_{i}=\emptyset$ or both $\beta'>\gamma_{i}$ and $\beta'\perp \gamma_{i}$ for any $i=1,2,...,k.$ Furthermore, by Lemma \ref{lemma12.71} $, z_{\mathbf{d}-(m+1)c}=s_{\gamma_{1}} s_{\gamma_{2}}...s_{\gamma_{k}}$ is reduced so $s_{\gamma_{1}} s_{\gamma_{2}}...s_{\gamma_{k}}=s_{\gamma_{1}} \cdot s_{\gamma_{2}}\cdot ...\cdot s_{\gamma_{k}}$ which follows by $(s_{\beta}\cdot s_{\beta'})^{m+1}\cdot s_{\gamma_{1}}\cdot s_{\gamma_{2}}\cdot ...\cdot s_{\gamma_{k}}=(s_{\beta}\cdot s_{\beta'})^{m+1}\cdot (s_{\gamma_{1}}s_{\gamma_{2}}...s_{\gamma_{k}})=(s_{\beta}\cdot s_{\beta'})^{m+1}\cdot z_{\mathbf{d}-(m+1)c}$. Now by Lemma \ref{lemma12.72}, the element $(s_{\beta}s_{\beta'})^{m+1} z_{\mathbf{d}-(m+1)c}$ is reduced so $(s_{\beta}\cdot s_{\beta'})^{m+1}\cdot z_{\mathbf{d}-(m+1)c}=(s_{\beta}s_{\beta'})^{m+1} z_{\mathbf{d}-(m+1)c}.$

$2)$ Let $(s_{\beta}s_{\beta'})^{m}z_{\mathbf{d}-mc}$ and $ (s_{\nu}s_{\nu'})^{m}z_{\mathbf{d}-mc}\in \Gamma_{\mathbf{d}}( id).$ We need to show that $(s_{\beta}s_{\beta'})^{m}z_{\mathbf{d}-mc}\neq (s_{\nu}s_{\nu'})^{m}z_{\mathbf{d}-mc}$ if $\beta'\neq \nu'.$ Now assume that $\beta'\neq \nu'$ but 
\begin{equation}\label{equation21}(s_{\beta}s_{\beta'})^{m}s_{\gamma_{1}} s_{\gamma_{2}}...s_{\gamma_{k}}= (s_{\nu}s_{\nu'})^{m}s_{\gamma_{1}} s_{\gamma_{2}}...s_{\gamma_{k}}.\end{equation} By Lemma \ref{lemma12.72} both $(s_{\beta}s_{\beta'})^{m}s_{\gamma_{1}} s_{\gamma_{2}}...s_{\gamma_{k}}$ and $ (s_{\nu}s_{\nu'})^{m}s_{\gamma_{1}} s_{\gamma_{2}}...s_{\gamma_{k}}$ are reduced, so Equation \ref{equation21} implies that $(s_{\beta}s_{\beta'})^{m}= (s_{\nu}s_{\nu'})^{m}$ but this is a contradiction by Lemma \ref{lemma10}.

$3)$ Let $w\in \Gamma_{\mathbf{d}}( id)$. Then by Lemma \ref{lemma12.72} we have $\ell(w)=2m(n-1)+\ell(z_{\mathbf{d}-mc})$.

\end{proof}

\begin{remark} Let $m$ is a positive integer. We get 
\begin{equation}\label{equation18.8} \Gamma_{\mathbf{d}} (id)=\{(s_{\beta}s_{\beta'})^{m} z_{\mathbf{d}-mc}:\beta' \in  \Pi_{\text{aff}}^{\text{re},\,+}(\mathbf{d}-mc)\,\, \text{such that}\,\, \beta+\beta'=c\},
\end{equation}
see the proof of Theorem \ref{thm20}.

\end{remark}

\begin{example}  Let $W_{\text{aff}}$ be the affine Weyl group associated to $A_{3}^{(1)}$ and $\mathbf{\mathbf{d}}=(6,5,8,5).$ Then $\mathbf{d}=5c+(1,0,3,0)$ so $m=5$ and $\mathbf{d}-5c=(1,0,3,0)$. Also, $z_{\mathbf{d}-5c}=s_{\alpha_{0}}\cdot s_{\alpha_{2}}\cdot s_{\alpha_{2}}\cdot s_{\alpha_{2}}=s_{\alpha_{0}}s_{\alpha_{2}}.$ Hence by Theorem \ref{thm0.62} we get 
\begin{equation*} \displaystyle \Gamma_{\mathbf{d}}( id)= \{t_{5\beta'_{1}} z_{\mathbf{d}-5c}, t_{5\beta'_{2}}z_{\mathbf{d}-5c},t_{5\beta'_{4}} z_{\mathbf{d}-5c},t_{5(\beta'_{3}-c)} z_{\mathbf{d}-5c} \}
\end{equation*}
where $\beta'_{1}=\alpha_{1}$, $\beta'_{2}=\alpha_{3}$, $\beta'_{3}=\alpha_{0}+\alpha_{1}+\alpha_{3}$, $\beta'_{4}=\alpha_{1}+\alpha_{2}+\alpha_{3}$. Moreover, for all $w\in \Gamma_{\mathbf{d}}( id)$, $\ell(w)=2m(n-1)+\ell(z_{\mathbf{d}-mc})=2\cdot 5\cdot3+2=32$.

\end{example}

\subsection{Reduction from $\Gamma_{\mathbf{d}}(w)$ to $\Gamma_{\mathbf{d}}(id)$} \label{reduction}

In this section we will discuss our last result which indicates that to calculate $\Gamma_{\mathbf{d}}(w)$ for any given degree $\mathbf{d}$ and $w \in W_{\text{aff}}$ one only needs to calculate $ \Gamma_{\mathbf{d}}(id).$

\begin{thm} \label{thm22} Let $w \in W_{\text{aff}}$ and $\mathbf{d}$ be any degree. Then 
$$\Gamma_{\mathbf{d}}(w)=\text{max}\left \{w\cdot u: u \in \Gamma_{\mathbf{d}}( id) \right \}.$$\end{thm}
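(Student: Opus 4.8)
The plan is to prove the two inclusions $\Gamma_{\mathbf{d}}(w) \subseteq \max\{w\cdot u : u \in \Gamma_{\mathbf{d}}(id)\}$ and $\max\{w\cdot u : u \in \Gamma_{\mathbf{d}}(id)\} \subseteq \Gamma_{\mathbf{d}}(w)$ by exploiting the translation structure of chains in the moment graph. The key observation is that a chain $u' = u_0 \stackrel{\beta_0}{\to} u_1 \stackrel{\beta_1}{\to} \cdots \stackrel{\beta_{k-1}}{\to} u_k = v$ with $u_{i+1} = u_i s_{\beta_i}$ can be left-translated by any $x \in W_{\text{aff}}$: since $x u_{i+1} = (x u_i) s_{\beta_i}$, the sequence $x u_0 \to x u_1 \to \cdots \to x u_k$ is again a chain of the \emph{same} degree $\sum \beta_i$. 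This gives a degree-preserving bijection between chains starting at $u'$ and chains starting at $xu'$, which is the engine for everything. The subtlety is that left translation does \emph{not} preserve the Bruhat order or the condition ``$u' \le u$'', so I cannot simply translate everything by $w$; instead I must combine chains emanating from $id$ (which can be pre-composed with chains realizing $u' \le w$) with the Hecke product.

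First I would show the inclusion $\supseteq$. Fix $u \in \Gamma_{\mathbf{d}}(id)$, so there is a chain of degree $\mathbf{d}' \le \mathbf{d}$ from some $id' = id$ (the only element $\le id$) to $u$; in fact by the definition and Remark \ref{remark12.9} we may take a chain $id \to \cdots \to v_0 = s_{\beta_1}\cdots s_{\beta_q}$ with $v_0 \le u$ realized via Hecke products, and we know $u$ is a maximal element reachable. I want to produce a chain of degree $\le \mathbf{d}$ from some $w' \le w$ to $w \cdot u$. Here I invoke property (e) of the Hecke product: the element $v := (w\cdot u)u^{-1}$ satisfies $v \le w$ and $vu = v\cdot u = w\cdot u$. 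Now $vu$ is a reduced product, so concatenating a reduced chain from $id$ to $v$ (which exists since $v \le w$, hence $v \le w$ in Bruhat order with an increasing chain — but increasing chains have degree equal to a sum of positive roots, and I must check this degree is ``free'', i.e. that such a chain from $v' \le w$ to... ) — actually the cleaner route: take the chain of degree $\mathbf{d}'$ from $id$ to $u$, left-translate it by $v$ to get a chain of degree $\mathbf{d}'$ from $v$ to $vu = w\cdot u$, and note $v \le w$. Since $\mathbf{d}' \le \mathbf{d}$, this shows $w\cdot u$ satisfies condition (1) in the definition of $\Gamma_{\mathbf{d}}(w)$. Taking the maximal such elements over all $u \in \Gamma_{\mathbf{d}}(id)$ gives $\supseteq$.

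Next the inclusion $\subseteq$. Let $v \in \Gamma_{\mathbf{d}}(w)$, so there is a chain $\pi$ of degree $\mathbf{d}' \le \mathbf{d}$ from some $w' \le w$ to $v$, and $v$ is maximal with this property. I want to write $v \le w \cdot u$ for some $u \in \Gamma_{\mathbf{d}}(id)$ reachable by a degree-$\le\mathbf{d}$ chain from $id$. The idea is: left-translate $\pi$ by $(w')^{-1}$ to obtain a chain of the same degree $\mathbf{d}'$ from $id$ to $(w')^{-1}v$; this shows $(w')^{-1}v$ is reachable from $id$ at degree $\le \mathbf{d}$, hence $(w')^{-1}v \le u_0$ for some $u_0 \in \Gamma_{\mathbf{d}}(id)$. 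Then by Hecke product monotonicity (property (b)) and the relation $w' \le w$, I would argue $v = w' \cdot\big((w')^{-1}v\big) \le w'\cdot u_0 \le w \cdot u_0$ — here I need that $v \le w'\cdot((w')^{-1}v)$, which follows because $w'\big((w')^{-1}v\big) = v$ is an ordinary (not necessarily reduced) product so $v \le w'\cdot((w')^{-1}v)$ by property (c). Since $w\cdot u_0$ is itself reachable (by the $\supseteq$ argument just proved) at degree $\le \mathbf{d}$, maximality of $v$ forces $v$ to be among $\max\{w\cdot u : u\in\Gamma_{\mathbf{d}}(id)\}$.

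The main obstacle I anticipate is bookkeeping the distinction between \emph{reduced} products, \emph{Hecke} products, and ordinary group products when transporting chains: a chain records an actual group-theoretic sequence $u_{i+1} = u_i s_{\beta_i}$ with no length constraint, whereas the curve-neighborhood maximal elements are naturally described via Hecke products, and Remark \ref{remark12.9} is exactly the bridge. Getting the inequalities to point the right way — in particular verifying $v \le w'\cdot((w')^{-1}v)$ and $w'\cdot u_0 \le w\cdot u_0$ simultaneously with $w' \le w$ — requires careful use of properties (a)–(e) of Section \ref{hecke product}, and I expect that is where the argument needs the most care. A secondary check is that left translation genuinely preserves the degree of a chain (immediate from $xu_i \stackrel{\beta_i}{\to} xu_{i+1}$ since $xu_{i+1} = xu_i s_{\beta_i}$) and that ``$u' \le u$'' in the definition can always be upgraded to ``$u' = s_{\beta_1}\cdots s_{\beta_q}$ reachable from $id$'', which is Remark \ref{remark12.9} again.
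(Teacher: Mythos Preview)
Your proof is correct and follows essentially the same strategy as the paper's: left-translate chains in the moment graph (which preserves degree), bound ordinary products by Hecke products via property~(c), use monotonicity~(b), and use property~(e) to exhibit $w\cdot u$ as $vu$ with $v\le w$ so that $w\cdot u$ is itself reachable. The paper combines both directions into a single pass and starts its generic chain at $w$ rather than at an arbitrary $w'\le w$; your version is slightly more explicit in treating the $w'\le w$ case separately, but the content is the same.
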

\begin{proof} Let
$$w \stackrel{\beta_{1}} \longrightarrow ws_{\beta_{1}} \stackrel{\beta_{2}} \longrightarrow ws_{\beta_{1}}s_{\beta_{2}} ...\stackrel{\beta_{k}} \longrightarrow ws_{\beta_{1}}s_{\beta_{2}}...s_{\beta_{k}}$$
be a path in the moment graph such that $\sum_{i=1}^{k} \beta_{i}\leq \mathbf{d}$. Then 
$$ws_{\beta_{1}}s_{\beta_{2}}...s_{\beta_{k}}\leq w\cdot (s_{\beta_{1}}s_{\beta_{2}}...s_{\beta_{k}}) \leq w\cdot u$$
for some $u\in \Gamma_{\mathbf{d}}( id)$. To complete the proof we need to show that the element $w\cdot u $ can be reached by a path which starts with an element that is smaller than and equal to $w$ and has a degree at most $\mathbf{d}$. Now, note that $w\cdot u=vu$ for some $v\in W_{\text{aff}}$ such that $v\leq w$ by property $e)$ of the Hecke product in Section \ref{hecke product}.   In addition, $u=s_{\beta'_{1}}s_{\beta'_{2}}...s_{\beta'_{t}}$ for some positive affine real roots $\beta'_{i}$, $i=1,2,...,t$ such that   $\sum_{i=1}^{t} \beta'_{i}\leq \mathbf{d}$ since $u\in \Gamma_{\mathbf{d}}(id)$.  Thus 
$$v \stackrel{\beta'_{1}} \longrightarrow vs_{\beta'_{1}} \stackrel{\beta'_{2}} \longrightarrow vs_{\beta'_{1}}s_{\beta'_{2}} ...\stackrel{\beta'_{t}} \longrightarrow vs_{\beta'_{1}}s_{\beta'_{2}}...s_{\beta'_{k}}=vu$$
is the path we are looking for.

 \end{proof}

%\bibliographystyle{halpha}
%\bibliographystyle{plain}
%\bibliographystyle{amsplain}
%\bibliography{biblio}

\addcontentsline{toc}{chapter}{Biblography}
\bibliographystyle{amsplain} \bibliography{biblio}
%
% Otherwise, uncomment the following:
% \chapter*{Bibliography}

% \appendix

% In LaTeX, each appendix is a "chapter"
% \chapter{Program Source}

\end{document}